\theoremstyle{plain}
\newtheorem{theorem}{Theorem}[section]
\newtheorem*{theorem*}{Theorem}
\newtheorem{lemma}[theorem]{Lemma}
\newtheorem{proposition}[theorem]{Proposition}
\newtheorem{corollary}[theorem]{Corollary}
\newtheorem{conjecture}[theorem]{Conjecture}
\newtheorem{question}[theorem]{Question}
\newtheorem{claim}[theorem]{Claim}
\theoremstyle{definition}
\newtheorem{definition}[theorem]{Definition}
\newtheorem{example}[theorem]{Example}
\theoremstyle{remark}
\newtheorem{remark}[theorem]{Remark}
\numberwithin{equation}{section}
\newcommand{\C}{\mathbb{C}}
\newcommand{\R}{\mathbb{R}}
\newcommand{\Z}{\mathbb{Z}}
\newcommand{\M}{\mathcal{M}}
\newcommand{\N}{\mathbb{N}}
\newcommand{\X}{\mathbf{X}}
\newcommand{\HH}{\mathbb{H}}
\newcommand{\Ss}{\mathbb{S}}
\newcommand{\eps}{\varepsilon}
\newcommand{\mc}{\mathcal}
\newcommand{\la}{\lambda}
\newcommand{\dd}{\mathrm{d}}
\newcommand{\pl}{\partial}
\newcommand{\cjg}{\langle}
\newcommand{\cjd}{\rangle}
\DeclareMathOperator{\Op}{Op}
\DeclareMathOperator{\WF}{WF}
\DeclareMathOperator{\ran}{ran}
\DeclareMathOperator{\Div}{div}
\DeclareMathOperator{\supp}{supp}
\DeclareMathOperator{\comp}{c}
\newcommand{\be}{\begin{equation}}
\newcommand{\ee}{\end{equation}}
\title [Local lens rigidity]{Local lens rigidity for manifolds of Anosov type}
\author{Mihajlo Ceki\'c}
\address{Institut f\"ur Mathematik, Universit\"at Z\"urich, Winterthurerstrasse 190, CH-8057 Z\"urich, Switzerland}
\email{mihajlo.cekic@math.uzh.ch}
\author{Colin Guillarmou}
\address{Université Paris-Saclay, CNRS, Laboratoire de mathématiques d'Orsay, 91405, Orsay, France.}
\email{colin.guillarmou@universite-paris-saclay.fr}
\author{Thibault Lefeuvre}
\address{Université de Paris and Sorbonne Université, CNRS, IMJ-PRG, F-75006 Paris, France.}
\email{tlefeuvre@imj-prg.fr}
\begin{document}

\begin{abstract}
The \emph{lens data} of a Riemannian manifold with boundary is the collection of lengths of geodesics with endpoints on the boundary together with their incoming and outgoing vectors. We show that negatively-curved Riemannian manifolds with strictly convex boundary are \emph{locally lens rigid} in the following sense: if $g_0$ is such a metric, then any metric $g$ sufficiently close to $g_0$ and with same lens data is isometric to $g_0$, up to a boundary-preserving diffeomorphism. More generally, we consider the same problem for a wider class of metrics with strictly convex boundary, called metrics of \emph{Anosov type}. We prove that the same rigidity result holds within that class in dimension $2$ and in any dimension, further assuming that the curvature is non-positive.
\end{abstract}

\maketitle

\section{Introduction}

\subsection{The lens rigidity problem}

Let $(M,g)$ be a smooth compact connected Riemannian manifold with strictly convex boundary (i.e. the second fundamental form is positive on $\partial M$). Let $\mc{M}:=SM$ be the unit tangent bundle of $(M,g)$ and define the incoming (-) and outgoing (+) boundary of $\mc{M}$ as:
\[
\partial_\pm \mc{M} := \left\{ (x,v) \in \mc{M} ~|~ x \in \partial M, \pm g_x(v,\nu(x)) > 0 \right\},
\]
where $\nu$ is the unit outward pointing normal vector to the boundary. For any $(x,v) \in \partial_-\mc{M}$, the maximally extended geodesic $\gamma_{(x,v)}$, with initial condition $\gamma_{(x,v)}(0)=x$, $\dot{\gamma}_{(x,v)}=v$ is defined on a time interval $[0,\ell_g(x,v)]$ where $\ell_g(x,v)\in \R_+\cup \{\infty\}$. 
When $\ell_g(x,v)<\infty$, we define 
\[S_g(x,v) := \big(\gamma_{(x, v)}(\ell_g(x,v)), \dot{\gamma}_{(x, v)}(\ell_g(x,v))\big)\]
to be the outgoing tangent vector at $\pl_+\mc{M}$, see Figure \ref{figure:counter-example}.

\begin{definition}[Lens data]\label{def:lens-data}
The map $S_g:\pl_-\mc{M}\setminus \{\ell_g=\infty\}\to \pl_+\mc{M}$ is called the \emph{scattering map} and the function $\ell_g: \pl_-\mc{M}\setminus \{\ell_g=\infty\} \to \mathbb{R}_+$ the \emph{length map}. The pair $(\ell_g,S_g)$ is the \emph{lens data} of the Riemannian manifold $(M,g)$.
\end{definition}

The lens data encodes the boundary data one can measure on the geodesic flow
 from ``outside of the manifold''. A natural inverse problem that arises from tomography consists in determining the 
 geometry, namely, the Riemannian metric $g$ inside $M$, from the measurement of the lens data  $(\ell_g,S_g)$. In geophysics, this is related to recovering the speed of propagation of waves inside a domain such as the Earth, for instance, see \cite{Paternain-Salo-Uhlmann-14-1}. When two metrics $g$ and $g'$ agree on $\pl M$, it makes sense to say that they have the same lens data as there is a natural identification
between the boundary of their respective unit tangent bundles via the unit disk bundle of the boundary, see Section \S\ref{sssection:unit} for further details. The \emph{lens rigidity problem} is concerned with the following question: 

\begin{question}
Assume that $(M,g)$ and $(M',g')$ are two Riemannian metrics with strictly convex boundary such that there exists an isometry $I \in \mathrm{Diff}(\partial M,\partial M')$ with $I^*(g'|_{T \partial M'}) = g|_{T \partial M}$. Does the following implication
\[
(\ell_g,S_g)= I^*(\ell_{g'},S_{g'}) \Longrightarrow \exists \psi \in {\rm Diffeo}(M, M'),\,  \psi|_{\pl M}=I, \quad \psi^*g'=g
\]
hold true?
\end{question}

We say that a manifold $(M,g)$ is \emph{lens rigid} if there is no other Riemannian manifold (up to isometry) having the same lens data as 
$(\ell_g,S_g)$. In the following, in order to simplify the notation, we will assume that $M=M', I = \mathrm{id}$.

\begin{center}
\begin{figure}[htbp!]
\includegraphics[scale=1.5]{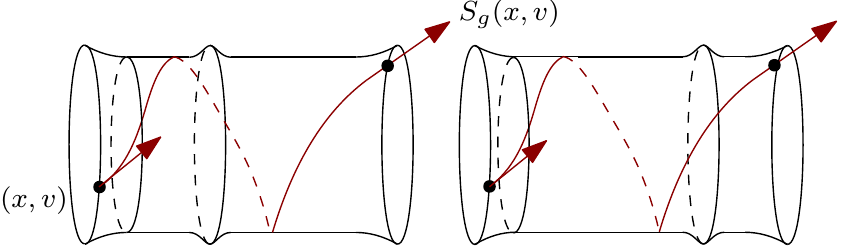}
\caption{A surface with strictly convex boundary which is not lens rigid. Example taken from \cite{Croke-Herreros-16}.}
\label{figure:counter-example}
\end{figure}
\end{center}

There are  simple counter-examples of manifolds for which lens rigidity does not hold: considering certain perturbations of the flat cylinder $\Ss^1 \times [0,1]$ (see Figure \ref{figure:counter-example} and \cite{Croke-Herreros-16} where this is further discussed), one can easily obtain non-isometric metrics with same lens data. 
Such cases have \emph{trapped geodesics}, that is some maximally extended geodesics with infinite length, or equivalently $\ell_g(x,v)=\infty$ for some 
$(x,v)\in \pl_-\mc{M}$. It turns out that all existing counter-examples to lens rigidity have trapped geodesics.

\subsection{Lens rigidity for non-trapping manifolds}

Even among manifolds without a trapped set, the lens rigidity problem is still widely open. The closest result in this direction is the recent breakthrough of Stefanov-Uhlmann-Vasy \cite{Stefanov-Uhlmann-Vasy-21}, showing lens rigidity in dimension $n\geq 3$ under the additional assumption that the manifold $(M,g)$ is foliated by 
strictly convex hypersurfaces. This includes all simply connected non-positively curved manifolds with strictly convex boundary.  
In the class of real analytic metrics such that from each $x\in \pl M$ there is a maximal geodesic free of conjugate points, the lens rigidity was proved by Vargo \cite{Vargo-09}. A local lens rigidity result was also proved near analytic metrics by Stefanov-Uhlmann \cite{Stefanov-Uhlmann-09} under certain assumptions on 
the conjugate points.

There is also a subclass of metrics that have attracted a lot of attention since the work of Michel \cite{Michel-81}, namely the class of \emph{simple manifolds}, which are manifolds with strictly convex boundary that have no trapped geodesics and no conjugate points. These manifolds are diffeomorphic to the unit ball in $\R^n$.
In this case, knowing the lens data is equivalent to knowing the restriction  $d_g|_{\pl M\times \pl M}$ of the Riemannian distance function $d_g\in C^0(M\times M)$ to the boundary, also called the \emph{boundary distance}. The lens rigidity problem for this subclass of metrics is also called the \emph{boundary rigidity problem}.
In dimension $n=2$, it was proved by Otal \cite{Otal-90-2} (in negative curvature), Croke \cite{Croke-91} (in non-positive curvature), and Pestov-Uhlmann \cite{Pestov-Uhlmann-05} (in general) that simple surfaces are boundary rigid, and thus lens rigid. 
We also mention the results by Croke-Dairbekov-Sharafutdinov  \cite{Croke-Dairbekov-Sharafutdinov-00} and Stefanov--Uhlmann \cite{Stefanov-Uhlmann-04} for local boundary rigidity results, the work by Gromov \cite{Gromov-83} and Burago--Ivanov \cite{Burago-Ivanov-10} for rigidity results of flat and close to flat simple manifolds, and we finally refer more generally to the review article by Croke \cite{Croke-04} and the recent book of Paternain--Salo--Uhlmann \cite{Paternain-Salo-Uhlmann-book} for an overview of the boundary rigidity problem.

\subsection{Lens rigidity for manifolds with non-empty trapped set} Trapped geodesics appear in most situations since all Riemannian manifolds $(M,g)$ with strictly convex boundary and non-trivial topology, i.e. non-trivial fundamental group, always have trapped geodesics (and they even have closed geodesics in the interior $M^\circ$). As far as manifolds with trapped geodesics are concerned, very little is known on the 
lens rigidity problem. It is not even clear what would be the most general class of manifolds for which lens rigidity could hold and the example above in Figure \ref{figure:counter-example} shows that it seems hopeless to consider general manifolds with both trapped geodesics and conjugate points.

The only available result considering cases with both trapped geodesics and conjugate points seems to be the local rigidity result of Stefanov-Uhlmann \cite{Stefanov-Uhlmann-09}. In dimension $n\geq 3$, under a certain topological assumption, it is proved that if $(M,g_0)$ is real analytic\footnote{Or more generally if a certain localized X-ray transform is injective.}, with strictly convex boundary, and for each $(x,v)\in SM$ there is $w\in v^\perp$ such that the maximally extended geodesic tangent to $w$ at $x$ has finite length (it is not trapped)
and is free of conjugate points, then the following holds: if $g$ is another metric with $\|g-g_0\|_{C^N}$ small enough for some $N \gg 1$ and $(\ell_g,S_g)=(\ell_{g_0},S_{g_0})$, then
$g$ and $g_0$ are isometric via a boundary-preserving diffeomorphism. On the other hand, it is not clear (geometrically speaking) what type of manifolds are contained in this class and there are many interesting geometric cases not contained in it. For example, there exist convex co-compact hyperbolic $3$-manifolds $M:=\Gamma\backslash \mathbb{H}^3$ (with constant sectional curvature $-1$) whose convex core $\mc{C}$ has positive measure and totally geodesic boundary. Thus, cutting the ends of such examples at a finite positive distance of $\mc{C}$, one obtains a metric not satisfying the assumptions of \cite{Stefanov-Uhlmann-09} due to the totally geodesic surfaces bounding $\mc{C}$.

From our point of view, there is a very natural class of metrics with non-trivial trapped set where the lens ridigity problem seems well-posed and interesting from a geometrical point of view. We call elements of this class manifolds of \emph{Anosov type}; it contains as a strict subclass the set of negatively-curved metrics with strictly convex boundary.

\begin{definition}
\label{admissiblemetrics}
A compact Riemannian manifold $(M,g)$ with boundary is of \emph{Anosov type} if:
\begin{enumerate}
\item it has strictly convex boundary,
\item no conjugate points,
\item the trapped set for the geodesic flow $(\varphi_t^g)_{t \in \R}$ on $\mc{M}:=SM$, defined by
\[
K^{g}:=\bigcap_{t\in \R}\varphi_t^{g}(\mc{M}^\circ)\subset \mc{M}^\circ,
\]
 is \emph{hyperbolic} in the following sense. There exists a continuous flow-invariant splitting
\[
\forall y\in K^{g},\quad T_y\mc{M}=\R X_g(y)\oplus E_-(y)\oplus E_+(y),
\]
where $X_g$ is the geodesic vector field, and constants $\nu,C > 0$ such that
\begin{equation}\label{hyperbolicity}
\forall \pm t\geq 0,\quad \forall y \in K^g, \quad \forall v \in E_{\mp}(y), \quad \|d\varphi^g_t(y) v\|\leq Ce^{-\nu |t|} \|v\|,
\end{equation}
for an arbitrary choice of metric $\|\cdot\|$ on $\mc{M}$.
\end{enumerate}
\end{definition}

\begin{example}
The main two examples of manifolds of Anosov type are:
\begin{enumerate}
\item Riemannian manifolds with negative sectional curvature and strictly convex boundary (see \cite[Theorem 3.2.17 and Section 3.9]{Klingenberg-95}),
\item strictly convex subdomains of closed Riemannian manifolds with Anosov geodesic flows.
\end{enumerate}
\end{example}

Manifolds of Anosov type have a trapped set with fractal structure and zero Lebesgue measure. It implies that almost-every point in $\mc{M}$ is reachable from geodesics with endpoints on $\pl \mc{M}$.
This case can be interpreted as an intermediate rigidity problem between the \emph{length spectrum rigidity} of manifolds with Anosov geodesic flows, where one asks if the lengths of closed geodesics determine the metric up to isometry, and the boundary rigidity problem of simple manifolds. 

In the closed case, Vign\'eras \cite{Vigneras-80} exhibited counter-examples to the length spectrum rigidity: in constant negative curvature, there are non-isometric metrics on surfaces with the same length spectrum. The well-posed rigidity problem is rather that of the \emph{marked length spectrum} problem, also known as the Burns-Katok conjecture \cite{Burns-Katok-85}: on a manifold $(M,g)$ with Anosov geodesic flow, each free homotopy class of loops $c$ on $M$ contains a unique geodesic representative $\gamma_c(g)$ whose length is denoted by $L_g(c)$; if $g_1$ and $g_2$ are two such Anosov metrics on $M$ with $L_{g_1}(c)=L_{g_2}(c)$ for all $c$, it is then conjectured that $g_1$ should be isometric to $g_2$. This conjecture was proved in dimension $2$ by Otal \cite{Otal-90} and Croke \cite{Croke-90}, and in all dimensions for pairs of metrics that are close enough in $C^k$ norm for $k \gg 1$ large enough by the last two authors \cite{Guillarmou-Lefeuvre-18} (local rigidity). However, it is still open in general.

Similarly, for manifolds with boundary and non-trivial topology, the same problem of ``marking" of geodesics is a serious difficulty. The first natural question one may consider is the following, known as \emph{marked lens rigidity} or \emph{marked boundary rigidity} problem for Riemannian manifolds of Anosov type. 

\begin{definition}[Marked lens data]
Let $g_1,g_2$ be two metrics of Anosov type on $M$. We say that $g_1$ and $g_2$ have the same \emph{marked lens data} if for each $(x,v)\in \pl_-\mc{M}\setminus \{\ell_g = \infty\}$ one has $(\ell_{g_1}(x,v),S_{g_1}(x,v))=(\ell_{g_2}(x,v),S_{g_2}(x,v))$ and the $g_1$- and $g_2$-geodesics with initial conditions $(x,v)$ are homotopic via a homotopy fixing the endpoints.
\end{definition}

Technically, having same marked lens data is the same as having same boundary distance function on the universal cover $\widetilde{M}$ (which is now a non-compact space).
The following conjecture is somehow similar to the Burns-Katok conjecture in the closed case and to the boundary rigidity problem of negatively curved simple metrics:

\begin{conjecture}[Marked lens rigidity of manifolds of Anosov type]
\label{conjecture2}
Let $M$ be a smooth manifold with boundary and assume that $g_1,g_2$ are two smooth metrics of Anosov type on $M$ in the sense of Definition \ref{admissiblemetrics}, such that $g_1|_{T(\partial M)} = g_2|_{T(\partial M)}$. If $g_1$ and $g_2$ have same marked lens data, then there exists a smooth diffeomorphism $\psi$, homotopic to the identity and equal to the identity on the boundary $\partial M$, such that $\psi^*g_2=g_1$.
\end{conjecture}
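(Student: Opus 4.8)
The plan is to reduce this non-linear marked boundary rigidity problem to a \emph{tensor tomography} statement for the geodesic X-ray transform on symmetric $2$-tensors of the reference metric, and then to solve that linear problem using the dynamics of the open geodesic flow near its hyperbolic trapped set; throughout one exploits that manifolds of Anosov type have no conjugate points and that the trapped set $K^{g}$ has zero Lebesgue measure, so almost every point of $SM$ lies on a geodesic with both endpoints on $\pl M$. As a first normalisation: since the lens data determines the full Taylor jet of the metric at $\pl M$ (boundary determination, in the spirit of Lassas--Sharafutdinov--Uhlmann and Stefanov--Uhlmann), one may pull $g_1$ back by a boundary-preserving diffeomorphism so that $g_1$ and $g_2$ agree to infinite order on $\pl M$; composing with a further diffeomorphism isotopic to $\id$ and equal to $\id$ on $\pl M$, obtained by solving the standard elliptic boundary-value problem, one puts $f := g_1 - g_2$ in \emph{solenoidal gauge} ($D_{g_2}^* f = 0$), still vanishing to infinite order at $\pl M$. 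It then suffices to prove $f = 0$.

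Next, having the same marked lens data is the same as having the same boundary distance function on the universal cover, so the scattering maps and lengths agree along every pair of homotopic boundary-to-boundary geodesics. Integrating this identity along the full-measure set of non-trapped geodesics --- via a pseudo-linearisation identity --- one obtains the quadratic estimate $I_2^{g_2} f = \OO(\|f\|^2)$ in suitable norms, where $I_2^{g_2}$ is the geodesic X-ray transform of $(M,g_2)$; convergence of the transform despite the trapped set is guaranteed by the uniform escape rate coming from hyperbolicity together with $\vol(K^{g_2}) = 0$. In the purely local setting of the paper's theorems the marking is automatic, since the geodesics of $g_2$-close metrics are homotopic, so one is directly reduced to this linear problem with $f = g_1 - g_2$.

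The heart of the matter is the $s$-injectivity of $I_2^{g_2}$ together with a stability estimate of the form $\|f\| \lesssim \|I_2^{g_2} f\|$ on solenoidal tensors, which then forces $f$ with small transform to vanish. The natural tool is the normal operator $\Pi_2 := \pi_{2*}\, R(0)\, \pi_2^*$, where $R(0)$ is the zero-energy resolvent of the geodesic vector field on $SM$, which exists as a suitable limit because $K^{g_2}$ is hyperbolic (via the theory of Pollicott--Ruelle resonances for open hyperbolic systems). One shows $\Pi_2$ has the same solenoidal kernel as $I_2^{g_2}$, that microlocally away from the conormal bundle of $K^{g_2}$ it is an elliptic pseudodifferential operator of order $-1$, and that its remaining non-pseudodifferential part --- concentrated near $K^{g_2}$ and tamed by the exponential contraction along $E_\pm$ --- is mild enough to be absorbed as a compact perturbation, so that $\Pi_2$ is Fredholm of index $0$ on solenoidal tensors. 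Triviality of its kernel then follows from a Pestov-type energy identity whose boundary term has a favourable sign by strict convexity of $\pl M$: this closes unconditionally when $\dim M = 2$, and when the sectional curvature is non-positive in any dimension (which includes negatively-curved metrics); since non-positivity is not an open condition, for a metric near a non-positively curved $g_0$ one additionally uses the continuous dependence of $\Pi_2$ on the metric to keep the kernel trivial. The genuine obstruction to Conjecture~\ref{conjecture2} in full generality is exactly the $s$-injectivity of $I_2$ in dimension $\geq 3$ for metrics of Anosov type whose curvature changes sign: there is no known analogue of the Pestov estimate in that case, and the question is open already for closed Anosov manifolds.

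Finally, once $f = 0$ in the solenoidal gauge, unwinding the gauge transformations of the first step produces a diffeomorphism $\psi$ with $\psi^* g_2 = g_1$; by construction $\psi$ is homotopic to $\id$ and equals $\id$ on $\pl M$, and elliptic regularity for the gauge equations makes it smooth. For the genuinely non-linear statement one feeds the quadratic bound $I_2^{g_2} f = \OO(\|f\|^2)$ into the stability estimate $\|f\| \lesssim \|I_2^{g_2} f\|$ and closes a contraction argument to conclude $f = 0$ outright, which yields the desired $\psi$.
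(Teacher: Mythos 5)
The statement you were asked to prove is \emph{Conjecture~\ref{conjecture2}}, which the paper states precisely because it is \emph{open}; the paper does not offer a proof of it, and in fact the introduction carefully distinguishes Conjecture~\ref{conjecture2} from the paper's own Theorem~\ref{theorem:main}, pointing to \cite{Erchenko-Lefeuvre-23} for the resolution in dimension~$2$ and to \cite{Lefeuvre-19-2} for a local version in higher dimensions. Your writeup should therefore be judged not against a proof in the paper (there is none) but on its own merits, and on that score it has two problems beyond the gap you yourself flag.

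First, you concede at the end that ``the genuine obstruction \ldots\ is exactly the $s$-injectivity of $I_2$ in dimension $\geq 3$'' for sign-changing curvature, which is a fair and honest acknowledgement. But this means you have not proved the conjecture; you have reduced it to an open linear problem, and the reduction itself is only carried out heuristically.

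Second, and more seriously, the machinery you invoke --- the pseudo-linearisation producing a \emph{quadratic} bound $I_2^{g_2} f = \OO(\|f\|^2)$ followed by a contraction against a stability estimate $\|f\|\lesssim\|I_2^{g_2} f\|$ --- closes only when $f=g_1-g_2$ is \emph{small}. That is exactly the mechanism of the paper's local Theorem~\ref{theorem:main}, and indeed you explicitly acknowledge mid-proof that ``in the purely local setting of the paper's theorems the marking is automatic.'' But Conjecture~\ref{conjecture2} is a \emph{global} statement about two metrics that may be arbitrarily far apart; for such pairs, $\|f\|$ is not small, the $\OO(\|f\|^2)$ right-hand side is not smaller than the left-hand side, and the contraction argument says nothing. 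The resolution of the global marked problem in dimension~$2$ (Otal, Croke, and for Anosov-type manifolds Guillarmou--Mazzucchelli and Erchenko--Lefeuvre) does not go through a perturbative X-ray linearisation at all: it exploits that the marking hypothesis produces a Hölder time-preserving conjugacy of the two geodesic flows together with a Liouville-measure-preservation statement, from which one runs a geometric/variational argument (Otal's method, or Gromov-type volume arguments). Your proposal never makes contact with this conjugacy, which is the actual content that the marking assumption provides for non-close metrics. A clean structure would be: either (a) restrict to the local setting, in which case you are essentially reproving the paper's Theorem~\ref{theorem:main} or \cite{Lefeuvre-19-2}; or (b) in the global setting, start from the conjugacy of the flows and argue as in the surface case --- but then the difficulty is precisely that no higher-dimensional analogue of Otal's argument is known, even in negative curvature, which is why the conjecture is open.

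Minor points: the claim that the lens data ``determines the full Taylor jet of the metric at $\pl M$'' is a boundary determination result that is proved for simple metrics and for metrics satisfying the foliation condition; for manifolds of Anosov type one has a version of it near $\pl M$ via \cite{Stefanov-Uhlmann-Vasy-21} (in dimensions $n\geq 3$), but it deserves a precise citation rather than a general invocation. And the step where you absorb the non-pseudodifferential part of $\Pi_2$ near $K^{g_2}$ as ``a compact perturbation'' is not established anywhere --- the actual argument in the literature (and in Proposition~\ref{prop:pim} of this paper, following \cite{Guillarmou-17-2}) shows $\Pi_2$ is genuinely a $\Psi$DO of order $-1$ on $M^\circ$ thanks to the wavefront-set analysis of the resolvent, not a $\Psi$DO-plus-compact decomposition in the naive sense you describe.
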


In dimension $2$, Conjecture \ref{conjecture2} was recently solved by the third author with Erchenko in \cite{Erchenko-Lefeuvre-23} (an earlier result had also been obtained by the second author together with Mazzuchelli in \cite{Guillarmou-Mazzucchelli-18} for negatively-curved surfaces using the method of Otal \cite{Otal-90}). In higher dimension, the third author \cite{Lefeuvre-19-2} proved Conjecture \ref{conjecture2} for pairs of negatively-curved metrics  $g_1,g_2$ that are close enough in $C^k$ norm for $k \gg 1$ large enough (local marked lens rigidity). The fact that there is no smooth $1$-parameter family $(g_s)_{s \in (-1,1)}$ of non-isometric negatively-curved 
metrics with the same marked lens data\footnote{In this case, having the same marked lens data is equivalent to having the same lens data.} is called \emph{infinitesimal rigidity} and was first proved by the second author \cite{Guillarmou-17-2}.

In this paper, we consider the more difficult problem of lens rigidity in the class of manifolds of Anosov type. 
Since, contrary to the closed case, there are still no counter-examples to lens rigidity, we make the following conjecture of lens rigidity in the class of metrics of Anosov type:

\begin{conjecture}[Lens rigidity of manifolds of Anosov type]
\label{conjecture3}
Let $(M_1,g_1)$, $(M_2,g_2)$ be two smooth Riemannian manifolds of Anosov type such that 
$(\pl M_1,g_1|_{\pl M_1})=(\pl M_2,g_2|_{\pl M_1})$. If $(\ell_{g_1},S_{g_1})=(\ell_{g_2},S_{g_2})$, then there exists a smooth diffeomorphism $\psi$, equal to the identity on the boundary, such that $\psi^*g_2=g_1$.
\end{conjecture}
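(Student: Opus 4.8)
The plan is to reduce Conjecture~\ref{conjecture3} to the \emph{marked} lens rigidity of manifolds of Anosov type, i.e.\ to Conjecture~\ref{conjecture2}, which is known in dimension~$2$ by \cite{Erchenko-Lefeuvre-23}; in higher dimension the final step stays conditional on Conjecture~\ref{conjecture2}. Two things must be established: \textbf{(i)} there is a diffeomorphism $\psi_0\colon M_1\to M_2$ restricting to the given identification of the boundaries; and \textbf{(ii)} after replacing $g_2$ by $\psi_0^*g_2$, the metrics $g_1$ and $\psi_0^*g_2$ on $M_1$ --- which already agree on $T(\partial M_1)$ and have the same \emph{unmarked} lens data --- in fact have the same \emph{marked} lens data. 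Granting (i)--(ii), Conjecture~\ref{conjecture2} provides a boundary-fixing diffeomorphism $\psi_1$ of $M_1$ with $\psi_1^*\psi_0^*g_2=g_1$, and $\psi:=\psi_0\circ\psi_1$ is as required. As a consistency check, the same scheme in the perturbative regime ($g$ close to a fixed Anosov-type $g_0$) recovers the known local statement, with (ii) then being exactly the type of claim this paper proves near $g_0$, by interpolating between $g$ and $g_0$ and using that the relevant homotopy class is locally constant along the interpolation.

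For \textbf{(i)}, one reconstructs $(M,\partial M)$ from the lens data. For a manifold of Anosov type the exceptional set $\Gamma\subset\mc{M}$ of forward- or backward-trapped directions has zero Lebesgue measure, so $\mc{M}\setminus\Gamma$ is open, dense, and foliated by the finite geodesic segments with both endpoints on $\partial M$; these are parametrised by $\partial_-\mc{M}\setminus\{\ell_g=\infty\}$, and their mutual incidences --- which segments cross which, with what ordering along each segment --- are encoded in $(\ell_g,S_g)$ together with the fixed boundary metric. Assembling the resulting ``space of oriented geodesic segments'' recovers $M^\circ$ minus a measure-zero set up to diffeomorphism, the trapped set $K^g$ is recovered as the complement of this open set inside the metric completion, and the jet of the scattering map at the glancing region $\{(x,v):x\in\partial M,\ g_x(v,\nu(x))=0\}$ determines the jet of $g$ at $\partial M$ in boundary normal coordinates, as in the boundary-determination arguments of \cite{Stefanov-Uhlmann-09,Stefanov-Uhlmann-Vasy-21}, so the smooth structures also match near $\partial M$. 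This yields $\psi_0$; renaming, we take $M_1=M_2=:M$ and $\psi_0=\mathrm{id}$ from now on. One also records, via Santaló's formula, the necessary compatibility $\vol(M,g_1)=\vol(M,g_2)$, which falls out of the lens data directly.

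Point \textbf{(ii)} is the crux. On the common open set $U:=\{\ell_{g_1}=\ell_{g_2}<\infty\}\subset\partial_-\mc{M}$ the homotopy class rel endpoints of the connecting geodesic $\gamma^{g_i}_{(x,v)}$ depends locally constantly on $(x,v)$ (continuous dependence of the geodesic flow on initial data, the exit time being continuous on $U$ by strict convexity and transversality of the exit), and near the glancing region it is \emph{trivial} for both $g_1$ and $g_2$, since strict convexity of $\partial M$ confines a short connecting geodesic to an arbitrarily small geodesically convex ball around its nearly coincident endpoints. Hence $g_1$ and $g_2$ carry the same marking on the connected component of $U$ meeting the glancing region. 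The difficulty is to propagate this over all of $U$: in general $U$ is not connected --- for instance if $\dim M=2$ and $\partial M$ is one circle then $\partial_-\mc{M}\cong S^1\times(0,1)$ and $\partial_-\mc{M}\setminus U$ may contain a set $C\times(0,1)$ with $C$ a Cantor set, which separates it. The more robust route is that the reconstruction of (i) is \emph{canonical on fundamental groups}: the homotopy class rel endpoints of a connecting geodesic is itself a piece of the combinatorial incidence data used to build $M$, hence is automatically preserved. Making this functoriality precise --- especially that passing to the metric completion and re-inserting $K^g$ does not corrupt $\pi_1$-data --- is, we expect, the main obstacle, and this is where hyperbolicity of $K^g$ should enter (shadowing of long connecting geodesics by periodic orbits, and the product structure of stable/unstable manifolds transverse to $\partial_-\mc{M}$, to control how the $\pi_1$-class behaves across $\partial_-\mc{M}\setminus U$).

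Once (ii) is in hand, $g_1$ and $\psi_0^*g_2$ are Anosov-type metrics on $M$ agreeing on $T(\partial M)$ with the same marked lens data, and Conjecture~\ref{conjecture2} concludes the argument. In dimension~$2$ this is unconditional by \cite{Erchenko-Lefeuvre-23}; in higher dimension it reduces the global Conjecture~\ref{conjecture3} to the global marked Conjecture~\ref{conjecture2}, and is complete for pairs of negatively curved metrics that are $C^k$-close for $k\gg1$ via the local marked lens rigidity of \cite{Lefeuvre-19-2}.
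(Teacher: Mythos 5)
First, be aware that the statement you are proving is stated in the paper as a \emph{conjecture}: the paper does not prove Conjecture \ref{conjecture3}, it only proves the local version (Theorem \ref{theorem:main}: $g$ in a $C^N$-neighborhood of $g_0$, in dimension $2$ or under non-positive curvature / solenoidal injectivity of $I_2^{g_0}$), and it does so by a route entirely different from yours: rather than reducing unmarked to marked lens data, the authors linearize $(\ell_g,S_g)$ directly, prove $C^2$-dependence of the scattering operator $g\mapsto\mc{S}_g$ on anisotropic Sobolev spaces (Section \S\ref{section:scattering}), combine this with exponential volume decay of nearly trapped trajectories and a Hadamard three-lines interpolation to get the key estimate of \S\ref{ssection:key}, and conclude with the normal-operator elliptic estimate. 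They stress that removing the marking is the crucial point precisely because one can no longer use a flow conjugacy preserving the Liouville measure; so your ``consistency check'' claim that step (ii) ``is exactly the type of claim this paper proves near $g_0$'' is wrong — the paper never shows that equal lens data forces equal \emph{marked} lens data, even perturbatively, and your interpolation remark does not work since a path $g_s$ joining $g$ to $g_0$ has no reason to have constant lens data along the way.

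Your proposal therefore has genuine gaps at both of its pillars. Step (ii) — that two Anosov-type metrics with the same unmarked lens data carry the same marking — is an open problem, and your own text concedes the obstruction: the set $U$ of non-trapped incoming directions is separated by a fractal set of trapped directions, so local constancy of the homotopy class does not propagate, and the appeal to ``canonicity on fundamental groups'' of a reconstruction procedure, or to shadowing and product structure, is a heuristic, not an argument. Step (i) is likewise not established: ``assembling the space of oriented geodesic segments'' and recovering $M$, its smooth structure, and $K^g$ as a ``complement in the metric completion'' is essentially a restatement of the reconstruction problem (which is what lens rigidity is about), and no construction is given; boundary jet determination alone does not produce a global diffeomorphism $\psi_0$. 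Finally, even granting (i) and (ii), in dimension $n\geq 3$ you only reduce Conjecture \ref{conjecture3} to the open Conjecture \ref{conjecture2}, so at best your scheme could yield the two-dimensional case via \cite{Erchenko-Lefeuvre-23} — but as it stands neither (i) nor (ii) is proved, so the proposal does not establish the statement, nor does it recover the paper's local theorem.
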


There are already partial answers to Conjecture \ref{conjecture3}:
\begin{enumerate}
\item In dimension $2$, Croke and Herreros  \cite{Croke-Herreros-16} proved that negatively-curved cylinders with strictly convex boundary are lens rigid,
\item  In dimension $2$, the second author shows in \cite{Guillarmou-17-2} that the scattering map $S_g$ determines $(M,g)$ up to conformal diffeomorphism fixing the boundary. Recovering the conformal factor of the metric is still an open question.
\item In dimension $n\geq 3$, Stefanov-Uhlmann-Vasy \cite{Stefanov-Uhlmann-Vasy-21} prove that for general metrics with strictly convex boundary, the lens data determines the metric in a neighborhood of $\pl M$; applying this result in the setting of negatively curved manifold, one can recover 
the metric outside the convex core of the manifold (which contains the projection of the trapped set).
\item In \cite{Bonthonneau-Guillarmou-Jezequel-22}, Bonthonneau, Jézéquel, and the second author recently proved Conjecture \ref{conjecture3} under the extra assumption that $(M_1,g_1), (M_2,g_2)$ are real analytic, but only using the equality $S_{g_1}=S_{g_2}$ of the scattering maps.
\end{enumerate}

Our first result in this article is the following local rigidity result answering Conjecture \ref{conjecture3} for metrics close to each other.
\begin{theorem}
\label{theorem:main}
Let $(M,g_0)$ be a Riemannian manifold of Anosov type. Assume that either $\dim M=2$ or that the curvature of $g_0$ is non-positive. Then there exists $N \gg 1, \delta > 0$ such that the following holds: for any smooth metric $g$ on $M$ such that $\|g-g_0\|_{C^N} < \delta$, if $(\ell_g,S_g) = (\ell_{g_0}, S_{g_0}) $, then there exists a smooth diffeomorphism $\psi : M \to M$ such that $\psi|_{\partial M} ={\rm id}$ and $\psi^*g=g_0$.
\end{theorem}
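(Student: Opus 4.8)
The plan is to follow the now-standard strategy for local rigidity problems of this type, as developed in \cite{Guillarmou-Lefeuvre-18, Lefeuvre-19-2, Guillarmou-17-2}, adapted to the boundary setting. The first step is a \emph{reduction to a solenoidal metric}: given $g$ with $\|g-g_0\|_{C^N}$ small and the same lens data as $g_0$, one uses a diffeomorphism $\psi_0$ fixing the boundary to pull $g$ back to a metric $g_1=\psi_0^*g$ that is still close to $g_0$, has the same lens data (since lens data is a diffeomorphism invariant when the diffeomorphism fixes the boundary), and is \emph{solenoidal} with respect to $g_0$, i.e. $D_{g_0}^* (g_1 - g_0) = 0$ where $D_{g_0}^*$ is the divergence. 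This is a slice-type construction (an implicit function theorem argument in the tame Fréchet category, or with sufficient Sobolev regularity), and one must check it can be done relative to the boundary — this uses strict convexity to control the behavior near $\partial M$. Set $h := g_1 - g_0$, a symmetric $2$-tensor, small in $C^N$, solenoidal, and we must show $h=0$.

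The second step extracts a linear consequence of equal lens data. Because $g_1$ and $g_0$ have the same scattering map and length map, the difference of the actions along geodesics joining boundary points vanishes; linearizing, the $X$-ray transform (integration along $g_0$-geodesics with endpoints on $\partial M$) of $h$ — more precisely of its pullback $\pi_2^* h$ to $SM$ — must vanish to leading order, with a quadratic error controlled by $\|h\|$. One then wants to invoke \emph{s-injectivity of the X-ray transform on solenoidal $2$-tensors} for metrics of Anosov type: this is exactly the infinitesimal rigidity statement, which holds in dimension $2$ (by the work of the second author \cite{Guillarmou-17-2}) and in the non-positively curved case in all dimensions (this is where the curvature hypothesis enters, via a Pestov energy identity / Santaló-type argument that needs a sign). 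The X-ray transform being s-injective with a stability estimate, one gets $\|h\|_{L^2}\lesssim \|\text{(error)}\|$, and the error is quadratic in $h$. Combined with elliptic regularity for the solenoidal gauge (bootstrapping from $L^2$ to $C^N$ using that $h$ is small in $C^N$ by hypothesis), this closes a quadratic-in-small-quantity estimate and forces $h=0$, hence $\psi_0^*g=g_0$, i.e. $\psi=\psi_0$ works.

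The key analytic input underlying the stability of the X-ray transform on open manifolds of Anosov type is the existence and good Fredholm/microlocal properties of the \emph{resolvent} of the geodesic flow up to the boundary: one needs $\Pi_2 := \pi_{2*}(R_+ + R_-)\pi_2^*$ (the ``normal operator'' built from the forward/backward resolvents $R_\pm = (X_{g_0} \mp 0)^{-1}$ acting on the non-compact trapped dynamics) to be an elliptic pseudodifferential operator of order $-1$ on the interior, with a controllable cokernel consisting only of potential tensors. This is where the Anosov hypothesis on the trapped set is essential — it gives the anisotropic Sobolev space framework in which $R_\pm$ are bounded — and it is the technical heart that must be set up carefully near $\partial M$ using the strict convexity to push singularities out of the manifold.

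\textbf{Main obstacle.} I expect the principal difficulty to be not the abstract scheme but the \emph{boundary analysis}: making the resolvent $R_\pm$ of the geodesic flow well-defined with the right microlocal structure on a manifold \emph{with boundary and with a nontrivial hyperbolic trapped set}, including the solenoidal-gauge reduction relative to $\partial M$ and the elliptic estimates up to the boundary. In dimension $\geq 3$ one additionally must handle the loss coming from the fact that s-injectivity with a \emph{uniform} stability estimate for $2$-tensors is only available under non-positive curvature; bridging the gap between ``infinitesimal rigidity'' (kernel is trivial) and ``quantitative stability'' (with a modulus of continuity good enough to absorb the quadratic error) is the delicate point, and is precisely why the theorem is stated only in $\dim 2$ or under the curvature sign assumption.
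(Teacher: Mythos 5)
Your outline correctly identifies the skeleton of the argument (solenoidal gauge reduction as in Lemma~\ref{lemma:solenoidal-gauge}, a small bound on $I_2^{g_0}(g'-g_0)$ extracted from equality of lens data, s-injectivity and an elliptic stability estimate for $\Pi_2^{g_{0e}}$ as in Proposition~\ref{proposition:elliptic-estimate}, and closing a super-linear estimate by interpolation of H\"older norms), but there is a genuine gap in the second step. You state that ``linearizing, the X-ray transform of $h$ must vanish to leading order, with a quadratic error controlled by $\|h\|$,'' as though this were a routine Taylor expansion of $\ell_g$ and $S_g$. It is not. The function $\ell_g$ blows up near the forward-trapped set $\Gamma_-^g$, $S_g$ is undefined there, and the scattering operator $\mc{S}_g$ has a singular Schwartz kernel; a pointwise linearization of the lens data identity is meaningless on a positive-measure neighborhood of $\Gamma_-^g$, and you offer no mechanism to produce a controllable remainder. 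The paper proves instead that the scattering \emph{operator} $\mc{S}_g$, viewed as a map $C^\infty(\partial_+\mc{M})\to H^{-6}(\partial_-\mc{M})$, depends $C^2$-smoothly on $g$ (Proposition~\ref{regularityR}), and similarly the damped length $e^{-z\ell_g}$ for $\Re(z)\gg1$ (Lemma~\ref{expsell}). These regularity statements rest on a meromorphic extension of the resolvent $R^X(z)$ in anisotropic Sobolev spaces with explicit $C^2$-dependence on the vector field $X$ (Theorem~\ref{theorem:meromorphic}), which in turn requires a \emph{uniform} escape function working simultaneously for all $X$ near $X_{g_0}$ (Proposition~\ref{proposition:escape}). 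This uniformity is the main technical contribution (Theorem~\ref{axiomAsmooth}); your proposal frames the resolvent analysis only as giving ellipticity of $\Pi_2$, which was already in \cite{Guillarmou-17-2}, rather than as the source of smooth dependence on $g$. Even with $C^2$-regularity in hand, the quadratic bound only holds after twisting by $e^{-z\ell_{g_0}}$ on a line $\Re(z)=\beta\gg1$ (Lemma~\ref{lemma:microlocal}); a Hadamard three-lines argument against a volume estimate on $\Re(z)=-\delta$ (Lemmas~\ref{lemma:volume}, \ref{lemma:key}) then yields only a sub-quadratic exponent $1+\mu<2$ at $z=0$ (Proposition~\ref{proposition:key}). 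You mention neither the damping weight nor the interpolation, and the step would fail without them.

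Secondly, you misplace the role of the dimension-$2$/non-positive-curvature hypothesis. You assert that ``s-injectivity with a \emph{uniform} stability estimate for $2$-tensors is only available under non-positive curvature'' and call the passage from injectivity to quantitative stability ``the delicate point.'' In the paper, the stability estimate $\|f\|_{L^2}\lesssim\|\Pi_2^{g_{0e}}E_0f\|_{H^1}$ (Proposition~\ref{proposition:elliptic-estimate}) follows from ellipticity of $\Pi_2$ plus s-injectivity by a soft parametrix-and-compactness argument; once s-injectivity of $I_2^{g_0}$ is known as a binary statement, the quantitative estimate is automatic. The curvature/dimension assumption is used \emph{only} to guarantee s-injectivity, and indeed the theorem is stated more generally under the assumption that $I_2^{g_0}$ is solenoidal injective. (Also, for general surfaces of Anosov type, the s-injectivity reference is \cite{Lefeuvre-19-1}, not \cite{Guillarmou-17-2}, which covers the non-positive curvature case.)
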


More generally, Theorem \ref{theorem:main} holds under the general assumption that $g_0$ is of Anosov type and that its \emph{X-ray transform operator} $I_2^{g_0}$ on divergence-free symmetric $2$-tensors is injective, see \eqref{I2} for a definition of $I_2^{g_0}$ and \S\ref{sssection:x-ray} where this is further discussed. The fact that $I_2^{g_0}$ is injective on divergence-free tensors was proved in 
\cite{Guillarmou-17-2} in non-positive curvature and in general on Anosov surfaces by \cite{Lefeuvre-19-1} (without any assumption on the curvature). It was also proved in \cite{Bonthonneau-Guillarmou-Jezequel-22} that $I_2^{g_0}$ is injective for real-analytic metrics $g_0$ which implies that generic smooth metrics of Anosov type have an injective X-ray transform operator $I_2^{g_0}$; generic injectivity of $I_2^{g_0}$ follows from the work of the first and third authors \cite{Cekic-Lefeuvre-21-2} as well, admitting also Theorem \ref{axiomAsmooth} below. As a corollary of Theorem \ref{theorem:main}, we obtain:

\begin{corollary}\label{cor:negcurv}
Let $(M,g_0)$ be a negatively-curved Riemannian manifold with strictly convex boundary. Then, there exists $N \gg 1, \delta > 0$ such that the following holds: for any smooth metric $g$ on $M$ such that $\|g-g_0\|_{C^N} < \delta$, if $(\ell_g,S_g) = (\ell_{g_0}, S_{g_0})$, then there exists a smooth diffeomorphism $\psi : M \to M$ such that $\psi|_{\partial M} ={\rm id}$ and $\psi^*g=g_0$.
\end{corollary}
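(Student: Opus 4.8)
The plan is to obtain Corollary \ref{cor:negcurv} as a direct application of Theorem \ref{theorem:main}; the only thing requiring verification is that a negatively-curved Riemannian manifold $(M,g_0)$ with strictly convex boundary meets the hypotheses of that theorem.

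First, I would recall that such a manifold is of Anosov type in the sense of Definition \ref{admissiblemetrics}. Strict convexity of $\partial M$ is assumed. The absence of conjugate points is the standard consequence of non-positive sectional curvature: along a geodesic $\gamma$, any Jacobi field $J$ satisfies $\tfrac{d^2}{dt^2}\|J\|^2 = 2\|\nabla_t J\|^2 - 2\langle R(J,\dot\gamma)\dot\gamma, J\rangle \geq 0$, so $\|J\|^2$ is convex and cannot vanish at two distinct times unless $J \equiv 0$. The hyperbolicity of the trapped set $K^{g_0}\subset \mc{M}^\circ$ follows from the classical theory of geodesic flows in negative curvature: the stable and unstable Jacobi fields along the complete geodesics contained in $\mc{M}^\circ$ — in particular along the orbits in $K^{g_0}$ — provide a continuous, flow-invariant splitting $T_y\mc{M} = \R X_{g_0}(y) \oplus E_-(y) \oplus E_+(y)$ with uniform exponential contraction on $E_-$ and expansion on $E_+$, i.e. estimate \eqref{hyperbolicity}; see \cite[Theorem 3.2.17 and Section 3.9]{Klingenberg-95}. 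This is exactly the first item of the Example following Definition \ref{admissiblemetrics}, which I would simply cite.

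Second, since the sectional curvature of $g_0$ is negative, it is in particular non-positive, so the dichotomy ``$\dim M = 2$ or the curvature of $g_0$ is non-positive'' in the statement of Theorem \ref{theorem:main} is satisfied for $g_0$ regardless of $\dim M$. Invoking Theorem \ref{theorem:main} then produces the constants $N \gg 1$, $\delta > 0$ and, for every smooth metric $g$ with $\|g-g_0\|_{C^N} < \delta$ and $(\ell_g,S_g) = (\ell_{g_0},S_{g_0})$, a smooth diffeomorphism $\psi : M \to M$ with $\psi|_{\partial M} = \mathrm{id}$ and $\psi^* g = g_0$, which is precisely the assertion of the corollary. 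There is no genuine obstacle at this stage: the whole analytic content sits in Theorem \ref{theorem:main}, and the corollary is merely the observation that negatively-curved manifolds with strictly convex boundary form a subclass of the metrics handled there. (Alternatively, one could use the more general form of Theorem \ref{theorem:main} together with the injectivity of the X-ray transform $I_2^{g_0}$ on divergence-free tensors in non-positive curvature from \cite{Guillarmou-17-2}, but the non-positive-curvature branch already suffices.)
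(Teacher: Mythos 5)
Your argument is correct and matches the paper's (implicit) derivation: verify that negatively-curved metrics with strictly convex boundary are of Anosov type (citing Klingenberg, as the paper's Example following Definition~\ref{admissiblemetrics} does), observe that negative curvature is in particular non-positive so Theorem~\ref{theorem:main} applies, and read off the conclusion. The paper states the corollary without a separate proof precisely because, as you say, all the content is in Theorem~\ref{theorem:main}.
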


We observe that Corollary \ref{cor:negcurv} and Theorem \ref{theorem:main} are not a consequence of \cite{Stefanov-Uhlmann-09} (nor of \cite{Stefanov-Uhlmann-Vasy-21}) mentioned above since: 1) our result contains the case of surfaces (dimension $n=2$); 2) the assumption on the trapped set in \cite{Stefanov-Uhlmann-09} does not cover all hyperbolic trapped sets (typically, the example $M = \Gamma \setminus \HH^3$ mentioned above is not covered when the boundary of the convex core $\mc{C}$ is totally geodesic), whereas we do not make any specific assumption on the topology, and neither do we assume that $g_0$ is analytic or that it has an injective localized X-ray transform. Theorem \ref{theorem:main} is also clearly stronger than the marked local rigidity result of the third author \cite{Lefeuvre-19-2}, since we are now able to remove the \emph{marking} assumption on the lens data.\\

Let us finally mention that there are interesting and related results for Euclidean billiards: Noakes--Stoyanov \cite{Noakes-Stoyanov-15} show that the lens data for the billiard flow on $\R^n \setminus \mc{O}$ (where $\mc{O}$ is a collection of strictly convex domains) is rigid, and 
De Simoi--Kaloshin--Leguil \cite{Desimoi-Kaloshin-Leguil} prove that the lengths of the marked periodic orbits generically determine the obstacles 
under a $\Z^2\times \Z^2$ symmetry assumption. 

\subsection{Removing the marking assumption. Idea of proof} The removal of the marking  
assumption is not simply a technical artefact: it is rather a crucial aspect in our work. Indeed, without the marking assumption, one can no longer use the fact that the geodesic flows of $g$ and $g_0$ are conjugate with a conjugacy preserving the Liouville measure. This conjugacy was a fundamental aspect of both proofs of \cite{Guillarmou-Mazzucchelli-18,Lefeuvre-19-2}.  
In the proof of Theorem \ref{theorem:main}, one has to rely on 
a completely different argument, which is the linearisation of the pair $(\ell_g,S_g)$. Nevertheless, since $g$ has a big set of trapped geodesics (typically a fractal set), this 
creates many singularities for $(\ell_g,S_g)$ and its linearization.
The analysis one has to perform is then quite involved. One needs to combine several different key tools, in particular: 
\begin{enumerate}
\item the proof of the $C^2$-regularity with respect to $g$ of the operator $\mc{S}_g: C^\infty(\pl_+\mc{M})\to \mc{D}'(\pl_-\mc{M})$ defined by $\mc{S}_gf:=f\circ S_g$, 
\item the exponential decay in $t\to \infty$ of the volume of points $(x,v)\in \mc{M}=SM$ that remain trapped for time $t$.
\end{enumerate}
The first item is obtained by reproving certain results of \cite{Dyatlov-Guillarmou-16} on the resolvent of an Axiom A vector field $X$, but now with an explicit control of the dependence with respect to the vector field $X$. In particular, as a byproduct of this analysis we show the following result that could prove useful for other applications such as Fried's conjecture for manifolds with boundary, in the spirit of \cite{Dang-Guillarmou-Riviere-Shen-20}:  
 
 \begin{theorem}\label{axiomAsmooth}
Let $\mc{M}$ be a smooth manifold with boundary and let $X_0$ be a smooth vector field so that  $\pl \mc{M}$ is strictly convex for the flow of $X_0$. Assume that the trapped set $K^{X_0}:=\cap_{t\in \R}\varphi_t^{X_0}(\mc{M}^\circ)$ of the flow  $(\varphi_t^{X_0})_{t \in \R}$ of $X_0$ is hyperbolic. Then, there exist $\delta>0$, $N\gg1$, such that for all $X\in C^\infty(\mc{M},T\mc{M})$ with $\|X-X_0\|_{C^N} < \delta$, the following holds:
\begin{enumerate}
\item the resolvent $R^{X}(z):=(-X+z)^{-1}: L^2(\mc{M})\to L^2(\mc{M})$, initially defined in the half-plane 
 $\left\{z \in \mathbb{C} \mid \Re(z)\gg 1\right\}$, extends meromorphically to $\C$ as a bounded operator $R^{X}(z): C^\infty_{\comp}(\mc{M}^\circ)\to \mc{D}'(\mc{M}^\circ)$,
\item if $z_0 \in \mathbb{C}$ is not a pole of $R^{X_0}(z)$, then the map
\[
C^\infty(\mc{M},T\mc{M}) \ni X\mapsto R^{X}(z_0) \in \mc{L}(C^\infty_{\comp}(\mc{M}^\circ),\mc{D}'(\mc{M}^\circ)),
\]
is $C^2$-regular\footnote{Even though we only need $C^2$, our proof actually shows it is $C^k$ for all $k\in \mathbb{N}$.} with respect to $X$.
 \end{enumerate} 
 \end{theorem}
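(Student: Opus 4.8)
The plan is to revisit the construction of the meromorphic resolvent for Axiom A (or more precisely, strictly convex with hyperbolic trapped set) flows from \cite{Dyatlov-Guillarmou-16}, but to carry along explicit quantitative control of every estimate in terms of the vector field $X$. Since $\partial\mc{M}$ is strictly convex for $X_0$, it remains strictly convex for all $X$ in a $C^2$-neighborhood of $X_0$, and the escape/non-escape dynamics near the boundary vary continuously; so one first fixes, uniformly in $X$, a function that is positive on $\mc{M}^\circ$, vanishes to first order on $\partial\mc{M}$, and whose $X$-derivative is positive on the boundary. Using this one builds, as in \cite{Dyatlov-Guillarmou-16}, a neighborhood $\widetilde{\mc{M}}$ of $\mc{M}$ inside an open manifold together with an extension $\widetilde{X}$ of $X$ that points outward near $\partial\widetilde{\mc{M}}$, and whose trapped set still coincides with $K^X\subset \mc{M}^\circ$; the key point is that all these extensions can be chosen to depend smoothly (indeed polynomially, in suitable coordinates) on $X$.

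Next I would set up the Fredholm/escape-function machinery. The hyperbolicity of $K^{X_0}$ is an open condition in the following robust sense: there is a conic neighborhood of the stable/unstable directions over $K^{X_0}$ and a uniform cone contraction estimate valid for all $X$ with $\|X-X_0\|_{C^N}<\delta$ (structural stability of hyperbolic sets, made quantitative). This allows one to construct an escape function $G$ on $T^*\widetilde{\mc{M}}$ that is simultaneously an escape function for the whole family of Hamiltonian flows of $\widetilde{X}$, $X$ near $X_0$; conjugating $-\widetilde{X}+z$ by the anisotropic weight $e^{G/h}$ (or the associated pseudodifferential weight), one gets that $P(X,z):=-\widetilde{X}+z$ is Fredholm of index $0$ on a scale of anisotropic Sobolev spaces $\mc{H}_G$ that can be chosen independent of $X$ in the family, with resolvent set containing $\{\Re z\gg 1\}$. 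Meromorphy of $R^X(z):=(-X+z)^{-1}$ on $\mathbb{C}$, acting $C^\infty_{\comp}(\mc{M}^\circ)\to\mc{D}'(\mc{M}^\circ)$, then follows exactly as in \cite{Dyatlov-Guillarmou-16}, by analytic Fredholm theory and the fact that the boundary is non-characteristic, establishing item (1).

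For item (2), fix $z_0$ not a pole of $R^{X_0}$. Then $P(X_0,z_0)^{-1}=R^{X_0}(z_0)$ is bounded $\mc{H}_G\to \mc{H}_G$ (in the appropriate order), and since $X\mapsto P(X,z_0)=-\widetilde{X}+z_0$ is \emph{affine} (hence real-analytic, in particular $C^k$ for every $k$) as a map into $\mc{L}(\mc{H}_G^{s+1},\mc{H}_G^s)$ — here one uses that multiplication by the components of $\widetilde{X}-\widetilde{X_0}$ together with differentiation maps the anisotropic space of order $s+1$ to that of order $s$ boundedly — one concludes by the standard fact that inversion is smooth on the open set of invertible operators. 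Concretely, for $\|X-X_0\|$ small, $P(X,z_0)$ stays invertible and the Neumann-type expansion
\begin{equation}
R^X(z_0)=R^{X_0}(z_0)\sum_{j\geq 0}\big((\widetilde{X}-\widetilde{X_0})R^{X_0}(z_0)\big)^j
\end{equation}
converges in $\mc{L}(\mc{H}_G^{s+1},\mc{H}_G^s)$, exhibiting $X\mapsto R^X(z_0)$ as real-analytic there; composing with the continuous inclusions $C^\infty_{\comp}(\mc{M}^\circ)\hookrightarrow \mc{H}_G^{s+1}$ and $\mc{H}_G^s\hookrightarrow \mc{D}'(\mc{M}^\circ)$ gives the claimed $C^2$ (indeed $C^k$) regularity as a map into $\mc{L}(C^\infty_{\comp}(\mc{M}^\circ),\mc{D}'(\mc{M}^\circ))$.

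The main obstacle — and the place where genuine work beyond \cite{Dyatlov-Guillarmou-16} is needed — is \emph{uniformity}: constructing a single anisotropic space $\mc{H}_G$, a single escape function $G$, and uniform Fredholm estimates valid for the whole family $\{X: \|X-X_0\|_{C^N}<\delta\}$, rather than for one fixed vector field. This requires a quantitative version of structural stability of the hyperbolic trapped set (uniform cone fields, uniform hyperbolicity constants, and uniform control of the trapped set's location) together with careful bookkeeping of how the wavefront-set/propagation estimates near the radial sets depend on $X$; one must also check that the convexity of $\partial\mc{M}$ and the gluing of the extension $\widetilde{X}$ are uniform. Once this uniform microlocal framework is in place, the regularity statement (2) is a soft consequence of the affine dependence of $P(X,z_0)$ on $X$ and the smoothness of operator inversion, so no further serious difficulty arises there.
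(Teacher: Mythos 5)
Your overall strategy — build a single escape function valid for all $X$ in a $C^N$-neighborhood of $X_0$, set up anisotropic Sobolev spaces independent of $X$, deduce Fredholmness and meromorphic continuation, then differentiate the resolvent — is exactly the route the paper takes (Propositions~\ref{proposition:escape}, \ref{proposition:crucial-bound} and Theorem~\ref{theorem:meromorphic}). You also correctly identify that the real work is in making the escape-function construction and Fredholm estimates uniform in $X$. Item~(1) and the high-level parts of item~(2) are fine.

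However, the concrete argument you give for item (2) has two genuine gaps, and both stem from the same issue: the resolvent on the anisotropic spaces is bounded but does \emph{not} gain a derivative. First, the claimed Neumann-series convergence
\[
R^X(z_0)=R^{X_0}(z_0)\sum_{j\geq 0}\big((\widetilde{X}-\widetilde{X_0})R^{X_0}(z_0)\big)^j \quad\text{in } \mc{L}(\mc{H}_G^{s+1},\mc{H}_G^s)
\]
fails: $\widetilde{X}-\widetilde{X}_0$ is a genuine first-order operator in directions transverse to the flow, so each application costs one order in the anisotropic scale, while $R^{X_0}(z_0)$ is only bounded on $\mc{H}_G^s$ (its domain gain is in the $X_0$-direction only, not a full Sobolev order). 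The $j$-th term of the series lives in a space $j$ orders lower, so the series never converges in any fixed operator topology. The paper instead differentiates via the resolvent identity
$R_-^X(z)-R_-^{X'}(z) = R_-^{X'}(z)(X-X')R_-^{X}(z)$
and accepts a loss of one order per derivative, landing in $\mc{L}(\mc{H}^{(r_\bot,r_0)}_+,\mc{H}^{(r_\bot,r_0+2)}_+)$ for $C^2$-regularity — which is harmless once one composes with $C^\infty_{\comp}\hookrightarrow \mc{H}^{(r_\bot,r_0)}_+$ and $\mc{H}^{(r_\bot,r_0+2)}_+\hookrightarrow\mc{D}'$. Second, and relatedly, ``for $\|X-X_0\|$ small, $P(X,z_0)$ stays invertible'' is not obtained for free from openness of the invertibles in $\mc{L}(\mc{H}_G^{s+1},\mc{H}_G^s)$: the operator $P(X_0,z_0)$ is not invertible as a map $\mc{H}_G^{s+1}\to\mc{H}_G^s$ (its inverse does not land in $\mc{H}_G^{s+1}$), and on the natural domain $\{f:\ Xf\in\mc{H}_G\}$ the family is not norm-continuous in $X$ for the same ``transverse first-order derivative'' reason. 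The paper circumvents this by working with the concrete parametrix identity \eqref{eq:identite3}, whose right-hand side $\mathbbm{1}+B^X(z)+K^X(z)$ \emph{is} a norm-continuous family on $L^2$; even so, it still requires a non-trivial argument (Step~2 of the proof of Theorem~\ref{theorem:meromorphic}, involving a perturbation of the cutoff $\chi_T$) to ensure the kernel of $\mathbbm{1}+B^X(z)+K^X(z)$ vanishes precisely when $z$ is not a resonance. Your proposal misses both of these technical points, and without them the smoothness-of-inversion argument and the Neumann series do not get off the ground.
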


Here, we denote by $\mc{L}(A, B)$ the space of continuous linear maps between functional spaces $A$ and $B$. The space $\mc{L}(C^\infty_{\comp}(\mc{M}^\circ),\mc{D}'(\mc{M}^\circ))$ can be naturally identified with $\mc{D}'(\mc{M}^\circ \times \mc{M}^\circ)$ via the Schwartz kernel theorem; the space $\mc{D}'(\mc{M}^\circ \times \mc{M}^\circ)$ is equipped with the standard topology on distributions. In fact, we prove the result above in anisotropic Sobolev spaces, and refer to Theorem \ref{theorem:meromorphic} for a more detailed statement.
We show that the scattering operator $\mc{S}_g$ has a Schwartz kernel that can be written as a restriction of the Schwartz kernel of 
$R^{X_{g}}(0)$ on $\pl_-\mc{M}\times \pl_+\mc{M}$, implying that the map $g\mapsto \mc{S}_g$ is $C^2$-regular as operators acting on some appropriate Sobolev spaces.  \\

The strategy of the proof then goes as follows. First of all, we put the metric $g$ in solenoidal gauge (with respect to $g_0$), namely we find a first diffeomorphism $\psi \in \mathrm{Diff}(M)$ such that 
$\psi|_{\partial M} = {\rm id}$ and $g' = \psi^*g$ is divergence-free with respect to $g_0$, see Lemma \ref{lemma:solenoidal-gauge}.
Secondly, letting
\[
I_2^{g_0} : C^\infty(M, \otimes_S^2T^*M)\to L^\infty_{\rm loc}(\pl_-\mc{M}\setminus \{\ell_{g_0} = \infty\})
\]
be the X-ray transform on symmetric $2$-tensors with respect to $g_0$, defined as
\begin{equation}\label{I2}
I_2^{g_0}h(x, v):= \int_{0}^{\ell_{g_0}(x, v)}h_{\gamma(t)}(\dot{\gamma}(t), \dot{\gamma}(t))dt, \quad \textrm{ if }\varphi_t^{g_0}(x, v)=(\gamma(t), \dot{\gamma}(t))\in \mc{M},
\end{equation}  
we show in Section \S\ref{ssection:key} the following key estimate: there are $C, \mu>0$ such that, if $(\ell_{g_0},S_{g_0})=(\ell_g,S_g)$ and $\|g'-g_0\|_{C^N}<\delta$ for some small $\delta>0$, then
\begin{equation}
\label{equation:key-intro}
\|I_2^{g_0}(g'-g_0)\|_{H^{-6}(\partial_-\mc{M})} \leq C \|g'-g_0\|_{C^N(M, \otimes_S^2T^*M)}^{1+\mu}.
\end{equation}
The proof of this estimate is involved. It is based on some complex interpolation argument using the holomorphic map $\C \ni z\mapsto e^{-z\ell_{g_0}}I_2^{g_0}(g'-g_0)$ and the $C^2$-smoothness of the scattering map 
$g \mapsto \mc{S}_g$ as a continuous map from $C^\infty(\partial_+\mc{M})$ to $H^{-6}(\partial_-\mc{M})$. This is established in Section \S\ref{section:scattering}. It also relies on some volume estimates on the set of geodesics trapped for time $t\to \infty$ that follow from \cite{Guillarmou-17-2}.

Finally, slightly extending $(M, g_0)$ to some $(M_e, g_{0e})$, using the mapping properties of the adjoint $(I_2^{g_{0e}})^*$, interpolation arguments, and \eqref{equation:key-intro}, one obtains for $h := g' - g_0$:
\begin{equation}\label{eq:step}
	\|h\|_{L^2} \leq C  \|\Pi_2^{g_{0e}} E_0 h\|_{H^1} \leq C\|h\|_{C^N}^{1 + \mu},
\end{equation}
where $E_0$ is the zero extension operator to $M_e$, $\Pi_2^{g_{0e}} = (I_2^{g_{0e}})^* I_2^{g_{0e}}$ is the normal operator, and the estimate on the left is an elliptic estimate proved in Proposition \ref{proposition:elliptic-estimate}. It is left to interpolate $C^N$ between $L^2$ and $C^{N'}$ in \eqref{eq:step}, where $N' \gg N$, to get for some $0 < \mu ' < \mu$:
\[
\|h\|_{L^2} \leq C \|h\|_{L^2} \|h\|_{C^{N'}}^{\mu'} \leq C \|h\|_{L^2} \|g-g_0\|_{C^{N'}}^{\mu'}.
\]
For $\|g-g_0\|_{C^{N'}}$ small enough, this readily implies that $g' = \phi^*g = g_0$, concluding the proof.\\

\noindent \textbf{Acknowledgement:} 
We thank the anonymous referees for their careful readings and helpful comments that improved the paper.
This project has received funding from the European Research Council (ERC) under the European Union’s Horizon 2020 research and innovation programme
(grant agreement No. 725967). MC is further supported by an Ambizione grant (project number 201806) from the Swiss National Science Foundation.

\section{Geometric and dynamical preliminaries}

\label{section:preliminaries}

Following \cite[Section 2]{Guillarmou-17-2}, we describe the scattering and length maps in our geometric setting, and relate them to the resolvent of the geodesic flow.

\subsection{Unit tangent bundle and extensions}

\subsubsection{Geometry of the unit tangent bundle}

\label{sssection:unit}

Let $(M,g)$ be a smooth compact oriented Riemannian manifold with strictly convex boundary (in the sense that the second fundamental form is positive) and let $S^gM=\{(x,v)\in TM\,|\, |v|_{g_x}=1\}$ be the unit tangent bundle with projection on the base denoted by $\pi_0:S^gM\to M$. For a point $y=(x,v)\in S^gM$, we shall write $-y:=(x,-v)$. 
Denote by $\varphi^g_t:S^gM\to S^gM$ the geodesic flow at time $t\in \R$ and by $X_g$ its generating vector field. Let $\alpha$ be the canonical Liouville $1$-form on $S^gM$, defined by $\alpha(x, v)(\xi) := g_x(d\pi_0(x, v)\xi, v)$ for any $\xi \in T_{(x, v)}S^gM$, and define $\mu:=\alpha\wedge d\alpha^{n-1}$, the associated Liouville volume form, which we will freely identify with the Liouville measure. It satisfies $\mc{L}_{X_g}\mu=0$, where $\mc{L}_{X_g}$ denotes the Lie derivative along $X_g$.

Recall that we introduced the incoming (-) and outgoing (+) boundaries as 
\[
\pl_\pm S^gM = \{(x,v) \in \partial S^gM \mid \pm g_x(v,\nu)>0\},
\]
where $\nu$ it the outward pointing unit normal to $\partial M$. Using the orthogonal decomposition
\begin{equation}
\label{equation:decomp-intro}
T_{\partial M}M = T(\partial M) \oplus^\bot \R \nu,
\end{equation}
the boundary $\partial_\pm S^gM$ can be naturally identified with the boundary ball
\[
B(\partial M) := \left\{ (x,v) \in TM ~|~ x \in \partial M, v \in T_x(\partial M), |v|_g \leq 1 \right\}
\]
by means of the orthogonal projection onto the first factor in \eqref{equation:decomp-intro}. As a consequence, if $g'$ is any other smooth metric on $M$ such that $g|_{T\partial M} = g'|_{T\partial M}$, the boundaries $\partial_\pm S^gM$ and $\partial_\pm S^{g'}M$ can be naturally identified and it makes sense to say that $(\ell_g,S_g) = (\ell_{g'},S_{g'})$. When this equality holds, we say that the manifolds $(M,g)$ and $(M',g')$ have same \emph{lens data}. 

When we consider a set of metrics $g$, the unit tangent bundles $S^{g}M$ depend on $g$. For convenience, we will thus fix the manifold
\[
\mc{M}:=S^{g_0}M,\]
associated to an arbitrary metric of reference $g_0$. We can always rescale the flow $\varphi_t^{g}$ so that it becomes defined on $\mc{M}$. Indeed, define $\Phi_{g_0 \to g} : S^{g_0}M \to S^gM$ by $\Phi_{g_0 \to g}(x,v) := (x,v/|v|_g)$. Then $\Phi_{g_0 \to g}^{-1} \circ \varphi_t^g \circ \Phi_{g_0 \to g}$ is a flow on $\mc{M}$ which we shall still denote by $\varphi_t^g$, and its vector field will also be denoted by $X_g$ for simplicity. 

We shall always work with metrics $g$ so that $g|_{T\pl M}=g_0|_{T \pl M}$. The boundary of $\mc{M}$ splits into a disjoint union
\begin{equation}\label{eq:splitting}
	\pl \mc{M}=\pl_-\mc{M} \cup \pl_+\mc{M}\cup \pl_0\mc{M},
\end{equation}
where $\pl_\pm \mc{M}:=\{(x,v)\in \pl \mc{M}\, |\, \pm g_x(v,\nu)>0\}$ and $\pl_0 \mc{M}:=\{(x,v)\in \pl \mc{M}\, |\, g_x(v,\nu)=0\}$. Note that the normal $\nu$ depends on $g$, and that the splitting \eqref{eq:splitting} does not depend on the choice of $g=g_0$ on $T\pl M$. This will be important to compare for $g \neq g'$ the length functions $\ell_g$ with $\ell_{g'}$ and the scattering maps $S_g$ with $S_{g'}$ (see Definition \ref{def:lensmap} below).

There is a symplectic form on $\pl_\pm \mc{M}$ obtained by restricting $\iota_{\pl}^*d\alpha$ to $\pl_\pm \mc{M}$, where $\iota_\pl:\pl \mc{M}\to \mc{M}$ is the inclusion map. We denote by 
\[
\mu_\pl :=|\iota_\pl ^*(i_{X_g}\mu)|=|\iota_{\pl}^*(d\alpha)^{n-1}|,
\]
the induced measure on $\pl \mc{M}$, where $i_{X_g}$ denotes the contraction with $X_g$. In what follows we will write $L^p(\pl_\pm \mc{M})$ for the usual $L^p$ space with respect to any smooth Riemannian measure ${\rm dv}_h$ on $\pl \mc{M}$ (for some metric $h$ on $\pl \mc{M}$), while we will write $L^p(\pl_\pm \mc{M},\mu_{\pl})$ when we use the measure $\mu_{\pl}$. We note that $\mu_{\pl}=\omega\, {\rm dv}_h$ where $\omega\in C^\infty(\pl \mc{M})$ is positive outside $\pl_0\mc{M}$ and vanishes to order $1$ at $\pl_0\mc{M}$, thus $L^p(\pl_\pm\mc{M})\hookrightarrow L^p(\pl_\pm \mc{M},\mu_{\pl})$ continuously.

\subsubsection{Extension of the manifold}

\label{sssection:extension}

It will be convenient to consider an embedding of $\mc{M} \hookrightarrow \mc{N}$ into a smooth closed manifold $\mc{N}$. This can be done by considering an embedding $M \hookrightarrow N$, where $N$ is a smooth closed manifold (this is always possible by doubling the manifold $M$ across its boundary for instance, i.e. gluing $M \sqcup M$ along $\partial M$ by means of the identity map), then extending smoothly the metric $g_0$ to $N$ (denoted by $g_{0N}$) and taking $\mc{N} := S^{ g_{0N}}N$. If $g_0$ is of Anosov type (see Definition \ref{admissiblemetrics}), it will be also convenient to have a slightly larger manifold with boundary $M_e$ at our disposal such that $M \hookrightarrow M_e \hookrightarrow N$ and the extension of the metric $g_0$ to $M_e$, which we denote by $g_{0e}$, is of Anosov type, see \cite[Section 2]{Guillarmou-17-2} where this is further discussed. Set $\mc{M}_e := S^{g_{0e}}M_e$. We have the successive embeddings $\mc{M} \hookrightarrow \mc{M}_e \hookrightarrow\mc{N}$. For a metric $g$ close to $g_0$ in $C^N$ norm and such that $g=g_0$ on $T\pl M$, we consider an  extension $g_e$ of Anosov type on $M_e$. The map $g\mapsto g_e$ can be chosen to be smooth and so that $\|g_e-g_{0e}\|_{C^N(M_e,\otimes_S^2T^*M_e)}\leq C_N\|g-g_0\|_{C^N(M,\otimes_S^2T^*M)}$ for all $N\geq 0$ and some constants $C_N>0$, where $\otimes_S^2T^*M$ is the bundle of symmetric $2$-tensors.

\begin{definition}
Let $c \in \mathbb{R}$. We say that a level set $\left\{\rho=c\right\}$ of a function $\rho \in C^\infty(\mc{N})$ is \emph{strictly convex} with respect to a vector field $Y \in C^\infty(\mc{N},T\mc{N})$ if for all $y \in \left\{\rho=c\right\}$, one has:
\[
Y \rho(y) = 0 \implies Y^2\rho(y) < 0.
\]
We say that a smooth submanifold $\mc{H} \subset \mc{N}$ is strictly convex with respect to $Y$ if $\mc{H}$ is in a neighbourhood of $\mc{H}$ given by a level set $\left\{\rho = 0 \right\}$ of some function $\rho$, and this level set is strictly convex with respect to $Y$. This is independent of the choice of $\rho$. \end{definition}
It can be easily checked that $(M, g_0)$ has strictly convex boundary in the Riemannian sense if and only if $\partial \mc{M}$ is strictly convex with respect to the geodesic vector field $X_{g_0}$.

We now consider an arbitrary smooth extension $\widetilde{X}_{g_0}$ of $X_{g_{0e}}|_{\mc{M}_e}$ to $\mc{N}$. Let $\rho \in C^\infty(\mc{N})$ be a global boundary defining function for $\mc{M}$, i.e. such that $\rho > 0$ on the interior of $\mc{M}$, $\partial \mc{M} = \left\{\rho=0\right\}$ and $\rho < 0$ on $\mc{N} \setminus \mc{M}$. Since $X_{g_0}$ does not vanish on $\mc{M} = \left\{\rho \geq 0\right\}$, we can consider $\rho_0 > 0$ small enough such that $\widetilde{X}_{g_0}$ does not vanish in $\left\{\rho > -2 \rho_0 \right\}$. A continuity argument shows that, for all $\rho_0 > 0$ small enough, the level set $\left\{\rho=-\rho_0\right\}$ is strictly convex with respect to $\widetilde{X}_{g_0}$.
We can assume that 
\[\mc{M}_e=\{x\in \mc{N}\, |\, \rho(x)\geq -\rho_0/2\}.\]
In the following, we will consider smooth perturbations $X$ of the vector field $X_{g_0}$ in $\mc{M}$ (small in the $C^N$-topology, for $N \gg 1$ large enough). They will mostly be induced by a metric $g$ close to $g_0$ but it might be better to have in mind a more general picture than just geodesic flows. It will be convenient to extend the vector fields $X_g$ to vector fields $\widetilde{X}_g$ on $\mc{N}$ such that $\widetilde{X}_g = \widetilde{X}_{g_0}$ on the set $\left\{\rho \leq -2\rho_0/3\right\}$ and $\widetilde{X}_g=X_{g_e}$ on $\mc{M}_e$. 
Moreover, it is possible to construct such an extension with, for any $N \in \mathbb{N}$
\[
\|\widetilde{X}_g-\widetilde{X}_{g_0}\|_{C^N(\mc{N},T\mc{N})} \leq C\|X_g-X_{g_0}\|_{C^N(\mc{M},T\mc{M})},
\]
for some constant $C > 0$ (depending only on $\mc{M}$, $\mc{N}$, and $N$). Also observe that strict convexity of the boundary is stable by $C^2$-perturbation of the vector field.

We introduce the smooth function $\psi \in C^\infty(\mc{N})$ with values in $[-1, 1]$ such that:
\begin{itemize}
\item $\psi = \rho + \rho_0$ on the set $\left\{-\rho_0-\rho_0/10 \leq \rho \leq -\rho_0+\rho_0/10\right\}$,
\item $\psi = 1$ on $\mc{M} = \left\{\rho \geq 0\right\}$, and $\psi > 0$ on $\{\rho > -\rho_0\}$,
\item  $\psi = -1$ on $\left\{\rho \leq -2\rho_0\right\}$, and $\psi < 0$ on $\{\rho < - \rho_0\}$.
\end{itemize}
With some abuse of notation, we then denote by $X$ (resp. $X_0$) the vector field on $\mc{N}$ defined by $X := \psi \widetilde{X}_g$ (resp. $X_0 := \psi \widetilde{X}_{g_0}$). This construction ensures that the restriction of $X$ to $\mc{M}$ is the original vector field initially defined on $\mc{M}$ and that $\{\rho\geq -\rho_0 \}$ is preserved by all the flows $(\varphi_t^X)_{t \in \R}$ for all $t\in \R$, and finally that each trajectory leaving $\mc{M}$ never comes back to $\mc{M}$, with the same property for $\mc{M}_e$. See Figure \ref{figure:extension} for a visual summary of this construction.

\begin{center}
\begin{figure}[htbp!]
\includegraphics[scale=0.75]{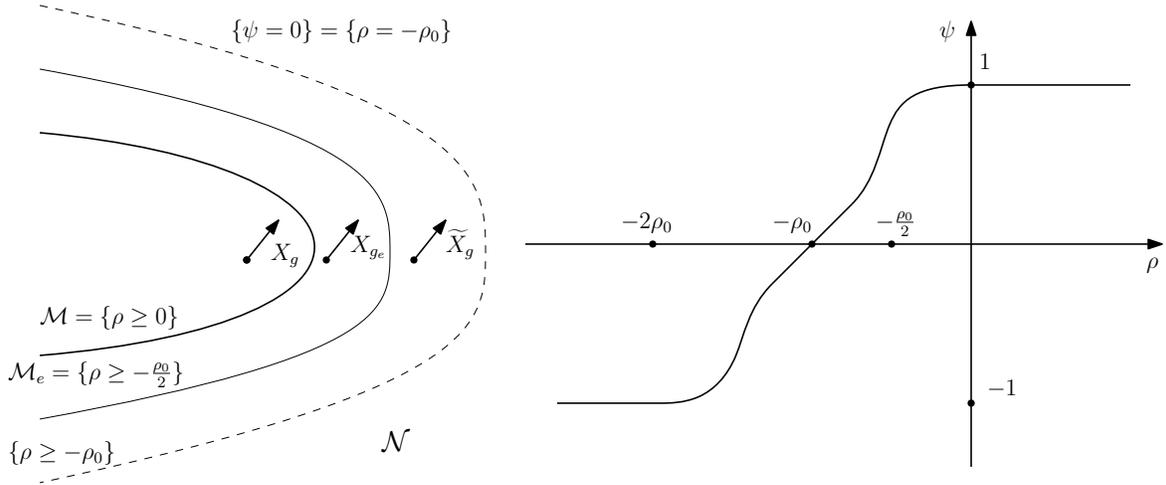}
\caption{One the left: the extension of the vector field $X_g$ from $\mc{M}$ to $X_{g_e}$ on $\mc{M}_e$, and further to $\widetilde{X}_g$ on $\mc{N}$. The vector field $X = \psi \widetilde{X}_g$ is \emph{complete} on the set $\{\rho \geq -\rho_0\}$ and vanishes on $\{\rho = -\rho_0\}$. On the right: the auxiliary function $\psi$ as a function of $\rho$.}
\label{figure:extension}
\end{figure}
\end{center} 
\vspace{-8mm}

\subsection{Scattering and length maps}

For $(x,v)\in \mc{M}$, the escape time $\tau_g(x,v)$ is defined to be the maximal time of existence of the integral curve $(\varphi^{g}_t(x,v))_{t\geq 0}$ in $\mc{M}$:
\[ \tau_g: \mc{M}\to [0,\infty], \quad \tau_g(x,v):=\sup \{t\geq 0 \, | \, \varphi^g_t(x,v)\in \mc{M} \}.\]
The forward (-) and backward (+) trapped sets $\Gamma^g_\pm$ are defined by 
\[
\Gamma^g_\pm :=\{ (x,v)\in \mc{M}\, |\, \tau_g(x,\mp v)=\infty\},
\] 
they are closed sets in $\mc{M}$ and the trapped set is the closed invariant set 
\[
K^g:=\Gamma^g_+\cap \Gamma^g_-=\bigcap_{t\in \R}\varphi_t^g(\mc{M}).
\]
Since $\pl M$ is strictly convex, it is direct to check that $\Gamma_\mp ^g\cap \pl_\pm \mc{M}=\emptyset$ and $K^g\cap \pl \mc{M}=\emptyset$. We now recall the definition (see Definition \ref{def:lens-data}) of the \emph{lens data}:

\begin{definition}[Lens data]
\label{def:lensmap}
The length map $\ell_g:\pl_-\mc{M}\setminus \Gamma^g_-\to \R_+$ and the scattering map $S_g:\pl_-\mc{M}\setminus \Gamma^g_-\to \pl_+\mc{M}\setminus \Gamma^g_+$ are defined by 
\[ \ell_g(x,v):=\tau_g(x,v), \quad S_{g}(x,v):=\varphi^g_{\tau_g(x,v)}(x,v).\]
The pair $(\ell_g,S_g)$ is called the lens data of $(M,g)$.
\end{definition}

When unnecessary, we will drop the index $g$ in the notation. It will be convenient to view the scattering map as acting on functions on $\pl_+\mc{M}$ by pull-back. We define the \emph{scattering operator} as 
\[
\mc{S}_g: C^\infty_{\comp}(\partial_+ \mc{M} \setminus \Gamma^g_+) \to C^\infty_{\comp}(\partial_-\mc{M} \setminus \Gamma^g_-), \qquad \mc{S}_g \omega := \omega \circ S_g.
\]
Under the assumption that $\mu_\pl\left(\left(\Gamma^g_-\cup\Gamma^g_+\right)\cap \pl \mc{M}\right))=0$, it is not difficult to show (see \cite[Lemma 3.4]{Guillarmou-17-2}) that for all $f \in C^\infty_{\comp}(\partial_+ \mc{M} \setminus \Gamma_+)$, one has
\[
\|\mc{S}_gf\|_{L^2(\pl_-\mc{M},\mu_\partial)}=\|f\|_{L^2(\pl_+\mc{M},\mu_\partial)},
\]
and thus $\mc{S}_g$ extends continuously to an isometry $L^2(\pl_+\mc{M},\mu_\partial)\to L^2(\pl_-\mc{M},\mu_\partial)$. The scattering operator $\mc{S}_g$ determines $S_g$ and conversely.

By the implicit function theorem (since $\pl M$ is strictly convex), we also have that 
\[
\tau_g\ \in C^\infty\left(\mc{M}\setminus (\Gamma_-^g \cup \partial_0 \mc{M})\right),\quad  \ell_g\in C^\infty\left(\overline{\pl_-\mc{M}} \setminus \Gamma_-^g\right),
\]
(here $\overline{\partial_-\mc{M}} = \partial_0 \mc{M} \cup \partial_-\mc{M}$), see \cite[Lemmas 4.1.1 and 4.1.2]{Sharafutdinov-94} for further details. Since we shall need the dependence of $\ell_g$ with respect to $g$, we first prove a result outside the trapped sets:

\begin{lemma}
\label{lemma:bound-dell}
Let $(M,g_0)$ be a smooth compact Riemannian manifold with strictly convex boundary and let $p \in \N$. There exists $\eps > 0$ small enough such that the following holds: for all metrics $g \in U_{g_0}$, where
\begin{equation}
\label{equation:neighb}
U_{g_0} := \left\{g \in C^{p+2}(M,\otimes^2_S T^*M) ~|~ \|g-g_0\|_{C^{p+2}} < \eps,\, g|_{T\partial M}=g_0|_{T\partial M}\right\},
\end{equation}
the following map is $C^p$-regular
\[
\ell: V \to \R_+, \quad (g,y)\mapsto \ell_g(y),
\] 
where $V:=\{(g,y)\in U_{g_0}\times \pl_-\mc{M} \, |\, y\notin \Gamma_-^g\}$. Moreover, for all $\chi\in C_{\comp}^\infty(\pl_-\mc{M})$, there exists a constant $C>0$ (depending only on $g_0, p$ and $\chi$) such that for all $j\leq p$ and $h \in C^\infty(M, \otimes_S^2 T^*M)$
\[
\forall (g,y)\in V, \quad |\chi d_y^j \ell_g(y)| \leq Ce^{C\ell_{g}(y)}, \qquad |\chi \partial_g^j \ell_g(y)(\otimes^j h)| \leq C e^{C\ell_g(y)} \|h\|^j_{C^{j+1}}.
\] 
\end{lemma}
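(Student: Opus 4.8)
The plan is to reduce the statement to a statement about the escape time $\tau_g$ on $\mc{M}$, and then to analyze the flow equation along the trajectory. Since $\pl M$ is strictly convex, for $(g,y) \in V$ the geodesic $\varphi^g_t(y)$ hits $\pl M$ transversally at time $t = \ell_g(y) = \tau_g(y)$, i.e. if $\rho$ is a boundary defining function for $M$ then $F(g,y,t) := \rho(\pi_0(\varphi^g_t(y))) = 0$ at $t = \ell_g(y)$ with $\partial_t F < 0$ there (this is where strict convexity and $y \notin \pl_0\mc{M}$ are used, together with the fact that $y \notin \Gamma^g_-$ so the exit time is finite). The implicit function theorem then gives that $(g,y) \mapsto \ell_g(y)$ is $C^p$ on $V$ as soon as $F$ is $C^p$ jointly, which in turn follows from $C^p$-regularity of the flow map $(g,y,t) \mapsto \varphi^g_t(y)$ in its arguments — a standard consequence of $C^{p+2}$ dependence of the geodesic vector field $X_g$ on $g$ and smooth dependence of ODE solutions on parameters and initial conditions. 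This establishes the first assertion; the neighbourhood $U_{g_0}$ and the $C^{p+2}$ norm are needed to make $X_g$ itself $C^{p+1}$ and to keep convexity.

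For the quantitative bounds, the key point is that derivatives of the flow map grow at most exponentially in the elapsed time. Concretely, write the linearized flow (the differential $d\varphi^g_t$, and more generally the derivatives of $\varphi^g_t$ in $(y,g)$) as the solution of the variational equations along the trajectory; these are linear ODEs whose coefficients are bounded in terms of $\|X_g\|_{C^{j+1}} \lesssim \|g-g_0\|_{C^{j+1}} + \|g_0\|_{C^{j+1}}$, hence by Grönwall one gets $\|d^j_y \varphi^g_t(y)\| \le C e^{Ct}$ and $\|\partial^j_g \varphi^g_t(y)(\otimes^j h)\| \le C e^{Ct}\|h\|_{C^{j+1}}^j$ for $t \in [0,\ell_g(y)]$, uniformly for $g \in U_{g_0}$. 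Differentiating the implicit relation $F(g,y,\ell_g(y)) = 0$ and solving for $d^j_y \ell_g$ and $\partial^j_g \ell_g$ via the chain rule (Faà di Bruno), the only denominators that appear are powers of $\partial_t F|_{t=\ell_g(y)} = d\rho(\dot\gamma)$; on the support of $\chi$ this quantity is bounded away from $0$ (since $\supp \chi$ is a compact subset of $\pl_-\mc{M}$, staying away from $\pl_0\mc{M}$), so it contributes only a constant. Every numerator is a polynomial in the flow derivatives evaluated at times $\le \ell_g(y)$, so the exponential bounds propagate, yielding $|\chi d^j_y \ell_g(y)| \le C e^{C\ell_g(y)}$ and $|\chi \partial^j_g \ell_g(y)(\otimes^j h)| \le C e^{C\ell_g(y)}\|h\|_{C^{j+1}}^j$.

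The main obstacle is bookkeeping rather than conceptual: one has to set up the variational equations for mixed derivatives in $(y,g)$ carefully, check that their coefficients are controlled by $\|X_g\|_{C^{j+1}}$ (hence by $\|g\|_{C^{j+2}}$, explaining the loss of one derivative relative to $p$ in the definition of $U_{g_0}$ — note the bound is asked only for $j \le p$ with $C^{p+2}$ control of $g$, and the tensor bound involves $\|h\|_{C^{j+1}}$), and verify that the factor $\chi$ indeed localizes away from $\pl_0\mc{M}$ so that the implicit-function denominators stay uniformly nonzero. A secondary subtlety is that $\ell_g(y) \to \infty$ as $y \to \Gamma^g_-$, so all estimates must be stated with the explicit $e^{C\ell_g(y)}$ weight and no uniform constant can be hoped for; this is exactly why the statement is phrased this way and why the weight later plays a role in the complex-interpolation argument of Section \S\ref{ssection:key}. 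I would also remark that the local uniformity in $g$ over $U_{g_0}$ follows because all the Grönwall constants depend only on $\sup_{g \in U_{g_0}}\|X_g\|_{C^{p+1}}$, which is finite by the definition of $U_{g_0}$.
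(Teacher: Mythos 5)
Your proposal is correct and follows essentially the same route as the paper's proof: the implicit function theorem applied to the escape-time equation $\rho(\varphi_t^g(y))=0$, Grönwall bounds for the $(y,g)$-derivatives of the flow, and the chain rule (with denominators controlled thanks to the cutoff $\chi$ keeping the geodesic family away from the glancing set). The only cosmetic difference is that the paper works with a boundary-defining function for $\mc{M}$ on the extended closed manifold $\mc{N}$ and the vector field $X=\psi\widetilde X_g$, which makes the ODE dependence global and uniform — but the content of the argument is identical to yours.
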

\begin{proof}
We shall use the implicit function theorem. Let $\rho$ be the boundary defining function of $\mc{M}$ defined in \S\ref{sssection:extension}. As explained in this paragraph, for $g$ close to $g_0$, we can consider a vector field $X$ on $\mc{N}$ such that $X$ vanishes (to first order) on $\{\rho=-\rho_0\}$. For the sake of simplicity, we still denote by $(\varphi_t^g)_{t \in \R}$ the extended flow on $\mc{N}$, and by $X_g := X$ its generator.

We consider the $C^{p}$-regular map  
\[ 
F: U_{g_0} \times \R_+ \to \R, \quad (g,y,t)\mapsto \rho(\varphi^g_{t}(y)).
\]
The function $\ell_g(y)$ satisfies the implicit equation $F(g,y,\ell_g(y))=0$. Let us take a point $(g_0, y_0) \in V$ and differentiate 
for $(g,y)$ near $(g_0, y_0)$
\[
\pl_t F(g,y,t)= (X_{g}\rho)(\varphi^g_{t}(y)).
\]
Notice that this is non-zero if $y\in \pl_-\mc{M}$ and $\varphi_{t}(y)\in \pl_+\mc{M}$ by strict convexity of $\pl \M$. Thus the implicit function theorem guarantees that there is a neighborhood $U_{g_0}'\subset U_{g_0}$ of $g_0$ 
and $B_{y_0}(\eps')\subset \pl_-\mc{M}$ of $y_0$ such that $(g,y)\mapsto \ell_g(y)$ is a well-defined $C^{p}(U_{g_0}'\times B_{y_0}(\eps'))$ 
function  and 
\[
d_y\ell_g(y)=- \frac{d\rho \big(\varphi^g_{\ell_g(y)}(y)\big) \circ \big(d \varphi^g_{\ell_g(y)}\big)(y)}{(X_g\rho)(\varphi^g_{\ell_g(y)}(y))}.
\]
Notice in particular that this implies that $V$ is an open set. By the Grönwall lemma, there is a constant $C>0$ uniform in $g\in U_{g_0}$ so that for each $(g,y)\in V$ and all $t>0$, where $\|\cdot\|$ denotes an arbitrary fixed metric on $\mc{N}$:
\begin{equation}
\label{equation:gron}
\|d_y\varphi^g_{t}(y)\|\leq Ce^{Ct}.
\end{equation}
The constant $C > 0$ provided by the Grönwall lemma is uniform in the metric $g$ as long as it is $C^3$-close to $g_0$. More generally, \eqref{equation:gron} holds for the $j$-th derivative $d_y^j\varphi^g_t$ with a constant $C > 0$ uniform for $g$ which is $C^{j+2}$-close to $g_0$. On the other hand, we know that $X_g\rho \neq 0$ on $\pl \mc{M}\setminus \pl_0\mc{M}$. So we obtain a constant $C > 0$ such that 
\[
\forall (g,y)\in V, \quad |\chi(y)d_y\ell_g(y)|\leq Ce^{C\ell_g(y)}.
\]
Next, we compute the derivative with respect to $g$, for some $h \in C^\infty(M, \otimes_S^2 T^*M)$: 
\[
	(\pl_g \ell_{g}.h) (y)=-\frac{d\rho\big(\varphi^g_{\ell_g(y)}(y)\big) \circ \big(\pl_g\varphi^g_{\ell_g(y)}.h\big)(y)}{(X_g\rho)(\varphi^g_{\ell_g(y)}(y))}.
\] 
Again, by the Grönwall lemma, we obtain a constant $C>0$ such that for all $t>0$, $(g,y)\in V$:
\begin{equation}
\label{equation:gron2}
\|(\pl_g\varphi^g_{t}.h)(y)\|\leq Ce^{Ct}\|h\|_{C^2},
\end{equation}
which provides the desired estimate for the $C^2$-norm. (The $C^2$-norm of $h$ appears as the vector field $X_g$ involves $1$-derivative of $g$, so that $X_{g+sh}$ is $C^1$ for all $s\in \R$ small). The constant $C > 0$ is uniform for $g$ that is $C^3$-close to $g_0$. More generally, the bound $|\pl^j_g\varphi^g_{t}(\otimes^j h)(y)|\leq Ce^{Ct}\|h\|_{C^{j+1}}^j$ holds with a constant $C > 0$ depending on the $C^{j+2}$-norm of $g$. The case of higher order derivatives works exactly the same way by differentiating as many times as needed the implicit equation defining $\ell_g(y)$ with respect to $(g,y)$, and using that the derivatives of the flow satisfy the bounds $\|D^j\varphi_t^g(y)\|\leq Ce^{Ct}$ (where $D^j = \partial_g^j$ or $d_y^j$), for some uniform $C>0$ with respect to $t > 0$, $y$ and $g\in U_{g_0}$.
\end{proof}

\subsection{Hyperbolic trapped set}

\label{ssection:hyperbolic-trapped-set}

\subsubsection{Axiom A property}

We say that the trapped set is \emph{hyperbolic} if there is a continuous flow-invariant splitting of $T(SM)$ restricted to $K^g$ into three subbundles
\[
\forall y\in K^g, \quad T_{y}\mc{M}=\R X_g(y)\oplus E^g_s(y)\oplus E^g_u(y)
\]
and $C,\nu>0$ such that for all $y \in K^g$ and $t \geq 0$
\begin{equation}
\label{equation:an}
\begin{split}
& v \in E^g_s(y) \implies  \|d\varphi^g_t(y) v\| \leq Ce^{-\nu t} \|v\| ,\\
&v \in E^g_u(y) \implies  \|d\varphi^g_{-t}(y)v\| \leq Ce^{-\nu t} \|v\|.
\end{split}
\end{equation}
There is a continuous extension of the bundle $E_{s}^{g}$ (resp. $E_u^g$) to a bundle $E_{-}^{g}$ (resp. $E_+^g$) over the set $\Gamma_-^{g}$ (resp. $\Gamma_+^g$), on which \eqref{equation:an} is still satisfied, see \cite[Lemma 2.10]{Dyatlov-Guillarmou-16}. For $y \in K^g$, these bundles coincide with $E_{s}^{g}, E_u^g$, namely $E_{s}^{g}(y)=E_-^{g}(y)$, $E_u^{g}(y)=E_+^{g}(y)$. We define $C^k_{\rm hyp}(M, \otimes^2_S T^*M_+)$ to be the set of $C^k$ Riemannian metrics on $M$ with strictly convex boundary and hyperbolic trapped set. For such metrics, the geodesic flow is a typical example of what is known as an \emph{Axiom A flow}. Since these metrics could have conjugate points, this set is larger than the set of metrics of Anosov type.

If $g_0$ is some fixed metric on $M$ and $M_e$ denotes the extension defined in \S\ref{sssection:extension} with $\rho$ a boundary defining function of $\mc{M}$, we can always choose $\rho_0 > 0$ small enough so that for all $|t|\leq \rho_0$ the level set $\{\rho=t\}$ is strictly convex with respect to the extension $g_{0e}$ of $g_0$ to $M_e$. This also holds for any metric $g$ close to $g_0$ in the $C^2$-topology. Recall that we denote by $g_e$ the extension of $g$ from $M$ to $M_e$. 

Observe that if $y\in \pl_\pm \mc{M}$,  $\cup_{\pm t>0}\varphi_t^{g_e}(y)\subset \mc{N}\setminus \mc{M}$. The trapped sets of $(M,g)$ and $(M_e,g_e)$ then coincide and $\Gamma_\pm^{g}=\Gamma_\pm^{g_e}\cap \mc{M}$. Moreover, if $(M,g)$ has no conjugate points, then by taking $\rho_0>0$ small enough $(M_e,g_e)$ does not have conjugate points either (see \cite[Lemma 2.3]{Guillarmou-17-2}).
 
Define the set of points that are trapped for time less than $t\geq 0$ as:
\[
\mc{T}^g(t):=\{ y\in \mc{M}\,| \, \forall s\in (0,t),\, \varphi_s^g(y)\in \mc{M}^\circ \}=\tau_g^{-1}(t,\infty).
\]
It is proved in \cite[Proposition 2.4]{Guillarmou-17-2} that there exist $C_g,Q_g>0$ (depending on the metric $g$), such that for all $t\geq 0$
\begin{equation}\label{eq:volume}
\mu(\mc{T}^g(t)) \leq C_ge^{-Q_gt}.
\end{equation}
(Here $\mu$ is the Liouville measure for the fixed $g_0$.) In particular, $\mu(\Gamma_\pm^g)=0$. The quantity $Q_g$ is called the \emph{escape rate} and is given by $-Q_g=P_g(-J_u^g) < 0$, the topological pressure of minus the unstable Jacobian $J_u^g(y):=\pl_t (\det d\varphi_t^g(y)|_{E_u(y)})|_{t=0}$ of the flow $(\varphi_t^g)_{t \in \R}$. 
Recall that the topological pressure of a H\"older potential $V\in C^\beta(S^{g}M)$ (for  some $\beta>0$) with respect to $g$ can be defined as follows: 
\[
P_g(V):=\lim_{T\to \infty} \frac{1}{T}\log \sum_{\gamma\in \mc{P},T_\gamma\in [T,T+1]} \exp \left({\int_{\gamma} V}\right),
\]
where $\mc{P}$ denotes the set of periodic orbits of the geodesic flow $(\varphi_t^g)_{t \in \R}$, and $T_\gamma$ denotes the period of $\gamma \in \mc{P}$.

The following formula for $f\in L^1(\mc{M})$ is known as \emph{Santaló's formula} (see \cite[Section 2.5]{Guillarmou-17-2}):
\begin{equation}
\label{equation:santalo}
\int_{\mc{M}} f(y) \dd \mu(y) = \int_{\partial_-\mc{M}} \int_0^{+\infty} f(\varphi_t^g(y)) \dd t\, \dd \mu_\partial(y).
\end{equation}
It implies, together with \eqref{eq:volume}, that there is $C_g>0$ so that for all $t>0$:
\begin{equation}\label{escaperate}
\mu_\pl \left(\ell_g^{-1}(t,\infty)\right) \leq C_g e^{-Q_g t}.
\end{equation}

Using Cavalieri's principle, estimates \eqref{eq:volume} and \eqref{escaperate}, it is straightforward to derive the following bounds:
\begin{equation}\label{cavalieri} 
\begin{gathered}
\forall p\in [1,\infty), \quad \tau_g\in L^p(\mc{M}),\,\, \ell_g\in L^p(\pl_-\mc{M}), \\ 
\forall \la\in (0,Q_g),\quad e^{\la \tau_g}\in L^1(\mc{M}), \,\,    e^{\la \ell_g}\in L^1(\pl_-\mc{M}).
\end{gathered}
\end{equation}
Here note that  $\ell_g$ is bounded near $\pl_0\mc{M}$ so that this region is trivial to deal with. 

\subsubsection{Robinson structural stability} In this paragraph, we recall some results about the stability of flows with hyperbolic trapped set, due to Robinson \cite[Theorem C]{Robinson-80}. 
First, the stable and unstable manifolds of a point $y\in K^g$ are defined by 
\[
\begin{split} W_s(y):=& \{ y'\in \mc{M}\, |\, \lim_{t\to +\infty}d(\varphi_t^g(y'),\varphi_t^g(y))\to 0\},  \\
W_u(y):=& \{ y'\in \mc{M}\, |\, \lim_{t\to -\infty}d(\varphi_t^g(y'),\varphi_t^g(y))\to 0\}.
\end{split}
\] 
They are smooth injectively immersed submanifolds. We also set 
$W_u(K^g):=\cup_{y\in K^g}W_u(y)$, $W_s(K^g):=\cup_{y\in K^g}W_s(y)$. It is proved in \cite[Lemma 2.2]{Guillarmou-17-2} that 
\begin{equation}\label{WsGamma_-}
W_s(K^g)=\Gamma_-^g , \qquad W_u(K^g)=\Gamma_+^g.
\end{equation}
The tangent spaces to $W_s(y)$ and $W_u(y)$ are respectively $E_s(y)$ and $E_u(y)$.
The flow satisfies the following \emph{transversality property} for the stable and unstable manifolds: 
for each $y,y'\in K^g$ and $z\in W_s(y)\cap W_u(y') \subset K^g$, we have
\[
T_z(\mc{M}) = T_{z}(W_s(y))\oplus T_{z}(W_u(y')) \oplus \R X_g(z).
\]
Indeed, such $z$ must belong to $K^g$, and the identity of tangent space 
can be rewritten as $E_s(z)\oplus E_u(z) \oplus  \R X_g(z)=T_z(\mc{M})$, which holds since $K^g$ is assumed hyperbolic. 
For a Riemannian manifold with strictly convex boundary and hyperbolic trapped set, the geodesic flow $(\varphi_t^g)_{t \in \R}$ on $\mc{M}$ satisfies that: 
\begin{itemize}
\item the non-wandering set $\Omega\subset K^g$ is hyperbolic, 
\item the stable and unstable manifolds have the transversality property,
\item the boundary is strictly convex with respect to the vector field $X_g$.
\end{itemize}
The following holds:

\begin{proposition}[Robinson \cite{Robinson-80}]
\label{theorem:stability}
Let $(M,g_0)$ be a smooth Riemannian manifold with strictly convex boundary and hyperbolic trapped set $K^{g_0}\subset \mc{M}:=SM$. 
Then, there exists $\eps_0 > 0$ such that for each smooth vector field $X$ on $\mc{M}$ so that  $\|X-X_{g_0}\|_{C^2(\mc{M})}\leq \eps_0$, there is a homeomorphism $h:\mc{M}\to \mc{M}$ and $a\in C^0(U)$, where $U=\big\{(y,t)\in \mc{M}\times \R \,|\, t \in \big[-\tau_{g_0}(-h(y)), \tau_{g_0}(h(y))\big] \big\}$, such that the following holds: for all $y \in \mc{M}$, $t \mapsto a(y, t)$ is strictly increasing in $t$, and satisfies 
 \[
 \varphi_t^{X_{g_0}}(h(y))=h(\varphi_{a(y,t)}^X(y)),
 \]
for all $(y, t) \in \mc{M} \times \mathbb{R}$ such that $\varphi_{a(y,t)}^X(y) \in \mc{M}$. Moreover, for each $\delta > 0$ there exists $\varepsilon > 0$ small enough such that if $\|X-X_{g_0}\|_{C^2(\mc{M})}\leq \eps$, then $d(h(y),y)\leq \delta$ for $y \in \M$, where $d$ denotes a Riemannian distance on $\mc{M}$, that is, $\|h-\mathrm{id}_{\mc{M}}\|_{C^0} \leq\delta$.
\end{proposition}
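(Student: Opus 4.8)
The plan is to deduce this statement from Robinson's structural stability theorem for flows with hyperbolic non-wandering set and strong transversality \cite[Theorem C]{Robinson-80}, after arranging the geometric setup so that its hypotheses hold verbatim. The three structural conditions listed just above the proposition are exactly the input Robinson needs: the non-wandering set $\Omega$ of $(\varphi_t^{g_0})_{t\in\R}$ on $\mc{M}$ is contained in $K^{g_0}$ and therefore carries the hyperbolic splitting $\R X_{g_0}\oplus E_s^{g_0}\oplus E_u^{g_0}$ (hyperbolicity of a flow on a closed invariant set passes to closed invariant subsets); the transversality $T_z\mc{M}=T_zW_s(y)\oplus T_zW_u(y')\oplus\R X_{g_0}(z)$ for $z\in W_s(y)\cap W_u(y')$ was verified using \eqref{WsGamma_-} and hyperbolicity of $K^{g_0}$; and $\partial\mc{M}$ is strictly convex with respect to $X_{g_0}$ by assumption. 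To turn the partial flow on $\mc{M}$ into (the restriction of) a genuine flow one invokes the extension of \S\ref{sssection:extension}: embed $\mc{M}\hookrightarrow\mc{M}_e\hookrightarrow\mc{N}$, extend $X_{g_0}$ to $X_0=\psi\widetilde{X}_{g_0}$ and $X$ to $\psi\widetilde{X}_g$ on $\mc{N}$, so that $\{\rho\geq-\rho_0\}$ is invariant, every trajectory leaving $\mc{M}$ (or $\mc{M}_e$) never returns, and inside $\{\rho\geq-\rho_0\}$ the only recurrence beyond $K^{g_0}$ lives on the fixed-point set $\{\rho=-\rho_0\}$, which is disjoint from $\mc{M}$ and plays no role for the conjugacy over $\mc{M}$; the extension estimates of \S\ref{sssection:extension} convert the bound $\|X-X_{g_0}\|_{C^2(\mc{M})}\leq\eps_0$ into smallness of $\|X-X_0\|_{C^2}$ on the relevant region.

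Granting this, the second step is simply to apply \cite[Theorem C]{Robinson-80}: for $\eps_0$ small enough one obtains an orbit equivalence, i.e. a homeomorphism $h$ and a continuous time change $a$, such that $h$ maps $X$-trajectories to $X_{g_0}$-trajectories preserving orientation, which after restriction to $\mc{M}$ gives the identity $\varphi_t^{X_{g_0}}(h(y))=h(\varphi_{a(y,t)}^X(y))$. The function $t\mapsto a(y,t)$ is strictly increasing because it is a reparametrization of a flow without fixed points on $\{\rho>-\rho_0\}\supset\mc{M}$, and the identity is meaningful precisely as long as $\varphi_{a(y,t)}^X(y)\in\mc{M}$, which — by strict convexity and finiteness of escape times — forces $t$ to range in $[-\tau_{g_0}(-h(y)),\tau_{g_0}(h(y))]$; this is exactly the domain $U$ in the statement.

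For the quantitative conclusion $\|h-\mathrm{id}_{\mc{M}}\|_{C^0}\leq\delta$, one uses that Robinson's $h$ is built from a fixed-point (shadowing/graph-transform) argument whose solution depends continuously on the perturbation and equals $\mathrm{id}$ at $X=X_{g_0}$, so $\|h-\mathrm{id}\|_{C^0}\to0$ as $\eps\to0$; alternatively, a compactness argument works, extracting from a hypothetical sequence $X_j\to X_{g_0}$ in $C^2$ with $\|h_j-\mathrm{id}\|_{C^0}\geq\delta$ a limiting orbit equivalence of $X_{g_0}$ with itself that must coincide with a (small) flow time-map by the normalization, a contradiction. The hard part will be the boundary bookkeeping in the first step: checking that the cut-off extension introduces no spurious non-wandering set (so that the non-wandering set is genuinely the hyperbolic trapped set $K^{g_0}$), that transversality holds globally on $\mc{N}$, and that Robinson's conjugacy can be normalized to preserve $\mc{M}$ and restrict to the stated cocycle identity on $U$; once this is in place the result is a direct citation of \cite{Robinson-80} combined with the extension estimates of \S\ref{sssection:extension}, as also used in \cite{Dyatlov-Guillarmou-16}.
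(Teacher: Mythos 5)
The paper's proof of this Proposition is a single-paragraph citation: it applies Robinson's Theorems A and C of \cite{Robinson-80} \emph{directly on the compact manifold-with-boundary} $\mc{M}$, identifying Robinson's ``quadratic external boundary conditions'' with strict convexity of $\pl\mc{M}$ and observing that the chain-recurrent set is contained in the hyperbolic trapped set $K^{g_0}$ whose stable and unstable manifolds are transversal; the $C^0$-closeness of $h$ to the identity is read off from Theorem A. No extension to a closed ambient manifold is made, because Robinson's Theorem C is precisely a stability theorem for flows on compact manifolds with boundary.

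Your proposal instead extends $X_{g_0}$ and $X$ to the closed manifold $\mc{N}$ via $X_0=\psi\widetilde X_{g_0}$ and proposes to apply structural stability there, restricting the conjugacy back to $\mc{M}$ afterwards. This has a genuine gap that you yourself flag as ``the hard part'' but do not resolve: the extended field $X_0$ vanishes identically along the hypersurface $\{\rho=-\rho_0\}$, so the chain-recurrent (and non-wandering) set of the extended flow on $\mc{N}$ contains an entire codimension-one manifold of \emph{degenerate} fixed points, on which no hyperbolic splitting exists. The Axiom A / strong-transversality hypotheses of closed-manifold structural stability therefore fail for the extended flow, and there is no general localization principle that lets you obtain an orbit equivalence on the closed manifold and simply discard the behavior near $\{\rho=-\rho_0\}$. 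You also could not extend $\widetilde X_{g_0}$ without making it vanish somewhere while keeping $\{\rho\geq-\rho_0\}$ forward-invariant; these requirements are in tension. The fix is exactly what the paper does: skip the extension and invoke Robinson's Theorem C verbatim on $\mc{M}$, where the boundary condition you need (strict convexity $=$ quadratic external boundary condition) is already built into the theorem, the relevant recurrent set is $K^{g_0}\subset\mc{M}^\circ$, and the transversality you verified suffices. The rest of your argument --- why $a(y,\cdot)$ is increasing, why the identity holds on the domain $U$, and the compactness argument (or direct citation of Theorem A) for $\|h-\mathrm{id}\|_{C^0}\leq\delta$ --- is fine once the conjugacy is obtained this way.
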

\begin{proof}
	This is a direct consequence of \cite[Theorems A and C]{Robinson-80}. We note that Robinson's ``quadratic external boundary conditions'' are equivalent to our strict convexity of the boundary, and that the \emph{chain-recurrent set} (see \cite{Robinson-80} for the definition) is contained in the trapped set, which by assumption has a hyperbolic structure with transversal stable and unstable manifolds. Finally, the last statement about the continuity of $h$ is stated in \cite[Theorem A]{Robinson-80}.
\end{proof}

As a consequence we see that for $g$ close enough to $g_0$ in $C^3$ norm, applying this Proposition with $X=X_g$, we get  $K^g=h^{-1}(K^{g_0})$, $h^{-1}(\Gamma_\pm^{g_0})=\Gamma_\pm^g$, and the trapped set varies continuously with respect to the metric.

\subsubsection{Symplectic lift to the cotangent bundle}

\label{sssection:geom-cons}

Recall that we introduced the vector field $X$ on $\mc{N}$ in \S\ref{sssection:extension}. In Section \S\ref{section:scattering}, it will be convenient to work on the cotangent bundle $T^*\mc{N}$ of the extended manifold $\mc{N}$. Denote by $\X$ the symplectic lift of the vector field $X$ to $T^*\mc{N}$. It generates the flow
\begin{equation}\label{eq:symplectic-lift}
\varphi^{\X}_t(y,\xi) = (\varphi^X_t(y), (d\varphi^X_t(y))^{-\top} \xi),
\end{equation}
where ${}^{-\top}$ stands for the inverse transpose. Note that this flow is linear in the second variable and thus induces a flow on the spherical bundle $S^*\mc{N} := (T^*\mc{N}\setminus \{0\})/\R_+$. Let $\pi : S^*\mc{N} \to \mc{N}$ and $\kappa:T^*\mc{N}\to S^*\mc{N}$ be the natural projections, and still write $\pi$ for the projection $T^*\mc{N} \to \mc{N}$. The dual subbundles $(E_{\pm,0}^X)^* \subset T^*\mc{N}$ are defined as the following symplectic orthogonals:
\[
(E_0^X)^*\left(E^X_+ \oplus E^X_-\right) = (E_+^X)^*\left(E^X_+ \oplus E^X_0\right) = (E_-^X)^*\left(E^X_- \oplus E^X_0\right) = \left\{0\right\}.
\]
With some abuse of notation, the spaces $(E_{\pm,0}^X)^*$ will be identified with the projections $\kappa((E_{\pm,0}^X)^*) \subset S^* \mc{N}$. Eventually, we record the following definition to be found useful later:
\begin{equation}
\label{equation:sigma}
\Sigma_\pm := \bigcup_{\|X-X_0\|_{C^2} \leq \delta,\,  \pm t \geq 0} \varphi^X_t(\mc{M}),
\end{equation}
where $\delta>0$ is small enough. Finally, we note that the tails $\Gamma_\pm^{X}$ and the bundles $(E_{\pm,0}^X)^*$ admit an extension to the set $\left\{\rho > -\rho_0\right\}$.

\subsection{Resolvent and X-ray transform}

Since we will work with Sobolev spaces on the manifolds $\mc{M}$ and $\pl_\pm \mc{M}$, let us clarify what this means as these are manifolds with boundary or open manifolds. First, since $\mc{M}$ is a smooth manifold 
with boundary the spaces $H^s(\mc{M})$ are defined intrinsically for $s\geq 0$ (as the restriction of $H^s$-functions defined on $\mc{N}$ for instance). Set $H_0^s(\mc{M})=\overline{C_{\comp}^\infty(\mc{M}^\circ)}$ where the closure is for the $H^s$ norm, and write $H^{-s}(\mc{M}):=(H_0^s(\mc{M}))^*$ for $s>0$, where the upper star denotes the continuous dual. For $\pl_\pm \mc{M}$, write 
$H^s(\pl\mc{M}):=H^s(\overline{\pl_\pm\mc{M}})$ where $\overline{\pl_\pm\mc{M}}:=\pl_\pm \mc{M}\cup \pl_0\mc{M}$ is a smooth manifold with boundary, and $H^{-s}(\pl_\pm \mc{M})=(H_0^s(\pl_\pm\mc{M}))^*$.

Define the resolvent of $X_g$ to be the the family of operators, for $\Re(z) \geq 0$:
\begin{equation}\label{defofRg}
R_g(z): C_{\comp}^\infty(\mc{M}^\circ \setminus \Gamma_-^g)\to C^\infty(\mc{M}), \quad R_g(z)f(y):=-\int_{0}^{\tau_g(y)}e^{-zt}f(\varphi^g_t(y))dt.
\end{equation}
For $z=0$, simply write $R_g := R_g(0)$.
It solves $X_gR_g=\mathbbm{1}$ on $C_{\comp}^\infty(\mc{M}^\circ \setminus \Gamma_-^g)$ with boundary condition $(R_gf)|_{\pl_+\mc{M}}=0$.

Assuming that $(M,g)$ has strictly convex boundary and hyperbolic trapped set, we have by \cite[Propositions 4.2 and 4.4]{Guillarmou-17-2} the following boundedness properties:
\begin{align}
&  \forall p\in [1,\infty), &  R_g: L^\infty(\mc{M})\to L^p(\mc{M}),\label{Rg1}\\
& \forall \alpha\in (0,1), \exists s>0, &   R_g: C_{\comp}^\alpha(\mc{M}^\circ)\to H^s(\mc{M})\label{Rg2},\\
& \forall s > 0, & R_g: H^s(\mc{M})\to H^{-s}(\mc{M})\label{Rg3},
\end{align}
where  $C^\alpha(\mc{M})$ is the H\"older space of order $\alpha$.
Note that if $\eps>0$ is chosen small enough, $U:=\cup_{t\in (-\eps,\eps)} \varphi_t^g(\pl_-\mc{M})$ is a neighborhood of $\pl_- \mc{M}$ in $\mc{M}_e$ which is diffeomorphic to $(-\eps,\eps)\times \pl_-\mc{M}$ by $(t,y)\mapsto \varphi_t^g(y)$, and $\pl_t(\tau_g\circ \varphi_t^g)=-1$ in $U$. Using \eqref{Rg1}, Santaló's formula \eqref{equation:santalo}, and the fact that $\ell_g$ is smooth near $\partial_0 \mc{M}$ in $\pl_- \mc{M}\cup \pl_0\mc{M}$ (see \cite[Lemma 4.1.1]{Sharafutdinov-94}), we consequently obtain
\begin{equation}\label{reg_ell_g} 
\ell_g=-(R_g{\bf 1}_{\mc{M}})|_{\pl_-\mc{M}} \in L^p(\pl_-\mc{M},\mu_{\pl})
\end{equation}
for all $1 \leq p<\infty$. The \emph{X-ray transform} is defined as the operator 
\[
I^g: C_{\comp}^\infty(\mc{M}\setminus \Gamma_-^g)\to C^\infty_{\comp}(\pl_-\mc{M}\setminus \Gamma_-^g),\quad I^gf:=-(R_gf)|_{\pl_-\mc{M}}.
\]
and, by \cite[Lemma 5.1]{Guillarmou-17-2}, it extends as a bounded map for all $p>2$ 
\begin{equation}\label{Igbounded} 
I^g: L^p(\mc{M})\to L^2(\pl_-\mc{M},\mu_{\pl}).
\end{equation}
We now show the following boundedness property:

\begin{lemma}
\label{boundednessIg}
Let $(M, g)$ be a compact Riemannian manifold with strictly convex boundary and hyperbolic trapped set. Then, there exists $s>0$ such that the operator $I^g$ is bounded as a map 
\[
I^g: C^2(\mc{M})\to H^s(\pl_-\mc{M}).
\]
\end{lemma}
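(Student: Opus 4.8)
The plan is to differentiate the explicit integral defining $I^g f$ directly on $\overline{\pl_-\mc{M}}\setminus\Gamma^g_-$, obtaining derivative bounds that blow up at most exponentially in $\ell_g$, and then to trade this blow-up against the exponential decay \eqref{escaperate} of $\mu_\pl(\ell_g^{-1}(t,\infty))$ by a truncation-and-interpolation argument.

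\textbf{Step 1 (pointwise derivative estimates).} On $\pl_-\mc{M}$ we have, by \eqref{defofRg}, $w:=I^g f=\int_0^{\ell_g(\cdot)}f(\varphi^g_t(\cdot))\,dt$. Since $\ell_g\in C^\infty(\overline{\pl_-\mc{M}}\setminus\Gamma^g_-)$ and $\Gamma^g_-\cap\overline{\pl_-\mc{M}}$ is closed and null for any smooth density, $w$ is $C^2$ on $\overline{\pl_-\mc{M}}\setminus\Gamma^g_-$; moreover $|w(y)|\le\ell_g(y)\|f\|_{C^0}$, so $w\in L^2(\pl_-\mc{M})$ by \eqref{cavalieri}. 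Differentiating under the integral sign produces a term at the endpoint $t=\ell_g(y)$ carrying $d^j_y\ell_g(y)$ and terms inside the integral carrying $d^j_y\varphi^g_t(y)$ and derivatives of $f$ of order $\le 2$. Using Lemma \ref{lemma:bound-dell} to bound $|d^j_y\ell_g(y)|\le Ce^{C\ell_g(y)}$ away from $\pl_0\mc{M}$ (near $\pl_0\mc{M}$, $\ell_g$ is smooth and bounded, hence so are $w$ and its derivatives, so this region is harmless), the Grönwall bound \eqref{equation:gron} and its higher-order analogue to bound $\|d^j_y\varphi^g_t(y)\|\le Ce^{Ct}$, and $\int_0^{\ell_g(y)}e^{Ct}\,dt\le C^{-1}e^{C\ell_g(y)}$, I obtain constants $C,a>0$ (depending on $g$) with $|d^j_y w(y)|\le C\|f\|_{C^2}e^{a\ell_g(y)}$ for $j=0,1,2$ and all $y\in\overline{\pl_-\mc{M}}\setminus\Gamma^g_-$.

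\textbf{Step 2 (truncation).} Fix $\chi\in C^\infty_{\comp}([0,\infty))$ with $\chi\equiv1$ on $[0,1]$ and $\supp\chi\subset[0,2]$, and for $T\ge1$ set $w_T:=\chi(\ell_g/T)\,w$. The factor $\chi(\ell_g/T)$ is supported in $\{\ell_g\le 2T\}$, hence extends by zero to a smooth function on $\overline{\pl_-\mc{M}}$ vanishing near $\Gamma^g_-$, so $w_T\in C^2(\overline{\pl_-\mc{M}})\subset H^2(\overline{\pl_-\mc{M}})$. The chain rule together with Step 1 gives, after possibly enlarging $a$, the bound $\|w_T\|_{H^2(\pl_-\mc{M})}\le C\|f\|_{C^2}e^{aT}$. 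Conversely, $w-w_T$ is supported in $\{\ell_g>T\}$, where $|w|\le\ell_g\|f\|_{C^0}$; estimating $\int_{\{\ell_g>T\}}\ell_g^2$ via \eqref{escaperate} and \eqref{cavalieri} yields, for any fixed $b\in(0,Q_g/2)$ and $T$ large, $\|w-w_T\|_{L^2(\pl_-\mc{M})}\le C\|f\|_{C^2}e^{-bT}$; in particular $w_T\to w$ in $L^2$.

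\textbf{Step 3 (interpolation and summation).} Setting $T_j:=1+j$ and telescoping $w=w_{T_0}+\sum_{j\ge0}(w_{T_{j+1}}-w_{T_j})$, each summand lies in $C^2(\overline{\pl_-\mc{M}})$ with $\|w_{T_{j+1}}-w_{T_j}\|_{L^2}\le Ce^{-bT_j}\|f\|_{C^2}$ and $\|w_{T_{j+1}}-w_{T_j}\|_{H^2}\le Ce^{aT_{j+1}}\|f\|_{C^2}$, so the standard interpolation inequality $\|u\|_{H^s}\le C\|u\|_{L^2}^{1-s/2}\|u\|_{H^2}^{s/2}$ (valid on the compact manifold with boundary $\overline{\pl_-\mc{M}}$, $0\le s\le 2$) gives $\|w_{T_{j+1}}-w_{T_j}\|_{H^s}\le C\|f\|_{C^2}e^{(-b(1-s/2)+as/2)T_j}$. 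Picking any $s$ with $0<s<\tfrac{2b}{a+b}$ makes the exponent negative; the series then converges in $H^s$, its sum equals $w$ since it already converges to $w$ in $L^2$, and adding $\|w_{T_0}\|_{H^s}\le Ce^{aT_0}\|f\|_{C^2}$ we conclude $I^g f\in H^s(\pl_-\mc{M})$ with $\|I^g f\|_{H^s}\le C\|f\|_{C^2}$, which is the claim. The main obstacle is Step 1: one must check that differentiating $I^g f$ produces nothing worse than the flow Jacobian $d^j_y\varphi^g_t$ and derivatives of $\ell_g$ (both exponentially controlled in $\ell_g$ off the trapped set), and that the grazing locus $\pl_0\mc{M}$ introduces no extra singularity — the latter resting on the fact, recorded in \S\ref{ssection:hyperbolic-trapped-set}, that $\ell_g$ extends smoothly and boundedly up to $\pl_0\mc{M}$. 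Granting that, everything else is soft, the exponential growth rate $a$ of the derivatives competing with the escape rate $Q_g$, and the threshold $s<2b/(a+b)$ being the price paid.
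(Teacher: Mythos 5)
The proposal is correct, and it takes a genuinely different route from the paper's. The paper proves the lemma by complex interpolation (Hadamard's three-line theorem) applied to the holomorphic family $z \mapsto (1+\Delta_h)^{z+\eps}\big(e^{-z\gamma\ell_g}\chi I^g f\big)$, and the delicate part there is Claim \ref{claim:above}, which shows that the distributional Laplacian of $e^{-z\gamma\ell_g}\chi I^gf$ coincides with its pointwise value extended by zero across $\Gamma_-^g$ — exactly because the untruncated function is not globally $C^2$. You sidestep this entirely: your truncations $w_T = \chi(\ell_g/T)\,I^gf$ vanish identically on the open neighborhood $\{\ell_g>2T\}$ of $\Gamma_-^g\cap\overline{\partial_-\mc{M}}$, hence are honestly $C^2(\overline{\partial_-\mc{M}})$ and the $W^{2,\infty}$ bound $\le Ce^{aT}\|f\|_{C^2}$ follows straight from Lemma \ref{lemma:bound-dell} and the Gr\"onwall bound \eqref{equation:gron}. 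Trading that growth against the $L^2$ decay $\|w-w_T\|_{L^2}\le Ce^{-bT}\|f\|_{C^0}$ (any $b<Q_g/2$, via \eqref{cavalieri}/\eqref{escaperate}) through real (Gagliardo--Nirenberg) interpolation on the telescoping series gives convergence in $H^s$ for $s<2b/(a+b)$, which is correct and even makes the final exponent explicit. Both proofs are morally the same dichotomy — exponential derivative growth in $\ell_g$ vs.\ exponential volume decay — but the truncation/telescoping scheme is the more elementary realization of it, while the paper's choice of a uniform complex-interpolation machinery pays off in the later Lemmas \ref{lemma:upperboundS_g}, \ref{expsell} and especially in the key estimate \eqref{equation:toprove} of \S\ref{ssection:key}, where the interpolation domain is genuinely a strip in $\C$ and real interpolation alone would not suffice. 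One small remark: the sharper bound is $\|w-w_T\|_{L^2}\le C\|f\|_{C^0}e^{-bT}$; writing $\|f\|_{C^2}$ on the right is of course still valid, only wasteful.
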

\begin{proof}
First of all, if $\chi\in C^\infty(\overline{\pl_-\mc{M}})$ is supported close to $\pl_0\mc{M}$, one can check that $\chi I^gf\in 
C^2(\overline{\pl_-\mc{M}})$ for $f\in C^2(\mc{M})$, see \cite[Lemma 4.1.1]{Sharafutdinov-94}. It thus remains to analyze $\chi I^gf$ when $\chi\in C_{\comp}^\infty(\pl_-\mc{M})$.
Let $\gamma > 0$ be a large enough constant (it will be determined later), $\eps \in \left(0,Q_g/(2\gamma)\right)$, and let $\Delta_h$ be the Riemannian Laplacian associated to an arbitrarily chosen smooth Riemannian metric $h$ on $\overline{\pl_-\mc{M}}$, with Dirichlet condition at $\pl_0\mc{M}$. It is self-adjoint on $H_0^1(\overline{\pl_-\mc{M}})\cap H^2(\overline{\pl_-\mc{M}})$ with respect to the Riemannian volume measure ${\rm dv}_h$. Note that ${\rm dv}_h$ is smoothly equivalent to $\mu_{\pl}$ on each compact set of $\pl_-\mc{M}$ as $\mu_{\pl}$ vanishes to first order on the boundary $\partial_0 \mc{M}$.

For $f\in C^2(\mc{M})$, consider the holomorphic map
\[
\left\{-\eps \leq \Re(z) \leq 1-\eps\right\} \ni z \mapsto u(z) :=(1+\Delta_{h})^{z+\eps}(e^{-z\gamma \ell_g}\chi I^gf)\in \mc{D}'(\pl_-\mc{M}).
\]
We are going to apply the Hadamard three line theorem (see \cite[Theorem 12.8]{Rudin-87}) to the holomorphic family of distributions $u(z)$. From \eqref{Igbounded}, we have $I^gf\in L^2(\pl_-\mc{M},\mu_{\pl})$, but we can also write the pointwise bound 
\begin{equation}\label{eq:pointwise-I^g}
	\forall y\in \pl_-\mc{M}\setminus \Gamma_-,\quad  |I^gf(y)| \leq \|f\|_{L^\infty}\ell_g(y).
\end{equation}
From \eqref{cavalieri}, we get using that $\eps <Q_g/(2\gamma)$, 
\[
\chi e^{\eps \gamma \ell_g}I^gf \in L^2(\pl_-\mc{M},{\rm dv}_h).
\]
Therefore on the line $\{\Re(z)=-\eps\}$ with $0<\eps <Q_g/(2\gamma)$, there exists a constant $C > 0$ independent of $z$, $f$ (but depending on $\chi$) such that
\begin{equation}\label{boundonRes=-eps} 
\|u(z)\|_{L^2}\leq \|(1+\Delta_h)^{i\Im(z)}\|_{L^2\to L^2}\|\chi e^{\eps \gamma \ell_g}I^g(f)\|_{L^2} \leq C\|f\|_{L^\infty}
\end{equation}
where $L^2=L^2(\pl_-\mc{M},{\rm dv}_h)$. Note that we used the spectral theorem for $\Delta_h$ in order to bound 
$\|(1+\Delta_h)^{i\Im(z)}\|_{L^2\to L^2}\leq 1$.

Now, using that $I^gf(y)=\int_0^{\ell_g(y)}f(\varphi_t^g(y))dt$,
we obtain, using Lemma \ref{lemma:bound-dell}, \eqref{equation:gron}, and \eqref{eq:pointwise-I^g}, the pointwise bound on $\pl_-\mc{M}\setminus \Gamma_-^g$
\[ 
|\Delta_h(e^{-z\gamma \ell_g}I^gf)(y)|\leq C(1+|z|^2)\|f\|_{C^2(\mc{M})}e^{(C_0-\gamma \Re(z))\ell_g(y)}
\]
for some uniform constants $C,C_0>0$ (depending only on the metric $g$). We therefore see that for $\Re(z)=1-\eps$, the function $\Delta_h(e^{-\gamma z\ell_g}\chi I^g(f))$ can be extended from $\pl_-\mc{M}\setminus \Gamma_-$ continuously to $\pl_-\mc{M}$ by setting it to be $0$ on $\Gamma_-$ as long as $\gamma(1-\eps)>C_0$. Here, we see that, in order to achieve this, we can choose $\gamma > 2022 C_0$ at the very beginning (the constant $C_0$ only depends on the metric $g$). 

\begin{claim}
\label{claim:above}
The continuous extension by $0$ of $\Delta_h(e^{-z\gamma \ell_g}\chi I^gf)$ on $\Gamma_-^g$ matches with the distributional derivative $\Delta_h(e^{-z\gamma \ell_g}\chi I^gf) \in \mc{D}'(\partial_-\mc{M})$.
\end{claim}

The proof of this claim is postponed below. Then $\Delta_h(e^{-z\gamma \ell_g}\chi I^gf) \in L^2(\partial_-\mc{M})$ and 
on the line $\{\Re(z)=1-\eps\}$ we have 
\begin{equation}
\label{boundonRes=1-eps}
\|u(z)\|_{L^2}\leq \|(1+\Delta_h)^{i\Im(z)}\|_{L^2\to L^2}\|(1+\Delta_h)(e^{-z\gamma \ell_g}\chi I^gf)\|_{L^2}\leq C(1+|z|^2)\|f\|_{C^2}.
\end{equation}
We can then use Hadamard 3-lines interpolation theorem applied to the holomorphic function
\[
\left\{-\eps \leq \Re(z)\leq1-\eps\right\} \ni z \mapsto v(z):=\int_{\pl_-\mc{M}} (1+z)^{-2}u(z) \psi \dd\mathrm{v}_h \in \C,
\]
where $\psi\in C_{\comp}^\infty(\pl_-\mc{M})$ is arbitrary. Note that this is well-defined and holomorphic in the strip $\Re(z)\in [-\eps,1-\eps]$ since we have the bound:
\[
|v(z)|\leq \frac{1}{(1 - \varepsilon)^2}\|\psi\|_{H^{2(\Re(z) + \varepsilon)}}\|e^{\eps \gamma \ell_g}\chi I_gf\|_{L^2}\leq C\|\psi\|_{H^2}\|f\|_{C^2}.
\]

From \eqref{boundonRes=-eps} and \eqref{boundonRes=1-eps}, we deduce the existence of a constant $C>0$, independent of $\psi$, such that for all $z$ with $\Re(z)\in [-\eps,1-\eps]$, one has 
\[
|v(z)|\leq C\|\psi\|_{L^2}.
\]
This shows that $u(z)\in L^2(\pl_-\mc{M})$ for all such $z$ with the bound $|u(z)| \leq C$.
In particular taking $z=0$, we obtain that $(1+\Delta_h)^\eps (\chi I^gf) \in L^2$, thus showing the claimed result. \\

It thus remains to prove Claim \ref{claim:above} above. Denote by $F$ the continuous extension of $\Delta_h(e^{-z\gamma \ell_g}\chi I^g(f))$ by $0$ on $\Gamma^g_-$. We need to show that for each $\psi\in C_{\comp}^\infty(\pl_-\mc{M})$,
\begin{equation}
\label{equation:eq}
\int_{\pl_-\mc{M}} \chi e^{-z\gamma \ell_g}I^g(f)  \Delta_h\psi ~{\rm dv}_h= \int_{\partial_- \mc{M}} 
F  \psi ~ {\rm dv}_h.
\end{equation}
Take $\theta\in C_{\comp}^\infty([0,2))$, equal to $1$ in $[0,1]$. We write the left hand side as 
\[
\begin{split} 
\lim_{T\to \infty}  \int_{\pl_-\mc{M}} \theta\big(\ell_g/T\big) \chi e^{-z\gamma \ell_g}I^g(f) \Delta_h\psi\,  {\rm dv}_h=& \lim_{T\to \infty}
 \int_{\pl_-\mc{M}} \Delta_h\Big(\theta\big(\ell_g/T\big) \chi e^{-z\gamma \ell_g}I^g(f)\Big) \psi \,{\rm dv}_h \\
=&  \lim_{T\to \infty} A_1(T)+A_2(T),
\end{split}\]
where:
\[
A_1(T):= \int_{\pl_-\mc{M}}  \Delta_h\big(\theta\big(\ell_g/T\big)\big)\chi e^{-z\gamma \ell_g}I^g(f)\psi~ {\rm dv}_h +  2\int_{\pl_-\mc{M}} \nabla \big( \theta\big(\ell_g/T\big)\big) \cdot \nabla(\chi e^{-z\gamma \ell_g}I^g(f)) \psi\, {\rm dv}_h,
\]
\[
A_2(T):= \int_{\pl_-\mc{M}} \theta\big(\ell_g/T\big) \Delta_h\big(\chi e^{-z\gamma \ell_g}I^g(f)\big) \psi \, {\rm dv}_h = \int_{\pl_-\mc{M}}  \theta(\ell_g/T) F \psi ~ {\rm dv}_h.
\]
In order to show \eqref{equation:eq}, it thus suffices to show that $A_1(T)\to 0$ as $T\to \infty$. The derivatives  $d^{j}_y( \theta(\ell_g/T))$ of order $j=1,2$ are supported in $\{\ell_g\in [T,2T]\}$, where we can use the pointwise bound of Lemma \ref{lemma:bound-dell}:
\[
|d^j_y( \theta(\ell_g(y)/T))|\leq Ce^{C_0\ell_g(y)} \leq Ce^{2C_0T},
\]
for some uniform $C,C_0 > 0$. Since all terms in the integrand of $A_1$ are multiplied by the weight $|e^{-\gamma z\ell_g(y)}|\leq e^{-\gamma (1-\eps)T}$, we easily see, using Lemma \ref{lemma:bound-dell} once again, that
\[
A_1(T)=\mc{O}((1+|z|)e^{(3C_0-\gamma(1-\eps))T}).
\]
Taking $\gamma>6C_0$ at the beginning and $\eps<1/2$, one obtains that $A_1(T)\to 0$, and this proves our claim.
\end{proof}

Note that, as a corollary of Lemma \ref{boundednessIg}, we obtain that there is $s>0$ such that
\begin{equation}\label{regSobolevell_g}
\ell_g=I^g({\bf 1}_{\mc{M}}) \in H^{s}(\pl_-\mc{M}). 
\end{equation}

\subsection{Scattering operator}

Working with the scattering operator $\mc{S}_g$ has several advantages rather than working directly with $S_g$. The main reason is that its Schwartz kernel can be expressed in terms of restriction of the Schwartz kernel of the resolvent $R_g$ of the geodesic vector field $X_g$. This is the content of Lemma \ref{SchwartzkernelSg} below. This will be important 
so that we can work in a good functional setting in order to apply the Taylor expansion of the lens data with respect to $g$. 
We denote $R_{g_e}$ the resolvent on $\mc{M}_e$ for the extension $g_e$ (for the definition of $g_e$ recall \S \ref{sssection:extension}), which has all the properties of $R_g$. 

\begin{lemma}
\label{SchwartzkernelSg}
Let $(M,g)$ be a compact Riemannian manifold with strictly convex boundary and hyperbolic trapped set.
Let $\iota_{\pl_\pm}:\pl_\pm \mc{M}\to \mc{M}$ be the inclusion map. The restriction $(\iota_{\pl_-} \times \iota_{\pl_+})^*R_{g_e}$ of the Schwartz kernel of the resolvent on $\pl_-\mc{M}\times \pl_+\mc{M}$ makes sense as a distribution, and the Schwartz kernel of $\mc{S}_g$ is given by:
\[
\mc{S}_g(y,y')=-(\iota_{\pl_-} \times \iota_{\pl_+})^*R_{g_e}(y,y'), \quad (y, y') \in \pl_-\mc{M}\times \pl_+\mc{M}.
\] 
\end{lemma}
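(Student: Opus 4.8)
The plan is to directly compute the action of $\mc{S}_g$ on test functions and identify the resulting distributional kernel with the claimed boundary restriction of $R_{g_e}$. First I would fix $\omega \in C^\infty_{\comp}(\partial_+\mc{M}\setminus \Gamma^g_+)$ and a test function $f \in C^\infty_{\comp}(\partial_-\mc{M}\setminus \Gamma^g_-)$, and write out $\langle \mc{S}_g \omega, f\rangle_{\mu_\pl} = \int_{\partial_-\mc{M}} \omega(S_g(y)) f(y)\, \dd\mu_\pl(y)$. The idea is to recognize the right-hand side as an integral over the ``exit flow-out'' of $\partial_-\mc{M}$: since $S_g(y) = \varphi^g_{\tau_g(y)}(y)$, I would use the fact that the flow-out map $(y,t) \mapsto \varphi^g_t(y)$ from a neighborhood of $\partial_-\mc{M}$ in time provides coordinates, and that by Santaló's formula \eqref{equation:santalo} the Liouville measure disintegrates along orbits with the boundary measure $\mu_\pl$ on $\partial_-\mc{M}$. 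The key observation is that for $y \in \partial_-\mc{M}\setminus\Gamma^g_-$, writing $y' = S_g(y) \in \partial_+\mc{M}$, the defining formula \eqref{defofRg} for the resolvent at $z=0$, $R_{g_e}F(y) = -\int_0^{\tau_{g_e}(y)} F(\varphi^{g_e}_t(y))\,\dd t$, restricted so that the test function $F$ concentrates near $\partial_+\mc{M}$, picks out precisely the exit point $S_g(y)$; thus $(\iota_{\partial_-}\times\iota_{\partial_+})^* R_{g_e}$, paired against $f \otimes \omega$, reproduces $-\int \omega(S_g(y)) f(y)\,\dd\mu_\pl(y)$. Here one uses that $\tau_{g_e} = \tau_g$ on $\mc{M}\setminus\Gamma^g_-$ and that geodesics leaving $\mc{M}$ never return (from \S\ref{sssection:extension}), so the extended resolvent sees only the part of the orbit inside $\mc{M}$ up to the first exit.

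The first technical point is to justify that $(\iota_{\partial_-}\times\iota_{\partial_+})^* R_{g_e}$ makes sense as a distribution. For this I would invoke a wavefront set argument: by the construction in \S\ref{sssection:geom-cons}, the Schwartz kernel of $R_{g_e}$ has wavefront set contained in the union of the conormal to the diagonal, the positive flow-out $\Omega_+$, and the product $(E_+^{\X})^* \times (E_-^{\X})^*$ of the dual stable/unstable bundles, pulled back to $T^*(\mc{M}_e \times \mc{M}_e)$. One then checks that the conormal bundle $N^*(\partial_-\mc{M}\times\partial_+\mc{M})$ inside $T^*(\mc{M}_e\times\mc{M}_e)$ is disjoint from $\WF(R_{g_e})$: strict convexity of $\partial\mc{M}$ with respect to $X_{g_0}$ (hence $X_{g_e}$ for $g$ close to $g_0$) guarantees that $X_{g_e}$ is transverse to $\partial_\pm\mc{M}$, so the flow-out directions are never conormal to these boundary hypersurfaces; and the trapped-set bundles $(E_\pm^{\X})^*$ live over $\Gamma_\pm^{g_e}$, which by strict convexity is disjoint from $\partial\mc{M}$. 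Hence the pullback by $\iota_{\partial_-}\times\iota_{\partial_+}$ is well-defined by Hörmander's theorem on pullbacks of distributions. I expect this verification — keeping careful track of which cotangent directions can occur in $\WF(R_{g_e})$ and confirming transversality — to be the main obstacle, though it is essentially a repackaging of the microlocal structure of $R_{g_e}$ already available from \cite{Dyatlov-Guillarmou-16} and \cite{Guillarmou-17-2}.

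Finally I would assemble the pieces: having shown both that $(\iota_{\partial_-}\times\iota_{\partial_+})^*R_{g_e}$ is a well-defined distribution on $\partial_-\mc{M}\times\partial_+\mc{M}$ and that, tested against $f\otimes\omega$ with $f,\omega$ compactly supported away from the tails, it equals $-\langle \mc{S}_g\omega, f\rangle_{\mu_\pl}$, density of such test functions (using $\mu_\pl(\Gamma^g_\pm\cap\partial\mc{M})=0$ and the continuity of $\mc{S}_g$ as an isometry on $L^2(\mu_\pl)$) gives the identity of Schwartz kernels $\mc{S}_g(y,y') = -(\iota_{\partial_-}\times\iota_{\partial_+})^*R_{g_e}(y,y')$ on all of $\partial_-\mc{M}\times\partial_+\mc{M}$. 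A small care point throughout is the measure-theoretic bookkeeping between $\mu_\pl$ and a smooth reference measure ${\rm dv}_h$ near $\partial_0\mc{M}$, where $\mu_\pl$ degenerates to first order; but since $\ell_g$ and the scattering map extend smoothly across $\partial_0\mc{M}$ and $\omega$ vanishes near $\Gamma^g_+$, this region contributes no singular behavior and is handled exactly as in the proof of Lemma \ref{boundednessIg}.
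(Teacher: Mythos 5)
Your overall plan — justify the restriction via Hörmander's pull-back theorem and a wavefront description of $R_{g_e}$, then identify the restricted kernel by testing directly against $f\otimes\omega$ — is a genuine alternative to the paper's argument. The paper instead introduces the auxiliary flow-invariant function $\mc{E}_g\omega := \tilde{\omega}\psi - R_{g_e}X_{g_e}(\tilde{\omega}\psi)$, where $\tilde{\omega}$ extends $\omega$ by constancy along flow lines and $\psi$ localizes away from the glancing region; it reads off $\mc{S}_g(\psi\omega)$ from the boundary value of $\mc{E}_g\omega$, and makes sense of the restriction using the flow-out diffeomorphism $\Psi_-$ (the wavefront/pull-back route is mentioned only parenthetically as an alternative). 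One gain of your approach is that it avoids building $\mc{E}_g$ and the cutoff bookkeeping around $\Omega$; the cost is that you import the full wavefront description $\WF(R_{g_e})\subset N^*\Delta\cup\Omega_+\cup\bigl((E_+^{\X})^*\times(E_-^{\X})^*\bigr)$ from \cite{Dyatlov-Guillarmou-16}, whereas the paper only needs the much softer facts that $X_{g_e}R_{g_e}=R_{g_e}X_{g_e}={\rm Id}$ away from the diagonal and that $X_{g_e}$ is transverse to $\partial_\pm\mc{M}$.

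The real gap is in the identification step, which you expected to be the easy part. You write that the integral formula for the resolvent ``picks out precisely the exit point $S_g(y)$'', concluding that the restricted kernel paired against $f\otimes\omega$ equals $-\int_{\partial_-\mc{M}}\omega(S_g(y))f(y)\,d\mu_\pl(y)$; this is the crux of the lemma and is asserted rather than computed. What must be checked is that when the test function on the right factor concentrates on $\partial_+\mc{M}$, the boundary term that emerges carries exactly the density $i_{X_{g_e}}d\mu = d\mu_\pl$ on $\partial_+\mc{M}$ — this is precisely what the paper's Green's formula (integrating by parts against $X'_{g_e}$ and using $X'_{g_e}R_{g_e}(y,y')=0$ there) supplies, and it is the same Santaló-type identity you invoke heuristically but never carry through. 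Without this integration by parts (or an explicit Jacobian calculation along the flow-out chart $\Psi_+$) the argument is not closed; in particular, the Schwartz kernel of $\mc{S}_g$ is only well-defined once a reference density on $\partial_\pm\mc{M}$ is fixed, and identifying that density as $\mu_\pl$ is part of what must be proved. Conversely, the step you singled out as the main obstacle — disjointness of $\WF(R_{g_e})$ from $N^*(\partial_-\mc{M}\times\partial_+\mc{M})$, using transversality of $X_{g_e}$ and $\Gamma_\mp^{g_e}\cap\partial_\pm\mc{M}=\emptyset$ — is correct and is actually the lighter-weight half of the proof.
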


\begin{proof} First, we define the operator $\mc{E}_g: C_{\comp}^\infty(\pl_+\mc{M})\to H^{s}(\mc{M})$ for $s>0$ as follows. Let 
$\Omega=\{(x,v)\in \pl \mc{M}\,|\, |g_x(\nu,v)|\leq \delta\}$ for $\delta>0$ small and define $\Omega_e=\mc{M}_e\cap \cup_{t\in \R}\varphi_t^{g_e}(\Omega)$ 
the flowout of $\Omega$ by $\varphi^{g_e}_t$, and let  $\psi\in C^\infty(\mc{M}_e,\R_+)$ so that $\psi|_{\Omega_e\cup \pl_-\mc{M}}=0$,  $\psi$ supported in a small neighborhood of $\pl_+\mc{M}\setminus \Omega$ and $X_{g_e}\psi=0$  in $\mc{M}_e\setminus \mc{M}$  and near $\pl_+\mc{M}$. Then set for $\omega\in C_{\comp}^\infty(\pl_+\mc{M})$
\[\mc{E}_g\omega:=\tilde{\omega} \psi- R_{g_e}X_{g_e}(\tilde{\omega}\psi)\in H^{s}(\mc{M}_e)\cap L^p(\mc{M}_e)\cap C^\infty(\mc{M}_e\setminus (\Gamma_-\cup \Gamma_+))\]
for some $s>0$ and all $p<\infty$ using \eqref{Rg1} and \eqref{Rg2}, where $\tilde{\omega}$ is defined on ${\rm supp}(\psi)$ by extending $\omega$ from $\pl_+\mc{M}$ to be constant on the flow lines of $X_{g_e}$. This can be done by using the diffeomorphism
 \[ \Psi_+:\{ (t,y)\in (-\delta/2,\infty)\times (\pl_+\mc{M}\setminus \Omega) \,| \, t\leq \tau_{g_e}(y)\} \ni (t,y) \mapsto \varphi_{t}^{g_e}(y)\in \mc{M}_e\] 
and using that the flow $\varphi_t^{g_e}$ is the translation in $t$ in these coordinates. 
One clearly has that $\mc{E}_g\omega$ is smooth near $\pl_+\mc{M}$ and 
\[ X_{g_e}\mc{E}_g\omega=0, \quad (\mc{E}_g\omega)|_{\pl_+\mc{M}}=\psi|_{\pl_+\mc{M}}\omega.\]
In particular, we see that outside $\Gamma_-$ we have 
\begin{equation}\label{restEgomega} 
(\mc{E}_g\omega)|_{\pl_-\mc{M}\setminus \Gamma_-}=(\mc{S}_g(\omega\psi|_{\pl_+\mc{M}}))|_{\pl_-\mc{M}\setminus \Gamma_-}.
\end{equation}
On the other hand, using the diffeomorphism 
\[\Psi_-:\{(t, y) \in (-\infty,\delta/2)\times (\pl_-\mc{M}\setminus \Omega) \, |\, t\geq -\tau_{g_e}(-y)\} \ni (t,y) \mapsto \varphi_t^{g_e}(y) \in \M_e\]
mapping to a neighborhood of $\pl_-\mc{M}\setminus \Omega$, we see that $\Psi_-^*\mc{E}_g\omega$ is independent of $t$ and can be viewed as a function in $H^s(\pl_-\mc{M})\cap L^p(\pl_-\mc{M})$, i.e. the restriction $(\mc{E}_g\omega)|_{\pl_-\mc{M}}$ makes sense as an $H^s(\pl_-\mc{M})\cap L^p(\pl_-\mc{M})$ function. (This fact can also be proved using H\"ormander pull-back theorem for distributions using wave-front analysis with the fact that $X$ is transverse to $\pl_-\mc{M}$.) 
Since $\mu_\pl(\Gamma_-^g\cap\pl_-\mc{M})=0$, this implies with \eqref{restEgomega} that $(\mc{E}_g\omega)|_{\pl_-\mc{M}}=\mc{S}_g(\omega\psi|_{\pl_+\mc{M}})$. But this is also given by 
$(\mc{E}_g\omega)|_{\pl_-\mc{M}}=-(R_{g_e}X_{g_e}(\tilde{\omega}\psi))|_{\pl_-\mc{M}}$. Since $X_{g_e}R_{g_e}=R_{g_e}X_{g_e}={\rm Id}$ in $C_{\comp}^\infty(\mc{M}_e^\circ)$ (this follows for instance by analytic extension of the identity $R_{g_e}(z)(X_{g_e} - z) = (X_{g_e} - z)R_{g_e}(z) = {\rm Id}$ on $C_c^\infty(M_e^\circ)$ for $\Re(z)\gg 1$), one has $(X_{g_e}R_{g_e})(y,y')=0$ and $(X'_{g_e}R_{g_e})(y,y')$ in the distribution sense for $y$ close to $\pl_-\mc{M}\setminus \Omega$ and 
$y'$ close to $\pl_+\mc{M}\setminus \Omega$, where $X_{g_e}$ (resp. $X'_{g_e}$) 
denotes the action of $X_{g_e}$ on the left (resp. right) variable of $\mc{M}_e\times \mc{M}_e$. This implies as above that the restriction $(\iota_{\pl_-}\times \iota_{\pl_+})^*R_{g_e}$ makes sense and we can apply Green's formula in the right variable:  if $\omega'\in C_{\comp}^\infty(\pl_-\mc{M})$ \[\begin{split}
-\cjg \iota_{\pl_-}^*(R_{g_e}X_{g_e}(\tilde{\omega}\psi)),\omega'\cjd
=& -\int_{\pl_-\mc{M}} \int_{\mc{M}}R_{g_e}(y,y')X_{g_e}(\tilde{\omega}\psi)(y') \omega'(y) d\mu(y')d\mu_{\pl}(y) \\
=&  -\int_{\pl_-\mc{M}}\int_{\pl_+\mc{M}}R_{g_e}(y,y')(\psi\omega)(y')\omega'(y) i_{X_{g_e}}d\mu (y') d\mu_{\pl}(y)
\end{split}\]
where we used $X_{g_e}(\tilde{\omega}\psi) = 0$ on $\M_e \setminus \M$, and that $(X_{g_e}R_{g_e})(y,y')=0$ for the interior term from Green's formula. This means, using that $i_{X_{g_e}}d\mu=d\mu_{\pl}$ at $\pl_+\mc{M}$, that
\[ -\cjg \iota_{\pl_-}^*(R_{g_e}X_{g_e}(\tilde{\omega}\psi)),\omega'\cjd= -\cjg (\iota_{\pl_-}\times \iota_{\pl_+})^*R_{g_e}, \omega'\otimes \psi|_{\pl_+\mc{M}}\omega\cjd .\] 
This shows that $\mc{S}_g(y,y')\psi(y') = -(\iota_{\pl_-}\times \iota_{\pl_+})^*R_g(y,y')\psi(y')$ as a distribution of $(y,y')\in \pl_-\mc{M}\times \pl_+\mc{M}$. Since $\Omega$ can be chosen with $\delta>0$ arbitrarily small, we obtain the result by choosing $\psi=1$ outside a $\delta/4$ neighborhood of $\Omega\cap \pl_+\mc{M}$ in $\pl_+\mc{M}$.
\end{proof}

We will also need the following regularity bound:

\begin{lemma}\label{lemma:upperboundS_g}
Let $g \in C^\infty(M,\otimes^2_S T^*M_+)$ be a metric with strictly convex boundary and hyperbolic trapped set, $\chi\in C_{\comp}^\infty(\pl_-\mc{M})$, $f\in C^\infty(\pl_+\mc{M})$ and $p \in \N$. Then:

\begin{enumerate}
\item There exists $\beta \gg 0$ large enough such that for all $z \in i\R + \beta$, $\chi e^{- z \ell_{g}}\mc{S}_{g}f$ extends by $0$ on $\Gamma_-^{g}$ with an extension belonging to $W^{p+1,\infty}(\partial_- \mc{M})$ and the weak distributional derivative $(1+\Delta_h)^{(p+1)/2}(\chi e^{- z \ell_{g}}\mc{S}_{g}f) \in \mc{D}'(\partial_-\mc{M})$ coincides with the derivative of the $W^{p+1,\infty}(\partial_- \mc{M})$-extension.

\item The map
\[
C^{p+1}(\partial_+\mc{M}) \ni f \mapsto e^{- z \ell_{g}} \mc{S}_{g}f \in W^{p+1,\infty}(\partial_-\mc{M})
\]
is bounded, and there exists a uniform constant $C > 0$ (independent of $z$) such that:
\begin{equation}
\label{boundweightedSgf}
\|(1+z)^{-(p+1)}\chi e^{- z \ell_{g}} \mc{S}_{g}f\|_{W^{{p+1},\infty}(\partial_- \mc{M})} \leq C \|f\|_{C^{p+1}(\partial_+\mc{M})}.
\end{equation}

\item In particular, by the Sobolev embedding $W^{p+1,\infty}(\partial_-\mc{M}) \hookrightarrow C^p(\partial_-\mc{M})$, the function $\chi e^{- z \ell_{g}}\mc{S}_{g}f$ extends to a $C^p$-function with $C^p$-norm bounded by \eqref{boundweightedSgf}.
\end{enumerate}
\end{lemma}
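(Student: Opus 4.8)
The plan is to mimic the proofs of Lemma~\ref{boundednessIg} and Claim~\ref{claim:above}, replacing the X-ray transform $I^gf=\int_0^{\ell_g}f\circ\varphi^g_t\,dt$ by the composition $\mc{S}_gf=f\circ S_g$, which is easier to differentiate since $f$ is smooth. Because $\ell_g$ is bounded and $C^\infty$ up to $\pl_0\mc{M}$ in $\overline{\pl_-\mc{M}}$ (there is no trapping near $\pl_0\mc{M}$ by strict convexity; see the discussion after~\eqref{cavalieri} and \cite[Lemma 4.1.1]{Sharafutdinov-94}), the scattering map $S_g(y)=\varphi^g_{\ell_g(y)}(y)$ and hence $\chi e^{-z\ell_g}\mc{S}_gf$ are $C^\infty$ up to $\pl_0\mc{M}$ when $\chi$ is supported near $\pl_0\mc{M}$, and all the asserted bounds are trivial there; so I would first reduce to the case $\chi\in C_{\comp}^\infty(\pl_-\mc{M})$. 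On the open set $\pl_-\mc{M}\setminus\Gamma_-^g$ one has the closed formula $\mc{S}_gf(y)=f(\varphi^g_{\ell_g(y)}(y))$, with $\ell_g$ and the flow both smooth there, and the strategy is: (i) bound the classical $y$-derivatives of $\chi e^{-z\ell_g}\mc{S}_gf$ pointwise on $\pl_-\mc{M}\setminus\Gamma_-^g$; (ii) show that, after extension by $0$ across $\Gamma_-^g$, these classical derivatives are the genuine distributional ones.

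For step (i), write $S_g(y)=\Phi(\ell_g(y),y)$ with $\Phi(t,y):=\varphi^g_t(y)$. The Fa\`a di Bruno formula expresses $d_y^jS_g(y)$, for $j\leq p+1$, as a finite sum of products of factors $\pl_t^a d_y^b\Phi(\ell_g(y),y)$ (with $a,|b|\leq j$) and $d_y^{j_i}\ell_g(y)$ (with $\sum_i j_i\leq j$). Since $\pl_t\Phi=X_g\circ\Phi$ with $X_g$ smooth, Gr\"onwall's lemma (cf.~\eqref{equation:gron}) together with its higher-order analogues gives $\|\pl_t^a d_y^b\Phi(t,y)\|\leq Ce^{Ct}$, and Lemma~\ref{lemma:bound-dell}, applied with its regularity parameter equal to $p+1$, gives $|\chi\,d_y^{j_i}\ell_g(y)|\leq Ce^{C_0\ell_g(y)}$; composing with $f\in C^{p+1}$ and multiplying out the at most $j$ factors yields, for $j\leq p+1$ and $y\in\pl_-\mc{M}\setminus\Gamma_-^g$,
\[
|d_y^j(\chi\,\mc{S}_gf)(y)|\leq C\,\|f\|_{C^{p+1}(\pl_+\mc{M})}\,e^{C_1\ell_g(y)},
\]
with $C,C_1$ depending only on $g$ and $p$. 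Since $d_y^k(e^{-z\ell_g})=P_k\,e^{-z\ell_g}$ where $P_k$ is a polynomial of degree $\leq k$ in $z$ with coefficients built from the $d_y^{j_i}\ell_g$, one gets $|d_y^k(e^{-z\ell_g})(y)|\leq C(1+|z|)^k e^{kC_0\ell_g(y)}|e^{-z\ell_g(y)}|$, and Leibniz's rule combined with the previous display gives, for $j\leq p+1$ and $\Re(z)=\beta$,
\[
|d_y^j(\chi\,e^{-z\ell_g}\mc{S}_gf)(y)|\leq C(1+|z|)^{p+1}\|f\|_{C^{p+1}(\pl_+\mc{M})}\,e^{(C_2-\beta)\ell_g(y)},
\]
with $C_2=C_2(g,p)$. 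Fixing $\beta>C_2$ once and for all (depending only on $g$ and $p$), the right-hand side is $\leq C(1+|z|)^{p+1}\|f\|_{C^{p+1}}$ and tends to $0$ as $\ell_g(y)\to\infty$; hence $\chi e^{-z\ell_g}\mc{S}_gf$ and all its classical derivatives of order $\leq p+1$ extend continuously by $0$ across $\Gamma_-^g$, to a function $F_z$.

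Step (ii) is a verbatim repetition of the argument proving Claim~\ref{claim:above}. Given a multi-index $\alpha$ with $|\alpha|\leq p+1$, a test function $\psi\in C_{\comp}^\infty(\pl_-\mc{M})$, and $\theta\in C_{\comp}^\infty([0,2))$ equal to $1$ on $[0,1]$, one writes $\langle\chi e^{-z\ell_g}\mc{S}_gf,\pl^\alpha\psi\rangle=\lim_{T\to\infty}\langle\theta(\ell_g/T)\chi e^{-z\ell_g}\mc{S}_gf,\pl^\alpha\psi\rangle$ (dominated convergence, using the bound above with $j=0$); since $\theta(\ell_g/T)\chi e^{-z\ell_g}\mc{S}_gf$ is smooth, compactly supported, and vanishes near $\Gamma_-^g$, integration by parts and Leibniz's rule turn the right-hand side into $\lim_{T\to\infty}$ of $\langle\theta(\ell_g/T)\,\pl^\alpha(\chi e^{-z\ell_g}\mc{S}_gf),\psi\rangle$ plus a finite sum of terms each carrying at least one derivative of $\theta(\ell_g/T)$, hence supported in $\{\ell_g\in[T,2T]\}$; on that set Lemma~\ref{lemma:bound-dell} bounds the derivatives of $\theta(\ell_g/T)$ by $Ce^{C_0'T}$ while $|e^{-z\ell_g}|\leq e^{-\beta T}$, so enlarging $\beta$ (still in terms of $g,p$ only) makes these error terms vanish, and the main term converges by dominated convergence to the pairing of $\psi$ with the zero-extension of the classical $\pl^\alpha(\chi e^{-z\ell_g}\mc{S}_gf)$. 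Thus $\pl^\alpha F_z$, computed distributionally, equals that zero-extension, so $F_z\in W^{p+1,\infty}(\pl_-\mc{M})$; as $\chi e^{-z\ell_g}\mc{S}_gf=F_z$ as elements of $L^\infty$, applying $(1+\Delta_h)^{(p+1)/2}$ to $\chi e^{-z\ell_g}\mc{S}_gf$ in $\mc{D}'(\pl_-\mc{M})$ produces the derivative of the $W^{p+1,\infty}$-extension, which is part (1); the pointwise bounds of step (i) give \eqref{boundweightedSgf} with a $z$-independent constant, which is part (2); and part (3) is the Sobolev embedding $W^{p+1,\infty}(\pl_-\mc{M})\hookrightarrow C^p(\pl_-\mc{M})$. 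I expect the only delicate point — the main obstacle — to be the uniform bookkeeping ensuring that a single $\beta=\beta(g,p)$ simultaneously absorbs all the exponential growth factors coming from Lemma~\ref{lemma:bound-dell} and \eqref{equation:gron} and kills the cutoff error terms.
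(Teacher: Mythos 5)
Your proposal is correct and follows essentially the same route as the paper's own proof: bound the classical $y$-derivatives of $\chi e^{-z\ell_g}\mc{S}_gf$ on $\pl_-\mc{M}\setminus\Gamma_-^g$ via Lemma \ref{lemma:bound-dell} and the Grönwall estimate $\|d_y^j\varphi_t^g\|\leq Ce^{C_0 t}$, take $\beta$ large so that the $e^{(C_2-\beta)\ell_g}$ factor kills the exponential growth and gives a continuous zero-extension across $\Gamma_-^g$, and then identify the distributional derivatives with that extension by the cutoff $\theta(\ell_g/T)$ argument of Claim \ref{claim:above}. The only cosmetic differences are that you invoke Faà di Bruno uniformly where the paper writes out $YF_z$ explicitly and says "repeat for higher orders," and your preliminary reduction to $\chi$ compactly supported in $\pl_-\mc{M}$ is vacuous since the lemma already assumes $\chi\in C_{\comp}^\infty(\pl_-\mc{M})$.
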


\begin{proof}
The proof is rather similar to that of Lemma \ref{boundednessIg} so we will be more succinct. First, if $f\in C^{p+1}(\pl_+\mc{M})$ and $\Re(z) > 0$, the function $F_z(y):=e^{- z \ell_{g}(y)}(\mc{S}_{g}f)(y)$ is $C^{p+1}$ outside $\Gamma_-^g$ and can be extended by continuity by $0$ on $\Gamma_-^g$. We compute
its derivative on $\pl_-\mc{M}\setminus \Gamma_-^g$: if $Y$ is a smooth vector field on $\pl_-\mc{M}$, then
\[
YF_z(y)=F_z(y)\Big(-zd_y\ell_g(y)Y +df_{S_g(y)}\left(d\varphi^g_{\ell_g(y)}(y)Y+d_y\ell_g(y)(Y)X_g(S_g(y))\right)\Big).
\]
We can use Lemma \ref{lemma:bound-dell} and the fact that $\|d_y\varphi_t^g\|\leq Ce^{C_0|t|}$ for some uniform $C,C_0>0$ with respect to $t$: this gives on 
$\supp(\chi)$ that
\[ 
|YF_z(y)|\leq C(1+|z|)\|Y\|_{C^0}\|f\|_{C^1} e^{(C_0-\beta)\ell_g(y)},
\] 
for some $C,C_0>0$ uniform in $y$. In particular if $\beta>C_0$ we obtain that $|Y (\chi F_z)(y)|\leq C(1+|z|)\|Y\|_{C^0}$ almost everywhere. Now, we claim 
that this function is also equal to the weak distributional derivative $Y(\chi F_z)\in H^{-1}(\pl_-SM)$. As in the proof of Lemma \ref{boundednessIg}, we need to show that for each $\psi\in C_{\comp}^\infty(\pl_-\mc{M})$
\[  
\int_{\pl_-\mc{M}} \chi e^{-z\gamma \ell_g}\mc{S}_g(f) Y(\psi)~ {\rm dv}_h=\lim_{T\to \infty} \int_{\partial_-\mc{M}}\theta\big(\ell_g/T\big)Y(e^{-z\gamma \ell_g}\chi \mc{S}_g(f)) \psi ~{\rm dv}_h
 \]
where $\theta\in C_{\comp}^\infty([0,2))$ is equal to $1$ in $[0,1]$ and $h$ is a smooth metric on $\pl_-\mc{M}$ 
as in the proof of Lemma \ref{boundednessIg}. Since the proof of the equality is exactly the same as in the proof of Lemma \ref{boundednessIg}, we do not repeat the argument. This shows that $\chi F_z\in W^{1,\infty}(\pl_-\mc{M})$ with bound
\[
\| \chi F_z\|_{W^{1,\infty}(\partial_-\mc{M})}\leq C(1+|z|)\|f\|_{C^1},
\]
for some $C$ uniform with respect to $z$. The bound $\|\chi F_z\|_{C^0(\partial_-\mc{M})} \leq C(1+|z|)\|f\|_{C^1}$ also follows immediately by Sobolev embedding.

For higher order derivatives, it suffices to repeat this argument, noting by Lemma \ref{lemma:bound-dell} that there are $C>0,C_0>0$ such that for $j\leq p+1$ we have 
\[
\|d_y^j\ell_g(y)\|\leq Ce^{C_0 |t|}, \quad \|d_y^j\varphi_t^g\|\leq Ce^{C_0t},
\] 
on $(\pl_-\mc{M}\cap \supp(\chi))\setminus \Gamma_-^g$. This means that taking $\beta>0$ large enough depending on $C_0$, the argument explained above works the same way. This proves the claimed result.
\end{proof}

Given $\chi\in C_{\comp}^\infty(\pl_-\mc{M})$, define the function on $\partial_-\mc{M}$:
\[
\mc{L}_g(z) := \chi e^{-z \ell_g} = (z (R_g(z){\bf 1}_{\mc{M}})|_{\pl_-\mc{M}}+1)\chi .
\]
We will need the following regularity property:

\begin{lemma}
\label{expsell}
Let $(M,g_0)$ be a smooth compact Riemannian manifold with hyperbolic trapped set and let $p \in 2\N$. There exists $\eps > 0$ small enough, $\beta \gg 0$ large enough such that the following holds: setting
\begin{equation}
\label{equation:neighb}
U_{g_0} := \left\{g \in C^{p+2}(M,\otimes^2_S T^*M) ~|~ \|g-g_0\|_{C^{p+2}} < \eps, g|_{T \partial M}=g_0|_{T \partial M}\right\},
\end{equation}
as in Lemma \ref{lemma:bound-dell}, we have that for $\Re(z)=\beta$ the map 
\[
\mc{L} : U_{g_0}\times\{\Re(z)=\beta \} \ni (g,z) \mapsto \mc{L}_g(z) = e^{-z\ell_g}\chi \in L^\infty(\pl_-\mc{M})\subset L^2(\pl_-\mc{M}),
\]
is $C^{p-1}$-regular. Moreover, there exists a uniform constant $C > 0$ such that
for all $j \leq p-1$:
\[
\forall h\in C^{p+2}(M,\otimes_S^2T^*M), \quad  \|\partial_g^j \mc{L}_g(z)(\otimes^j h)\|_{L^2}\leq C(1+|z|)^j\|h\|^j_{C^{p+2}}.
\]
\end{lemma}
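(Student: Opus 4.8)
\emph{Strategy and a priori bound.} Write $\mc{L}_g(z)=\chi e^{-z\ell_g}$. Off the trapped set $\Gamma_-^g$ this is a smooth function of $g$, but by Lemma~\ref{lemma:bound-dell} both $\ell_g$ and all of its $g$-derivatives blow up at the exponential rate $e^{C\ell_g}$ as $y\to\Gamma_-^g$; the content of the statement is that, for $\beta=\Re z$ large, the weight $|e^{-z\ell_g}|=e^{-\beta\ell_g}$ defeats this blow-up, turning formal $g$-derivatives of $\mc{L}_g(z)$ into genuine $L^2$-valued ones. Since $\beta>0$ and $\ell_g\geq0$ we already have $\mc{L}_g(z)\in L^\infty(\pl_-\mc{M})$, supported in the compact set $\supp(\chi)$, hence in $L^2$. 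For $(g,y)$ in the open set $V$ of Lemma~\ref{lemma:bound-dell} and $h\in C^\infty(M,\otimes_S^2T^*M)$, the Faà di Bruno formula gives
\[
\partial_g^j\big(e^{-z\ell_g}\big)(y)(\otimes^jh)=e^{-z\ell_g(y)}\sum_{\pi}c_\pi(-z)^{|\pi|}\prod_{B\in\pi}\partial_g^{|B|}\ell_g(y)(\otimes^{|B|}h),
\]
the sum over partitions $\pi$ of $\{1,\dots,j\}$ with $|\pi|$ blocks and $c_\pi\in\N$. By Lemma~\ref{lemma:bound-dell} there is $C=C(g_0,p,\chi)>0$, uniform over $g\in U_{g_0}$, with $|\chi\,\partial_g^{|B|}\ell_g(\otimes^{|B|}h)|\leq Ce^{C\ell_g}\|h\|_{C^{p+2}}^{|B|}$ for $|B|\leq j\leq p-1$ (using $\|h\|_{C^{|B|+1}}\leq\|h\|_{C^{p+2}}$); multiplying over the at most $j$ blocks and using $|e^{-z\ell_g}|=e^{-\beta\ell_g}$, $|z|\geq\beta\geq1$, one gets on $\supp(\chi)$:
\[
\big|\chi\,\partial_g^j\big(e^{-z\ell_g}\big)(\otimes^jh)\big|\leq C(1+|z|)^j\|h\|_{C^{p+2}}^j\,e^{\big((p-1)C-\beta\big)\ell_g}.
\]
Fixing $\beta>\max\big(1,(p-1)C\big)$ (which depends only on $g_0,p,\chi$) makes the exponential $\leq1$, and since $\supp(\chi)$ has finite $\mu_\pl$-measure this proves the stated bound for every $j\leq p-1$.

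\emph{These formal derivatives are the real ones.} Fix $g\in U_{g_0}$, a direction $h$, and $|s|$ small so that $g+sh\in U_{g_0}$. For each $y\notin\Gamma_-^g$, openness of $V$ gives $(g+sh,y)\in V$ for $|s|$ small, and $s\mapsto\ell_{g+sh}(y)$ is then $C^p$, so $s^{-1}\big(\mc{L}_{g+sh}(z)-\mc{L}_g(z)\big)(y)\to\chi(y)(-z)\partial_g\ell_g(y)(h)\,e^{-z\ell_g(y)}$ for $\mu_\pl$-a.e.\ $y$ (as $\mu_\pl(\Gamma_-^g)=0$). Split $\supp(\chi)$ into the set where the whole segment $\{g+th:0\leq t\leq s\}$ stays in $V$ over $y$, on which the mean value theorem and the bound above dominate the difference quotient by the constant $C(1+|z|)\|h\|_{C^{p+2}}$ (dominated convergence then controls its $L^2$-contribution), and the complementary ``near-trapped'' set, whose $\mu_\pl$-measure tends to $0$ as $s\to0$: there, a Grönwall estimate forces the escape times $\ell_g(y)$ and $\ell_{g+sh}(y)$ to be $\gtrsim\tfrac1C\log(1/|s|)$ (modulo a near-grazing subset of negligible measure, the grazing set being a hypersurface in $\pl_-\mc{M}$), so the weight $e^{-\beta\ell}$ makes $\mc{L}_{g+sh}(z),\mc{L}_g(z),\partial_g\mc{L}_g(z)(h)$ of size $O(|s|^{\beta/C})$ there, and since $\beta>C$ their $L^2$-norm, even after division by $s$, tends to $0$. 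Hence the $g$-Gâteaux derivative exists in $L^2$ and equals $\chi(-z)\partial_g\ell_g(\cdot)(h)e^{-z\ell_g}$ extended by $0$ across $\Gamma_-^g$. Iterating on the already-differentiated expression, whose factors obey the same Lemma~\ref{lemma:bound-dell} bounds, produces all $g$-Gâteaux derivatives up to order $p$, given by the Faà di Bruno formulas extended by $0$ and satisfying the estimates above; the $z$-derivatives $\partial_z^k(e^{-z\ell_g})=(-\ell_g)^ke^{-z\ell_g}$ only add polynomial factors $\ell_g^k$, again absorbed by $e^{-\beta\ell_g}$.

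\emph{Upgrade to Fréchet $C^{p-1}$.} It remains to check that $(g,z)\mapsto\partial_g^j\mc{L}_g(z)$ is continuous into $\mc{L}(\otimes^jC^{p+2},L^2)$ for $j\leq p-1$. For $g_n\to g$ in $C^{p+2}$ and $y\notin\Gamma_-^g$, one has $\ell_{g_n}(y)\to\ell_g(y)$ and, for $m\leq j$, the $m$-linear forms $\partial_g^m\ell_{g_n}(y)$ converge to $\partial_g^m\ell_g(y)$ in operator norm on $\otimes^mC^{p+2}$ (from the $C^p$-regularity of $\ell$ on $V$ — e.g.\ from the explicit formula for $\partial_g\ell_g$ in the proof of Lemma~\ref{lemma:bound-dell} and Duhamel's formula for $\partial_g\varphi_t^g$, using uniform convergence of flows and their derivatives on $[0,\ell_g(y)]$). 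Hence $\|(\partial_g^j\mc{L}_{g_n}(z)-\partial_g^j\mc{L}_g(z))(\cdot)(y)\|_{(\otimes^jC^{p+2})^*}\to0$ a.e., dominated by the integrable bound of the first step, so
\[
\big\|\partial_g^j\mc{L}_{g_n}(z)-\partial_g^j\mc{L}_g(z)\big\|^2_{\mc{L}(\otimes^jC^{p+2},L^2)}\leq\int_{\pl_-\mc{M}}\big\|(\partial_g^j\mc{L}_{g_n}(z)-\partial_g^j\mc{L}_g(z))(\cdot)(y)\big\|^2_{(\otimes^jC^{p+2})^*}\,\dd\mu_\pl(y)\longrightarrow0
\]
by dominated convergence; continuity in $z$ is immediate. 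Together with the existence of all Gâteaux derivatives up to order $p$ this shows $\mc{L}$ is $C^{p-1}$-regular with the claimed bounds.

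\emph{Main obstacle.} The only real difficulty is the interchange of $g$-differentiation with the $L^2$-limit, performed in the face of the $e^{C\ell_g}$-blow-up of $\ell_g$ and all its derivatives near the moving trapped set $\Gamma_-^g$. Everything is forced by taking $\beta\gg1$: away from the trapped configuration, Lemma~\ref{lemma:bound-dell} plus the weight give a uniform $L^\infty$- (hence $L^2$-)bound and the dominated-convergence arguments run; near it, the escape time is so long that $e^{-\beta\ell}$ is correspondingly tiny, rendering those points negligible even after dividing by the increment; and openness of $V$ together with $\mu_\pl(\Gamma_-^g)=0$ dispose of the null set of actually-trapped points.
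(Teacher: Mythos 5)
Your proposal is correct in spirit but takes a genuinely different route from the paper. The paper's proof of Lemma~\ref{expsell} fixes a line $g_t=g_0'+th$, views $F(t,y,z)=e^{-z\ell_{g_t}(y)}\chi(y)$ as a function on the full parameter space $W$ (including the set $S$ over the moving trapped sets), verifies that the pointwise derivatives on $\mc{O}=W\setminus S$ extend continuously by $0$ across $S$, and then shows — by integration by parts against $\Delta^j$ with a truncation $\theta(\ell_{g_t}/T)$, letting $T\to\infty$ — that these continuous extensions actually \emph{are} the distributional derivatives of $F$. This gives $F\in W^{p,\infty}_{\mathrm{loc}}(W)\subset C^{p-1,\alpha}(W)$ by Sobolev embedding, from which the $L^2$-valued Fréchet regularity follows. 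Your proof instead attacks Gâteaux differentiability head-on via a good/bad split of $\supp\chi$: on the good set (where the segment $\{g+th\}$ avoids the trapped sets over $y$) the mean value theorem and the bound from Lemma~\ref{lemma:bound-dell} give a uniform dominating function; on the bad set you claim a quantitative ``Grönwall'' lower bound $\ell_g(y),\ell_{g+sh}(y)\gtrsim C^{-1}\log(1/|s|)$, so the exponential weight beats the division by $s$.

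The one step in your argument that deserves to be a lemma rather than a throwaway remark is precisely that Grönwall claim. It is not implied by Lemma~\ref{lemma:bound-dell}, and it is not a pure Grönwall estimate: you also need that $\Gamma_-^{g'}$ stays a \emph{uniform} positive distance from $\overline{\partial_+\mc{M}}$ for all $g'$ in a $C^2$-neighbourhood of $g_0$, which is a consequence of Robinson's structural stability (Proposition~\ref{theorem:stability}), not of a direct estimate. Once that uniform distance $d_1>0$ is in hand, the argument does go through: if $y\in\Gamma_-^{g+t^*h}$ for some $t^*\in[0,s]$ and $T:=\ell_g(y)<\infty$, then $\varphi^g_T(y)\in\partial_+\mc{M}$ while $\varphi^{g+t^*h}_T(y)\in\Gamma_-^{g+t^*h}$, so Grönwall forces $d_1\leq C\|h\|_{C^2}|s|\,e^{CT}$, giving $T\gtrsim\log(1/|s|)$. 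Incidentally your ``modulo a near-grazing subset'' caveat is superfluous: for $y$ in the interior of $\partial_-\mc{M}$, strict convexity forbids tangential exit (a first zero of $t\mapsto\rho(\varphi_t^g(y))$ with vanishing first derivative contradicts $\ddot\rho<0$), so the exit is always transversal through $\partial_+\mc{M}$. Note also that iterating the argument to order $j$ imposes $\beta>jC+C'$ (with $C'$ the constant from the Grönwall lower bound), slightly stronger than the $\beta>(p-1)C$ you write but in the same spirit. The trade-off between the two proofs: yours is more elementary (no distribution theory or integration by parts) but hinges on the quantitative bad-set estimate; the paper's truncation argument only needs $\beta\gg1$ to kill the boundary term $A_1(T)$, and therefore never needs to quantify how fast escape times diverge as one approaches the moving trapped set.
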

\begin{proof}
First of all, note by \cite[Proposition 2.1]{Guillarmou-Mazzucchelli-18} that all metrics in a $C^2$-neighborhood of $g_0$ have hyperbolic trapped set and strictly convex boundary. Hence $\eps > 0$ is chosen so that this holds.
Pick an arbitrary $g_0' \in U_{g_0}$ and let $h\in C^{p+2}(M,\otimes_S^2T^*M)$ such that $g_t:=g_0'+th\in U_{g_0}$ for $t\in (-\delta,1+\delta)$ for some $\delta>0$ small. Consider the map 
\[ F: (-\delta,1+\delta)\times \pl_-\mc{M}\times \{\Re(z)=\beta\} \ni (t,y,z) \mapsto \mc{L}_{g_t}(z)(y)=e^{-z\ell_{g_t}(y)}\chi(y)\]
where by convention $e^{-z\ell_{g_t}(y)}:=0$ when $\ell_{g_t}(y)=\infty$. 
Lemma \ref{lemma:bound-dell} implies that  $F$ is $C^p$ in the open set 
\[ \mc{O}:=\{ (t,y,z) \in (-\delta,1+\delta)\times \pl_-\mc{M}\times \{\Re(z)=\beta\} \,|\, y\notin \Gamma_-^{g_t}\},\]
and one can write $\partial_t^{j_1}\pl_y^{j_2}\pl_z^{j_3} \mc{L}_{g_t}(z)(y)=H(t,y,z,h)(\otimes^{j_1}h)$ where 
$H(t,y,z,h)$ is a continuous function on $(-\delta,1+\delta)\times \pl_-\mc{M}\times C^{p+2}(M,\otimes_S^2T^*M)$ with values in $j_1$-multilinear functions on $C^{p+2}(M,\otimes_S^2T^*M)$ and satisfying:
there is $C > 0$  such that for all $j_1+j_2+j_3\leq p$ and all $(t, y, z) \in \mc{O}$
\begin{equation}\label{boundDiffj_1j_2jj_3}
|\partial_t^{j_1}\pl_y^{j_2}\pl_z^{j_3} \mc{L}_{g_t}(z)(y)| \leq C(1+|z|)^{j_1+j_2} e^{(C-\beta) \ell_{g_t}(y)} \|h\|^{j_1}_{C^{p+2}}.
\end{equation}
First, we observe that $F$ is continuous on $(-\delta,1+\delta)\times \pl_-\mc{M}\times \{\Re(z)=\beta\}$. Indeed, if 
$(t_n,y_n)\to (t,y)$ is a sequence such that $\ell_{g_{t_n}}(y_n)\leq T$ for some $T<\infty$, by 
Proposition \ref{theorem:stability} we deduce that the trajectories $\mc{M}\cap \cup_{s\geq 0}
\varphi_s^{g_{t_n}}(y_n)$ converge  to the trajectory $\mc{M}\cap \cup_{s\geq 0}\varphi_{s}^{g_t}(y)$ as $n\to \infty$, and 
therefore $\ell_{g_t}(y)<\infty$, and so the limit point belongs to $\mc{O}$. On the other hand, if there is no such $T$, this also implies that $\ell_{g_{t_n}}(y_n)\to \infty$, and in turn $F(t_n,y_n,z)\to 0$ as $n \to \infty$, and $(t,y,z)$ belongs to the set
\[S:=\cup_{t\in (-\delta,1+\delta)}( \{t\}\times \Gamma_-^{g_t}\times  \{\Re(z)=\beta\}).\]
Since $\ell_{g_{t_n}}(y_n)\to\infty$ if $(t_n, y_n)$ converge to a point in $S$ as $n \to \infty$, we see from \eqref{boundDiffj_1j_2jj_3} that if $\beta\gg 1$ is large enough, the derivative $H(t,y,z,h)$ of $F$ on $\mc{O}$ converges to $0$  when approaching $S$, and can thus be extended from $\mc{O}$ by $0$ as a continuous function
on $(-\delta,1+\delta)\times \pl_-\mc{M}\times \{\Re(z)=\beta\}\times C^{p+2}(M,\otimes_S^2T^*M)$.
Next, we are going to show that $F$ is a $C^{p-1}$ map, with $\partial_t^{j_1}\pl_y^{j_2}\pl_z^{j_3} F(t,y,z)=H(t,y,z,h)(\otimes^{j_1}h)$ with $H$ the continuous extension by $0$ on $S$ just discussed, and that there exists $C>0$ independent of $h,t,y,z$ such that for all 
$(t,y,z)\in (-\delta,1+\delta)\times \pl_-\mc{M}\times \{\Re(z)=\beta\}$, and all  $j_1+j_2+j_3\leq p-1$ 
\begin{equation}\label{estimates_derivatives} 
|\partial_t^{j_1}\pl_y^{j_2}\pl_z^{j_3} F(t,y,z)|\leq C(1+|z|)^{j_1+j_2} \|h\|_{C^{j_1+1}}^{j_1}.
\end{equation}
This would prove that the Gateaux derivatives of order $p-1$ are continuous thus the function $\mc{L}$ is $C^{p-1}$ and with the desired bounds on the derivatives.

We proceed in a way similar to the proof of Claim \ref{claim:above}.
We will show that, for each fixed $h$, the distributional derivatives 
of $F$ of order $j\leq p$ are bounded and coincide with the continuous extension of $H(t,y,z,h)(\otimes^{j_1}h)$ 
from $\mc{O}$ to $W:=(-\delta,1+\delta)\times \pl_-\mc{M}\times \{\Re(z)=\beta\}$.
First we let $\Delta$ be a Laplacian associated to a fixed smooth product metric $\hat{g}:=dt^2+g_-+ds^2$ on 
$(-\delta,1+\delta)\times \pl_-\mc{M}\times \{\beta+is\,|\, s\in \R\}$.
Let  $\psi\in C_{\comp}^\infty( (-\delta,1+\delta) \times \pl_-\mc{M}\times (\beta+i\R))$ and we want to show that for $2j\leq p$
\[
\int_{W} \chi e^{-z\ell_{g_t}}\Delta^j \psi \, {\rm dv}_{\hat{g}}= \int_{\mc{O}}
(\Delta^j F)  \psi ~{\rm dv}_{\hat{g}}.
\]
Take $\theta\in C_{\comp}^\infty([0,2); [0, 1])$, equal to $1$ in $[0,1]$ and write the left hand side as 
\begin{equation}\label{limittoprove}
\lim_{T\to \infty} \int_{W} \theta\Big(\frac{\ell_{g_t}}{T}\Big) \chi e^{-z \ell_g} \Delta^j \psi\,  {\rm dv}_{\hat{g}}=  \lim_{T\to \infty}A_1(T)+A_2(T),
\end{equation}
where
\[
A_1(T):= \sum_{k=1}^{2j} \int_{W}  P_k \Big(\theta\Big(\frac{\ell_{g_t}}{T}\Big)\Big) Q_{2j-k}(\chi e^{-z \ell_{g_t}})\psi~{\rm dv}_{\hat{g}} ,
\]
with $P_k$ and $Q_k$ some differential operator of order $k\geq 1$ in the variable $(t,y,z)$ and such that $P_k(1)=Q_k(1)=0$ and
\[
A_2(T):= \int_{W} \theta\Big(\frac{\ell_{g_t}}{T}\Big) (\Delta^j F) \psi \, {\rm dv}_{\hat{g}} 
\]
In order to show \eqref{limittoprove}, it suffices to show that $A_1(T)\to 0$ as $T\to \infty$. The derivatives  $D^{k}_{t,y,z}( \theta(\ell_{g_t}/T))$ of order $k\in [1, 2j]$ 
are supported in $\{\ell_{g_t}\in [T,2T]\}$, where we can use the pointwise bound of Lemma \ref{lemma:bound-dell}: there exists $C>0$ such that for all $(t,y,z)$ with $\ell_{g_t}(y)\in [T,2T]$ 
 \[
|D^k_{t,y,z}( \theta(\ell_{g_t}(y)/T))|\leq Ce^{C\ell_{g_t}(y)} \leq Ce^{2CT}.
\]
Since all terms in the integrand of $A_1$ are multiplied by the weight $|e^{-\beta \ell_{g_t}(y)}|\leq e^{-\beta T}$, we see using Lemma \ref{lemma:bound-dell} that
\[
A_1(T)=\mc{O}(e^{(4C-\beta)T}).
\]
Thus if $\beta$ is chosen large enough we obtain that $A_1(T)\to 0$ as $T\to \infty$. We thus deduce that $F\in W_{\rm loc}^{p,\infty}(W)$ and by Sobolev embedding that $F\in C^{p-1,\alpha}(W)$ for all $\alpha<1$. Finally, the bound \eqref{estimates_derivatives} follows from \eqref{boundDiffj_1j_2jj_3} by continuity.
\end{proof}

\section{Symmetric tensors and the normal operator}

\label{section:tensors}

\subsection{Symmetric tensors} In this paragraph, we recall standard facts on symmetric tensors on Riemannian manifolds. We refer to \cite{Heil-Moroianu-Semmelmann-16,Guillarmou-17-1,Gouezel-Lefeuvre-21} for further details. 

\subsubsection{Definitions}

Let $(M,g)$ be a smooth connected Riemannian manifold with boundary. Let $m \in \Z_{\geq 0}$. Let $\otimes^m_S T^*M \to M$ be the vector bundle of symmetric tensors over $M$ (for $m=0$ we just take the trivial line bundle $\R\times M\to M$). We will also write $\otimes^2_ST^*M_+\subset \otimes_S^2T^*M$ 
for the open convex subset consisting of positive definite tensors (Riemannian metrics).
Since $\otimes^m_S T^*M$ is a subbundle of the vector bundle $\otimes^m T^*M \to M$ of $m$-tensors over $M$, it inherits the natural metric $g^{\otimes m}$. Define the pullback operator
\[
\pi_m^* : L^2(M,\otimes^m_S T^*M) \to L^2(\mc{M}), ~~~ \pi_m^*f (x,v) := f_x(v^{\otimes m}),
\]
where $M$ is equipped with the Riemannian volume, $\otimes^m_S T^*M$ with the metric $g^{\otimes m}$ and $\mc{M}$ with the Liouville measure $\mu$. We denote by ${\pi_m}_*$ the adjoint of $\pi_m^*$ with respect to these scalar products and volume forms. 

The symmetric covariant derivative
\[
D_g : C^\infty(M,\otimes^m_S T^*M) \to C^\infty(M,\otimes^{m+1}_S T^*M)
\]
is defined as $D_g := \sigma \circ \nabla^g$, where $\nabla^g$ is the Levi-Civita connection induced by $g$ and $\sigma :\otimes^m T^*M \to \otimes^m_S T^*M$ is the symmetrization operator defined as:
\[
\sigma \left( \eta_1 \otimes ... \otimes \eta_m \right) := \dfrac{1}{m!} \sum_{\pi \in \mathfrak{S}_m} \eta_{\pi(1)} \otimes ... \otimes \eta_{\pi(m)},
\]
where $\eta_1, ..., \eta_m \in T^*M$. The operator $D_g$ is of \emph{gradient type}, namely it has injective principal symbol. Moreover, it is injective when $m$ is odd and has kernel given by $\R g^{\otimes m/2}$ for even $m$. It satisfies the relation
\begin{equation}
\label{equation:relation}
X_g \pi_m^*  = \pi_{m+1}^* D_g,
\end{equation}
where we recall that $X_g$ is the geodesic vector field of $g$. We let $D_g^* : C^\infty(M,\otimes^{m+1}_S T^*M) \to C^\infty(M,\otimes^m_S T^*M)$ be the formal adjoint of $D_g$, which is nothing more than the divergence $D_g^*u=-{\rm Tr}(\nabla^g u)$, and ${\rm Tr}(\cdot)$ is the trace operator.

For $m \geq 1$ and $k \geq 0, \alpha \in (0,1)$, there exists a unique decomposition
\begin{equation}
\label{equation:decomp-tens}
C^{k,\alpha}(M,\otimes^m_S T^*M) = D_g\left(C^{k+1,\alpha}_0(M,\otimes^{m-1}_S T^*M)\right) \oplus^\bot \ker D_g^*|_{C^{k,\alpha}(M,\otimes^m_S T^*M)},
\end{equation}
where $C^{k+1,\alpha}_0(M,\otimes^{m-1}_S T^*M)$ denotes the space of tensors of Hölder-Zygmund regularity $k+1+\alpha$, vanishing on the boundary, and the sum is orthogonal with respect to the $L^2$-scalar product. The decomposition \eqref{equation:decomp-tens} also holds in the scale of Sobolev spaces $H^s(M,\otimes^m_S T^*M)$ for $s \geq 0$. We call \emph{potential tensors} the tensors in $\ran D_g$ and \emph{solenoidal tensors} (or divergence free tensors) the ones in $\ker D_g^*$.

\begin{lemma}\label{lemma:potential-solenoidal}
For $m\geq 1$, there exist bounded projections $\pi_{\ker D_g^*}: L^2(M,\otimes^m_ST^*M)\to L^2(M,\otimes^m_ST^*M)\cap \ker D_g^*$ and  
$\pi_{\ran D_g}: L^2(M,\otimes^m_ST^*M)\to L^2(M,\otimes^m_ST^*M)\cap \ran D_g|_{H_0^1}$, 
which are pseudodifferential operator of order $0$ on $M^\circ$. Moreover for all $f \in L^{2}(M,\otimes^m_S T^*M)$, there is a unique $h\in H_0^1(M, \otimes^{m-1}_ST^*M)$ and $f_s\in \ker D_g^*\cap L^2$ such that
$f = D_gh + f_s$, and it is given by 
$\pi_{\ker D_g^*}f=f_s$ and  $\pi_{\ran D_g}f=D_gh$.
\end{lemma}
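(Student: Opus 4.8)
The plan is to construct the projectors $\pi_{\ker D_g^*}$ and $\pi_{\ran D_g}$ via the elliptic operator $\Delta_g := D_g^*D_g$ acting on $C^\infty(M,\otimes^{m-1}_ST^*M)$ with Dirichlet boundary conditions, and then to verify the claimed mapping properties and pseudodifferential character. First I would note that $D_g$ has injective principal symbol (it is of gradient type), so $\Delta_g = D_g^*D_g$ is a second-order elliptic operator with Laplace-type principal symbol. Imposing the Dirichlet condition on $\otimes^{m-1}_ST^*M$ makes $\Delta_g : H^2 \cap H_0^1 \to L^2$ a Fredholm operator; moreover $\Delta_g h = 0$ with $h|_{\partial M} = 0$ forces $\|D_g h\|_{L^2}^2 = \langle \Delta_g h, h\rangle = 0$, hence $D_g h = 0$, and since $h$ vanishes on the boundary and $D_g$ has trivial kernel on tensors vanishing at the boundary (for $m - 1$ odd this is automatic; for $m-1$ even the kernel of $D_g$ is $\R g^{\otimes (m-1)/2}$, which does not vanish on $\partial M$ unless it is zero), we get $h = 0$. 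So $\Delta_g$ is invertible, with bounded inverse $\Delta_g^{-1} : H^s \to H^{s+2}\cap H_0^1$ for $s \geq 0$ (elliptic boundary regularity).

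Next I would set $\pi_{\ran D_g} := D_g \Delta_g^{-1} D_g^*$ and $\pi_{\ker D_g^*} := \id - \pi_{\ran D_g}$. One checks directly that $\pi_{\ran D_g}$ is idempotent: $(D_g\Delta_g^{-1}D_g^*)(D_g\Delta_g^{-1}D_g^*) = D_g \Delta_g^{-1}(D_g^*D_g)\Delta_g^{-1}D_g^* = D_g\Delta_g^{-1}D_g^*$. It is formally self-adjoint, hence the decomposition $f = D_g(\Delta_g^{-1}D_g^*f) + \pi_{\ker D_g^*}f$ is $L^2$-orthogonal; and $D_g^*\pi_{\ker D_g^*}f = D_g^*f - (D_g^*D_g)\Delta_g^{-1}D_g^*f = D_g^*f - D_g^*f = 0$, so $\pi_{\ker D_g^*}f$ indeed lies in $\ker D_g^*$. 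For boundedness on $L^2$: $D_g^* : L^2 \to H^{-1}$ is bounded, $\Delta_g^{-1} : H^{-1} \to H^1_0$ is bounded (by duality from the $H^1 \to H^{-1}$, or directly using the variational formulation), and $D_g : H^1_0 \to L^2$ is bounded, so $\pi_{\ran D_g}$ maps $L^2 \to L^2$; in fact it lands in $D_g(H^1_0)$. Setting $h := \Delta_g^{-1}D_g^* f \in H_0^1$ and $f_s := \pi_{\ker D_g^*}f$ gives the stated decomposition, and uniqueness follows from $L^2$-orthogonality of $\ran D_g|_{H_0^1}$ and $\ker D_g^* \cap L^2$: if $D_g h = D_g h'$ with $h, h' \in H_0^1$ then $D_g(h - h') = 0$ with $h - h'$ vanishing on the boundary, whence $h = h'$ as above.

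Finally, for the pseudodifferential statement on $M^\circ$: away from the boundary the Dirichlet condition is invisible, so it suffices to observe that $\Delta_g$ is an elliptic differential operator of order $2$ on the interior; by standard interior parametrix construction it admits a properly supported parametrix $Q \in \Psi^{-2}(M^\circ)$ with $Q\Delta_g = \id + R$, $R$ smoothing, and modulo smoothing operators $\Delta_g^{-1}$ agrees with $Q$ microlocally in $M^\circ$. Hence $\pi_{\ran D_g} = D_g\Delta_g^{-1}D_g^*$ is, microlocally in the interior, the composition of $D_g \in \mathrm{Diff}^1$, an operator in $\Psi^{-2}$, and $D_g^* \in \mathrm{Diff}^1$, therefore an element of $\Psi^0(M^\circ)$; its principal symbol is the orthogonal projection onto $\ran \sigma_1(D_g)(x,\xi)$, and correspondingly $\pi_{\ker D_g^*}$ has principal symbol the complementary projection.

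The main obstacle I anticipate is the boundary analysis of $\Delta_g^{-1}$: one must make sure that the Dirichlet realization of $\Delta_g = D_g^*D_g$ is indeed invertible (not merely Fredholm) and has the stated mapping properties on the full Sobolev scale, which requires checking that the Dirichlet condition is elliptic (Lopatinski--Shapiro) for this particular Laplace-type operator — this is where the gradient-type/injective-symbol hypothesis on $D_g$ and care with the even-$m$ case (the kernel $\R g^{\otimes m/2}$ not vanishing at $\partial M$) really enter — and, if needed, to invoke the references \cite{Heil-Moroianu-Semmelmann-16, Guillarmou-17-1, Gouezel-Lefeuvre-21} where this Hodge-type decomposition for symmetric tensors on manifolds with boundary is established. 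The idempotency, self-adjointness, and interior pseudodifferential claims are then formal consequences.
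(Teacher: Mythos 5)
Your approach is the same as the paper's---set $\pi_{\ran D_g} := D_g(D_g^*D_g)^{-1}D_g^*$ via the Dirichlet realization of $D_g^*D_g$ on $(m-1)$-tensors, check the projector identities, and obtain the interior $\Psi^0$ property from an interior parametrix---and the formal verifications you give (idempotency, self-adjointness, $L^2$-boundedness via $D_g^*:L^2\to H^{-1}$, $(D_g^*D_g)^{-1}:H^{-1}\to H^1_0$, $D_g:H^1_0\to L^2$, and uniqueness by $L^2$-orthogonality) are correct and somewhat more explicit than what the paper records.

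Where the argument actually fails is the injectivity of the Dirichlet Laplacian. You correctly reduce to showing that $D_g h = 0$ together with $h|_{\partial M}=0$ forces $h=0$, but your parenthetical reason---that $\ker D_g$ on smooth $(m-1)$-tensors is trivial for $m-1$ odd and equals $\R g^{\otimes(m-1)/2}$ for $m-1$ even---is false for a general compact manifold with boundary. The kernel of $D_g$ is the space of Killing $(m-1)$-tensors: already for $m-1=1$ this includes all Killing vector fields, and a geodesic ball in the round sphere carries many of those, none proportional to a power of $g$. What you actually need, and what the paper's proof invokes, is the unique continuation theorem of Dairbekov and Sharafutdinov \cite{Dairbekov-Sharafutdinov-10}: on a compact manifold with boundary, a symmetric Killing tensor vanishing on $\partial M$ vanishes identically. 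This holds for every metric, is not a formal consequence of a kernel count, and is the one substantive input to the lemma. Finally, the obstacle you anticipated (Shapiro-Lopatinski ellipticity of the Dirichlet boundary condition) is not where the subtlety lies: since $\sigma(D_g)$ is injective, $D_g^*D_g$ is a strongly elliptic second-order system, and the Dirichlet problem for such systems is automatically elliptic. All the difficulty sits in the Killing-tensor unique continuation above.
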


\begin{proof}
The Dirichlet Laplacian $D_g^*D_g: H^2(M, \otimes_S^mT^*M)\cap H_0^1(M, \otimes_S^mT^*M)\to L^2(M)$ is an elliptic self-adjoint operator which is invertible since there are no symmetric Killing tensors vanishing at $\pl M$ by \cite{Dairbekov-Sharafutdinov-10}.
When restricted to $C_{\comp}^\infty(M^\circ)$, its inverse $(D_g^*D_g)^{-1}:H^{-1}(M, \otimes_S^mT^*M)\to H^1_0(M, \otimes_S^mT^*M)$ is a pseudo-differential operator of order $-2$ on $M^\circ$  by standard elliptic microlocal analysis. We then set:
\[
\pi_{\ran D_g}: = D_g(D_g^*D_g)^{-1}D_g^*, \qquad  \pi_{\ker D_g^*}=: {\rm Id} - \pi_{\ran D_g}.
\] 
By construction, they satisfy the desired properties.
\end{proof}

\subsubsection{X-ray transform of tensors}

\label{sssection:x-ray}

We now further assume that the metric $g$ is of Anosov type in the sense of Definition \ref{admissiblemetrics}. We introduce the X-ray transform of symmetric $m$-tensors.

\begin{definition}
The X-ray transform on the space of symmetric $m$-tensors is defined by $I_m^g := I^g \circ \pi_m^*$, where $I_m^g : C^\infty(M, \otimes_S^m T^*M) \to L^2(\partial_-\mc{M})$.
\end{definition}

It is clear from \eqref{equation:relation} that the following inclusion holds:
\begin{equation}
\label{equation:inclusion}
D_g\left(C^{k+1,\alpha}_0(M,\otimes^{m-1}_S T^*M)\right) \subset \ker I^g_m.
\end{equation}

\begin{definition}
The X-ray transform $I_m^{g}$ is said to be \emph{solenoidal injective} on $C^{k,\alpha}(M,\otimes_S^mT^*M)$ if \eqref{equation:inclusion} is an equality.
\end{definition}

In other words, $I_m^{g}$ is solenoidal injective if it is injective in restriction to solenoidal tensors, i.e. on the second factor of the decomposition \eqref{equation:decomp-tens}. When $(M,g)$ is of Anosov type, solenoidal injectivity of the X-ray transform has been proved so far in the following cases:
\begin{enumerate}
\item In dimension $n \geq 2$, when $g$ is of Anosov type with non-positive sectional curvature, see \cite{Guillarmou-17-2};
\item On all surfaces of Anosov type, see \cite{Lefeuvre-19-1};
\item In dimension $n \geq 2$, on all real analytic manifold of Anosov type, injectivity of $I_2^g$ is proved in \cite{Bonthonneau-Guillarmou-Jezequel-22}.
\end{enumerate}
We conjecture that the following holds:

\begin{conjecture}[Solenoidal injectivity of the X-ray transform on manifolds of Anosov type]
Let $(M,g)$ be a smooth Riemannian manifold of Anosov type in the sense of Definition \ref{admissiblemetrics}. Then $I_m^g$ is solenoidal injective.
\end{conjecture}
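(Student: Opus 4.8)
The plan is to run the resolvent/energy-identity strategy that gives solenoidal injectivity in the two cases that are currently understood --- manifolds of Anosov type with non-positive curvature \cite{Guillarmou-17-2} and Anosov surfaces \cite{Lefeuvre-19-1} --- and to isolate exactly where it breaks in general. Let $f\in C^\infty(M,\otimes^m_ST^*M)$ be solenoidal, $D_g^*f=0$, with $I_m^gf=0$; we must show $f=0$ (for even $m$, $g^{\otimes m/2}$ is solenoidal but $I_m^g g^{\otimes m/2}=I^g{\bf 1}=\ell_g\neq0$, so no extra kernel intervenes). Since $I_m^gf=-(R_g\pi_m^*f)|_{\pl_-\mc{M}}$, set $u:=R_g\pi_m^*f$. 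Then $X_gu=\pi_m^*f$ on $\mc{M}^\circ$, while $u$ carries two boundary conditions: $u|_{\pl_+\mc{M}}=0$ from the construction of $R_g$, and $u|_{\pl_-\mc{M}}=-I_m^gf=0$ by hypothesis; as $\mu_{\pl}(\Gamma_\pm^g\cap\pl\mc{M})=0$, the solution $u$ vanishes $\mu_{\pl}$-a.e.\ on $\pl\mc{M}$.

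First I would pin down the regularity of $u$ using the mapping properties \eqref{Rg1}--\eqref{Rg3} together with a wavefront analysis (as in the construction of $\mc{S}_g$): one gets $u\in H^s(\mc{M})$ with $\WF(u)$ over the unstable dual bundle on $\Gamma_+^g$, and $u$ smooth up to the boundary away from $\Gamma_\pm^g$, which, with the boundary vanishing, licenses the integrations by parts below. The heart of the matter is then the fibrewise Fourier/Pestov machinery: decompose $u=\sum_{k\geq0}u_k$ into spherical-harmonic degrees on $\mc{M}=SM$, split $X_g=X_++X_-$ into degree-raising and degree-lowering parts, and observe that $\pi_m^*f$ has only degrees $m,m-2,\dots$, so that $X_gu=\pi_m^*f$ is triangular in $k$. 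The aim is to show $u$ has degree $\leq m-1$, after which a classical argument (Sharafutdinov; see also \cite{Paternain-Salo-Uhlmann-book}) produces $p\in C^\infty(M,\otimes_S^{m-1}T^*M)$ with $p|_{\pl M}=0$ and $f=D_gp$, making $f$ both potential and solenoidal, hence $f=0$ by \eqref{equation:decomp-tens}. To kill the modes $u_k$, $k\geq m$, one uses the \emph{Pestov identity}, schematically
\[
\|X_g\nabla^{\mathrm v}u\|_{L^2}^2 \;=\; \|\nabla^{\mathrm v}X_gu\|_{L^2}^2 \;-\; \langle \mc{R}\,\nabla^{\mathrm v}u,\nabla^{\mathrm v}u\rangle_{L^2} \;+\; (\text{zeroth order and boundary terms}),
\]
where $\nabla^{\mathrm v}$ is the vertical gradient and $\mc{R}$ the curvature operator, combined with the resolvent identity $u=-R_gX_gu$ which controls $\|u\|$ by $\|X_gu\|$. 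In non-positive curvature the curvature contribution has the favourable sign and the identity becomes coercive at once; in dimension $2$ one instead invokes holomorphic integrating factors --- equivalently, surjectivity of $X_-$ between fibrewise holomorphic modes, from the Fredholm theory of $X_g$ --- to bypass the curvature term, as in \cite{Lefeuvre-19-1}.

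The hard part, and the reason the statement is only a conjecture, is the case of \textbf{variable-sign curvature in dimension $\geq3$}: the Pestov curvature term is then indefinite, and no higher-dimensional substitute for the holomorphic-integrating-factor trick is known, while the fractal trapped set obstructs the thermostat-type constructions one might hope to use. Two routes seem worth exploring. The first is perturbative: on solenoidal tensors the normal operator $(I_m^g)^*I_m^g$ is an elliptic pseudodifferential operator of order $-1$ on $\mc{M}^\circ$ (cf.\ the elliptic estimate of Proposition \ref{proposition:elliptic-estimate} and \eqref{eq:step}), so solenoidal injectivity is an open condition; deforming through a family of Anosov-type metrics and propagating injectivity would require a lower bound for $(I_m^g)^*I_m^g$ that is \emph{uniform} along the family --- precisely the missing input. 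The second is to upgrade the real-analytic result of \cite{Bonthonneau-Guillarmou-Jezequel-22} (and the generic injectivity of \cite{Cekic-Lefeuvre-21-2}) by a propagation-of-analytic-singularities argument adapted to the fractal trapped set, or by a density argument --- again needing new microlocal information near $K^g$. In the absence of either ingredient, the honest content of the proposal is the reduction of the conjecture to a coercive lower bound on the top fibre-modes of solutions of $X_gu=\pi_m^*f$, i.e.\ to controlling the indefinite curvature term in the Pestov identity.
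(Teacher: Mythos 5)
The statement you address is labelled a \emph{conjecture} in the paper and is not proved there: it is genuinely open, with the only known cases being exactly the three you cite --- non-positive curvature \cite{Guillarmou-17-2}, Anosov surfaces \cite{Lefeuvre-19-1}, and real-analytic metrics of Anosov type \cite{Bonthonneau-Guillarmou-Jezequel-22}. So there is no ``paper's proof'' to compare against; the right question is whether your proposal is a sound assessment of the problem, and it is, with one inevitable caveat: it is not a proof, and you say so.

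Your setup is correct. Writing $u=R_g\pi_m^*f$ with $X_gu=\pi_m^*f$ and $u$ vanishing $\mu_\pl$-a.e.\ on $\pl\mc{M}$ is the standard starting point, and your remark about $g^{\otimes m/2}$ for even $m$ (it lies in $\ker D_g\cap\ker D_g^*$ but $I_m^gg^{\otimes m/2}=I^g\mathbf{1}=\ell_g\neq0$) correctly disposes of the trivial kernel. The Guillemin--Kazhdan/Pestov strategy you outline is the correct mechanism behind both known smooth cases, and you have named the obstruction exactly: in dimension $\geq 3$ with mixed-sign curvature the curvature term in the Pestov identity is indefinite, there is no higher-dimensional analogue of the fibrewise holomorphic-integrating-factor trick, and the fractal trapped set blocks the usual propagation arguments. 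The two routes you float --- a uniform quantitative lower bound on the normal operator propagated along a deformation, and an analytic-to-smooth density or analytic-singularity-propagation upgrade of \cite{Bonthonneau-Guillarmou-Jezequel-22} and \cite{Cekic-Lefeuvre-21-2} --- are both reasonable to state, and both currently lack the key microlocal input near $K^g$, as you note. The honest content, as you yourself put it, is a reduction of the conjecture to controlling the top fibre-modes against the indefinite curvature term; this is a fair description of the state of the art, not a proof, and you should be careful not to present it as the latter.
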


Eventually, we conclude this paragraph by the following variational formula which relates the length map and the X-ray transform on $2$-tensors:

\begin{lemma}
\label{lemma:variation}
Let $(M,g_0)$ be a compact Riemannian manifold with strictly convex boundary and hyperbolic trapped set. Let $(x,v) \in \partial_-\mc{M} \setminus \Gamma_-^{g_0}$. Let $(g_t)_{t \in (-1,1)}$ be a smooth family of metrics on $M$ with $g_t|_{t=0}=g_0$ and write $h :=\pl_t g_t|_{t=0}$. Then $t \mapsto \ell_{g_t}(x,v)$ is $C^2$-regular for small $t$ and
\[
\left. \partial_t \ell_{g_t}(x,v)\right|_{t = 0} = \dfrac{1}{2}I_2^{g_0}h (x,v) + \alpha_{S_{g_0}(x,v)}(\partial_t S_{g_t}(x,v)|_{t=0})
\]
where we recall that $\alpha$ is the Liouville $1$-form.
\end{lemma}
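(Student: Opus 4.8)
The plan is to compute $\partial_t \ell_{g_t}(x,v)|_{t=0}$ directly from the implicit equation defining $\ell_g$, following the Gröwnall-type analysis already set up in Lemma~\ref{lemma:bound-dell}, and then to identify the resulting first-order term with the X-ray transform of $h$ plus a boundary correction coming from the motion of the exit point. First I would recall that $C^2$-regularity in $t$ is immediate from Lemma~\ref{lemma:bound-dell} applied with $p=2$ and the family $g_t$, since $(x,v)\notin\Gamma^{g_0}_-$ so $\ell_{g_0}(x,v)<\infty$ and $V$ is open. For the variational formula itself, I would work with the maximally extended geodesic curves. Write $\gamma_t(s)$ for the $g_t$-geodesic with $(\gamma_t(0),\dot\gamma_t(0))=(x,v)$, so that $\Phi_t(s):=\varphi^{g_t}_s(x,v)=(\gamma_t(s),\dot\gamma_t(s))$ and $\ell_{g_t}(x,v)$ is characterized by $\rho(\Phi_t(\ell_{g_t}(x,v)))=0$ (with $\rho$ the boundary defining function of $\S\ref{sssection:extension}$, extending the flows as there). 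Differentiating this identity in $t$ at $t=0$ and using $\partial_s|_{s=\ell_{g_0}}\rho(\Phi_0(s)) = d\rho(X_{g_0})(S_{g_0}(x,v)) \neq 0$ by strict convexity, one gets
\[
\partial_t \ell_{g_t}(x,v)|_{t=0} = - \frac{d\rho_{S_{g_0}(x,v)}\big(\partial_t \Phi_t(\ell_{g_0}(x,v))|_{t=0}\big)}{d\rho_{S_{g_0}(x,v)}(X_{g_0}(S_{g_0}(x,v)))}.
\]

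The core of the argument is then to split $W:=\partial_t\Phi_t(\ell_{g_0}(x,v))|_{t=0}\in T_{S_{g_0}(x,v)}\mc{M}$ into a component along $X_{g_0}$ and a component tangent to $\partial\mc{M}$. Since $\partial_t S_{g_t}(x,v)|_{t=0}$ is tangent to $\partial\mc{M}$, it lies in $\ker d\rho$; comparing it with $W$ via the chain rule $S_{g_t}(x,v)=\Phi_t(\ell_{g_t}(x,v))$ gives $\partial_t S_{g_t}(x,v)|_{t=0} = W + (\partial_t\ell_{g_t}|_{t=0}) X_{g_0}(S_{g_0}(x,v))$, hence $d\rho(W) = - (\partial_t\ell_{g_t}|_{t=0})\, d\rho(X_{g_0})$ — which only recovers the displayed identity and says nothing new. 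To extract content one must instead evaluate $W$ by integrating the variation of the flow along the trajectory. This is the standard Jacobi/first-variation computation: with $Y(s):=\partial_t \gamma_t(s)|_{t=0}$ the variation field along $\gamma_0$, one has $Y(0)=0$, $\nabla_s Y(0)=0$, and $Y$ satisfies the inhomogeneous Jacobi equation whose inhomogeneity is built from $\nabla h$. Pairing the Jacobi equation against $\dot\gamma_0$ and integrating by parts (using $\langle Y,\dot\gamma_0\rangle' = \langle\nabla_s Y,\dot\gamma_0\rangle$ and the symmetry of the curvature term) produces exactly $\tfrac12\int_0^{\ell_{g_0}(x,v)} h_{\gamma_0(s)}(\dot\gamma_0(s),\dot\gamma_0(s))\,ds = \tfrac12 I_2^{g_0}h(x,v)$, up to a boundary term evaluated at $s=\ell_{g_0}(x,v)$. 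That boundary term is precisely $\alpha_{S_{g_0}(x,v)}$ applied to the horizontal/velocity data of $W$, i.e.\ to $d\pi_0(W)$ paired with the exit velocity; since $\alpha(\xi)=g(d\pi_0\xi,v)$ and $\alpha(X_{g_0})=1$, $\alpha(W) = \alpha(\partial_t S_{g_t}(x,v)|_{t=0}) - \partial_t\ell_{g_t}|_{t=0}$. Rearranging yields the claimed identity $\partial_t\ell_{g_t}(x,v)|_{t=0} = \tfrac12 I_2^{g_0}h(x,v) + \alpha_{S_{g_0}(x,v)}(\partial_t S_{g_t}(x,v)|_{t=0})$.

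Concretely, the cleanest route avoiding $\rho$ altogether is the classical energy-functional argument: for each $t$ the curve $\gamma_t$ is a $g_t$-geodesic, so it is a critical point of the $g_t$-energy among curves with fixed endpoints, and the length $\ell_{g_t}(x,v)$ (equal to the $g_t$-arclength, since $|v|_{g_t}$ need not be $1$ one should rescale, or work with energy and unit-speed reparametrization) varies, by the first variation of arclength, only through (i) the explicit dependence of the integrand on $g_t$, which contributes $\tfrac12\int h(\dot\gamma,\dot\gamma)$, and (ii) the movement of the endpoint $\gamma_t(\ell_{g_t})\in\partial M$, which contributes the boundary term $\langle \dot\gamma_0(\ell_{g_0}), \partial_t(\gamma_t(\ell_{g_t}))\rangle = \alpha_{S_{g_0}(x,v)}(\partial_t S_{g_t}(x,v)|_{t=0})$, the initial endpoint being fixed. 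The interior first-variation term vanishes because $\gamma_0$ is a $g_0$-geodesic. I would make this rigorous by parametrizing all curves on a fixed interval, differentiating under the integral (justified by the $C^2$ regularity from Lemma~\ref{lemma:bound-dell} and smoothness of $t\mapsto g_t$), and carefully matching the horizontal part of $\partial_t S_{g_t}(x,v)|_{t=0}$ with $\partial_t(\gamma_t(\ell_{g_t}))$ via $d\pi_0$.

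\textbf{Main obstacle.} The delicate point is bookkeeping the endpoint term correctly: $S_{g_t}(x,v)$ lives in the $t$-dependent manifold $S^{g_t}M$ (rescaled to $\mc{M}$ via $\Phi_{g_0\to g_t}$), $\ell_{g_t}$ is arclength which differs from flow-time unless one is careful with the $|v|_{g_t}$ normalization, and one must disentangle the variation of the foot point from the variation of the velocity to see that only the $\alpha$-pairing survives. Getting the constant $\tfrac12$ and the absence of any curvature/conjugate-point contribution right — i.e.\ confirming that the interior first variation genuinely vanishes on the $g_0$-geodesic and that no hidden term from the $t$-dependence of the Liouville $1$-form appears — is where the care is needed; everything else is a routine differentiation under the integral sign justified by the estimates already established.
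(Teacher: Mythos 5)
Your ``cleanest route'' paragraph is essentially the paper's proof: the paper differentiates the arclength functional $\ell_{g_t}(c_t)=\int_0^{\ell_{g_0}}\sqrt{g_t(\partial_sc_t,\partial_sc_t)}\,ds$ directly, separates the explicit $t$-dependence (giving $\tfrac12\int h(\dot c_0,\dot c_0)\,ds$) from the term $g_0(\nabla_{\partial_t}\partial_s c_t|_{t=0},\dot c_0)$, integrates the latter by parts using the compatibility of $\nabla$ with $g_0$, kills the interior term by the geodesic equation $\nabla_{\partial_s}\dot c_0=0$, and reads off the endpoint boundary term as $\alpha_{S_{g_0}(x,v)}(\partial_tS_{g_t}(x,v)|_{t=0})$ after normalizing $v\mapsto v/|v|_{g_t}$. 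You identify the same obstacles the paper handles (vanishing of the initial boundary term, the $|v|_{g_t}$ normalization, matching the foot-point variation via $d\pi_0$). Your middle paragraph via an inhomogeneous Jacobi equation and your first sketch via differentiating $\rho(\Phi_t(\ell_{g_t}))=0$ are unnecessary detours — as you yourself notice, the $\rho$-route alone is tautological, and the Jacobi machinery is overkill since no second variation enters; the paper needs only the one integration by parts you describe in the final paragraph.
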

\begin{proof} First, we use the fact that for $t$ small enough, $g_t$ must have hyperbolic trapped set by \cite[Proposition 2.1]{Guillarmou-Mazzucchelli-18}. 
Let $c_0(s)$ be a geodesic for $g_0$ parametrised by arc-length, and $s \mapsto c_t(s)$ for $t \in (-1,1)$ be a $C^1$ family of curves for $s \in [0, \ell_{g_0}(c_0)]$. Let $Y(s) := \partial_t c_t(s)|_{t = 0}$ be the vector field along $c_0(s)$ determined by the family $(c_t)_{t \in (-1,1)}$. Denote $\dot{g}:=\pl_tg_t|_{t=0}$, and by $\nabla$ the Levi-Civita derivative defined by $g_0$.
	
	By definition $\ell_{g_t}(c_t) = \int_0^{\ell_{g_0}(c_0)} \sqrt{g_t\big(\partial_s c_t(s), \partial_s c_t(s)\big)} ds$, so differentiating we obtain:
	\begin{align}
	\label{eq:variation}
		\begin{split}
		\partial_t(\ell_{g_t}(c_t))|_{t=0} &= \frac{1}{2} \int_0^{\ell_{g_0}(c_0)} \frac{2g_0\big(\nabla_{\partial_t} \partial_s c_t(s)|_{t = 0}, \partial_s c_0(s)\big) + \dot{g}\big(\partial_s c_0(s), \partial_s c_0(s)\big)}{|\partial_s c_0(s)|_{g_0}}ds\\
		& = \frac{1}{2} \int_0^{\ell_{g_0}(c_0)} \dot{g}\big(\partial_s c_0(s), \partial_s c_0(s)\big) ds  \\
		& \quad + \int_0^{\ell_{g_0}(c_0)} \Big(\partial_s \big(g_0\big(\partial_t c_t(s), \partial_s c_t(s)\big)\big)\Big|_{t=0} - g_0\big(\partial_tc_t(s)|_{t=0}, \underbrace{\nabla_{\partial_s} \partial_s c_0(s)}_{=0}\big)\Big)ds\\
		&= \frac{1}{2} \int_0^{\ell_{g_0}(c_0)} \dot{g}\big(\partial_s c(s), \partial_s c(s)\big) ds + g_0\big(Y(s), \partial_s c_0(s)\big)|_{0}^{\ell_{g_0}(c_0)}.
		\end{split}
	\end{align}
	Here we used that $|\partial_s c_0(s)|_g = 1$ since the parametrisation of $c_0$ is by arc-length, and that $\nabla_{\partial_t} \partial_s = \nabla_{\partial_s} \partial_t$ (this is seen on the pullback bundle $c^*TM$ of the tangent bundle by the family $c$ since the connection is torsion-free and $[\partial_t, \partial_s] = 0$). In the third line, we used the compatibility of $g_0$ with $\nabla$, and the last term is zero since $\nabla_{\partial_s}\partial_s c_0(s) = 0$ is the geodesic equation.
	
	If $(x, v) \in \partial_-\mc{M}\setminus \Gamma_-^{g_0}$, then for $t$ small enough, $(x,v)\notin \Gamma_-^{g_t}$ by Proposition \ref{theorem:stability} and $\ell_{g_t}(x,v)$ is $C^2$ near $t=0$ by Lemma \ref{lemma:bound-dell}. Then, we get from \eqref{eq:variation}:
\begin{align*}
	\partial_t \ell_{g_t}(x, v)|_{t=0} &= \frac{1}{2} I_2^{g_0}(\dot{g})(x, v) + g_0\Big(\underbrace{\partial_{t}\Big(\pi \circ S_{g_t}\big(x, \frac{v}{|v|_{g_t}}\big)\Big)\Big|_{t=0}}_{= d\pi \circ \partial_tS_{g_t}(x, v)|_{t = 0}}, S_{g_0}(x, v)\Big)\\
	&= \frac{1}{2} I_2^{g_0}(\dot{g})(x, v) + \alpha_{S_{g_0}(x, v)}(\partial_t S_{g_t}(x, v)|_{t=0}).\qedhere
\end{align*}
\end{proof}

\subsubsection{Solenoidal gauge} The following lemma asserts that any metric in a neighborhood of a fixed metric $g_0$ can be put in a \emph{solenoidal gauge}.

\begin{lemma}
\label{lemma:solenoidal-gauge}
Let $(M,g_0)$ be a smooth Riemannian manifold with metric of Anosov type  let $k \geq 2, \alpha \in (0,1)$. There exists $C, \delta > 0$ such that the following holds: for all metrics $g$ such that $\|g-g_0\|_{C^{k,\alpha}} < \delta$, there exists a $C^{k+1,\alpha}$-diffeomorphism $\psi$, with $\psi|_{\pl M}={\rm Id}$, such that $\psi^*g$ is divergence-free with respect to $g_0$, namely $D^*_{g_0}(\psi^*g - g_0) = 0$, and $\|\psi^*g-g_0\|_{C^{k,\alpha}} \leq C \|g-g_0\|_{C^{k,\alpha}}$. 
\end{lemma}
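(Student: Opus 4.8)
The plan is to realize $\psi$ as the time-one flow (or rather the endpoint) of a vector field produced by solving an elliptic fixed-point problem, following the standard recipe for the solenoidal gauge (see \cite{Guillarmou-17-2} and \cite{Croke-Dairbekov-Sharafutdinov-00}). First I would reduce the statement to finding a vector field $U$ on $M$, vanishing on $\partial M$, such that the diffeomorphism $\psi = \psi_U$ obtained by integrating $U$ (say the flow at time one of a suitable $t$-dependent vector field interpolating between $\id$ and $\psi_U$) satisfies $D^*_{g_0}(\psi_U^* g - g_0) = 0$. At the linearized level, writing $g = g_0 + h$ with $\|h\|_{C^{k,\alpha}} < \delta$ small, one has $\psi_U^* g - g_0 = h + 2 D_{g_0}(g_0(U,\cdot)^\flat)/\!\!\sim{} + Q(U,h)$, where the quadratic-and-higher remainder $Q$ is controlled in $C^{k,\alpha}$ by $C(\|U\|_{C^{k+1,\alpha}}^2 + \|U\|_{C^{k+1,\alpha}}\|h\|_{C^{k,\alpha}})$. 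Applying $D^*_{g_0}$ and using that the Dirichlet Laplacian $D^*_{g_0}D_{g_0}$ on $1$-forms vanishing at $\partial M$ is invertible (by \cite{Dairbekov-Sharafutdinov-10}, as recalled in the proof of Lemma \ref{lemma:potential-solenoidal}), the equation becomes a fixed point $U = \Phi(U) := -\tfrac12 (D^*_{g_0}D_{g_0})^{-1} D^*_{g_0}\big(h + Q(U,h)\big)$ (after identifying $1$-forms and vector fields via $g_0$).

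Next I would run a contraction-mapping argument for $\Phi$ on a small ball $B := \{\|U\|_{C^{k+1,\alpha}} \le A\|h\|_{C^{k,\alpha}}\}$ in the space of $C^{k+1,\alpha}$ vector fields vanishing on $\partial M$. The operator $(D^*_{g_0}D_{g_0})^{-1}D^*_{g_0}$ gains one derivative and is bounded $C^{k,\alpha} \to C^{k+1,\alpha}$ (elliptic Schauder estimates for the Dirichlet problem on the manifold with boundary; here one uses that $g_0$ is smooth). Hence $\|\Phi(U)\|_{C^{k+1,\alpha}} \le C(\|h\|_{C^{k,\alpha}} + \|U\|_{C^{k+1,\alpha}}^2 + \|U\|_{C^{k+1,\alpha}}\|h\|_{C^{k,\alpha}})$, and on $B$ with $A$ chosen large and $\delta$ small this is $\le A\|h\|_{C^{k,\alpha}}$; similarly $\|\Phi(U_1)-\Phi(U_2)\|_{C^{k+1,\alpha}} \le C(\|U_1\|+\|U_2\|+\|h\|)\|U_1-U_2\|_{C^{k+1,\alpha}} \le \tfrac12\|U_1-U_2\|_{C^{k+1,\alpha}}$ for $\delta$ small. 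The Banach fixed-point theorem then yields a unique $U \in B$, and one sets $\psi := \psi_U$; by construction $\psi|_{\partial M} = \id$ (since $U$ vanishes on $\partial M$), $\psi$ is a $C^{k+1,\alpha}$-diffeomorphism for $\delta$ small (close to the identity in $C^{k+1,\alpha}$, hence invertible), $D^*_{g_0}(\psi^*g - g_0) = 0$, and $\|\psi^*g - g_0\|_{C^{k,\alpha}} \le \|h\|_{C^{k,\alpha}} + C\|U\|_{C^{k+1,\alpha}} \le C'\|g - g_0\|_{C^{k,\alpha}}$.

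The main obstacle is purely bookkeeping: making precise the nonlinear remainder $Q(U,h)$ — i.e. expressing $\psi_U^* g$ to the needed order and checking it is a smooth (tame) map in $U$ with the stated quadratic bound in $C^{k,\alpha}$, including the dependence of $Q$ on $h$ — and checking that the boundary condition ($U$ vanishing on $\partial M$, so $\psi$ fixes $\partial M$ pointwise) is compatible with the ellipticity of the Dirichlet problem for $D^*_{g_0}D_{g_0}$. There is one genuine subtlety worth flagging: one must verify that $\psi^* g$ stays a \emph{metric} (positive-definite), which is automatic for $\delta$ small, and that $\psi^*g$ still has $\psi^*g|_{T\partial M} = g_0|_{T\partial M}$ if that is desired — this holds because $\psi|_{\partial M} = \id$ and $d\psi|_{\partial M}$ is the identity on $T\partial M$ when $U$ vanishes on $\partial M$ together with its tangential part, which one arranges by working with $U$ vanishing on $\partial M$. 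The elliptic regularity and the contraction estimates themselves are standard once the algebraic setup is in place.
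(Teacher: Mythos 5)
The paper itself gives no proof for this lemma: it is a one-line citation to \cite[Lemma 2.2]{Croke-Dairbekov-Sharafutdinov-00}. Your plan reconstructs the standard argument behind that reference correctly in outline — realize $\psi$ as the exponential/flow of a vector field $U$ vanishing on $\partial M$, reduce to a fixed-point equation via the invertible Dirichlet Laplacian $D_{g_0}^*D_{g_0}$ on $1$-forms (as in Lemma~\ref{lemma:potential-solenoidal}), and observe the elliptic gain $(D_{g_0}^*D_{g_0})^{-1}D_{g_0}^* : C^{k,\alpha}\to C^{k+1,\alpha}$.

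However, the step you dismiss as ``purely bookkeeping'' contains a genuine gap, and it is the actual content of the cited lemma. The estimate
$\|Q(U,h)\|_{C^{k,\alpha}} \le C\bigl(\|U\|_{C^{k+1,\alpha}}^2 + \|U\|_{C^{k+1,\alpha}}\|h\|_{C^{k,\alpha}}\bigr)$
and the Lipschitz bound on $\Phi$ are false in the $C^{k,\alpha}$ H\"older scale. The offending term is $\psi_U^*h - h$: taking $k$ derivatives produces the top-order term $D^k h\circ\psi_U - D^k h$, and for $D^k h \in C^{0,\alpha}$ the $\alpha$-H\"older seminorm of $D^k h\circ\psi_U - D^k h$ does \emph{not} tend to $0$ as $\|U\|\to 0$. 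Already the model case $h(x)=|x|^\alpha$ on $\R$ and $\psi_U(x)=x+\varepsilon$ gives $[\,h\circ\psi_U - h\,]_{C^{0,\alpha}} \ge 2$ for every $\varepsilon>0$: sample the difference at $x=0$ and $x=-\varepsilon$. In other words, composition with a near-identity diffeomorphism is not Lipschitz (not even continuous at $\id$) as a map $C^{k+1,\alpha}\times C^{k,\alpha}\to C^{k,\alpha}$, so your Banach contraction argument does not close: the contraction constant would secretly involve $\|h\|_{C^{k+1,\alpha}}$, which is not controlled. A correct treatment must get around this, e.g.\ by replacing the Banach fixed point by Schauder's (exploiting the compactness of $C^{k+1,\alpha}\hookrightarrow C^{k+1,\beta}$ for $\beta<\alpha$ together with an a posteriori $C^{k+1,\alpha}$ bound), or by a more careful use of the naturality $D^*_{g_0}(\psi_U^*g) = \psi_U^*\bigl(D^*_{(\psi_U^{-1})^*g_0}g\bigr)$ to exploit that $g_0$ is smooth so that the $U$-dependence enters only through $(\psi_U^{-1})^*g_0$. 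This H\"older-loss subtlety is exactly what \cite[Lemma 2.2]{Croke-Dairbekov-Sharafutdinov-00} resolves, and you should not brush it aside. (A smaller quibble: the expression $2 D_{g_0}(g_0(U,\cdot)^\flat)$ has a spurious $\flat$ — $g_0(U,\cdot)$ is already the $1$-form $U^\flat$ — and should simply read $2 D_{g_0}U^\flat = \mathcal{L}_U g_0$.)
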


\begin{proof}
The proof is contained in \cite[Lemma 2.2]{Croke-Dairbekov-Sharafutdinov-00}.
\end{proof}

\subsection{Normal operator} \label{section:normalop}
Let $(M,g)$ be a smooth Riemannian manifold with metric of Anosov type $g$. The normal operator on $m$-symmetric tensors is defined by
\[
\Pi^{g}_m :=(I_m^g)^* I_m^{g}.
\]
It enjoys strong analytic properties, as proved in \cite{Guillarmou-17-2}:

\begin{proposition}
\label{prop:pim}
The operator $\Pi^g_m \in \Psi^{-1}(M^\circ,\otimes^m_S T^*M^\circ)$ is a pseudodifferential operator of order $-1$ on $M^\circ$. It is elliptic on solenoidal tensors, in the sense that there exists pseudodifferential operator $Q,K_L,K_R$ on $M^{\circ}$ of respective order $1,-\infty,-\infty$ such that:
\[
Q\Pi^g_m = \pi_{\ker D_g^*} + K_L, \qquad \Pi^g_m Q = \pi_{\ker D_g^*} + K_R,
\]
and the equality holds when applied to all distributions $f \in \mc{E}'(M^\circ,\otimes^m_S T^*M^\circ)$ with compact support in $M^\circ$. The operator $Q$ can be taken to be properly supported in $M^\circ$. Moreover, $\Pi_m^g$ is solenoidal injective, i.e. injective in restriction to $\ker D_g^*$, if and only if the X-ray transform $I^g_m$ is solenoidal injective.
\end{proposition}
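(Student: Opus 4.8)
The plan is to reduce the whole proposition to the microlocal structure of the resolvent $R_g$ of the geodesic vector field and then run the standard normal-operator parametrix argument, being careful that $M^\circ$ is an open manifold. The first step is to express $\Pi_m^g$ through $R_g$. Starting from $I^gf=-(R_gf)|_{\pl_-\mc{M}}$, I would differentiate $t\mapsto (R_gf)(\varphi_t^g y)\,(R_gh)(\varphi_t^g y)$, use $X_gR_g={\rm Id}$ together with $(R_g\cdot)|_{\pl_+\mc{M}}=0$ to kill the boundary term at the outgoing boundary, integrate $t$ over $[0,\tau_g(y)]$, and then integrate over $\pl_-\mc{M}$ against $\mu_\pl$ via Santaló's formula \eqref{equation:santalo}. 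This yields $(I^g)^*I^g=-(R_g+R_g^*)$ on $C_{\comp}^\infty(\mc{M}^\circ)$, the adjoints taken with respect to the Liouville measure $\mu$. Composing with $\pi_m^*$ and $\pi_{m*}$ gives $\Pi_m^g=-\pi_{m*}(R_g+R_g^*)\pi_m^*$, so all the analytic content is inherited from $R_g$.

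Next I would invoke the known wavefront-set estimate for $R_g$ on a manifold with hyperbolic trapped set, as in \cite{Dyatlov-Guillarmou-16,Guillarmou-17-2}: $\WF'(R_g)$ lies in the union of $N^*\Delta_{\mc{M}}$, the forward flow-out Lagrangian of $N^*\Delta_{\mc{M}}$ under the cogeodesic flow, and a piece living over the trapped tails $\Gamma_+\times\Gamma_-$ built from the dual bundles $(E_+^X)^*\times(E_-^X)^*$ (with the transposed sets for $R_g^*$), together with the associated anisotropic Sobolev bounds. Pulling back and pushing forward by the sphere-bundle fibration $\pi:\mc{M}\to M$: the $N^*\Delta_{\mc{M}}$ piece produces the conormal singularity of a $\Psi$DO of order $-1$ on $M^\circ$, while the flow-out and the trapped-set pieces are annihilated by the fibre integration, because for $x\neq x'$ the set of $v\in S_xM$ whose geodesic reaches $x'$ with prescribed cotangent data is a submanifold of positive codimension — and this is precisely where the \emph{no-conjugate-points} hypothesis enters, guaranteeing the relevant maps are submersions so that the fibre integrals are smooth. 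Hence $\Pi_m^g\in\Psi^{-1}(M^\circ,\otimes^m_S T^*M^\circ)$. Near the diagonal $R_g+R_g^*$ is governed by short geodesics, so the principal symbol of $\Pi_m^g$ at $(x,\xi)$ agrees with that of the X-ray normal operator of a trapping-free metric; the standard computation (Sharafutdinov) identifies it with the positive semidefinite endomorphism $w\mapsto c_n\int_{S_xM\cap\xi^\perp}\langle w,v^{\otimes m}\rangle\,v^{\otimes m}\,d\omega(v)$ of $\otimes^m_S T_x^*M$, which annihilates potential symbols (forced by $\ran D_g\subset\ker I_m^g$, see \eqref{equation:inclusion}) and is positive definite on the complementary solenoidal symbols $\ker\iota_{\xi^\sharp}$. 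Using the zeroth-order projectors $\pi_{\ker D_g^*},\pi_{\ran D_g}$ of Lemma \ref{lemma:potential-solenoidal}, I would then build $Q\in\Psi^1(M^\circ)$ whose symbol is the fibrewise pseudo-inverse of $\sigma(\Pi_m^g)$ on solenoidal symbols and zero on potential ones, so that $\sigma(Q\Pi_m^g)=\sigma(\pi_{\ker D_g^*})$; the usual asymptotic-summation parametrix improvement upgrades the error to a smoothing operator, giving $Q\Pi_m^g=\pi_{\ker D_g^*}+K_L$ and, symmetrically, $\Pi_m^gQ=\pi_{\ker D_g^*}+K_R$. Since $M^\circ$ is open, $Q$ is taken properly supported, which is exactly why the identities are asserted only on compactly supported distributions.

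For the injectivity equivalence, for $f\in\ker D_g^*$ one has $\langle\Pi_m^gf,f\rangle_{L^2}=\|I_m^gf\|^2_{L^2(\pl_-\mc{M})}$, so $\Pi_m^gf=0\iff I_m^gf=0$ (using $\Pi_m^g=(I_m^g)^*I_m^g$). If $I_m^g$ is solenoidal injective, then $f\in\ker I_m^g\cap\ker D_g^*=\ran D_g\cap\ker D_g^*=\{0\}$ by the direct-sum decomposition \eqref{equation:decomp-tens} (equivalently, by injectivity of $D_g^*D_g$ on $H^1_0$, \cite{Dairbekov-Sharafutdinov-10}); conversely, if $\Pi_m^g$ is injective on solenoidal tensors and $I_m^gf=0$, writing $f=D_gh+f_s$ as in \eqref{equation:decomp-tens} gives $I_m^gf_s=0$, hence $\Pi_m^gf_s=0$, hence $f_s=0$, so $f\in\ran D_g$ and $\ker I_m^g=\ran D_g$.

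The main obstacle is the pseudodifferential claim itself: showing that $\Pi_m^g$ is an honest $\Psi$DO, rather than a Fourier integral operator carrying extra Lagrangian components, on a manifold whose trapped set may be an arbitrary hyperbolic set. This rests on the precise microlocal and anisotropic Sobolev description of $R_g$ from \cite{Dyatlov-Guillarmou-16,Guillarmou-17-2}, and on the cancellation of the flow-out and trapped-set contributions under the sphere-bundle fibre integration, a cancellation that genuinely uses the absence of conjugate points; everything downstream (symbol computation, parametrix, injectivity) is then routine.
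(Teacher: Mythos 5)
The paper itself gives no proof of Proposition~\ref{prop:pim}; it cites \cite{Guillarmou-17-2} directly, and your outline reproduces the argument given there: the identity $(I^g)^*I^g=-(R_g+R_g^*)$ obtained from $X_gR_g=\mathrm{Id}$, the vanishing boundary condition at $\partial_+\mc{M}$, and Santaló's formula; the reduction of the $\Psi$DO claim for $\Pi_m^g=\pi_{m*}(I^g)^*I^g\pi_m^*$ to the wavefront-set description of $R_g$ from \cite{Dyatlov-Guillarmou-16}; the cancellation of the flow-out and $\Gamma$-tail pieces under the sphere-bundle fibre integration, which is where the no-conjugate-points hypothesis enters (it ensures $(E_\pm^X)^*$ is nowhere conormal to the fibres); the Sharafutdinov symbol computation and the microlocal parametrix on the solenoidal block; and the elementary equivalence of solenoidal injectivity for $I_m^g$ and $\Pi_m^g$ via $\langle\Pi_m^gf,f\rangle=\|I_m^gf\|^2_{L^2(\partial_-\mc{M},\mu_\partial)}$ and the decomposition \eqref{equation:decomp-tens}. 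This is the same route as the cited reference, so the proposal is correct in approach; the only place you wave your hands is exactly where the original reference must also do the hard microlocal work.
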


We now prove an elliptic estimate for the operator $\Pi_m^g$. Recall from \S\ref{ssection:hyperbolic-trapped-set} that $(M_e,g_e) \supset (M,g)$ is a Riemannian extension of the manifold $(M,g)$ which is also of Anosov type in the sense of Definition \ref{admissiblemetrics}. We will denote by 
\[ 
E_0 : L^2(M,\otimes^m_S T^*M) \to L^2(M_e,\otimes^m_S T^*M_e)
\]
the operator of extension by $0$.

\begin{proposition}
\label{proposition:elliptic-estimate}
Let $(M,g)$ be a manifold of Anosov type, and further assume that $I_2^g$ is solenoidal injective. Let 
$(M_e,g_e)$ be an  extension of Anosov type of $(M,g)$. Then, there exists $C > 0$ such that for all $f \in L^2(M,\otimes^2_S T^*M) \cap \ker D_g^*$:
 \[
\|f\|_{L^2(M)} \leq C \|\Pi_2^{g_e} E_0 f\|_{H^1(M_e)}.
\]
\end{proposition}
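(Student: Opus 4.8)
The plan is to argue by contradiction, in the by-now classical spirit of the stability estimates of Stefanov--Uhlmann: the microlocal ellipticity of $\Pi_2^{g_e}$ on $\ker D_{g_e}^*$ (Proposition \ref{prop:pim}) controls the $g_e$-solenoidal part of any tensor, while the assumed solenoidal injectivity of $I_2^g$ is used to exclude a would-be blow-up sequence. Suppose the claimed estimate fails; then there is a sequence $f_n\in L^2(M,\otimes_S^2 T^*M)\cap\ker D_g^*$ with $\|f_n\|_{L^2(M)}=1$ and $\|\Pi_2^{g_e}E_0 f_n\|_{H^1(M_e)}\to 0$. After passing to a subsequence, $f_n\rightharpoonup f$ weakly in $L^2(M)$ with $f\in\ker D_g^*$, since this is a weakly closed subspace.

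First I would feed $E_0 f_n$ into the parametrix of Proposition \ref{prop:pim}. Since $M\Subset M_e^\circ$, $E_0 f_n$ is a compactly supported distribution on $M_e^\circ$, so
\[
\pi_{\ker D_{g_e}^*}E_0 f_n \;=\; Q\,\Pi_2^{g_e}E_0 f_n \;-\; K_L E_0 f_n ,
\]
with $Q$ properly supported of order $1$ and $K_L$ smoothing; hence $\|\pi_{\ker D_{g_e}^*}E_0 f_n\|_{L^2(M_e)}\le C\|\Pi_2^{g_e}E_0 f_n\|_{H^1(M_e)}+\|K_L E_0 f_n\|_{L^2(M_e)}$, where the first term tends to $0$ and the second converges strongly in $L^2(M_e)$ to $K_L E_0 f$ (compactness of $K_L$ and $E_0 f_n\rightharpoonup E_0 f$). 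Next I would show $f=0$. By boundedness, hence weak continuity, of $\Pi_2^{g_e}$ on $L^2$, together with $\Pi_2^{g_e}E_0 f_n\to 0$ in $H^1\hookrightarrow L^2$, one gets $\Pi_2^{g_e}E_0 f=0$, so $I_2^{g_e}E_0 f=0$. Since $M$ is strictly convex inside $(M_e,g_e)$, every maximal $g_e$-geodesic meeting $M$ enters and exits it exactly once and its portion inside $M$ is a $g$-geodesic; and because $\Gamma_-^{g_e}\cap\mc{M}=\Gamma_-^g$ (see \S\ref{ssection:hyperbolic-trapped-set}), these portions realize $\mu_\partial$-almost every point of $\partial_-\mc{M}\setminus\Gamma_-^g$. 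Therefore $I_2^{g_e}E_0 f=0$ forces $I_2^g f=0$, and solenoidal injectivity of $I_2^g$ together with $f\in\ker D_g^*$ gives $f=0$. Consequently $K_L E_0 f_n\to 0$ and thus $\pi_{\ker D_{g_e}^*}E_0 f_n\to 0$ strongly in $L^2(M_e)$.

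The hard part, and the reason the extension $M_e$ cannot be avoided, is to turn this into a contradiction: $\Pi_2^{g_e}$ only sees the $g_e$-solenoidal gauge whereas $f_n$ lives in the $g$-solenoidal gauge, and the two disagree near $\partial M$. I would write the $g_e$-decomposition $E_0 f_n=\pi_{\ker D_{g_e}^*}E_0 f_n+D_{g_e}v_n$ with $v_n\in H_0^1(M_e,\otimes_S^1 T^*M_e)$ bounded in $H^1$. On the collar $C:=\overline{M_e\setminus M}$ one has $D_{g_e}v_n|_C=-\pi_{\ker D_{g_e}^*}E_0 f_n|_C\to 0$ in $L^2(C)$; a weak-compactness argument, using that $(M_e,g_e)$ carries no nonzero Killing tensor vanishing on $\partial M_e$ (Dairbekov--Sharafutdinov), first yields $v_n\to 0$ in $L^2(M_e)$, after which an elliptic (Korn-type) estimate on $C$ upgrades this to $\|v_n\|_{H^1(C)}\to 0$, hence $v_n|_{\partial M}\to 0$ in $H^{1/2}(\partial M)$. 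Restricting the decomposition to $M$ (where $D_{g_e}=D_g$) and projecting onto $\ker D_g^*$ gives $f_n=D_g\kappa_n+o(1)_{L^2(M)}$, where $\kappa_n\in H^1(M)$ solves the Dirichlet problem $D_g^*D_g\kappa_n=0$ in $M^\circ$, $\kappa_n|_{\partial M}=v_n|_{\partial M}$ — well-posed because $D_g^*D_g$ is Dirichlet-invertible on $M$ (no Killing tensors vanishing on $\partial M$). The associated elliptic estimate then gives $\|f_n\|_{L^2(M)}\le C\|v_n|_{\partial M}\|_{H^{1/2}(\partial M)}+o(1)\to 0$, contradicting $\|f_n\|_{L^2(M)}=1$. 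The genuinely delicate point is this coordination between the two gauges on a collar of $\partial M$; everything else is routine functional analysis and elliptic theory.
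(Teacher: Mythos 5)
Your approach is the right one (and close to the paper's, which runs the same argument as an a priori estimate with a compact error, removed at the end by the same compactness device you invoke up front), but there is a concrete gap in the parametrix step. You apply $Q$ — a properly supported pseudodifferential operator of order $1$ on the \emph{open} manifold $M_e^\circ$ — and claim the global bounds $\|Q\Pi_2^{g_e}E_0f_n\|_{L^2(M_e)}\lesssim\|\Pi_2^{g_e}E_0f_n\|_{H^1(M_e)}$ and that $K_LE_0\colon L^2(M)\to L^2(M_e)$ is compact. Proper support only gives such bounds on compact subsets of $M_e^\circ$; there is no uniform control as one approaches $\partial M_e$, and likewise the smoothing kernel of $K_L$ need not be square-integrable on $M_e\times M$. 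What the parametrix genuinely yields is $\pi_{\ker D_{g_e}^*}E_0f_n\to 0$ in $L^2_{\mathrm{loc}}(M_e^\circ)$, which does not give $D_{g_e}v_n\to 0$ in $L^2$ of the full collar $C=\overline{M_e\setminus M}$ that your Korn step uses. The paper circumvents this exactly by introducing a second extension $(M_{ee},g_{ee})\Supset(M_e,g_e)$, building the parametrix on $M_{ee}^\circ$, and restricting to $M_e\Subset M_{ee}^\circ$ — now a genuinely compact subset of the interior — before shrinking everything at the very end. A minimal fix of your version is to insert a cutoff $\chi\in C_{\comp}^\infty(M_e^\circ)$ with $\chi=1$ near $\overline M$ and run Korn on a sub-collar $C'\Subset M_e^\circ$ in place of $C$; this is the same device up to relabeling.

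Beyond the gap, your handling of the potential term is a genuine route divergence. You get $v_n\to 0$ in $L^2(M_e)$ by weak compactness — Rellich, then Dairbekov--Sharafutdinov to rule out a nonzero Killing $1$-form vanishing at $\partial M_e$ — inside your contradiction framework. The paper proves instead a quantitative a priori estimate $\|p\|_{L^2(\Omega)}\leq C\|D_{g_e}p\|_{L^2(\Omega)}$ on $\Omega:=M_e\setminus M^\circ$: pseudolocality makes $p|_\Omega$ smooth, and $\pi_1^*p$ is then integrated along the $g_e$-flow from $\partial M_e$, where $p$ vanishes, into $\Omega$. The paper's variant is tighter: no second compactness extraction is needed, and the whole proof stays at the level of a single a priori estimate with one compact remainder, with solenoidal injectivity invoked only once at the end to remove it.
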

\begin{proof}
It will be convenient in the proof to consider a second extension of Anosov type  $(M_{ee}, g_{ee}) \supset (M_e,g_e)$ and to work on it.
The argument follows \cite{Stefanov-Uhlmann-04}. The operator $\Pi_2^{g_{ee}}$ is a (non-properly supported) pseudodifferential operator of order $-1$ on ${M_{ee}}^\circ$ which is elliptic on solenoidal tensors. By Proposition \ref{prop:pim}, we can construct a properly supported pseudo-differential operator  $Q \in \Psi^{1}(M_{ee}^\circ, \otimes^2_S T^* M_{ee}^\circ)$ such that
\[
Q \Pi_2^{g_{ee}} = \pi_{\ker D^*_{g_{ee}}} + K,
\]
where $K\in \Psi^{-\infty}(M_{ee}^{\circ})$ is smoothing. We let $\iota : M_e \hookrightarrow M_{ee}$ be the embedding. 
Observe that, taking a cutoff function $\chi \in C^\infty_{\comp}(M_e^\circ)$ with value $1$ in an open neighborhood of $M$, we get:
\[
\begin{split}
\iota^* Q \Pi_2^{g_{ee}} E_0 =&   \iota^* \pi_{\ker D_{g_{ee}}^*} E_0 + \iota^* K E_0 \\
= &  \pi_{\ker D_{g_e}^*}E_0 + \chi (\iota^* \pi_{\ker D_{g_{ee}}^*} - \pi_{\ker D_{g_e}^*}) \chi E_0 +  \iota^* K E_0 \\
& +  (1-\chi) (\iota^* \pi_{\ker D_{g_{ee}}^*} - \pi_{\ker D_{g_e}^*}) E_0 .
\end{split}
\]
By the pseudolocality of pseudodifferential operators (they preserve the singular support of distributions), the term $(\iota^* \pi_{\ker D_{g_{ee}}^*} - \pi_{\ker D_{g_e}^*}) E_0$ maps continuously $L^2$ sections to sections that are smooth outside $M$, and thus
\[
(1-\chi) (\iota^* \pi_{\ker D_{g_{ee}}^*} - \pi_{\ker D_{g_e}^*}) E_0 : L^2(M,\otimes^2_S T^*M) \to L^2(M_e,\otimes^2_S T^*M_e)
\]
is a compact operator. As to the term $\chi (\iota^* \pi_{\ker D_{g_{ee}}^*} - \pi_{\ker D_{g_e}^*}) \chi$, we observe that it has Schwartz kernel supported in the interior of $M_{ee} \times M_{ee}$. It is \emph{a priori} a pseudodifferential operator of order $0$ but its principal symbol vanishes (see Lemma \ref{lemma:potential-solenoidal}) and thus it is a pseudodifferential operator of order $-1$, i.e. it is compact as a map $L^2(M_e) \to L^2(M_e)$. (We now drop the notation of the vector bundle in the functional spaces in order to avoid repetition.) As a consequence, we see that, up to changing the compact remainder:
\begin{equation}
\label{equation:parametrix}
\iota^* Q \Pi_2^{g_{ee}} E_0 = \pi_{\ker D_{g_e}^*}E_0 + K,
\end{equation}
where $K$ is compact as a map $L^2(M) \to L^2(M_e)$.

Given $f \in L^2(M)\cap \ker D_g^*$, by Lemma \ref{lemma:potential-solenoidal} we may write $E_0f = D_gp + h$, where $p \in H^1(M_e, T^*M_e)$ and $p|_{\partial M_e} = 0$, $h = \pi_{\ker D_{g_e}^*} E_0f$. 
Now, using \eqref{equation:parametrix}, there is $C>0$ independent of $f$ such that 
\begin{equation}
\label{equation:f}
\begin{split}
\|f\|_{L^2(M)} & = \|E_0 f\|_{L^2(M_e)}  \\
& \leq \|\pi_{\ker D_{g_e}^*}E_0 f\|_{L^2(M_e)} +  \|({\rm Id}-\pi_{\ker D_{g_e}^*})E_0 f\|_{L^2(M_e)} \\
& \leq \|\iota^* Q \Pi_2^{g_{ee}} E_0f\|_{L^2(M_e)} + \|Kf\|_{L^2(M_e)} + \|D_{g_e}p\|_{L^2(M_e)} \\
& \leq C( \|\Pi_2^{g_{ee}} E_0 f\|_{H^1(M_{ee})} +  \|Kf\|_{L^2(M_e)} + \|D_{g_e}p\|_{L^2(M_e)}).
\end{split}
\end{equation}
It remains now to bound the potential term $D_{g_e}p$. We have
\begin{equation}
\label{equation:dp}
\|D_{g_e}p\|_{L^2(M_e)} \leq \|D_{g_e}p\|_{L^2(\Omega)} + \|D_{g}p\|_{L^2(M)},
\end{equation}
where $\Omega := M_e \setminus M^\circ$. We observe that on $\Omega$, $D_gp = -h = - \pi_{\ker D_{g_e}^*} E_0f$. Hence, using \eqref{equation:parametrix}, we get:
\begin{equation}
\label{equation:dp2}
\|D_{g_e}p\|_{L^2(\Omega)} \leq \|\iota^* Q \Pi_2^{g_{ee}} E_0 f\|_{L^2(\Omega)} + \|Kf\|_{L^2(\Omega)}.
\end{equation}
The boundary $\partial \Omega = \partial M_e \sqcup \partial M$ splits into two components. We define $\nu$ to be the outward pointing unit normal vector to $\partial \Omega$ and $j := p|_{\partial M}$. In $M$, we have $D_g^*f = 0 = D_g^*h + D_g^*D_gp = \Delta_D p$, where $\Delta_D := D_g^*D_g$ is the (symmetric) Laplacian on $1$-forms. Hence, in $M$, $p$ satisfies the elliptic system $\Delta_D p = 0, p|_{\partial M} = j \in H^{1/2}(\partial M, \otimes^2_S T^*M)$ (by the trace Theorem) so by standard elliptic estimates \cite[Chapter 5, Proposition 1.7]{Taylor-11}, we get $\|p\|_{H^1(M)} \lesssim \|j\|_{H^{1/2}(\partial M)}$. Observe that the $H^1$-norm in $M$  can be defined by  $\|p\|_{H^1(M)} := \|p\|_{L^2(M)} + \|D_gp\|_{L^2(M)}$. As a consequence, using the boundedness of the trace map $H^1(\Omega) \to H^{1/2}(\partial \Omega)$, we get (for some $C$ uniform that can change from line to line):
\begin{equation}
\label{equation:dp3}
\begin{split}
\|D_gp\|_{L^2(M)} & \leq C\|p\|_{H^1(M)}  \leq C \|j\|_{H^{1/2}(\partial M)} \leq  C\|p\|_{H^1(\Omega)} \leq  C(\|p\|_{L^2(\Omega)} + \|D_{g_e}p\|_{L^2(\Omega)}) \\
&  \leq C( \|p\|_{L^2(\Omega)} + \|\iota^* Q \Pi_2^{g_{ee}} E_0 f\|_{L^2(\Omega)} + \|Kf\|_{L^2(\Omega)}),
\end{split}
\end{equation}
by \eqref{equation:dp2}. It remains to bound $\|p\|_{L^2(\Omega)}$. Recall that $D_g p = \pi_{\ran D_g} E_0 f$, and by pseudolocality of the pseudodifferential operator $\pi_{\ran D_g}$ (see Lemma \ref{lemma:potential-solenoidal}) we get that $p|_{\Omega}$ belongs to $C^\infty(\Omega, T^*\Omega)$. For any point $(x,v) \in S\Omega$, there is a uniformly bounded time $\tau(x,v)$ (possibly negative) such that $\pi \left(\varphi_{\tau(x,v)}(x,v) \right) \in \partial M_e$ and using that $p$ vanishes on $\partial M_e$, we can thus write using \eqref{equation:relation}
\[
|\pi_1^*p(x,v)| = \Big|\int_0^{\tau(x,v)} (X_{g_e} \pi_1^*p)(\varphi^{g_e}_t(x,v)) \dd t \Big| = \Big|\int_0^{\tau(x,v)} (\pi_2^* D_{g_e}p)(\varphi^{g_e}_t(x,v)) \dd t\Big|.
\]
This equality implies that $\|p\|_{L^2(\Omega)} \leq C\| D_{g_e}p\|_{L^2(\Omega)}$. Hence, combining \eqref{equation:f} with \eqref{equation:dp}, \eqref{equation:dp2} and \eqref{equation:dp3}, we get that for all $f \in L^2(M,\otimes^2_S T^*M) \cap \ker D_{g}^*$, the following inequality holds for some uniform $C>0$
\[
\|f\|_{L^2(M)} \leq C\left( \|\Pi_2^{g_{ee}} E_0 f\|_{H^1(M_{ee})} +  \|Kf\|_{L^2(M_e)}\right),
\]
where $K : L^2(M,\otimes^2_S T^*M) \to L^2(M_{e}, \otimes^2_S T^*M_{e})$ is compact. The solenoidal injectivity of $\Pi^g_2$ on $M$ implies that 
$\Pi_2^{g_{ee}} E_0$ is also solenoidal injective (see \cite[Proof of Lemma 2.3]{Lefeuvre-19-1} for instance) and thus by standard arguments, one can remove the compact remainder $K$ from the previous inequality. Hence there is uniform $C > 0$ such that
\[
\|f\|_{L^2(M)} \leq C \|\Pi_2^{g_{ee}} E_0 f\|_{H^1(M_{ee})}.
\]
The claimed estimate is proved by observing that in the above proof one can replace $(M_{ee}, g_{ee})$ by $(M_e, g_e)$, and $(M_e, g_e)$ by a slightly smaller manifold $(M_{e}', g_e')$ of Anosov type containing $(M, g)$.
\end{proof}

\section{Local lens rigidity, proof of the main result}

In this section, we prove the main Theorem \ref{theorem:main}.

\subsection{Key estimate}
\label{ssection:key}

The goal of this paragraph is to show the following key estimate:

\begin{proposition}
\label{proposition:key}
Let $g_0$ be of Anosov type. There exist $C, \eps, \mu, N > 0$ such that for all smooth metrics $g$ such that $g|_{\pl M}=g_0|_{\pl M}$, 
$\|g-g_0\|_{C^N} < \eps$, and $(\ell_g,S_g) = (\ell_{g_0},S_{g_0})$, then:
\[
\|I_2^{g_0}(g-g_0)\|_{L^2} \leq C \|g-g_0\|_{C^N}^{1+\mu}.
\]
\end{proposition}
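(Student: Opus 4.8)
The plan is to exploit the variational formula of Lemma \ref{lemma:variation} together with the $C^2$-regularity of the scattering operator (Section \S\ref{section:scattering}) and a complex interpolation argument, as announced in the introduction. Write $h := g - g_0$. Since $(\ell_g, S_g) = (\ell_{g_0}, S_{g_0})$ and $g, g_0$ agree on $T\partial M$, the boundary identifications make sense and one can form the one-parameter family $g_t := g_0 + th$ for $t \in [0,1]$, all of Anosov type (for $\|h\|_{C^N}$ small, by \cite[Proposition 2.1]{Guillarmou-Mazzucchelli-18}); by the fundamental theorem of calculus,
\[
0 = \ell_g(x,v) - \ell_{g_0}(x,v) = \int_0^1 \partial_t \ell_{g_t}(x,v)\, dt
\]
for $(x,v) \in \partial_-\mc{M}\setminus \Gamma_-^{g_t}$ (valid a.e., since the trapped sets have zero $\mu_\partial$-measure). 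Lemma \ref{lemma:variation} gives $\partial_t \ell_{g_t}(x,v) = \tfrac12 I_2^{g_t} h(x,v) + \alpha_{S_{g_t}(x,v)}(\partial_t S_{g_t}(x,v))$, so that
\[
\tfrac12 \int_0^1 I_2^{g_t}h\, dt = -\int_0^1 \alpha_{S_{g_t}(x,v)}(\partial_t S_{g_t}(x,v))\, dt.
\]
The left side is $\tfrac12 I_2^{g_0}h$ plus an error $\tfrac12\int_0^1 (I_2^{g_t} - I_2^{g_0})h\, dt$ which is quadratically small in $\|h\|_{C^N}$ (the $t$-derivative of $I_2^{g_t}h$ involves two factors of $h$: one from $\pi_2^*$-type terms and one from the variation of the flow; cf.\ the estimates of Lemma \ref{lemma:bound-dell}), after multiplying by a weight $e^{-z\ell_{g_0}}$ and taking $H^{-6}$-norms as in \eqref{equation:key-intro}. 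So the crux is to estimate the right-hand ``scattering'' term.

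For the scattering term one writes $\alpha_{S_{g_t}(x,v)}(\partial_t S_{g_t}(x,v))$ in terms of the scattering operator $\mc{S}_{g_t}$ and $\partial_t \mc{S}_{g_t}$. Here is where the $C^2$-regularity established in Section \S\ref{section:scattering} — that $g \mapsto \mc{S}_g$ is $C^2$ as a map into $\mc{L}(C^\infty(\partial_+\mc{M}), H^{-6}(\partial_-\mc{M}))$ — enters: since $S_g = S_{g_0}$ holds \emph{as maps}, the first-order term $\partial_t \mc{S}_{g_t}|_{t=0}$, when paired appropriately, is not zero but its contribution telescopes. More precisely, the identity $\mc{S}_{g_1} = \mc{S}_{g_0}$ forces $\int_0^1 \partial_t \mc{S}_{g_t}\, dt = 0$; combining this with a Taylor expansion of $t \mapsto \mc{S}_{g_t}$ to second order and the Lipschitz-in-$g$ control of the first derivative $\partial_g \mc{S}_g$ (which is the content of the $C^2$-regularity, giving $\|\partial_g\mc{S}_{g_t} - \partial_g \mc{S}_{g_0}\| \leq C\|h\|_{C^N}$ times $t$), one gets that the scattering term is bounded by $C\|h\|_{C^N}^2$ in the relevant norm. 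To make all of this rigorous in the presence of the singularities of $(\ell_g, S_g)$ along the (fractal) trapped set, one multiplies everything by the holomorphic family of weights $\mc{L}_{g_0}(z) = \chi e^{-z\ell_{g_0}}$ (Lemma \ref{expsell}), uses the volume bounds \eqref{eq:volume}–\eqref{cavalieri} to control the resulting weighted integrals, and applies the Hadamard three-line theorem: on the line $\Re z = \beta \gg 1$ one has the polynomial-in-$z$, quadratic-in-$h$ bound coming from Lemmas \ref{lemma:upperboundS_g} and \ref{expsell}; on the line $\Re z = -\eps$ one has only an $L^2$-bound linear in $\|h\|_{C^N}$ from the isometry property of $\mc{S}_g$ on $L^2(\partial_-\mc{M},\mu_\partial)$ together with $e^{\eps\gamma\ell_{g_0}} \in L^1$; interpolating at $z = 0$ yields the bound
\[
\|I_2^{g_0}h\|_{H^{-6}(\partial_-\mc{M})} \leq C\|h\|_{C^N}^{1+\mu}
\]
for some $\mu \in (0,1)$ depending on $\beta, \eps$, and the escape rate $Q_{g_0}$. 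Finally the $H^{-6}$-estimate is upgraded to the $L^2$-estimate in the statement: since $I_2^{g_0}h = I^{g_0}(\pi_2^* h)$ with $\pi_2^* h \in C^\infty(\mc{M})$ bounded by $\|h\|_{C^N}$, Lemma \ref{boundednessIg} gives $\|I_2^{g_0}h\|_{H^s} \leq C\|h\|_{C^N}$ for some $s > 0$, and interpolating between $H^{-6}$ and $H^s$ converts the $H^{-6}$ power-gain into an $L^2$ power-gain (with a smaller $\mu$).

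The main obstacle, as the authors emphasise, is the second step: controlling the scattering term $\int_0^1 \alpha(\partial_t S_{g_t})\, dt$ and the error $\int_0^1 (I_2^{g_t} - I_2^{g_0})h\, dt$ in a norm strong enough to close the argument, \emph{despite} $(\ell_g, S_g)$ and their linearisations being genuinely singular along the hyperbolic trapped set. This is exactly why one cannot work in $L^2$ directly but must pass through the weighted holomorphic family $\chi e^{-z\ell_{g_0}}\,\cdot$, use the exponential volume decay $\mu(\mc{T}^{g_0}(t)) \leq C e^{-Q_{g_0}t}$ to tame the weights, and invoke the $C^2$-regularity of $g \mapsto \mc{S}_g$ (Theorem \ref{axiomAsmooth} / Section \S\ref{section:scattering}) — the quantitative, $X$-dependent refinement of \cite{Dyatlov-Guillarmou-16} — to get the quadratic gain. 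Everything else (the variational identity, the Grönwall-type bounds on flow derivatives from Lemma \ref{lemma:bound-dell}, the final interpolation) is comparatively routine.
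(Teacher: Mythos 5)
Your plan follows the paper's proof essentially step by step: linearise the length via Lemma~\ref{lemma:variation}, kill the scattering term using the $C^2$-regularity of $g\mapsto\mc{S}_g$ (Proposition~\ref{regularityR}) together with $S_g=S_{g_0}$, apply the Hadamard three-lines theorem to the weighted family $z\mapsto (1+z)^{-7}\chi e^{-z\ell_{g_0}}I_2^{g_0}h$ (volume bound on $\Re z=-\delta$, quadratic Taylor bound on $\Re z=\beta$), and finally upgrade $H^{-6}$ to $L^2$ via Lemma~\ref{boundednessIg} and Sobolev interpolation --- the paper organises the Taylor step a bit differently (a direct second-order expansion at $g_0$ of $g\mapsto(1+z)^{-7}\chi^2(1-e^{-z\ell_g})/z$ via Lemma~\ref{expsell}, rather than your path-integral formulation, plus the partition-of-unity decomposition $\alpha=\sum\alpha^{(j)}_i\,dy^{(j)}_i$ to reduce $\alpha_{S_{g_0}}(\partial_gS_g.h)$ to pairings of $\mc{S}_{g_0}$ with $\partial_g\mc{S}_g.h$ on scalar functions), but these are equivalent. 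The one misstatement is attributing the $\Re z=-\delta$ bound to the isometry of $\mc{S}_g$ on $L^2(\partial_-\mc{M},\mu_\partial)$; the paper's Lemma~\ref{lemma:volume} instead uses the elementary pointwise bound $|I_2^{g_0}h(y)|\leq\|h\|_{C^0}\ell_{g_0}(y)$ together with $e^{\delta\ell_{g_0}}\ell_{g_0}\in L^2(\partial_-\mc{M})$ from~\eqref{cavalieri}, but this does not affect the structure of the argument.
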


In order to prove Proposition \ref{proposition:key}, we are still missing one ingredient, namely, the following $C^2$-regularity of the scattering operator.
\begin{proposition}\label{regularityR}
Let $(M,g_0)$ be a smooth compact Riemannian manifold with strictly convex boundary and hyperbolic trapped set. Let $\chi \in C^\infty_{\comp}(\partial_-\mc{M},[0,1])$ be a smooth cutoff function. 
Then, for each $\omega\in C^\infty(\pl_+\mc{M})$ the map
\[
C^\infty(M, \otimes^2_S T^*M) \ni g\mapsto \chi \mc{S}_g(\omega) \in H^{-6}(\partial_-\mc{M})
\]
is $C^2$-regular near $g_0$. As a consequence, there exists $C,N>0$ large enough and $\delta>0$ such that for all $g\in C^\infty(M, \otimes^2_S T^*M)$ with $\|g-g_0\|_{C^N}\leq \delta$, the following holds:
\begin{equation}\label{Taylororder2S} 
\| \chi \mc{S}_g(\omega) -\chi \mc{S}_{g_0}(\omega) + \chi \pl_{g}\mc{S}_g(\omega)|_{g=g_0}.(g-g_0)\|_{H^{-6}(\pl_-\mc{M})}\leq C\|g-g_0\|^2_{C^N(M,\otimes^2_ST^*M)}. 
\end{equation}
\end{proposition}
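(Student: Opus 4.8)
The plan is to deduce the $C^2$-regularity of $g\mapsto \chi\mc{S}_g(\omega)$ from the identification, in Lemma \ref{SchwartzkernelSg}, of the Schwartz kernel of $\mc{S}_g$ with a boundary restriction of the resolvent $R_{g_e}=R^{X_{g_e}}(0)$, combined with the smooth dependence of that resolvent on its generating vector field (Theorem \ref{axiomAsmooth}, in the refined anisotropic form of Theorem \ref{theorem:meromorphic}). First I would recall that, by Lemma \ref{SchwartzkernelSg}, the kernel of $\mc{S}_g$ is $-(\iota_{\pl_-}\times\iota_{\pl_+})^*R_{g_e}$, a restriction to the codimension-two submanifold $\pl_-\mc{M}\times\pl_+\mc{M}$ lying in the interior of $\mc{M}_e\times\mc{M}_e$. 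Next I would observe that the assignment $g\mapsto \widetilde X_g=\psi X_{g_e}$ built in \S\ref{sssection:extension} is smooth from $C^\infty(M,\otimes^2_ST^*M)$ into $C^\infty(\mc{N},T\mc{N})$, with $\|\widetilde X_g-\widetilde X_{g_0}\|_{C^k}\le C_k\|g-g_0\|_{C^{k+1}}$ for every $k$, since $X_g$ depends on the $1$-jet of $g$ and the extension and cutoff are fixed. Hence it suffices to prove that $X\mapsto (\iota_{\pl_-}\times\iota_{\pl_+})^*R^X(0)$ is $C^2$ from a $C^N$-neighbourhood of $X_0=\widetilde X_{g_0}$ into $\mc{L}\big(C^\infty(\pl_+\mc{M}),H^{-6}(\pl_-\mc{M})\big)$, and then to compose with the fixed maps $\omega\mapsto\cdot$ and multiplication by $\chi$.

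The $C^2$-dependence of the full-phase-space resolvent is then Theorem \ref{axiomAsmooth}(2) (equivalently Theorem \ref{theorem:meromorphic}): the trajectories leave $\mc{M}_e$ and never return, so $z_0=0$ is not a pole of $R^{X_0}(z)$ — equivalently $R_{g_0}$ has the mapping properties \eqref{Rg1}--\eqref{Rg3} coming from the integral formula \eqref{defofRg} — and therefore $X\mapsto R^X(0)$ is $C^2$ (indeed $C^k$) as a map into $\mc{L}(\mathcal{H}^{s},\mathcal{H}^{-s})$ for suitable anisotropic Sobolev spaces, with derivatives given by the differentiated resolvent identities
\[
\pl_X R^X(0)\cdot Y = R^X(0)\,Y\,R^X(0),\qquad \pl_X^2 R^X(0)(Y,Y)=2\,R^X(0)\,Y\,R^X(0)\,Y\,R^X(0),
\]
which are meaningful precisely because the anisotropic framework allows one to compose $R^X(0)$ with the first-order operator $Y$. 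In particular one gets uniform bounds $\|\pl_X^j R^X(0)\|_{\mc{L}(\mathcal{H}^{s},\mathcal{H}^{-s})}\le C_j$ for $j=0,1,2$ on the neighbourhood, the $j$-th one involving $\|Y\|_{C^{N'}}^j$ for some fixed $N'$.

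It then remains to transfer this to the boundary restriction with a fixed loss of regularity. The wavefront set of $R^X(0)$ is contained in $N^*\Delta$, together with its forward flow-out for the lifted flow and the product $(E^X_+)^*\times(E^X_-)^*$; this containment, and the continuity of its pieces in $X$, follow from Theorem \ref{theorem:meromorphic} together with the structural stability of the hyperbolic trapped set (Proposition \ref{theorem:stability}) and the continuity in $X$ of the bundles $E^X_\pm$ and of the tails $\Gamma^X_\pm$. Strict convexity of $\pl M$ makes $X_{g_e}$ transverse to $\pl_\pm\mc{M}$ with definite sign, while $\pl_-\mc{M}$ and $\pl_+\mc{M}$ are disjoint from each other and from $\Gamma^X_-$, $\Gamma^X_+$ respectively, all uniformly for $X$ near $X_0$; hence $N^*(\pl_-\mc{M}\times\pl_+\mc{M})$ is disjoint from $\WF(R^X(0))$ uniformly in $X$, and Hörmander's pull-back theorem gives a well-defined distribution $(\iota_{\pl_-}\times\iota_{\pl_+})^*R^X(0)$ whose pairing with $\omega\in C^\infty(\pl_+\mc{M})$ in the second slot lies in $H^{-6}(\pl_-\mc{M})$, the specific loss $6$ being pinned down by tracking the anisotropic orders of $\mathcal{H}^{\pm s}$ through the two codimension-one restrictions (alternatively it may be read off from the $C^p$-bounds of Lemma \ref{lemma:upperboundS_g}, applied after conjugation by $e^{-z\ell_g}$ with $\Re z=\beta$, via an analytic continuation in $z$). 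Since restriction and the pairing with $\omega$ and $\chi$ are fixed continuous linear operations and the wavefront bounds above are uniform on the neighbourhood, the composite $g\mapsto \chi\mc{S}_g(\omega)\in H^{-6}(\pl_-\mc{M})$ is $C^2$, and \eqref{Taylororder2S} follows from Taylor's formula with integral remainder $\chi\mc{S}_g(\omega)-\chi\mc{S}_{g_0}(\omega)-\chi\pl_g\mc{S}_g(\omega)|_{g_0}\!\cdot(g-g_0)=\int_0^1(1-t)\,\pl_g^2\big(\chi\mc{S}_{g_t}(\omega)\big)(g-g_0,g-g_0)\,dt$, using the uniform second-derivative bound above together with $\|\widetilde X_{g_t}-\widetilde X_{g_0}\|_{C^{N-1}}\le C\|g-g_0\|_{C^N}$ and the linearity of $h\mapsto\pl_g X_g\cdot h$, which yields the $C^N$-bound once $N$ is taken large enough to absorb the finitely many derivatives required by the anisotropic estimates and the extension construction.

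The main obstacle I expect is this last transfer step: ensuring that the $C^2$-regularity and the anisotropic/wavefront estimates for the \emph{full} resolvent $R^X(0)$ survive the pull-back to $\pl_-\mc{M}\times\pl_+\mc{M}$ with a loss of derivatives that is bounded \emph{uniformly} in $X$ (so that the target space $H^{-6}(\pl_-\mc{M})$ is fixed). This is exactly where strict convexity of the boundary and the continuity in $X$ of the hyperbolic structure (Proposition \ref{theorem:stability}) enter, and it is the reason Theorem \ref{axiomAsmooth} must be established with explicit control on the dependence on the vector field rather than merely qualitatively.
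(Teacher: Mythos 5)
Your proposal follows essentially the same route as the paper: reduce via Lemma \ref{SchwartzkernelSg} to the $C^2$-dependence of the resolvent on the vector field (Theorem \ref{theorem:meromorphic}), and then make the boundary restriction meaningful via wavefront set and anisotropic Sobolev considerations. Where you differ in execution is in how the boundary restriction is justified. You propose to work with the Schwartz kernel $(\iota_{\pl_-}\times\iota_{\pl_+})^*R^X(0)$ directly, invoking a wavefront-set containment for the kernel (in $N^*\Delta$, its flow-out, and $(E^X_+)^*\times(E^X_-)^*$) and Hörmander's pull-back theorem; this containment is plausible (it is the Dyatlov--Zworski/Dyatlov--Guillarmou picture) but is not actually established uniformly in $X$ in the paper, and would require additional work to pin down the Sobolev loss $-6$ quantitatively. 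The paper takes the dual tack: it applies $\theta R_{g_e}\theta$ to the conormal distribution $u=\tilde\chi\,\omega\,\delta_{\pl_+\mc{M}}$, notes that $u$ already lies in the \emph{fixed} anisotropic space $\mc{H}_-^{\mathbf{r}}(\mc{N})$ for a suitable regularity pair, and that $\WF(\theta R_{g_e}u)\subset\{p_{X_{g_e}}=0\}$ is transverse to $\pl_-\mc{M}$; the $-6$ loss then comes from picking $\mathbf{r}=(r_\bot,r_0)$ with $1/2<r_0<1$, $r_0+2<r_\bot<3$, so that $\mc{H}^{(r_\bot,r_0+2)}_-(\mc{N})\hookrightarrow H^{-6}(\mc{N})$. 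This buys a cleaner and more explicit argument: one only needs the mapping bounds of Theorem \ref{theorem:meromorphic}, not a full description of the resolvent kernel's wavefront set.

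One detail you should not omit: to ensure $u\in\mc{H}_-^{\mathbf{r}}(\mc{N})$, the escape function $m$ of Proposition \ref{proposition:escape} must be chosen so that $\supp(m)\cap N^*\pl_+\mc{M}=\emptyset$; this uses strict convexity (a nonzero covector in $(E_+^{X_0})^*$ over $\pl\mc{M}$ cannot be conormal to $\pl_+\mc{M}$ since $X_0$ is transverse there). Without that arrangement the anisotropic order near $N^*\pl_+\mc{M}$ could be $-r_\bot-r_0$, and the conormal distribution $u$ (of order $-1/2-\eps$) would still land in the space, but the point is that one wants \emph{uniform} control independent of $X$, which the construction guarantees because the escape function is the same for all nearby vector fields. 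Apart from this, and the fact that you leave the regularity bookkeeping implicit (``tracking the anisotropic orders through the two codimension-one restrictions''), your proposal captures the argument.
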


Since this result is quite technical, its proof is postponed to Section \S\ref{section:scattering}. In the following, we will write $h := g-g_0$. Using a complex interpolation argument, Proposition \ref{proposition:key} is actually a direct consequence of the following technical lemma, which gives weighted estimates on the $X$-ray transform of $g-g_0$.

\begin{center}
\begin{figure}[htbp!]
\includegraphics[scale=0.75]{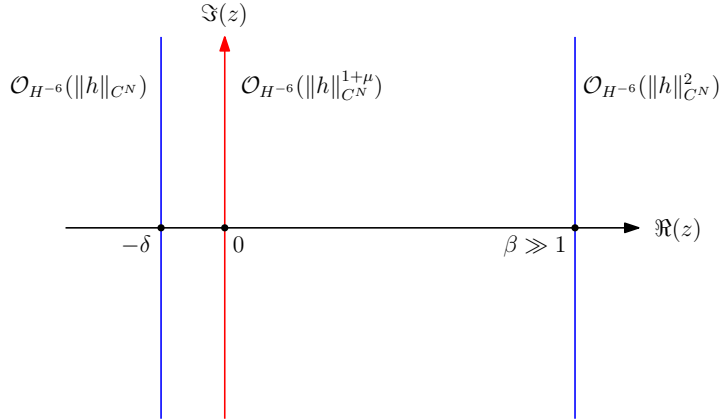}
\caption{Estimates on $f(z) = e^{-z \ell_{g_0}} I_2^{g_0}h$ in \eqref{equation:toprove}. For $z$ on the left blue line we have a ``volume estimate'' of $f(z)$, while for $z$ on the right blue line we have a ``microlocal estimate'' of $f(z)$. For $z$ on the red line we have the interpolation estimate obtained in Proposition \ref{proposition:key}.}

\label{figure:volume-microlocal-interpolation}
\end{figure}
\end{center} 

\begin{lemma}
\label{lemma:key}
There exist $C, \eps, \delta, \beta, N > 0$ such that for all smooth metrics $g$ such that $g|_{\pl M}=g_0|_{\pl M}$, 
$\|g-g_0\|_{C^N} < \eps$, and $(\ell_g,S_g) = (\ell_{g_0},S_{g_0})$, we have for $h=g-g_0$:
\begin{equation}
\label{equation:toprove}
\|(1+z)^{-7} e^{-z \ell_{g_0}} I_2^{g_0}h\|_{H^{-6}(\partial_-\mc{M})} \leq \left\{ \begin{array}{l} C\|h\|_{C^N}, \forall z \in i\R - \delta, \\ C\|h\|^2_{C^N}, \forall z \in i\R + \beta. \end{array} \right.
\end{equation}
\end{lemma}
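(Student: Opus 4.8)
The plan is to establish the two bounds in \eqref{equation:toprove} separately; they are then glued by a Hadamard three–lines argument in the strip $-\delta\le\Re(z)\le\beta$ (Figure~\ref{figure:volume-microlocal-interpolation}) to yield Proposition~\ref{proposition:key}. Write $h:=g-g_0$; note $h$ is smooth with $h|_{T\pl M}=0$, and that $(\ell_g,S_g)=(\ell_{g_0},S_{g_0})$ forces $\Gamma_-^{g}=\Gamma_-^{g_0}$ and, since the scattering operators are isometries of $L^2(\pl_-\mc{M},\mu_\pl)$, the equality $\mc{S}_g(\omega)=\mc{S}_{g_0}(\omega)$ as distributions on $\pl_-\mc{M}$ for every $\omega$. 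Since near $\pl_0\mc{M}$ the length $\ell_{g_0}$ is smooth and bounded and $I_2^{g_0}h$ extends smoothly up to $\pl_0\mc{M}$ with norms controlled by those of $h$ (implicit function theorem), I would fix $\chi\in C^\infty_\comp(\pl_-\mc{M},[0,1])$, prove \eqref{equation:toprove} with $I_2^{g_0}h$ replaced by $\chi I_2^{g_0}h$, and handle the region near $\pl_0\mc{M}$ by the same identity below, all objects there being smooth. The line $\Re(z)=-\delta$ is the easy ``volume'' estimate: from $|I_2^{g_0}h(y)|\le\|h\|_{C^0}\,\ell_{g_0}(y)$ on $\pl_-\mc{M}\setminus\Gamma_-^{g_0}$ together with \eqref{cavalieri}, choosing $0<\delta<Q_{g_0}/2$ gives $e^{\delta\ell_{g_0}}\ell_{g_0}\in L^2(\pl_-\mc{M})$, hence $\|e^{-z\ell_{g_0}}I_2^{g_0}h\|_{L^2}\le C\|h\|_{C^0}$ uniformly for $\Re(z)=-\delta$; as $|(1+z)^{-7}|\le1$ there and $L^2\hookrightarrow H^{-6}$, the first estimate follows.

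The line $\Re(z)=\beta$ is the heart of the matter. Applying Lemma~\ref{lemma:variation} to the path $g_t=g_0+th$ (each $g_t$ has strictly convex boundary and hyperbolic trapped set for $\|h\|_{C^2}$ small) yields, a.e.\ on $\pl_-\mc{M}\setminus\Gamma_-^{g_0}$,
\[
\chi e^{-z\ell_{g_0}}I_2^{g_0}h \;=\; -\tfrac2z\,\pl_g\mc{L}_{g_0}(z).h\;-\;2\,\chi e^{-z\ell_{g_0}}\,\alpha_{S_{g_0}}\!\big(\pl_gS_{g_0}.h\big),
\]
where $\mc{L}_g(z)=\chi e^{-z\ell_g}$ as in Lemma~\ref{expsell} and $\pl_gS_{g_0}.h:=\pl_tS_{g_t}|_{t=0}$. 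For the first term, since $\ell_g=\ell_{g_0}$ we have $\mc{L}_g(z)=\mc{L}_{g_0}(z)$, so Taylor's formula together with the $C^3$–regularity of $g\mapsto\mc{L}_g(z)\in L^2$ and the bound $\|\pl_g^2\mc{L}_g(z)(h,h)\|_{L^2}\le C(1+|z|)^2\|h\|_{C^N}^2$ of Lemma~\ref{expsell} give $\|\pl_g\mc{L}_{g_0}(z).h\|_{L^2}\le C(1+|z|)^2\|h\|_{C^N}^2$; combined with the factor $|z|^{-1}$ and $|(1+z)^{-7}|\le C(1+|z|)^{-7}$ on $\Re(z)=\beta$, this term is $O(\|h\|_{C^N}^2)$ in $L^2\hookrightarrow H^{-6}$.

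For the second term I would trade the Liouville $1$–form for the scattering operator. Embed $\overline{\pl_+\mc{M}}$ in some $\R^K$ so that its coordinate functions $\omega_1,\dots,\omega_K\in C^\infty(\overline{\pl_+\mc{M}})$ have differentials spanning $T^*_p\overline{\pl_+\mc{M}}$ at every $p$, and pick $a_1,\dots,a_K\in C^\infty(\overline{\pl_+\mc{M}})$ with $\iota_{\pl_+}^*\alpha=\sum_k a_k\,d\omega_k$ (e.g.\ via the orthogonal projection onto $T_p\overline{\pl_+\mc{M}}$); then, a.e.\ on $\pl_-\mc{M}\setminus\Gamma_-^{g_0}$,
\[
\chi e^{-z\ell_{g_0}}\alpha_{S_{g_0}}\!\big(\pl_gS_{g_0}.h\big)=\sum_{k=1}^{K}\big(\chi e^{-z\ell_{g_0}}\mc{S}_{g_0}a_k\big)\cdot\big(\pl_g\mc{S}_{g_0}.h(\omega_k)\big).
\]
By Lemma~\ref{lemma:upperboundS_g}, for $\beta$ large enough $\chi e^{-z\ell_{g_0}}\mc{S}_{g_0}a_k$ extends to a function in $W^{6,\infty}(\pl_-\mc{M})$, compactly supported in $\pl_-\mc{M}$, with norm $\le C(1+|z|)^{6}\|a_k\|_{C^{6}}$, and multiplication by such a function is bounded on $H^{-6}(\pl_-\mc{M})$ with that norm. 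On the other hand, since $\mc{S}_g(\omega_k)=\mc{S}_{g_0}(\omega_k)$, the $C^2$–Taylor estimate \eqref{Taylororder2S} of Proposition~\ref{regularityR} gives $\|\tilde\chi\,\pl_g\mc{S}_{g_0}.h(\omega_k)\|_{H^{-6}}\le C\|h\|_{C^N}^2$ for any fixed cutoff $\tilde\chi$. Taking $\tilde\chi\equiv1$ on $\supp\chi$, each summand is $\le C(1+|z|)^{6}\|h\|_{C^N}^2$ in $H^{-6}$, and the weight $(1+z)^{-7}$ absorbs the $z$–growth; this gives the second estimate in \eqref{equation:toprove}.

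The one genuinely delicate point — and what I expect to be the main obstacle — is that the two displayed identities have so far been justified only pointwise a.e.\ on $\pl_-\mc{M}\setminus\Gamma_-^{g_0}$, whereas the $H^{-6}$–bounds require them as identities of distributions on all of $\pl_-\mc{M}$; i.e.\ one must rule out spurious mass along the fractal set $\Gamma_-^{g_0}$. I would dispose of this exactly as in the proof of Claim~\ref{claim:above} (and of Lemmas~\ref{lemma:upperboundS_g} and \ref{expsell}): insert a factor $\theta(\ell_{g_0}/T)$, pair against an arbitrary test function, and let $T\to\infty$; the commutator terms are supported in $\{\ell_{g_0}\in[T,2T]\}$, where they carry the weight $|e^{-z\ell_{g_0}}|\le e^{-\beta T}$, against which the $\le e^{C\ell_{g_0}}\le e^{2CT}$ growth of the derivatives of $\theta(\ell_{g_0}/T)$ (Lemma~\ref{lemma:bound-dell}) is harmless once $\beta$ is chosen large enough. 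The only other care needed is the bookkeeping of the powers of $(1+|z|)$ in each term, which is precisely what dictates the exponent $7$ (and the Sobolev order $-6$) in the statement; beyond that, everything is an assembly of the already established regularity results (Lemmas~\ref{lemma:variation}, \ref{expsell}, \ref{lemma:upperboundS_g}, \ref{lemma:bound-dell} and Proposition~\ref{regularityR}).
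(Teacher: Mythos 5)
Your proposal tracks the paper's proof very closely and is essentially correct. For $\Re(z)=-\delta$ you use the same volume estimate (Lemma~\ref{lemma:volume}), and for $\Re(z)=\beta$ you use the same three ingredients: the variation formula of Lemma~\ref{lemma:variation}, the $C^2$-regularity of $g\mapsto\mc{L}_g(z)$ (Lemma~\ref{expsell}), and the Taylor expansion of the scattering operator (Proposition~\ref{regularityR}). Your explicit identity
\[
\chi e^{-z\ell_{g_0}}I_2^{g_0}h = -\tfrac{2}{z}\,\partial_g\mc{L}_{g_0}(z).h-2\chi e^{-z\ell_{g_0}}\alpha_{S_{g_0}}\bigl(\partial_g S_{g_0}.h\bigr)
\]
is correct and is substantively the same computation the paper packages through the auxiliary function $\mc{F}(g)(z):=(1+z)^{-7}\chi^2\frac{1-e^{-z\ell_g}}{z}$; the paper's choice has the cosmetic advantage of being holomorphic across $z=0$ and already carrying the $(1+z)^{-7}$ weight, but the estimates are identical. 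Your $\R^K$-embedding decomposition of $\alpha=\sum_k a_k\,d\omega_k$ is an equivalent substitute for the paper's partition-of-unity decomposition \eqref{equation:decomp-alpha}, and your power accounting — $W^{6,\infty}$ with $(1+|z|)^6$ against $H^{-6}$ — is consistent with the $(1+z)^{-7}$ weight, even slightly tighter than the paper's $C^6$ with $(1+|z|)^7$. You also correctly flag the distributional-versus-pointwise identification near $\Gamma_-^{g_0}$ and propose the same $\theta(\ell_{g_0}/T)$ truncation as in Claim~\ref{claim:above}.

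The one place where you genuinely depart from the paper is the region near $\partial_0\mc{M}$, and there is a gap there. You propose to run the same variational identity with a cutoff $1-\chi$ supported near $\partial_0\mc{M}$, arguing that all objects are smooth. That is plausible, but it requires the $C^2$-dependence of $S_g$ (and of $\mc{S}_g$ as a multiplier) on $g$ up to $\partial_0\mc{M}$, which is \emph{not} supplied by the paper's Proposition~\ref{regularityR}: that result carries a cutoff $\chi\in C^\infty_\comp(\partial_-\mc{M})$ precisely because the resolvent construction degenerates at the glancing set. Since there is no trapping near $\partial_0\mc{M}$ one can indeed establish such regularity by hand (uniform bounds on short geodesics, implicit function theorem), but that is extra work you have not done. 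The paper sidesteps this entirely in Lemma~\ref{lemma:microlocal-nearboundary}: it restricts to short geodesics, equates the lens data with the boundary distance function there, and quotes the second-order Taylor expansion of $d_g$ in $g$ from Stefanov--Uhlmann \cite[Section~9]{Stefanov-Uhlmann-04} to get $\|(1-\chi^2)I_2^{g_0}h\|_{L^\infty}\le C\|h\|_{C^1}^2$ directly, with no $z$-dependence at all. That reduction is cleaner and you should adopt it — or, if you prefer your route, you must supply a separate argument showing that $g\mapsto \mc{S}_g$ is $C^2$ (as functions, not distributions) on a neighbourhood of $\partial_0\mc{M}$ in $\overline{\partial_-\mc{M}}$, uniformly in $g$.
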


We now show that Lemma \ref{lemma:key} implies Proposition \ref{proposition:key}. The rest of \S\ref{ssection:key} is devoted to the proof of Lemma \ref{lemma:key}.

\begin{proof}[Proof of Proposition \ref{proposition:key}]
By the Hadamard three line Theorem applied to the function $z\mapsto (1+z)^{-7} e^{-z \ell_{g_0}} I_2^{g_0}(h)$ (which is bounded in $\Re(z)\in [-\delta,\beta]$ with values in $L^2(\pl_-\mc{M}) \subset H^{-6}(\pl_-\mc{M})$), Lemma \ref{lemma:key} implies that
\[
\|I_2^{g_0}h\|_{H^{-6}(\partial_-\mc{M})} \leq C \|h\|_{C^N(M)}^{1+\mu},
\]
for some constants $C,\mu > 0$ independent of $h$ (note that $\mu$ depends on $\delta,\beta$). By Lemma \ref{boundednessIg}, there is $C>0,s>0$ depending on $g_0$ such that (for $N \geq 2$)
\[
\|I_2^{g_0}h\|_{H^{s}(\partial_-\mc{M})} \leq C \|h\|_{C^N(M)}.
\]
  Interpolating $L^2(\pl_-\mc{M})$ between $H^{-6}(\pl_-\mc{M})$ and $H^s(\pl_-\mc{M})$, we deduce that there exists $\mu'>0$ and $C>0$ such that
		\[
		\|I^{g_0}_2h\|_{L^2(\pl_-\mc{M})} \leq C \|h\|_{C^N(M)}^{1+\mu'}. \qedhere
		\]
\end{proof}

We now start with the proof of Lemma \ref{lemma:key}. See Figure \ref{figure:volume-microlocal-interpolation}: on $\{\Re(z) = -\delta\}$ the bound will follow from an estimate on the volume of long trajectories, while the estimate on the line $\{\Re(z) = \beta\}$ may be thought of as a ``microlocal estimate'' since it crucially relies on the Taylor expansion of $g \mapsto \mc{S}_g$ obtained in Proposition \ref{regularityR}. 

The first bound in \eqref{equation:toprove} for $z \in i\R - \delta$ follows directly from the following stronger bound:

\begin{lemma}
\label{lemma:volume}
There exists $\delta > 0$ small enough and $C > 0$ (depending on $\delta$) such that for all $h \in C^0(M,\otimes^2_S T^*M)$:
\[
\|e^{\delta \ell_{g_0}} I_2^{g_0}h\|_{L^2(\pl_-\mc{M})} \leq C\|h\|_{C^0(M)}.
\]
\end{lemma}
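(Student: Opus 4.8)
The plan is to reduce the $L^2$ estimate to the trivial pointwise bound on $I_2^{g_0}h$ combined with the exponential integrability of $\ell_{g_0}$ already recorded in \eqref{cavalieri} (which itself rests on the exponential volume decay \eqref{eq:volume}--\eqref{escaperate} of the set of long trapped geodesics).

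First I would record the elementary pointwise bound. By the definition \eqref{I2} of $I_2^{g_0}$, for $(x,v)\in\pl_-\mc{M}\setminus\Gamma_-^{g_0}$ one has $\varphi_t^{g_0}(x,v)=(\gamma(t),\dot\gamma(t))$ with $|\dot\gamma(t)|_{g_0}=1$, hence
\[
|I_2^{g_0}h(x,v)| = \Big|\int_0^{\ell_{g_0}(x,v)} h_{\gamma(t)}(\dot\gamma(t),\dot\gamma(t))\,\dd t\Big| \leq \|h\|_{C^0(M)}\,\ell_{g_0}(x,v).
\]
Since the trapped part $\Gamma_-^{g_0}\cap\pl_-\mc{M}$ is of measure zero, this gives, almost everywhere on $\pl_-\mc{M}$,
\[
\big|e^{\delta\ell_{g_0}(x,v)}\, I_2^{g_0}h(x,v)\big| \leq \|h\|_{C^0(M)}\,\ell_{g_0}(x,v)\,e^{\delta\ell_{g_0}(x,v)}.
\]

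Next I would choose $\delta$ and bring in the volume estimates. Fix any $\delta\in(0,Q_{g_0}/2)$ and pick $\lambda\in(2\delta,Q_{g_0})$. Since $t^2e^{2\delta t}=o(e^{\lambda t})$ as $t\to+\infty$, there is $C_\delta>0$ with $t^2e^{2\delta t}\leq C_\delta e^{\lambda t}$ for all $t\geq 0$, whence $\big(\ell_{g_0}\,e^{\delta\ell_{g_0}}\big)^2=\ell_{g_0}^2\,e^{2\delta\ell_{g_0}}\leq C_\delta\,e^{\lambda\ell_{g_0}}$. By \eqref{cavalieri}, $e^{\lambda\ell_{g_0}}\in L^1(\pl_-\mc{M})$ (the region near $\pl_0\mc{M}$ being harmless since $\ell_{g_0}$ is bounded there), so $\ell_{g_0}\,e^{\delta\ell_{g_0}}\in L^2(\pl_-\mc{M})$. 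Combining with the pointwise bound,
\[
\|e^{\delta\ell_{g_0}}\,I_2^{g_0}h\|_{L^2(\pl_-\mc{M})} \leq \|h\|_{C^0(M)}\,\big\|\ell_{g_0}\,e^{\delta\ell_{g_0}}\big\|_{L^2(\pl_-\mc{M})} =: C\,\|h\|_{C^0(M)},
\]
with $C=C(g_0,\delta)<\infty$, which is the claim.

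There is essentially no serious obstacle here; the only point requiring care is that $\delta$ must be taken strictly below half the escape rate $Q_{g_0}$ so that the weight $\ell_{g_0}^2e^{2\delta\ell_{g_0}}$ stays dominated by an exponential that \eqref{cavalieri} already controls. (If one prefers to argue directly from \eqref{escaperate}, one writes $\int_{\pl_-\mc{M}}\ell_{g_0}^2e^{2\delta\ell_{g_0}}\,\dd\mathrm{v}_h=\int_0^\infty \mu_{\pl}\big(\{\ell_{g_0}^2e^{2\delta\ell_{g_0}}>s\}\big)\,\dd s$ on the region away from $\pl_0\mc{M}$ where $\dd\mathrm{v}_h\lesssim\mu_{\pl}$, substitutes $s=t^2e^{2\delta t}$, uses $\mu_{\pl}(\ell_{g_0}^{-1}(t,\infty))\leq C_{g_0}e^{-Q_{g_0}t}$, and treats the region near $\pl_0\mc{M}$ by boundedness of $\ell_{g_0}$ there.)
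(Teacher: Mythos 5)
Your proof is correct and follows exactly the same route as the paper: the pointwise bound $|I_2^{g_0}h|\leq\|h\|_{C^0}\,\ell_{g_0}$ off the measure-zero trapped set, followed by the exponential integrability of $\ell_{g_0}$ from \eqref{cavalieri} with $\delta<Q_{g_0}/2$. You merely spell out the domination $\ell_{g_0}^2 e^{2\delta\ell_{g_0}}\lesssim e^{\lambda\ell_{g_0}}$ that the paper leaves implicit.
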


\begin{proof}
For $y\notin \Gamma^{g_0}_-$, we have $|I_2^{g_0}h(y)| \leq \|h\|_{C^0} |\ell_{g_0}(y)|$. Thus 
\[\|e^{\delta \ell_{g_0}} I_2^{g_0}h\|_{L^2(\pl_-\mc{M})}\leq \|e^{\delta \ell_{g_0}}\ell_{g_0}\|_{L^2(\pl_-\mc{M})}\|h\|_{C^0}\] 
which gives the result by \eqref{cavalieri}, if $\delta<Q_{g_0}/2$.
\end{proof}

We now study the second bound in \eqref{equation:toprove}. Let $\chi \in C_{\comp}^\infty(\partial_-\mc{M},[0,1])$ be a smooth cutoff function. First of all, near the boundary, we have the following:

\begin{lemma}\label{lemma:microlocal-nearboundary}
There exist $C, \eps > 0$, and $\chi \in C_{\comp}^\infty(\partial_-\mc{M},[0,1])$ such that $1 - \chi^2$ is supported near the boundary of $\partial_-\M$, such that if $\|g-g_0\|_{C^N} < \eps$ and $(\ell_g,S_g) = (\ell_{g_0},S_{g_0})$, then:
\[
\|(1-\chi^2) I_2^{g_0} h\|_{L^\infty(\partial_-\mc{M})} \leq C\|h\|^2_{C^1}.
\]
\end{lemma}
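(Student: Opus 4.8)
The plan is to localise near $\pl_0\mc{M}$, where all geodesics are short, and to exploit directly the equality of the lens data by comparing the $g$-geodesic and the $g_0$-geodesic issued from the same boundary point. Since $\pl M$ is strictly convex, $\ell_{g_0}$ extends continuously to $\overline{\pl_-\mc{M}}\setminus\Gamma_-^{g_0}$ and vanishes identically on $\pl_0\mc{M}$ (a geodesic tangent to a strictly convex boundary immediately leaves $M$). Hence, for every small $T_0>0$ there is a neighbourhood $\mc{U}$ of $\pl_0\mc{M}$ in $\overline{\pl_-\mc{M}}$ on which $\ell_{g_0}<T_0$, and by the hypothesis $\ell_g=\ell_{g_0}$ also $\ell_g<T_0$ on $\mc{U}$. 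Taking $T_0$ small and $N\ge 1$, $\eps>0$ small, strict convexity of $\pl M$ (which is stable under $C^2$-perturbations) guarantees that for all $g$ with $\|g-g_0\|_{C^N}<\eps$ and all $(x,v)\in\mc{U}$ the $g$-geodesic issued from $(x,v)$ lies in a geodesically convex ball, hence is the unique length-minimising $g$-geodesic between its endpoints and carries no conjugate points; the same holds for $g_0$. I then choose $\chi\in C^\infty_{\comp}(\pl_-\mc{M},[0,1])$ with $\chi\equiv 1$ outside $\mc{U}$ and $1-\chi^2$ supported in $\mc{U}$; in particular $\ell_{g_0}<\infty$, so $I_2^{g_0}h$ is finite, on $\supp(1-\chi^2)$.

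Fix $(x,v)\in\supp(1-\chi^2)\subset\mc{U}$ and set $L:=\ell_{g_0}(x,v)=\ell_g(x,v)\le T_0$. Let $\gamma_0$ be the $g_0$-geodesic and $\gamma_1$ the $g$-geodesic issued from $(x,v)$, each parametrised on $[0,L]$ by its own arc-length; by $(\ell_g,S_g)=(\ell_{g_0},S_{g_0})$ they share both their initial and terminal endpoints in $\overline M$. Writing $g=g_0+h$ and using $\sqrt{1+h(\dot\gamma_0,\dot\gamma_0)}=1+\tfrac12 h(\dot\gamma_0,\dot\gamma_0)+O(h(\dot\gamma_0,\dot\gamma_0)^2)$ (valid since $\|h\|_{C^0}<\eps$), together with $L\le T_0$, I obtain, uniformly over $\mc{U}$,
\[
L_g(\gamma_0)=\int_0^L\sqrt{g(\dot\gamma_0,\dot\gamma_0)}\,dt=L+\tfrac12 I_2^{g_0}h(x,v)+O(\|h\|_{C^1}^2).
\]
On the other hand, since $X_g-X_{g_0}=O(\|h\|_{C^1})$ (Christoffel symbols involve one derivative of the metric), Grönwall's inequality over the bounded interval $[0,L]$ gives $\|\gamma_0-\gamma_1\|_{C^1([0,L])}\le C\|h\|_{C^1}$. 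As $\gamma_1$ is a length-minimising $g$-geodesic and $\gamma_0$ a $C^1$-close competitor with the same endpoints, $L_g(\gamma_0)\ge L_g(\gamma_1)=L$; and since $\gamma_1$ has no conjugate points the second variation formula bounds $L_g(\gamma_0)-L_g(\gamma_1)$ from above by $C\|\gamma_0-\gamma_1\|_{C^1}^2\le C\|h\|_{C^1}^2$. Combining these two inequalities with the displayed expansion yields $|I_2^{g_0}h(x,v)|\le C\|h\|_{C^1}^2$, with $C$ uniform over $\supp(1-\chi^2)$. Since $0\le 1-\chi^2\le 1$, this is precisely the claimed bound $\|(1-\chi^2)I_2^{g_0}h\|_{L^\infty(\pl_-\mc{M})}\le C\|h\|_{C^1}^2$.

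The argument uses only strict convexity of $\pl M$ and the equality of the lens data, so it is light technically: the mechanism is that matching both $\ell$ and $S$ forces the first-order quantity $I_2^{g_0}h$ along a short geodesic to coincide, modulo $O(\|h\|_{C^1}^2)$, with the minimisation defect of a short geodesic, which is itself quadratically small. The two points deserving care are (i) the uniform choice of $\mc{U}$, $T_0$ and the geodesically convex tube, uniformly for all $g$ in the $C^N$-ball around $g_0$ — standard, via stability of strict convexity under $C^2$-perturbations and uniform lower bounds on convexity radii for $C^2$-close metrics on the compact $M$ — and (ii) the quantitative second variation estimate with the explicit $O(\|h\|_{C^1}^2)$ remainder, which follows either by working in Fermi coordinates along $\gamma_1$ or from the fact that the index form of a conjugate-point-free geodesic of bounded length is a nonnegative quadratic form bounded above on $H^1_0$; neither is a genuine obstacle.
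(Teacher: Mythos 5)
Your proof is correct and rests on the same underlying mechanism as the paper's: both exploit the Taylor expansion of the boundary distance function $d_g(x,x')$ in $g$ near the diagonal of $\partial M \times \partial M$, namely $d_g(x,x') = d_{g_0}(x,x') + \tfrac12 I_2^{g_0}h(x,x') + T_g(x,x')$ with $T_g = O(\|h\|_{C^1}^2)$, and then use that equality of lens data forces $d_g=d_{g_0}$ for close boundary points so that $I_2^{g_0}h = -2T_g$ is quadratically small. The difference is that the paper simply cites \cite[Section 9]{Stefanov-Uhlmann-04} for this Taylor expansion, while you re-derive it from first principles: you compute the first variation $L_g(\gamma_0) = L + \tfrac12 I_2^{g_0}h + O(\|h\|_{C^0}^2)$ directly, squeeze the defect $L_g(\gamma_0)-L_g(\gamma_1)$ between $0$ (minimality of $\gamma_1$) and $C\|\gamma_0-\gamma_1\|_{C^1}^2$ (vanishing first variation at the critical point $\gamma_1$, plus a Grönwall bound $\|\gamma_0-\gamma_1\|_{C^1}\lesssim\|h\|_{C^1}$), and combine. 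Your version is self-contained, which is a plus; what it costs is that the step ``$L_g(\gamma_0)-L_g(\gamma_1)\leq C\|\gamma_0-\gamma_1\|_{C^1}^2$'' is not literally the second variation formula but rather a quantitative Taylor expansion of the length functional around a critical point (including the cubic remainder, which is harmless because $\|h\|_{C^1}$ is small), and one needs the uniformity over $(x,v)$ and over the $C^N$-ball of metrics as you note; also, the absence of conjugate points is used only to ensure $\gamma_1$ is the unique minimizer, not really in the upper bound. One small point worth stating explicitly is that the identification of $\partial_\pm S^gM$ with $\partial_\pm S^{g_0}M$ (via $g|_{T\partial M}=g_0|_{T\partial M}$) means $\gamma_1$ has initial velocity $v'$ differing from $v$ by $O(\|h\|)$, which your Grönwall estimate silently absorbs. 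None of these is a gap; they are exactly the ``two points deserving care'' you already flagged.
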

\begin{proof}
This follows from \cite[Section 9]{Stefanov-Uhlmann-04} as we have the following Taylor expansion for $x,x' \in \partial M$ close enough:
\[
d_g(x,x') = d_{g_0}(x,x') + \frac{1}{2} I_2^{g_0}h(x,x') + T_g(x,x'),
\]
with the bound $|T_g(x,x')| \leq C \|h\|^2_{C^1} d_{g_0}(x,x')$, where $C > 0$ is a uniform constant depending only on $g_0$. Since the metrics have same lens data, they also have same boundary distance function for $x,x' \in \partial M$ close enough, that is, $d_g(x,x') = d_{g_0}(x,x')$, which easily implies the claimed estimate when $1 - \chi^2$ is taken to have support near the boundary of $\partial_-\M$ (i.e. close to short geodesics). 
\end{proof}

Using the continuous embeddings $L^\infty(\partial_-\mc{M}) \hookrightarrow L^2(\partial_-\mc{M}) \hookrightarrow H^{-6}(\partial_-\mc{M})$, from Lemma \ref{lemma:microlocal-nearboundary} we deduce that
\begin{equation}
\label{equation:izy}
\|(1+z)^{-7} e^{-z \ell_{g_0}} (1-\chi^2)I_2^{g_0} h\|_{H^{-6}(\partial_-\mc{M})} \leq C\|h\|^2_{C^N},
\end{equation}
for all $z \in i \R + \beta$. It thus remains to prove the following estimate to deduce the second bound of \eqref{equation:toprove}.

\begin{lemma}
\label{lemma:microlocal}
There exist $C, \eps, \beta, N > 0$ such that if $\|g-g_0\|_{C^N} < \eps$ and $(\ell_g,S_g) = (\ell_{g_0},S_{g_0})$, then  for $h:=g-g_0$ and for all $z \in i\R + \beta$:
\[
\|(1+z)^{-7} e^{-z \ell_{g_0}} \chi^2 I_2^{g_0} h\|_{H^{-6}(\partial_-\mc{M})} \leq C \|h\|^2_{C^N}.
\]
\end{lemma}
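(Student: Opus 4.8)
The plan is to relate the quantity $e^{-z\ell_{g_0}}\chi^2 I_2^{g_0}h$ to the difference $\chi^2(\mathcal{S}_g - \mathcal{S}_{g_0})\omega$ for a suitable choice of test function $\omega\in C^\infty(\partial_+\mathcal{M})$, and then exploit the $C^2$-regularity of $g\mapsto\mathcal{S}_g$ from Proposition~\ref{regularityR} together with the fact that $\mathcal{S}_g = \mathcal{S}_{g_0}$ (equal lens data). First I would use the variational formula of Lemma~\ref{lemma:variation}: for the family $g_t = g_0 + th$ one has $\partial_t\ell_{g_t}(x,v)|_{t=0} = \tfrac12 I_2^{g_0}h(x,v) + \alpha_{S_{g_0}(x,v)}(\partial_t S_{g_t}(x,v)|_{t=0})$. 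The point is that $\partial_t S_{g_t}|_{t=0}$ is exactly the linearization of the scattering map, which appears in the first-order Taylor term $\chi\,\partial_g\mathcal{S}_g(\omega)|_{g=g_0}.(g-g_0)$ controlled by \eqref{Taylororder2S}. Since $\ell_g = \ell_{g_0}$ and $S_g = S_{g_0}$ exactly, the left-hand sides of the ``linearized'' identities vanish, and the Taylor estimate \eqref{Taylororder2S} says that the linearized term $\chi\,\partial_g\mathcal{S}_g(\omega)|_{g=g_0}.h$ is $O(\|h\|_{C^N}^2)$ in $H^{-6}$. Feeding this back through Lemma~\ref{lemma:variation}, one should get that $\chi^2 I_2^{g_0}h$ (or rather a regularized, weighted version of it) is $O(\|h\|_{C^N}^2)$ in the appropriate negative Sobolev space.

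The technical heart is to make the above precise while keeping track of the weight $e^{-z\ell_{g_0}}$, which is needed because $I_2^{g_0}h$ and $\ell_{g_0}$ blow up near the trapped set $\Gamma_-^{g_0}$, and of the cutoff $\chi^2$, which localizes away from $\partial_0\mathcal{M}$. Concretely, I would proceed as follows. For $z$ with $\Re(z) = \beta \gg 1$, the weighted scattering operator $e^{-z\ell_{g_0}}\mathcal{S}_{g_0}$ extends with good bounds by Lemma~\ref{lemma:upperboundS_g}, and by Lemma~\ref{expsell} the map $g\mapsto \mathcal{L}_g(z) = \chi e^{-z\ell_g}$ is $C^{p-1}$ with derivative bounds $\|\partial_g\mathcal{L}_g(z).h\|_{L^2}\le C(1+|z|)\|h\|_{C^{p+2}}$. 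Writing $e^{-z\ell_g}\mathcal{S}_g(\omega) - e^{-z\ell_{g_0}}\mathcal{S}_{g_0}(\omega)$ and performing a first-order Taylor expansion in $g$ at $g_0$, the second-order remainder is $O(\|h\|_{C^N}^2)$ in $H^{-6}$ by Proposition~\ref{regularityR} combined with Lemma~\ref{expsell}; the zeroth-order term vanishes because $\mathcal{S}_g = \mathcal{S}_{g_0}$ and $\ell_g = \ell_{g_0}$; hence the \emph{first-order} term $\partial_g\big(e^{-z\ell_g}\mathcal{S}_g(\omega)\big)|_{g=g_0}.h$ is itself $O(\|h\|_{C^N}^2)$ in $H^{-6}$. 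Expanding this first-order term by the Leibniz rule yields $-z(\partial_g\ell_g.h)\,e^{-z\ell_{g_0}}\mathcal{S}_{g_0}(\omega) + e^{-z\ell_{g_0}}(\partial_g\mathcal{S}_g.h)(\omega)$, and by Lemma~\ref{lemma:variation} the quantity $\partial_g\ell_{g_0}.h = \tfrac12 I_2^{g_0}h + \alpha_{S_{g_0}}(\partial_gS_{g_0}.h)$ while $\partial_g(\mathcal{S}_g\omega).h = -d\omega\cdot(\partial_gS_{g_0}.h)$. Choosing $\omega$ cleverly (e.g. so that the $\alpha_{S_{g_0}}(\partial_gS_{g_0}.h)$ and $d\omega\cdot(\partial_gS_{g_0}.h)$ contributions either cancel or can be absorbed into the $O(\|h\|^2)$ remainder — concretely, $\omega$ should be chosen, on the support of $\psi$ near $\partial_+\mathcal{M}$, to match the relevant $1$-form so that $\partial_t$ of the composition reconstructs $I_2^{g_0}h$), one extracts precisely $z\,e^{-z\ell_{g_0}}\chi^2 I_2^{g_0}h$ up to terms that are $O(\|h\|_{C^N}^2)$ in $H^{-6}$. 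Dividing by $z$ and by the factor $(1+z)^6$ (absorbed into $(1+z)^{-7}$) then gives the claimed bound.

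The main obstacle I anticipate is precisely the choice and justification of the test function $\omega$: one must arrange that pulling back $\omega$ by the scattering map and differentiating in $g$ produces $I_2^{g_0}h$ on $\chi^2$-supported region without introducing uncontrolled terms from the ``angular'' part $\partial_g S_{g_0}.h$ of the scattering derivative, and one must verify that the $H^{-6}$ norms (as opposed to pointwise or $L^2$ bounds) of all error terms are genuinely quadratic in $\|h\|_{C^N}$. The index $-6$ is forced by the regularity loss in Proposition~\ref{regularityR}, and the weight $e^{-z\ell_{g_0}}$ must be threaded through Lemmas~\ref{boundednessIg}, \ref{lemma:upperboundS_g}, and~\ref{expsell} uniformly in $\Im(z)$, with the polynomial factor $(1+z)^{-7}$ exactly compensating the $(1+|z|)$-type losses in those lemmas together with the division by $z$. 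A secondary subtlety is that $\partial_g\ell_g.h$ and $\partial_gS_g.h$ a priori only make sense off $\Gamma_-^{g_0}$; the weight $e^{-z\ell_{g_0}}$ with $\Re(z)=\beta$ large suppresses these singularities (by the volume estimate \eqref{eq:volume} and the $L^p$ bounds \eqref{cavalieri}), exactly as in the proof of Lemma~\ref{boundednessIg}, so Claim~\ref{claim:above}-type arguments show the weighted objects are genuine $H^{-6}$ distributions whose distributional identities can be manipulated as above.
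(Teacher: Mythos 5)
Your overall strategy is aligned with the paper's: differentiate a weighted object in $g$, use the vanishing of the zeroth-order term (equal lens data), control the second-order remainder via Proposition~\ref{regularityR}, and feed the variational formula of Lemma~\ref{lemma:variation} back in. The paper, rather than differentiating $e^{-z\ell_g}\mc{S}_g(\omega)$ directly, uses the scalar
\[
\mc{F}(g)(z) := (1+z)^{-7}\chi^2\,\frac{1-e^{-z\ell_g}}{z},
\]
which involves only $\ell_g$ and is $C^2$ in $g$ with values in $L^2(\partial_-\mc{M})$ by Lemma~\ref{expsell}. Since $\ell_g=\ell_{g_0}$, the first-order term, equal by Lemma~\ref{lemma:variation} to $(1+z)^{-7}\chi^2 e^{-z\ell_{g_0}}\big(\tfrac12 I_2^{g_0}h+\alpha_{S_{g_0}(\cdot)}(\partial_g S_g.h)\big)$, is $O(\|h\|_{C^N}^2)$ in $L^2$; this is \eqref{equation:borne-1}. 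The task then reduces to bounding the $\alpha$-term, and that is exactly where your proposal has a genuine gap.

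You propose to choose a single test function $\omega$ so that the contributions $\alpha_{S_{g_0}}(\partial_g S_{g_0}.h)$ and $d\omega\cdot(\partial_g S_{g_0}.h)$ cancel. This cannot work with a $z$-independent $\omega$: in your Leibniz expansion of $\partial_g(e^{-z\ell_g}\mc{S}_g\omega).h$, the $\partial_g\ell_g$-term carries an extra factor $z$ (it comes from $-z(\partial_g\ell_g.h)e^{-z\ell_{g_0}}\mc{S}_{g_0}\omega$), while the $\partial_g\mc{S}_g$-term does not, so the required identity $-z\,\alpha(V)\,\mc{S}_{g_0}\omega+d\omega(V)=0$ for all tangent vectors $V$ forces $\omega$ to depend on $z$. (Incidentally, $\partial_g(\mc{S}_g\omega).h=d\omega_{S_{g_0}}(\partial_gS_g.h)$ has no minus sign, and a single scalar $\omega$ cannot ``match a $1$-form''.) What the paper does instead, in Lemma~\ref{lemma:h-minus}, is \emph{not} cancel the $\alpha$-term but control it by decomposing $\alpha=\sum_{i,j}\alpha_i^{(j)}\,dy_i^{(j)}$ in coordinates, so that
\[
\chi^2\alpha_{S_{g_0}(\cdot)}(\partial_g S_g.h)\,e^{-z\ell_{g_0}}=\sum_{i,j}\big(\chi\,\mc{S}_{g_0}(\alpha_i^{(j)})\,e^{-z\ell_{g_0}}\big)\cdot\big(\chi\,\partial_g\mc{S}_g(y_i^{(j)})|_{g=g_0}.h\big);
\]
the first factor is $O((1+|z|)^7)$ in $C^6(\partial_-\mc{M})$ by Lemma~\ref{lemma:upperboundS_g}, the second is $O(\|h\|_{C^N}^2)$ in $H^{-6}$ by Proposition~\ref{regularityR} and $\mc{S}_g=\mc{S}_{g_0}$, and the product is closed via continuity of $C^6\times H^{-6}\to H^{-6}$ multiplication. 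Applying Proposition~\ref{regularityR} to the finitely many coordinate functions $y_i^{(j)}$ is the mechanism your ``clever $\omega$'' was grasping for, and is the missing step in your argument.
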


\begin{proof}
We let $\imath_{\pl_-} : \partial_- \mc{M} \to \mc{M}$ be the inclusion map. For $\beta>0,$ we consider the space
\begin{equation}
\label{equation:ebeta}
E_\beta := C_b^0\left(\beta + i\R, L^2(\partial_-\mc{M})\right),
\end{equation}
where $C^0_b$ denotes the vector space of bounded continuous functions, equipped with the $L^\infty$ norm. It is a Banach space when equipped with the norm:
\[
\|F\|_{E_{\beta}} := \sup_{z \in \beta + i\R}\|F(z)\|_{L^2(\partial_-\mc{M})}.
\]
Then for $z\in \C$ with $\Re(z)=\beta$ large (it will be adjusted later), we define for $U_{g_0}$ the neighborhood of $g_0$ introduced in \eqref{equation:neighb} (with $p = N - 2$)
\begin{equation}
\label{equation:psi}
\mc{F}: U_{g_0} \ni g \mapsto \mc{F}(g)(z):= (1+z)^{-7}\chi^2 \frac{(1-e^{-z\ell_g})}{z} \in E_{\beta},
\end{equation}
where the value at $z=0$ is set to be $\chi^2\ell_g$.

First, the function $\mc{F}$ is $C^2$ by Lemma \ref{expsell} by taking $N\geq 5$. We compute its Taylor expansion in the space 
$E_\beta$: for some $N$ large enough, $g$ close enough to $g_0$, and $h:=g-g_0$
\begin{equation}
\label{equation:taylor-rg}
\begin{split}
\mc{F}(g)(z)-\mc{F}(g_0)(z)=& \frac{\chi^2 e^{-z\ell_{g_0}} }{(1+z)^7} (\partial_g \ell_g)|_{g=g_0}.h + \mc{O}_{L^2(\partial_-\mc{M})}(\|h\|^2_{C^N})\\
=& \frac{\chi^2 e^{-z\ell_{g_0}}}{(1+z)^7}\Big(\frac{1}{2} I_2^{g_0}(h)+\alpha_{S_{g_0}(\cdot)}(\pl_gS_g(\cdot)|_{g=g_0}.h)\Big)+\mc{O}_{L^2(\partial_-\mc{M})}(\|h\|^2_{C^N})
\end{split}
\end{equation}
and the remainder is bounded uniformly in $z$ (by Lemma \ref{expsell} again), where we use Lemma \ref{lemma:variation} in the second line (recall $\alpha$ is the Liouville $1$-form). If $\ell_g=\ell_{g_0}$, we obtain in particular $\mc{F}(g)(z)-\mc{F}(g_0)(z)=0$, thus
\begin{equation}
\label{equation:borne-1}
\sup_{z \in \beta + i\R} \Big\|\frac{\chi^2 e^{-z\ell_{g_0}}}{(1+z)^7}\Big(\frac{1}{2} I_2^{g_0}(h)+\alpha_{S_{g_0}(\cdot)}(\pl_gS_g(\cdot)|_{g=g_0}.h)\Big)\Big\|_{L^2(\pl_-\mc{M})} 
\leq C \|h\|^2_{C^N}.
\end{equation}
Note that, for $\Re(z) = \beta>0$, as a consequence of \eqref{Igbounded} we have $\chi^2 e^{- z \ell_{g_0}} I_2^{g_0}(h) \in L^2(\partial_-\mc{M})$,  thus since by Lemma \ref{expsell} we know that $\pl_g\mc{F}(g)(z)|_{g=g_0}.h \in L^2(\pl_-\mc{M})$ if $\beta$ is large enough, we obtain that
\[
\chi^2 \alpha_{S_{g_0}(\cdot )}\left(\partial_g S_g(\cdot)|_{g=g_0}.h\right)e^{- z \ell_{g_0}} \in L^2(\partial_-\mc{M}).
\]
We now claim the following Lemma, the proof of which is deferred to below.
\begin{lemma}
\label{lemma:h-minus}
There exist $C, \eps, \beta, N > 0$ such that if $\|g-g_0\|_{C^N} < \eps$ and $(\ell_g,S_g) = (\ell_{g_0},S_{g_0})$, then for all $z \in i\R + \beta$ and $h=g-g_0$:
\[
\|(1+z)^{-7}\chi^2 \alpha_{S_{g_0}(\cdot)}\left(\partial_g S_{g_0}(\cdot)|_{g=g_0}.h\right) e^{- z \ell_{g_0}}\|_{H^{-6}(\partial_-\mc{M})}  \leq C \|h\|^2_{C^N}.
\]
\end{lemma}
Using \eqref{equation:borne-1} and Lemma \ref{lemma:h-minus}, we deduce that for all $\Re(z) = \beta$ with $\beta,N>0$ large enough 
\begin{align*}
& \sup_{z \in \beta + i\R} |1+z|^{-7}  \|\chi^2 I_2^{g_0}(h) e^{-z \ell_{g_0}}\|_{H^{-6}(\partial_-\mc{M})} \\
&  \leq \sup_{z \in \beta + i\R} |1+z|^{-7} \|\chi^2 \alpha_{S_{g_0}(\cdot)}\left(\partial_g S_{g_0}(\cdot)|_{g=g_0}.h\right) e^{- z \ell_{g_0}}\|_{H^{-6}(\partial_-\mc{M})}   +  C \|h\|^2_{C^N}\\
& \leq C \|h\|^2_{C^N},
\end{align*}
where the constant $C > 0$ changes from line to line. This concludes the proof of Lemma \ref{lemma:microlocal}.
\end{proof}

 \begin{proof}[Proof of Lemma \ref{lemma:h-minus}]
 Taking a finite cover of $\mc{M} = \cup_i U_i$, a partition of unity $\sum_i \chi_i = \mathbf{1}$ subordinate to that cover, we may write 
\begin{equation}
\label{equation:decomp-alpha}
\alpha = \sum_{i,j} \alpha^{(j)}_i dy ^{(j)}_i,
\end{equation}
where $\alpha^{(j)}_i, y^{(j)}_i \in C^\infty(\mc{M})$ are smooth functions compactly supported inside $U_i$ and thus for $y \not\in \Gamma^{g_0}_-$, we have:
\begin{equation}
\label{equation:relations-alpha}
\begin{split}
\chi^2 \alpha_{S_{g_0}(y)}\left(\partial_g S_{g_0}(y)|_{g=g_0}.h\right) e^{- z \ell_{g_0}(y)} & = \chi^2 \sum_{i,j} \alpha^{(j)}_i(S_{g_0}(y)) \langle d y^{(j)}_i, \partial_g S_g(y)|_{g=g_0}.h\rangle e^{- z \ell_{g_0}(y)} \\
& =  \sum_{i,j}  \chi \mc{S}_{g_0}(\alpha^{(j)}_i) (y) e^{- z \ell_{g_0}(y)} \cdot \chi  \partial_g \mc{S}_g(y^{(j)}_i)(y)|_{g=g_0}.h
\end{split}
\end{equation}
First, taking $\beta > 0$ large enough, we can ensure by Lemma \ref{lemma:upperboundS_g} the existence of a constant $C > 0$ such that for all $z \in i\R + \beta$, and for all $i, j$, one has $\chi \mc{S}_{g_0}^*\alpha^{(j)}_i e^{- z \ell_{g_0}} \in C^6(\partial_-\mc{M})$ with the uniform bound
\begin{equation}
\label{equation:c4}
\|(1+z)^{-7}\chi \mc{S}_{g_0}(\alpha^{(j)}_i) e^{- z \ell_{g_0}}\|_{C^6(\partial_-\mc{M})} \leq C.
\end{equation}

We now let $f \in C^\infty(\mc{M})$ be one of the functions $y_i^{(j)}$ in \eqref{equation:decomp-alpha}. By Proposition \ref{regularityR}, we have 
\[
\chi \mc{S}_gf =  \chi  \mc{S}_{g_0}f + \chi  \partial_g\mc{S}_gf|_{g=g_0}.h + \mc{O}_{H^{-6}(\partial_-\mc{M})}(\|h\|^2_{C^N}).
\]
(The constant in the $\mc{O}$ notation depends on the function $f$, but there are only finitely many functions $y_i^{(j)}$ considered in the end so the constant will be uniform.) Now, using that the scattering relations are the same, i.e. $S_g = S_{g_0}$, we have $\chi \mc{S}_g^* f = \chi  \mc{S}_{g_0}^* f$, where the equality holds in $L^\infty(\partial_-\mc{M})$, hence in $L^2(\partial_-\mc{M}) \subset H^{-6}(\partial_-\mc{M})$. As a consequence, we deduce that:
\begin{equation}
\label{equation:z-bound}
\|\chi  \partial_g\mc{S}_g^* y_i^{(j)} |_{g=g_0}.h\|_{H^{-6}(\partial_-\mc{M})} \leq C \|h\|^2_{C^N}.
\end{equation}

Using both \eqref{equation:c4} and \eqref{equation:z-bound} in \eqref{equation:relations-alpha} and the continuity of the multiplication $C^6(\partial_-\mc{M}) \times H^{-6}(\partial_-\mc{M}) \ni (u,v) \mapsto u v \in H^{-6}(\partial_-\mc{M})$, we deduce that for some $C > 0$
\[
\|(1+z)^{-7}\alpha_{S_{g_0}(\cdot)}\left(\partial_g S_{g_0}(\cdot)|_{g=g_0}.h\right) e^{- z \ell_{g_0}}\|_{H^{-6}(\partial_-\mc{M})}  \leq C \|h\|^2_{C^N}.
\]
This concludes the proof of Lemma \ref{lemma:h-minus}.
\end{proof}

\subsection{End of the proof} We can now complete the proof of Theorem \ref{theorem:main}.

\begin{proof}[Proof of Theorem \ref{theorem:main}]

Assume that $(\ell_g,S_g) = (\ell_{g_0},S_{g_0})$ and $g$ is close enough to $g_0$ in the $C^N$-topology. By Lemma \ref{lemma:solenoidal-gauge}, we can find a diffeomorphism $\psi$ such that $\psi|_{\partial M} = {\rm Id}_{\partial M}$ and $g' := \psi^* g$ is solenoidal with respect to $g_0$. Moreover, $(\ell_{g'},S_{g'}) = (\ell_g,S_g)=(\ell_{g_0},S_{g_0})$. Also note that $\|g'-g_0\|_{C^N} \leq C\|g-g_0\|_{C^N}$ for some uniform $C>0$ (depending on $g_0$).

Writing $h := g'-g_0$, Proposition \ref{proposition:key} implies that:
\begin{equation}\label{eq:I_2^g_0}
	\|I_2^{g_0}h\|_{L^2} \leq C\|h\|_{C^N}^{1 + \mu}.
\end{equation}
Now recall that for any $\eps > 0$, the adjoint $(I_2^{g_{0e}})^*: L^2 \to L^{p(\eps)} \subset H^{-\varepsilon}$ is bounded (here $p(\eps) < 2$ and $p(\eps) \to 2$ as $\eps \to 0$), see \cite[Lemma 5.1 and Equation (5.3)]{Guillarmou-17-2}.
		
By \eqref{eq:I_2^g_0}, and since $\Pi_2^{g_{0e}}$ is of order $-1$ (by Proposition \ref{prop:pim}), and $E_0h$ has regularity $H^{\frac{1}{2} - \varepsilon}$ for any $\varepsilon > 0$, we conclude that for any $\varepsilon > 0$, where $C > 0$ changes from line to line:
\begin{align*}
& \|\Pi_2^{g_{0e}} E_0 h\|_{H^{-\varepsilon}} = \|(I_2^{g_{0e}})^*I_2^{g_{0e}} E_0 h\|_{H^{-\varepsilon}} \leq C \|I_2^{g_{0e}} E_0 h\|_{L^2} \leq C \|I_2^{g_{0}} h\|_{L^2} \leq C\|h\|_{C^N}^{1 + \mu},\\
& \|\Pi_2^{g_{0e}} E_0 h\|_{H^{\frac{3}{2} - \varepsilon}} \leq C \|E_0h\|_{H^{\frac{1}{2} - \varepsilon}} \leq C\|h\|_{C^N}.
\end{align*}
By interpolation in Sobolev spaces, we obtain from these two estimates that, for some (different) $C,\mu > 0$:
\[\|\Pi_2^{g_{0e}} E_0 h\|_{H^1} \leq C \|h\|_{C^N}^{1 + \mu}.\]
Applying the elliptic stability estimate for solenoidal tensors of Proposition \ref{proposition:elliptic-estimate} (using that our assumption implies that $I_2^{g_0}$ is solenoidal injective), we get:
\[\|h\|_{L^2} \leq C  \|\Pi_2^{g_{0e}} E_0 h\|_{H^1} \leq C\|h\|_{C^N}^{1 + \mu}.\]
By interpolation, we then obtain for some (much larger) other integer $N \in \N$:
\[\|h\|_{L^2} \leq C \|h\|_{L^2} \|h\|_{C^{N}}^{\mu} \leq C \|h\|_{L^2} \|g-g_0\|_{C^N}^{\mu}.\]
If $\|g-g_0\|_{C^{N}}< (1/C)^{1/\mu}$, this implies that $h = 0$, namely $g' = \psi^*g = g_0$.
\end{proof}

\section{Smoothness of the scattering operator with respect to the metric}

\label{section:scattering}

The goal of this section is to prove Theorem \ref{axiomAsmooth} and to derive Proposition \ref{regularityR} as a corollary. Theorem \ref{axiomAsmooth} will follow directly from Theorem \ref{theorem:meromorphic} and Lemma \ref{lemma:res} below. The scattering operator $\mc{S}_g$ can be expressed purely in terms of the resolvent $R_{g_e}$ of $X_{g_e}$ thanks to Lemma \ref{SchwartzkernelSg}. 
Thus, in order to analyze the map $g\mapsto \mc{S}_g$, we shall study the regularity of the map $g\mapsto R_{g_e}$ in adequate functional spaces. Since working with $g_e$ or $g$ is equivalent (they share exactly the same properties), we shall consider $R_g$ for the simplicity of notation.
The construction of $R_g$ is done using microlocal methods as in \cite{Dyatlov-Guillarmou-16}, but we need to understand the $g$-dependence in the construction. We fix a metric of Anosov type $g_0$ on $M$ and we denote by $X_0$ its associated geodesic vector field on $\mc{M}$. We will consider the resolvent of $X$ if $X$ is any smooth vector field that is close enough to $X_0$ in $C^2(\mc{M},T\mc{M})$. We refer to \S\ref{sssection:geom-cons}, where the notation for the cotangent bundle is introduced.

\subsection{Construction of the uniform escape function}

\label{ssection:escape}

In this paragraph, we construct a \emph{uniform escape function}, i.e. an escape function\footnote{A function decreasing along the bicharacteristics of the symplectic lift of $X$ to the cotangent bundle.}  for $X_0$ which is also an escape function for all vector fields $X$ that are sufficiently close to $X_0$. We will use an idea of Bonthonneau \cite{Bonthonneau-20} in order to obtain an escape function adapted to all flows $X$ close to $X_0$.
Denote by $S^*\mc{M}:=(T^*\mc{M}\setminus \{0\})/\R^+$ (and similarly $S^*\mc{N}$) the spherical bundle, by $\kappa:T^*\mc{M}\to S^*\mc{M}$ the quotient projection, by $\pi: S^*\mc{N} \to \mc{N}$ the footpoint map, and recall that $\X$ is the generator of the symplectic lift of $\varphi_t$ defined in \eqref{eq:symplectic-lift}. Finally, recall that $\rho_0>0$ is the constant of \S\ref{sssection:extension} used to define the extension $\mc{M}_e$, and that $\widetilde{X}_0$ is some initial extension of the vector field from $\mc{M}$ to $\mc{N}$ (which does not need to vanish at $\{\rho = -\rho_0\}$). 

\begin{proposition}
\label{proposition:escape}
There exists a smooth function $m \in C^\infty(S^*\mc{N},[-1,1])$, invariant by the antipodal map $(x,\xi) \mapsto (x,-\xi)$, and $\delta > 0$ such that for all vector fields $X \in C^\infty(\mc{M},T\mc{M})$ such that $\|X-X_0\|_{C^2(\mc{M},T\mc{M})} \leq \delta$, the following holds:
\begin{enumerate}
\item $m=1$ in a neighborhood of $(E_-^X)^* \cap \pi^{-1}(\mc{M})$, 
\item $m=-1$ in a neighborhood of $(E_+^X)^*\cap \pi^{-1}(\mc{M})$,
\item $\supp(m) \cap \pi^{-1}(\mc{M})$ is contained in a small conic neighborhood of $(E_-^X)^*$ and $(E_+^X)^*$,
\item $\supp(m) \subset \left\{\rho > -2\rho_0\right\}$,
\item $\supp(m) \cap \left\{\rho=-\rho_0\right\} \cap \left\{\widetilde{X}_0 \rho=0\right\} = \emptyset$,
\item $\X m \leq 0$.
\end{enumerate}
\end{proposition}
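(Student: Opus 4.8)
The plan is to construct $m$ in three stages: first build a ``base'' escape function for the single flow $X_0$ using the standard Dyatlov--Guillarmou microlocal machinery, then thicken the conic neighborhoods so that the construction is robust under small $C^2$ perturbations $X$ of $X_0$, and finally adjust near $\{\rho=-\rho_0\}$ so that properties (4)--(5) hold. The key observation — which is Bonthonneau's idea \cite{Bonthonneau-20} — is that all the bundles $(E_{\pm,0}^X)^*$ for $\|X-X_0\|_{C^2}\le\delta$ sit inside an arbitrarily small conic neighborhood of $(E_{\pm,0}^{X_0})^*$ once $\delta$ is small, by continuity of the hyperbolic splitting (and its extension to the tails, recalled in \S\ref{ssection:hyperbolic-trapped-set}) under $C^2$-perturbation, together with Robinson structural stability (Proposition \ref{theorem:stability}) which controls the trapped set and tails. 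So if one builds an escape function whose ``good'' regions (where $\X_0 m<0$ strictly, away from a fixed small neighborhood $U_\pm$ of $(E_\pm^{X_0})^*$) have enough slack, then $\X m\le 0$ will persist for $\X$ close to $\X_0$.

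More concretely, I would first work on the cotangent bundle of the extended manifold. Following \cite[Section 3 and Lemma 2.10]{Dyatlov-Guillarmou-16}, there is a function $m_0 \in C^\infty(S^*\mc{N},[-1,1])$, antipodally invariant, equal to $1$ near $(E_-^{X_0})^*\cap\pi^{-1}(\mc{M})$, equal to $-1$ near $(E_+^{X_0})^*\cap\pi^{-1}(\mc{M})$, supported in a small conic neighborhood of $(E_-^{X_0})^*\cup(E_+^{X_0})^*$ over $\pi^{-1}(\mc{M})$, and with $\X_0 m_0\le 0$, with strict inequality outside a neighborhood of the radial sets. The construction uses that $(E_+^{X_0})^*$ is a radial sink and $(E_-^{X_0})^*$ a radial source for the flow $\varphi_t^{\X_0}$ on $S^*\mc{N}$ restricted to the region where $X_0\ne 0$, plus the fact that the only points of $S^*\mc{N}$ over $\mc{M}$ whose whole forward and backward orbit stays trapped project into $K^{X_0}$; for points that eventually leave $\mc{M}$ one uses the strict convexity of $\partial\mc{M}$ (and of $\{\rho=-\rho_0\}$) to push $m_0$ down to $\pm 1$ monotonically along the flow before the trajectory exits. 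One then multiplies by a cutoff in $\rho$ supported in $\{\rho>-2\rho_0\}$ and arranges, using strict convexity of $\{\rho=-\rho_0\}$ with respect to $\widetilde X_0$, that the support avoids $\{\rho=-\rho_0\}\cap\{\widetilde X_0\rho=0\}$: near that set $\widetilde X_0\rho$ has a definite sign change, so a trajectory through a point with $\widetilde X_0\rho=0$ on $\{\rho=-\rho_0\}$ immediately enters $\{\rho<-\rho_0\}$, hence one can let $m$ vanish there; this gives (4) and (5). The monotonicity along the flow near such points gives no obstruction to $\X_0 m\le 0$.

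The heart of the matter is then the uniformity in $X$. Here I would fix, once and for all, small open conic neighborhoods $\mathcal{V}_\pm\supset\mathcal{W}_\pm\supset(E_\pm^{X_0})^*$ (over a neighborhood of $\mc{M}$, and extended to $\{\rho>-\rho_0\}$ as noted at the end of \S\ref{sssection:geom-cons}) such that by the continuity just recalled, for $\delta$ small every $(E_\pm^X)^*\cap\pi^{-1}(\{\rho>-\rho_0\})\subset\mathcal{W}_\pm$. I would construct $m$ so that $m\equiv \pm1$ on $\mathcal{W}_\mp$, $\operatorname{supp}(m)\subset\mathcal{V}_-\cup\mathcal{V}_+$ over $\pi^{-1}(\mc{M})$, and $\X_0 m\le -c<0$ on $\operatorname{supp}(dm)$ (which lives in the fixed compact ``annular'' set $\mathcal{V}_\pm\setminus\mathcal{W}_\pm$, bounded away from all the radial sets). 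Then $\X m=\X_0 m+(\X-\X_0)m$, and since $\|\X-\X_0\|_{C^1}\le C\|X-X_0\|_{C^2}\le C\delta$ while $dm$ is a fixed smooth compactly supported form, we get $\X m\le -c+C'\delta\cdot\|dm\|_{C^0}<0$ on $\operatorname{supp}(dm)$ for $\delta$ small, and $\X m=0$ where $m$ is locally constant; this yields (6). Properties (1)--(3) hold because $(E_\pm^X)^*\subset\mathcal{W}_\pm\subset\{m\equiv\pm1\}$ and $\operatorname{supp}(m)\cap\pi^{-1}(\mc{M})\subset\mathcal{V}_-\cup\mathcal{V}_+$, which were chosen to be small conic neighborhoods of the two radial sets of $X_0$, hence also of those of $X$.

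The main obstacle I anticipate is ensuring simultaneously that the escape function decreases along $\X_0$ \emph{and} that its support, when propagated, does not re-enter $\mc{M}$ through the trapped region in a way incompatible with monotonicity — i.e. the interaction between the conic structure near the radial sets and the non-radial escape behavior for trajectories passing near but not through $K^{X_0}$. The resolution is exactly the strict convexity of $\partial\mc{M}$ and of $\{\rho=-\rho_0\}$: any cotangent trajectory over $\mc{M}$ that is not forward-trapped must leave $\{\rho>-\rho_0\}$ in finite time without return (as arranged in \S\ref{sssection:extension}), so one has room to drive $m$ monotonically to $-1$; symmetrically for backward-trapped. A secondary technical point is that the whole construction must take place uniformly over the slightly larger region $\{\rho>-\rho_0\}$ rather than just $\mc{M}$, which is why the extensions of $\Gamma_\pm^X$ and $(E_{\pm,0}^X)^*$ to $\{\rho>-\rho_0\}$ noted at the end of \S\ref{sssection:geom-cons} are used; these extensions are again continuous in $X$, so no new difficulty arises.
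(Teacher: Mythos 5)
Your high-level strategy — construct an escape function for $X_0$ whose decrease is robust under $C^2$-perturbation of the vector field, using continuity of the hyperbolic splitting and Robinson stability — is indeed the paper's strategy. But the core perturbation argument you give has a genuine gap, and the treatment of the region near $\{\rho=-\rho_0\}$ is not correct either.

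\textbf{The main gap.} You propose to build $m$ so that $\X_0 m\le -c<0$ on $\operatorname{supp}(dm)$, with $m$ locally constant elsewhere, and then conclude by perturbation. This cannot be achieved: $dm$ is continuous and vanishes on $\partial(\operatorname{supp}(dm))$, which is non-empty because $m$ is non-constant on a connected $S^*\mc{N}$. At any such boundary point $p\in\operatorname{supp}(dm)$ one has $\X_0 m(p)=\langle dm(p),\X_0(p)\rangle = 0$, contradicting $\X_0 m\le -c<0$ there. So the estimate $\X m \le -c + C'\delta\|dm\|_{C^0}<0$ cannot hold uniformly on $\operatorname{supp}(dm)$, and the argument breaks precisely near the boundary of the support, where $(\X-\X_0)m$ need not vanish. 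The paper's fix is a factoring trick: one first builds an intermediate function $m_1\in C^\infty(S^*\mc{N},[0,1])$ as a time-average $m_1(\zeta)=\tfrac{1}{2T}\int_T^{3T}m_0(e^{-t\X_0}\zeta)\,dt$ and proves (Lemma \ref{lemma:m1}, item (4)) that $\X_0 m_1\ge \delta_0>0$ on the \emph{compact} set $\{|m_1-1/2|\le \eps_0\}\cap\pi^{-1}(\mc{M}_e)$ — this is the nontrivial step, requiring the propagation Lemma \ref{lemma:technique}. Then $m_+:=\chi(m_1)$ for a cutoff $\chi$ with $\chi'\ge 0$ supported in $\{|x-1/2|<\eps_0\}$, so that $\X m_+ = \chi'(m_1)\,\X m_1$; the perturbation argument is applied to the factor $\X m_1$ on $\{\chi'(m_1)\ne 0\}$ where the lower bound $\delta_0$ holds, while on $\{\chi'(m_1)=0\}$ one simply has $dm_+=0$ so $(\X-\X_0)m_+=0$. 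Your proposal asserts the strict decrease without this factoring/averaging construction, and as a bare statement it is false.

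\textbf{The secondary gap.} Your reasoning for item (5) — that a trajectory through a point of $\{\rho=-\rho_0\}\cap\{\widetilde X_0\rho=0\}$ ``immediately enters $\{\rho<-\rho_0\}$'' — does not apply: the rescaled field $X=\psi\widetilde X$ vanishes identically on $\{\rho=-\rho_0\}$, so those are fixed points of $\varphi_t^X$. The real difficulty is the buffer region $\{-\rho_0<\rho\}\setminus\mc{M}_e$ where the flow slows down and the averages $m_\pm$ need no longer satisfy $\pm\X m_\pm\ge 0$. The paper handles this by adding the base bump functions $\eta^{-1}(\pi^*\chi_- - \pi^*\chi_+)$, with $\chi_\pm$ built from $\rho$ and $\widetilde X_0\rho$ (Lemma \ref{lemma:chi-plus}) so that $X\chi_\pm>\delta_1^3\rho_0/2$ uniformly in $X\sim X_0$ on the buffer region, and then takes $\eta$ small so that this term dominates the possibly bad contribution of $m_\pm$ there. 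The support of $\chi_\pm$ avoids $\{\widetilde X_0\rho=0\}$ by construction, which is what yields item (5). Your proposal omits these bump functions entirely.

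Both points are needed to make the proof go through; the second is more mechanical, but the first — the factored form $m_\pm=\chi(m_1)$ with strict positivity of $\X_0 m_1$ on a compact ``transition'' set — is the key idea you are missing.
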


The fact that $X$ and $X_0$ are $C^2$-close will ensure that the structural stability Proposition \ref{theorem:stability} applies. The function $m$ will be constructed as
\begin{equation}
\label{equation:m}
m = m_- -m_+ + \eta^{-1}(\pi^* \chi_- - \pi^* \chi_+),
\end{equation}
where $m_\pm$ are smooth functions with support near $(E_\pm^X)^*$ and taking value $1$ on $(E_\pm^X)^*$, $\chi_\pm$ are smooth functions with compact support in a slightly larger neighborhood of $\Sigma_\pm$ (defined in \eqref{equation:sigma}), and $\eta > 0$ will be a small parameter chosen small enough in the end. We refer to \S\ref{sssection:geom-cons} where all the previous notation are defined. The proof being rather technical, we advise the reader to have in mind Figure \ref{figure:construction} below, where the various sets and functions of the construction are depicted.

\begin{center}
\begin{figure}[htbp!]
\includegraphics[scale=0.7]{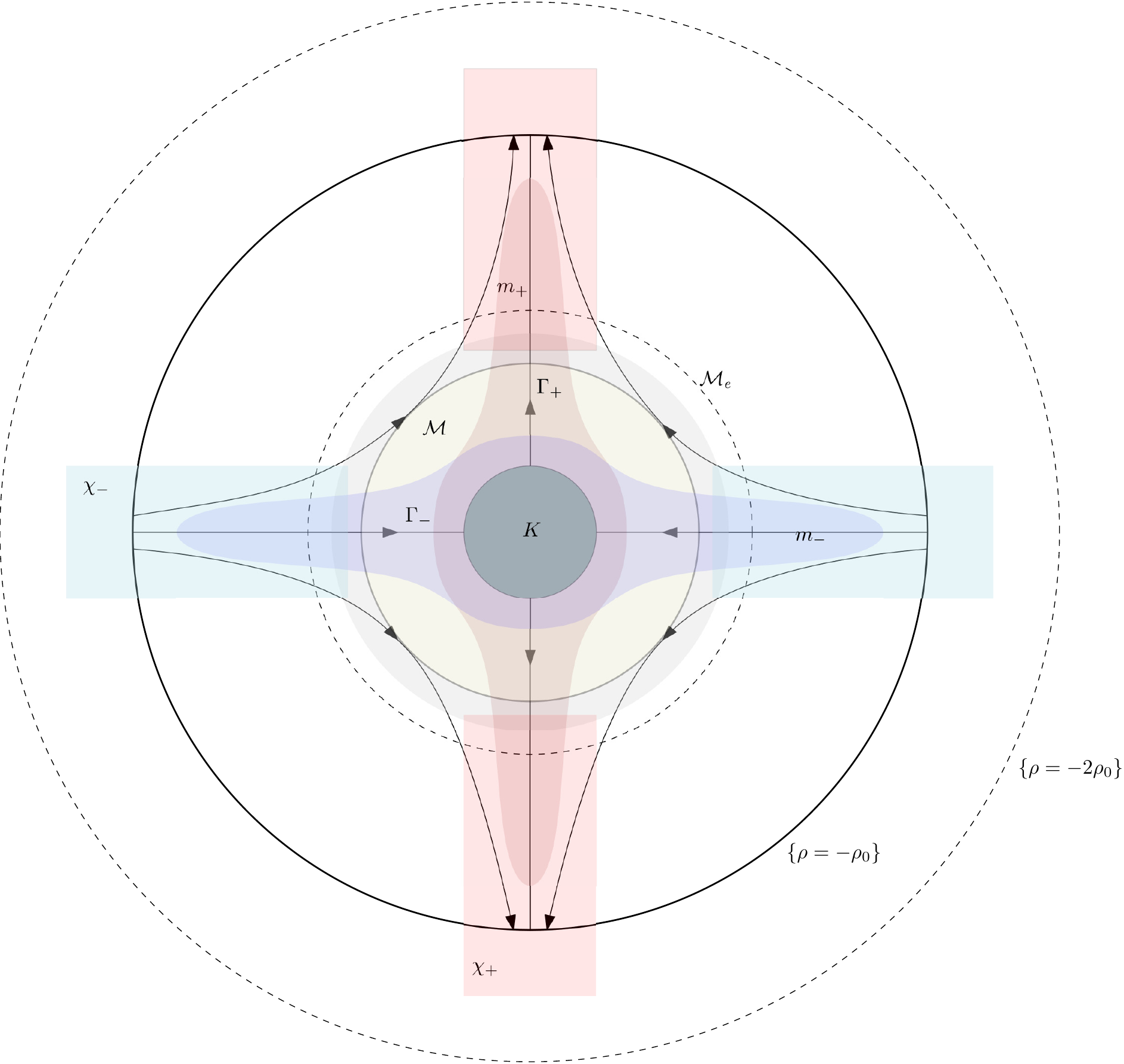}
\caption{A schematic representation of the various sets and functions appearing in Lemmas \ref{lemma:m-plus}, \ref{lemma:chi-plus}. The disks represent (respectively, from the center to the outer disk): the trapped set $K$ of $X_0$, the manifold $\mc{M}$, the set $\left\{q=0\right\}$ (in light gray) defined in \S \ref{ssection:meromorphic}, the extended manifold $\mc{M}_e$, the set $\left\{\rho \geq -\rho_0\right\}$, the set $\left\{\rho \geq -2\rho_0\right\}$. The support of the functions $m_+,\chi_+,m_-,\chi_-$ are depicted respectively in: dark red, light red, dark blue, light blue. The flowlines of $X_0$ are represented in black with arrows indicating the flow direction.}
\label{figure:construction}
\end{figure}
\end{center} 

\begin{remark}
More generally, one could construct a function $m$ taking any positive (resp. negative) constant value near $(E_-^X)^*$ (resp. $(E_+^X)^*$) but this will not be needed.
\end{remark}

\subsubsection{Uniform cone contraction}

We start with some technical lemmas on the contraction of cones in $T^*\mc{M}$. In order to abbreviate notation, we will sometimes write $X \sim X_0$ if $\|X-X_0\|_{C^2} \leq \delta$, where $\delta > 0$ is some small constant, that will be chosen later. In what follows, we will use the notion of conic neighborhoods of conic sets in $T^*\mc{N} \setminus 0$, which may be identified with neighborhoods on the spherical bundle $S^*\mc{N}$. First of all, we have:

\begin{lemma}
\label{lemma:close}
Let $\mc{U}$ be an open neighborhood  of  the trapped set $K^{X_0}$. Then, there exists $\delta > 0$ and $T \geq 0$ such that for all $t \geq T$, and all smooth vector fields $X$ such that $\|X-X_0\|_{C^2(\mc{M},T\mc{M})} < \delta$:
\[
y, \varphi^X_{-t}(y),  \varphi^X_t(y) \in \mc{M}_e \implies y \in \mc{U}.
\]
\end{lemma}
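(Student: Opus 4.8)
The statement is a uniform (in $X$) version of the standard fact that a point whose forward and backward orbits both remain in $\mc{M}_e$ for a long time must be close to the trapped set. The plan is to argue by contradiction and use the structural stability Proposition \ref{theorem:stability} to transfer everything to the fixed flow $\varphi^{X_0}$.

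\textbf{Step 1: reduction to the case $X = X_0$.} First I would record the elementary statement for $X_0$ alone: since $K^{X_0} = \bigcap_{t \in \R}\varphi_t^{X_0}(\mc{M}_e^\circ)$ (here I use the trapped set of the extended flow, which coincides with the one on $\mc{M}$ intersected appropriately, as recalled in \S\ref{ssection:hyperbolic-trapped-set}), and since $\mc{U}$ is an open neighbourhood of $K^{X_0}$, a compactness argument gives $T_0 \geq 0$ such that $y, \varphi_{-T_0}^{X_0}(y), \varphi_{T_0}^{X_0}(y) \in \mc{M}_e$ forces $y \in \mc{U}$. Indeed, the sets $A_t := \{y \in \mc{M}_e \mid \varphi_{-t}^{X_0}(y) \in \mc{M}_e \text{ and } \varphi_t^{X_0}(y) \in \mc{M}_e\}$ are compact, decreasing in $t$, and $\bigcap_{t \geq 0} A_t = K^{X_0} \subset \mc{U}$; hence $A_{T_0} \subset \mc{U}$ for $T_0$ large by finite intersection.

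\textbf{Step 2: transfer via Robinson structural stability.} Now take $\delta > 0$ small enough that Proposition \ref{theorem:stability} applies to every $X$ with $\|X - X_0\|_{C^2} \leq \delta$, yielding a homeomorphism $h = h_X : \mc{M} \to \mc{M}$ (which, after the extension-of-the-flow construction of \S\ref{sssection:extension}, we may view as defined on $\{\rho \geq -\rho_0\}$ since the flows agree near $\{\rho = -\rho_0\}$), conjugating $\varphi^X$ to $\varphi^{X_0}$ up to the time reparametrization $a(y,t)$, with $\|h - \mathrm{id}\|_{C^0}$ as small as we like by further shrinking $\delta$. If $y, \varphi_{-t}^X(y), \varphi_t^X(y) \in \mc{M}_e$, then their $h$-images lie in a $\delta'$-neighbourhood of $\mc{M}_e$, and more importantly $h(y)$, $h(\varphi_{-t}^X(y)) = \varphi_{a(y,-t)}^{X_0}(h(y))$, $h(\varphi_t^X(y)) = \varphi_{a(y,t)}^{X_0}(h(y))$ all lie in $\mc{M}_e' := \{\rho \geq -\rho_0 - \delta'\}$ for a slightly larger extension. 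Since $t \mapsto a(y,t)$ is continuous, strictly increasing, with $a(y,0) = 0$, and (by the uniform structural stability, shrinking $\delta$) satisfies $|a(y,t) - t| \leq \tfrac{1}{2}|t| + 1$ say — or more simply $a(y,\pm t) \to \pm\infty$ uniformly — there is $T \geq T_0$ (depending only on $T_0$ and the modulus of the reparametrization, hence only on $\delta$) such that $t \geq T$ forces $a(y,t) \geq T_0$ and $a(y,-t) \leq -T_0$. Applying Step 1 (with $\mc{M}_e$ replaced by the slightly larger $\mc{M}_e'$, which only changes $T_0$) we get $h(y) \in \mc{U}'$, where $\mc{U}'$ is a neighbourhood of $K^{X_0}$; finally, since $h^{-1}(\mc{U}')$ is a neighbourhood of $K^X = h^{-1}(K^{X_0})$ and $\|h - \mathrm{id}\|_{C^0}$ is small, we conclude $y \in \mc{U}$ after possibly shrinking $\delta$ once more (using that $\mc{U}$ is open and contains $K^{X_0}$, and $K^X$ is $C^0$-close to $K^{X_0}$).

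\textbf{Main obstacle.} The delicate point is controlling the time reparametrization $a(y,t)$ uniformly in both $y$ and $X$: one needs that $a(y,\cdot)$ does not drift too far from the identity, so that "trapped for $X$-time $\geq T$" implies "trapped for $X_0$-time $\geq T_0$" with a $T$ independent of $X$. This is not literally in the statement of Proposition \ref{theorem:stability} as quoted, but follows from the uniform bounds in Robinson's theorem together with the fact that $a(y,t)$ is obtained by integrating a positive continuous density bounded above and below uniformly for $\|X - X_0\|_{C^2} \leq \delta$; alternatively, one can bypass this entirely by noting that $a(y,t)$ is defined precisely so that $\varphi_{a(y,t)}^{X_0}(h(y)) = h(\varphi_t^X(y))$ stays in $\mc{M}_e$ exactly when $\varphi_t^X(y)$ does, and then running the compactness argument of Step 1 directly on the reparametrized orbit. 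I would phrase the final write-up using this second route to keep the dependence on $\delta$ transparent.
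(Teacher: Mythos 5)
Your proof takes a genuinely different route from the paper's, and a heavier one, with a gap that you partly flag yourself. The paper's own argument is a short compactness/contradiction and does not invoke structural stability at all: assume the conclusion fails along sequences $T_j \to \infty$, $X_j \to X_0$ in $C^2$, and $y_j \in \mc{M}_e\setminus\mc{U}$ with $\varphi^{X_j}_{\pm T_j}(y_j) \in \mc{M}_e$; extract $y_j \to y_\infty \in \mc{M}_e$. By the design of the extension in \S\ref{sssection:extension} (trajectories leaving $\mc{M}_e$ never return, a consequence of strict convexity), the whole orbit segment $\{\varphi^{X_j}_s(y_j)\ :\ |s|\le T_j\}$ lies in $\mc{M}_e$; so for each fixed $s$ and $j$ large, $\varphi^{X_j}_s(y_j) \in \mc{M}_e$, and by continuous dependence of solutions on the vector field (uniformly on compact time intervals), $\varphi^{X_j}_s(y_j) \to \varphi^{X_0}_s(y_\infty)$. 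Hence $\varphi^{X_0}_s(y_\infty) \in \mc{M}_e$ for all $s$, i.e.\ $y_\infty \in K^{X_0} \subset \mc{U}$, contradiction. Hyperbolicity of $K^{X_0}$ plays no role here: only the convexity of $\partial\mc{M}_e$, compactness, and the elementary fact that $C^2$-close vector fields have $C^0$-close flows on bounded time intervals are used.

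Your version instead routes through Robinson's Proposition \ref{theorem:stability}, and the point you identify as the main obstacle is a genuine gap given only what the quoted proposition supplies: $a(y,t)$ is continuous, strictly increasing, with $a(y,0)=0$, and $\|h-\mathrm{id}\|_{C^0}$ is small, but nothing there prevents $a(y,\pm t)$ from staying bounded as $t\to\pm\infty$ for some $y$. Your ``second route'' does not by itself close this: knowing $\varphi^X_s(y)\in\mc{M}_e$ for $|s|\le t$ only gives $\varphi^{X_0}_\sigma(h(y))\in\mc{M}_e$ for $\sigma\in[a(y,-t),a(y,t)]$, and to invoke your Step~1 you still need $a(y,t)-a(y,-t)$ to grow with $t$, uniformly in $y$ and $X$. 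This can be supplied with further input (a uniform bound on the reparametrization density can be extracted from $\|X-X_0\|_{C^2}\le\delta$ via Gr\"onwall, or from details of Robinson's proof), but it is exactly the extra machinery the compactness argument avoids. Moreover, there is a circularity risk: Robinson's theorem hinges on hyperbolicity of $K^{X_0}$, whereas the lemma in question is purely topological and is itself used downstream as a soft ingredient. I would rewrite your proof in the paper's style; it is shorter, transparent about what it uses, and makes clear that structural stability is not what is doing the work here.
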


Taking $X \sim X_0$ close enough in the $C^2$-topology, we can ensure that $\mc{U}$ is also an open neighborhood of $\cup_{X \sim X_0} K^X$ by the structural stability Proposition \ref{theorem:stability}.

\begin{proof}
We argue by contradiction. Assume that we can find sequences $(T_j)_{j \geq 1}$, $T_j \to +\infty$, $(X_j)_{j \geq 1}$ such that $X_j \to X_0$ in $C^2(\mc{M},T\mc{M})$, and $(y_j)_{j \geq 1}$ such that $y_j \in \mc{M}_e$, $\varphi^{X_j}_{-T_j}(y_j) \in  \mc{M}_e$ and $\varphi^{X_j}_{T_j}(y_j) \in  \mc{M}_e$ but $y_j \notin \mc{U}$. By compactness of $\mc{M}_e$, we can always assume, up to extraction, that $y_j \to y_\infty \in \mc{M}_e$. But then $y_\infty \in K^{X_0}$, which contradicts $y_\infty \notin \mc{U}$.
\end{proof}

We now show the existence of small conic subsets in $T^*\mc{M}$, independent of the vector field $X$, on which the differential of the flow $(\varphi_t^X)_{t \in \R}$ is exponentially expanding/contracting. This may be compared to \cite[Lemma 2.11]{Dyatlov-Guillarmou-16}.

\begin{lemma}
\label{lemma:expansion}
There exist $\delta > 0$ small enough, constants $C,T,\lambda > 0$ and small open conic neighborhoods $U_\pm$ of $\cup_{X \sim X_0} (E_\pm^X)^*$, such that for all $X$ with $\|X -X_0\|_{C^2} \leq \delta$, the following holds: for all $(y,\xi) \in U_\pm$, for all $t \geq T$ such that $y, \varphi_{\pm t}^X(y) \in \mc{M}_e$,
\[
\begin{split}
\forall s \in [0,t-T], \, e^{\pm s\X}(y,\xi) \in U_\pm, \qquad \forall s \in [0,t],\,   |e^{\pm s\X}(y,\xi)| \geq C e^{\lambda s} |\xi|. 
\end{split}
\] 
\end{lemma}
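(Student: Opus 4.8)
The statement is a uniform (in the vector field $X$) version of the standard cone-contraction estimates for hyperbolic flows, cf.\ \cite[Lemma 2.11]{Dyatlov-Guillarmou-16}. The plan is to run the usual argument for $X_0$, keeping track of which quantities are controlled by the $C^2$-norm of $X$, and to use compactness (as in Lemma~\ref{lemma:close}) together with the structural stability Proposition~\ref{theorem:stability} to make all constants uniform over the $C^2$-ball $\{\|X-X_0\|_{C^2}\le\delta\}$.

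First, I would set up the dual (co-)dynamics. Recall from \S\ref{sssection:geom-cons} that $\X$ generates the flow $\varphi^{\X}_t(y,\xi)=(\varphi^X_t(y),(d\varphi^X_t(y))^{-\top}\xi)$, so expansion of $e^{s\X}$ on $U_+$ near $(E^X_+)^*$ corresponds, by the definition of $(E^X_+)^*$ as the annihilator of $E^X_+\oplus E^X_0$, to contraction of $d\varphi^X_s$ on $E^X_-$ (the stable bundle), and symmetrically for $U_-$. The hyperbolicity estimate \eqref{equation:an}, which holds on $\Gamma^X_\pm$ with the extended bundles $E^X_\pm$ (see \cite[Lemma 2.10]{Dyatlov-Guillarmou-16}), gives for a \emph{fixed} $X$ constants $C,\nu>0$ with $\|d\varphi^X_{-t}(y)v\|\le Ce^{-\nu t}\|v\|$ for $v\in E^X_+(y)$, $y\in\Gamma^X_+$, $t\ge 0$; dually this yields $|e^{s\X}(y,\xi)|\ge C^{-1}e^{\nu s}|\xi|$ for $\xi\in(E^X_+)^*(y)$. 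The first task is to make $C,\nu$ uniform in $X\sim X_0$: by Proposition~\ref{theorem:stability} the trapped set and the bundles $E^X_\pm$ vary continuously with $X$ (more precisely $K^X=h_X^{-1}(K^{X_0})$ and the bundles $(E^X_{\pm,0})^*$ admit continuous extensions to $\{\rho>-\rho_0\}$, as recorded at the end of \S\ref{sssection:geom-cons}), and hyperbolicity is an open condition in the $C^1$ topology on the cocycle; standard perturbation arguments (cone fields) then give uniform $\lambda\in(0,\nu)$ and uniform $T$ so that, on a uniform neighborhood, one gets $e^{\lambda s}$-expansion for $s\ge T$ along any trajectory staying in $\mc{M}_e$.

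Next, I would upgrade from the bundles themselves to small conic neighborhoods $U_\pm$. The mechanism is cone invariance: there is a conic neighborhood $\mathcal{C}_\pm$ of $\cup_{X\sim X_0}(E^X_\pm)^*$ in $T^*\mc{M}_e|_{\mc{U}}$ over a neighborhood $\mc{U}$ of $\cup_{X\sim X_0}K^X$, and a uniform $T_1$, such that $e^{s\X}$ maps $\mathcal{C}_+$ into itself for $s\ge 0$ whenever the trajectory stays in $\mc{M}_e$, with the quantitative expansion $|e^{s\X}(y,\xi)|\ge C e^{\lambda s}|\xi|$ for $s\ge T_1$; this is the standard ``cone is eventually absorbed and then expanded'' lemma, whose constants depend only on the cocycle $d\varphi^X$ on $\mc{U}$ and hence are uniform in $C^2$. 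Then I would extend this from the recurrent region $\mc{U}$ to the whole of $\mc{M}_e$ using Lemma~\ref{lemma:close}: if $(y,\xi)\in U_\pm$ and $y,\varphi^X_{\pm t}(y)\in\mc{M}_e$, split the orbit segment $[0,t]$ into the portion spent outside $\mc{U}$ (of uniformly bounded length, since any point whose forward \emph{and} backward $T$-orbit lies in $\mc{M}_e$ is forced into $\mc{U}$) and the portion inside $\mc{U}$ where the contraction above applies; on the bounded outside portion $\|d\varphi^X_s\|$ is uniformly bounded (Grönwall in $C^2$, as in \eqref{equation:gron}), which only costs a uniform multiplicative constant absorbed into $C$. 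Choosing $U_\pm\subset\mathcal{C}_\pm$ small enough and $T$ large enough (larger than $T_1$ plus the uniform escape time from $\mc{M}_e\setminus\mc{U}$) gives both conclusions: $e^{\pm s\X}(y,\xi)\in U_\pm$ for $s\in[0,t-T]$, and the expansion bound for all $s\in[0,t]$.

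The main obstacle I anticipate is the \emph{uniformity} over the $C^2$-ball — in particular, that the cones $U_\pm$ and the constants $C,T,\lambda$ can be chosen once and for all, independently of $X$. This requires knowing that the family $\{(E^X_{\pm})^*\}_{X\sim X_0}$ stays inside a single small conic neighborhood of $(E^{X_0}_\pm)^*$; this follows from the continuous dependence of the (extended) bundles on $X$ provided by Proposition~\ref{theorem:stability} together with compactness of $\mc{M}_e$, but it has to be stated carefully over the non-compact-time orbit segments, which is why one isolates the recurrent set $\mc{U}$ and the uniformly-bounded transient part. The rest — Grönwall bounds on $\|d\varphi^X_s\|$ with $C^2$-uniform constants, and the purely linear-algebra step passing from contraction of $d\varphi^X$ on $E^X_\mp$ to expansion of $e^{\pm s\X}$ on a neighborhood of $(E^X_\pm)^*$ — is routine.
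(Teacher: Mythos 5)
Your high-level strategy—uniform cone estimates obtained from continuity, structural stability (Proposition~\ref{theorem:stability}), Grönwall bounds, and Lemma~\ref{lemma:close} to control the transient part of the orbit—is the right ingredients list, and it is essentially the same toolkit the paper uses. But the architecture differs, and one step in your version has a real gap.

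The paper does not decompose the orbit into ``transient outside $\mc{U}$'' plus ``recurrent inside $\mc{U}$.'' Instead it constructs two nested conic sets $U_+^{(2)} \Subset^{\mathrm{fiber}} U_+^{(1)}$ directly over a region of $\mc{M}_e$ (not merely over a neighborhood $\mc{U}$ of the trapped set), using that the dual bundles $(E_+^X)^*$ are defined and continuous on all of $\Gamma_+^X$ (indeed on $\{\rho>-\rho_0\}$), and establishes a \emph{one-step} uniform estimate: $e^{T_0\X}$ sends $U_+^{(1)}$ into $U_+^{(2)}$ with expansion factor $5$, valid for every orbit segment in $\mc{M}_e$ and every $X$ with $\|X-X_0\|_{C^2}\le\delta$. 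A separate escape-time condition on the annulus $\pi(U_+^{(2)})\setminus\pi(U_+^{(1)})$ then lets this one-step estimate be iterated cleanly via an arithmetic decomposition of $t$. That makes the whole proof a self-contained iteration, with no separate absorption lemma needed.

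Your version, by contrast, defines the invariant cone $\mathcal{C}_\pm$ only over $\mc{U}$ and then tries to ``extend'' to $\mc{M}_e$ by bounding the transient part with Grönwall. The missing step is cone absorption: starting from $(y,\xi)\in U_+$ with $y\in\mc{M}_e\setminus\mc{U}$, you need $e^{T_0\X}(y,\xi)\in\mathcal{C}_+$ once $\varphi^X_{T_0}(y)$ has entered $\mc{U}$, and nothing in your sketch guarantees this. The phrase ``choosing $U_\pm\subset\mathcal{C}_\pm$ small enough'' does not parse, since $U_\pm$ must be a cone over all of $\mc{M}_e$ while $\mathcal{C}_\pm$ is a cone over $\mc{U}$ only; without an explicit absorption estimate, the Grönwall bound on $[0,T_0]$ controls the size of $e^{s\X}\xi$ but not its direction, which is what is needed to enter $\mathcal{C}_+$. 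Similarly, your statement that $\mathcal{C}_+$ is preserved ``whenever the trajectory stays in $\mc{M}_e$'' only makes sense while the base point is in $\mc{U}$ (the domain of $\mathcal{C}_+$). The fix is to build the cone field over $\{\rho>-\rho_0\}$ from the start using the extended bundles—which is, in effect, the nested-cone construction the paper carries out—so the absorption issue should not be glossed over.
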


\begin{proof}
We prove the lemma for the outgoing (+) direction, the proof being similar for the incoming (-) direction. Fix arbitrary small conic neighborhoods $\widetilde{U}_+^{(2)} \Subset \widetilde{U}_+^{(1)}$ of $(E_+^{X_0})^*$. By hyperbolicity, there is a $T_0 > 0$ large enough, such that the following holds: for all $(y,\xi) \in T^*_{\Gamma^{X_0}_+}\mc{M}_e \cap \widetilde{U}_+^{(1)}$ such that $y, \varphi^{X_0}_{T_0}(y) \in \mc{M}_e$, one has:
\[
e^{T_0 \X_0}(y,\xi) \in T^*_{\Gamma^{X_0}_{+}}\mc{M}_e \cap  \widetilde{U}_+^{(2)}, \qquad |e^{T_0\X_0}(y,\xi)| \geq 10 |\xi|.
\]
By continuity, there exist small neighborhoods $U_+^{(j)}$ of $\widetilde{U}_+^{(j)}$ such that the following hold:
\begin{enumerate}
\item The neighborhoods are chosen so that $\pi(U_+^{(1)}) \Subset \pi(U_+^{(2)})$.
\item Letting $W := \pi(U_+^{(1)})$, one has $U_+^{(2)} \cap W \Subset^{\mathrm{fiber}} U_+^{(1)} \cap W$, in the sense that for all $y \in W$, $U_+^{(2)} \cap T^*_y\mc{M}_e \Subset U_+^{(1)} \cap T^*_y\mc{M}_e$.
\item For all $(y,\xi) \in U^{(1)}_+$ such that $y,\varphi^{X_0}_{T_0}(y) \in \mc{M}_e$,
\[
e^{T_0 \X}(y,\xi) \in U_+^{(2)},\quad  |e^{T_0 \X}(y,\xi)| \geq 5 |\xi|.
\]
\item There is a time $T_1 > T_0$ such that: if $y \in \pi(U_+^{(2)}) \setminus \pi(U_+^{(1)})$, then for all $t \geq T_1$, $\varphi_t^{X_0}(y) \notin \mc{M}_e$.
\end{enumerate}

By continuity, this can be achieved so that points (1-4) also hold for all smooth vector fields $X$ such that $\|X-X_0\|_{C^1} \leq \delta$, where $\delta > 0$ is chosen small enough. We will actually choose $\|X-X_0\|_{C^2} \leq \delta$, where $\delta > 0$ is chosen small enough: by the structural stability Proposition \ref{theorem:stability}, we can then ensure that the neighborhoods $U_+^{(j)}$ also contain $(E_+^{X})^*$ for $X \sim X_0$ in the $C^2$-topology.

We set $U_+ := U_+^{(1)}$ and $T := 3T_1$ and we claim that these satisfy the required properties. Take $(y,\xi) \in U_+$ such that $y \in \mc{M}_e, \varphi_t(y) \in \mc{M}_e$ and $t \geq T$. Write $t = k_1 T_1 + r_1$, with $k_1 \in \Z_{\geq 1}, r_1 \in [0,T_1)$, and $(k_1-1)T_1 = k_0 T_0 + r_0$, with $k_0 \in \Z_{\geq 0}, r_0 \in [0,T_0)$, that is
\[
t = k_0T_0 + T_1 + r_1 + r_0.
\]
Note that $T_1 + r_1 + r_0 < 3T_1 = T$.

For all $s \in [0, k_0 T_0]$, one has $\varphi_s^X(y,\xi) \in \pi(U_+^{(1)})$ and $(y,\xi) \in U_+^{(1)}$. Indeed, otherwise, we would get for some $s_\star \in [0,k_0 T_0]$ that $\varphi_{s_\star}^X(y,\xi) \in \pi(U_+^{(2)}) \setminus \pi(U_+^{(1)})$ but then $\varphi_{s_\star+T_1}^X(y) \notin \mc{M}_e$, which contradicts the fact that $\varphi_t^X(y) \in \mc{M}_e$ since
\[
s_\star+T_1 \leq (k_1-1)T_1 + T_1 = kT_1 \leq t.
\]
Then, using the uniform lower bound $|e^{(T_1+r_0+r_1)\X}(y,\xi)| \geq C_0 |\xi|$, we obtain:
\[
|e^{t\X}(y,\xi)| = |e^{(T_1 + r_0+r_1)\X} (e^{T_0 \X})^{k_0}(y,\xi)| \geq C_0 5^{k_0} |\xi| \geq C e^{\lambda t}|\xi|,
\]
for some constant $C > 0$ and $\lambda = \log(5)/T_0$. 
\end{proof}

We now let $V_+$ be a small conic neighborhood of $\cup_{X \sim X_0} (E^X_+)^*$ contained inside $U_+$, i.e. $V_+ \Subset U_+$. It will be convenient to use the following operation on the category of fibered conic subsets: if $V \subset T^*\mc{N}$ is an open conic subset, define the \emph{fiberwise complement} of $V$ as:
\[
\label{equation:fiber-complement}
V^{\complement_{\mathrm{fiber}}} := \left\{ (y,\xi) \in T^*\mc{N} ~|~ y \in \pi(V), \xi \in \overline{V}^{\complement} \cap T^*_y \mc{N} \right\},
\]
where the superscript $\complement$ denotes the set theoretic complement.

\begin{lemma}
\label{lemma:contraction}
There exists $\delta > 0$, $T > 0$, and $V_- := (W_-)^{\complement_{\mathrm{fiber}}}$, where $W_-$ 
is a small conic neighborhood of $\cup_{X \sim X_0} (E_-^{X})^* \oplus (E_0^{X})^*$, such that for all $X$ with $\|X-X_0\|_{C^{2}(\mc{M},T\mc{M})} \leq \delta$, one has $e^{T \X}V_- \Subset V_+$.
\end{lemma}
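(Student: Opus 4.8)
The plan is to exhibit $e^{T\X}$ as a \emph{uniform} (in $X\sim X_0$) hyperbolic contraction that pushes codirections away from the stable–neutral bundle and into an arbitrarily small cone around the unstable codirection. First I unwind the statement. Since $(E_-^X)^*=\operatorname{Ann}(E_-^X\oplus E_0^X)$ and $(E_0^X)^*=\operatorname{Ann}(E_+^X\oplus E_-^X)$, one has $(E_-^X)^*\oplus(E_0^X)^*=\operatorname{Ann}(E_-^X)$; hence $W_-$ is a small conic neighbourhood of $\operatorname{Ann}(E_-^X)$, and $(y,\xi)\in V_-=(W_-)^{\complement_{\mathrm{fiber}}}$ means precisely that, writing $\xi=\xi_++\xi_-+\xi_0$ in the dual splitting $(E_+^X)^*\oplus(E_-^X)^*\oplus(E_0^X)^*$, one has a lower bound $|\xi_+|\ge c_{W_-}|\xi|$ with $c_{W_-}>0$ depending only on how small $W_-$ is. The target $V_+\Subset U_+$ will be a fixed small conic neighbourhood of $\cup_{X\sim X_0}(E_+^X)^*$. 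The whole point is that enough forward time makes $\xi_+$ dominate overwhelmingly.

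The first step is to install uniformity. Taking $\delta$ small, structural stability (Proposition~\ref{theorem:stability}) gives that $K^X,\Gamma_\pm^X$, the splittings $E_\pm^X\oplus E_0^X$ and their duals $(E_{\pm,0}^X)^*$ vary continuously with $X$ on $\{\rho>-\rho_0\}$, and \eqref{hyperbolicity} holds with constants $C,\nu$ that may be taken uniform over $\{X:\|X-X_0\|_{C^2}\le\delta\}$ (this is where $C^2$-closeness enters). Dualising \eqref{hyperbolicity} as in \S\ref{sssection:geom-cons}: along $e^{s\X}$, as long as the base orbit stays in $\{\rho\ge-\rho_0\}$, the component $\xi_+$ is expanded, $|(e^{s\X}\xi)_+|\ge C^{-1}e^{\nu s}|\xi_+|$, while $\xi_-$ is contracted and $\xi_0$ merely invariant, so $|(e^{s\X}\xi)_-|+|(e^{s\X}\xi)_0|\le C(|\xi_-|+|\xi_0|)\le C|\xi|$ — all uniformly in $X\sim X_0$. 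Next, Lemma~\ref{lemma:close} reduces matters to the recurrent region: $\pi(V_-)=\pi(W_-)$ is a small neighbourhood of $\cup_{X\sim X_0}\Gamma_-^X$, so for $y\in\pi(V_-)$, either $\varphi_s^X(y)\in\mc{M}_e$ for all $s\in[0,T]$ — in which case by Lemma~\ref{lemma:close} and continuity of $\Gamma_-^X$ the point $\varphi_T^X(y)$ is as close as desired to $\cup_{X\sim X_0}K^X$ — or else the orbit drifts out of $\mc{M}_e$ within time $T$, and since it starts in the fixed neighbourhood $\pi(W_-)$ of $\Gamma_-^X$ and $X$ is forward-complete on $\{\rho\ge-\rho_0\}$ (it vanishes on $\{\rho=-\rho_0\}$), it must shadow $\Gamma_-^X$ for almost all of $[0,T]$ before doing so; in either case the dual estimate above is available on (almost) all of $[0,T]$.

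The core computation is then immediate: for $(y,\xi)\in V_-$ with $|\xi|=1$ one gets, for $s\le T$ in the range where the estimate holds,
\[
\frac{|(e^{s\X}\xi)_-|+|(e^{s\X}\xi)_0|}{|(e^{s\X}\xi)_+|}\ \le\ \frac{C\,|\xi|}{C^{-1}e^{\nu s}|\xi_+|}\ \le\ C^2 c_{W_-}^{-1}e^{-\nu s},
\]
so choosing $T$ large enough (depending on $c_{W_-}$ and the uniform constants) forces $e^{T\X}(y,\xi)$ into an arbitrarily small cone around $(E_+^X)^*$, hence into $V_+$; since $\varphi_T^X(y)$ lies in $\{\rho>-\rho_0\}$ (near $\cup_{X\sim X_0}K^X$ in the recurrent case), $e^{T\X}(y,\xi)$ actually lands in the interior of $V_+$, and after passing to the sphere bundle the image is relatively compact, so $e^{T\X}V_-\Subset V_+$. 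Alternatively, the conclusion can be obtained by a compactness/contradiction argument modelled on Lemmas~\ref{lemma:close} and~\ref{lemma:expansion}: were it false, one would find $X_j\to X_0$ in $C^2$, $T_j\to\infty$ and $(y_j,\xi_j)\in V_-$ with $|\xi_j|=1$ and $e^{T_j\X_j}(y_j,\xi_j)\notin V_+$ (here $\X_j$ is the symplectic lift of $X_j$); passing to a limit $(y_\infty,\xi_\infty)$ with $\xi_\infty\notin W_-$, local $C^1$-convergence of the flows and the fact that the time spent in $\mc{M}_e$ tends to infinity would force $e^{t\X_0}(y_\infty,\xi_\infty)/|e^{t\X_0}(y_\infty,\xi_\infty)|\to(E_+^{X_0})^*$, contradicting $e^{T_j\X_j}(y_j,\xi_j)\notin V_+$ for large $j$.

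On the bookkeeping of constants: with $V_+$ fixed, I would choose $W_-$ small enough to qualify as a ``small conic neighbourhood'' (fixing $c_{W_-}$), then $T$ large, then $\delta$ small so that Proposition~\ref{theorem:stability}, Lemma~\ref{lemma:close} and the uniformity of $C,\nu$ all hold on the $C^2$-ball around $X_0$. The main obstacle is precisely this uniformity — obtaining the hyperbolicity rate $\nu$, the closeness time in Lemma~\ref{lemma:close}, and the cone-contraction time uniformly over the whole $C^2$-neighbourhood of $X_0$ — which is what forces the use of Robinson's structural stability; a secondary subtlety, flagged above, is checking that orbits leaving $\mc{M}_e$ before time $T$ nonetheless shadow $\Gamma_-^X$ long enough for the contraction to take hold, which uses that $\pi(V_-)$ is a genuinely small neighbourhood of $\cup_{X\sim X_0}\Gamma_-^X$ and that no orbit ever crosses $\{\rho=-\rho_0\}$.
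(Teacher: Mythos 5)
Your qualitative picture — cone contraction in the cotangent bundle drives covectors away from $\operatorname{Ann}(E_-^X)$ and into a cone around $(E_+^X)^*$ — is the right one, and your identification $(E_-^X)^*\oplus(E_0^X)^*=\operatorname{Ann}(E_-^X)$ is correct. But the proof as written has two genuine gaps, both tied to the order in which you make choices.

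First, you assert that the hyperbolicity constants $C,\nu$ (and hence the dual expansion/contraction rates on $\xi_\pm$) can be taken \emph{uniform} over the entire $C^2$-ball around $X_0$, citing Robinson's structural stability. That conclusion does not follow: Proposition~\ref{theorem:stability} only gives a $C^0$ conjugacy with time reparametrisation, which transports nothing quantitative about $d\varphi_t$. Obtaining uniform rate constants over a perturbation family is not a corollary of structural stability; it is precisely the content one has to prove, and it is normally proved by the very continuity argument the paper uses instead. The paper sidesteps the issue entirely by reversing the order of quantifiers: it establishes the cone contraction for $X_0$ alone, over the compact set $\pi^{-1}(K^{X_0})$, and there hyperbolicity of the fixed flow $X_0$ gives a finite $T>0$ such that $e^{T\X_0}$ maps $\widetilde V_-\cap\pi^{-1}(K^{X_0})$ strictly inside $V_+$. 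Only \emph{after} $T$ is fixed does one shrink $\pi(V_-)$ to a small neighbourhood of $K^{X_0}$ and shrink $\delta$: since $[0,T]$ is compact and $(X,y,\xi)\mapsto e^{T\X}(y,\xi)$ is continuous in all arguments, the strict inclusion $\Subset$ is an open condition and persists. No uniform Lyapunov estimate over the whole neighbourhood of $X_0$ is ever needed.

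Second, you place $\pi(V_-)$ in a neighbourhood of $\cup_{X\sim X_0}\Gamma_-^X$ and then try to justify that orbits starting there ``shadow $\Gamma_-^X$'' long enough, partly by invoking Lemma~\ref{lemma:close}. This does not work: Lemma~\ref{lemma:close} requires \emph{both} $\varphi_{-t}^X(y)$ and $\varphi_t^X(y)$ to lie in $\mc{M}_e$ to conclude $y\in\mc{U}$, and your ``either the orbit stays in $\mc{M}_e$, or it drifts out'' dichotomy supplies only the forward containment, so the reduction to the recurrent region is unjustified. More fundamentally, the dual splitting you use to separate $\xi$ into $\xi_\pm,\xi_0$ along the orbit is only defined over (extensions of) the trapped/tail sets, not over an arbitrary neighbourhood of $\Gamma_-^X$. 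In the paper, $\pi(V_-)$ is taken to be a small neighbourhood of $K^{X_0}$ (not of $\Gamma_-^{X_0}$), which is both where the dual splitting is naturally available and what makes the compact-time continuity argument go through. If you adopt the paper's ordering — fix $T$ over $K^{X_0}$, then shrink the neighbourhood and $\delta$ — both issues disappear and your cone computation becomes the heuristic behind the choice of $T$ rather than a step that requires uniform hyperbolicity.
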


The same lemma can be proved by reversing the direction of $X$, i.e. by swapping the role of $E_-^*$ and $E_+^*$.

\begin{proof}
We fix an arbitrary open conic set $\widetilde{V}_-$ near $\pi^{-1}(K^{X_0})$ such that $\widetilde{V}_- \cap \left((E^{X_0}_-)^* \oplus (E_0^{X_0})^*\right) = \emptyset$. In restriction to $\pi^{-1}(K^{X_0})$, hyperbolicity ensures the existence of a time $T > 0$ such that
\[
e^{T \X_0}\left(\widetilde{V}_- \cap \pi^{-1}(K^{X_0})\right) \Subset V_+ \cap \pi^{-1}(K^{X_0}).
\]
By continuity, this also holds for an open conic neighborhood $V_-$ by taking $\pi(V_-)$ to be contained inside a small neighborhood of $K^{X_0}$ (whose size depends on $T$) and it also holds uniformly for all vector fields $X$ such that $\|X- X_0\|_{C^2} \leq \delta$, if $\delta > 0$ is taken small enough (depending on $T$) by using the stability result of Proposition \ref{theorem:stability} and choosing $\delta>0$ small enough so that 
$\cup_{X\sim X_0}K^X\subset \pi(V_-)$.
\end{proof}

In order to simplify notation, we will write $\zeta = (y,\xi)$ for a point in $T^*\mc{N}$, and $p_X(x, \xi) := \xi(X)$ for the principal symbol of $-iX$. From Lemmas \ref{lemma:expansion} and \ref{lemma:contraction}, we deduce:

\begin{lemma}
\label{lemma:time}
Let $\Omega$ be a small conic neighborhood of $\cup_{X \sim X_0} \left\{p_X = 0\right\}$ in $T^*\mc{M}_e$. There exist $\delta > 0, T > 0$ such that the following holds: for all $X$ with $\|X-X_0\|_{C^{2}(\mc{M},T\mc{M})} \leq \delta$, $t \geq T$, if $\zeta, e^{t \X}(\zeta) \in \Omega \cap T^*\mc{M}_e \setminus \left\{0\right\}$, then:
\[
\int_0^t \mathbf{1}_{U_+\sqcup~ U_-}(e^{s\X}(\zeta)) \dd s \geq t-T.
\]
\end{lemma}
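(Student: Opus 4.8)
The plan is to argue by a quantitative compactness/contradiction scheme, very much in the spirit of Lemma \ref{lemma:close} but tracking the conic direction $\xi$ as well. Suppose, for contradiction, that no such $T$ works: then there are sequences $T_j\to\infty$, vector fields $X_j\to X_0$ in $C^2$, points $\zeta_j=(y_j,\xi_j)$ with $|\xi_j|=1$ and times $t_j\ge T_j$ such that $\zeta_j,\ e^{t_j\X_j}\zeta_j\in \Omega\cap T^*\mc M_e$, yet the ``bad time''
\[
B_j:=\int_0^{t_j}\mathbf 1_{(U_+\sqcup U_-)^{\complement}}\big(e^{s\X_j}\zeta_j\big)\,\dd s
\]
exceeds any prescribed bound, say $B_j\to\infty$ (in fact it suffices to show $B_j$ is uniformly bounded, which is what a single finite $T$ would give). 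The strategy is to show that a trajectory segment which spends a long time in the complement of $U_+\sqcup U_-$ must in fact leave $\mc M_e$ before time $t_j$, contradicting $e^{t_j\X_j}\zeta_j\in T^*\mc M_e$.

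The first step is purely on the base: apply Lemma \ref{lemma:close} to the open set $\mc U$ (a neighbourhood of $\bigcup_{X\sim X_0}K^X$) to conclude that whenever $y,\varphi^X_{-T'}(y),\varphi^X_{T'}(y)\in\mc M_e$ for $T'\ge T_{\mathrm{close}}$, the point $y$ lies in $\mc U$. Hence, for a trajectory that stays in $\mc M_e$ for a long forward and backward time window, the base point is forced near the trapped set. Combining this with the fact that $\zeta_j$ lies in the small conic set $\Omega$ around $\{p_{X_j}=0\}$ and that $\Omega$ is as narrow as we like around the characteristic variety, we get that the relevant portion of the trajectory lies in a small neighbourhood (in $S^*\mc M_e$) of $\pi^{-1}(K^{X_0})$ intersected with the characteristic set. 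The second step is the dynamical trichotomy in the fibres: near the trapped set, $S^*\mc M_e\cap\{p_X=0\}$ is covered by $V_+$, $V_-$, and the remaining ``central'' region; if $\zeta$ is in the $V_-$-side, Lemma \ref{lemma:contraction} pushes it into $V_+\Subset U_+$ after the uniform time $T$, and once in $U_+$ Lemma \ref{lemma:expansion} keeps it in $U_+$ (up to the last time $T$ before exit) while expanding $|\xi|$; symmetrically running the flow backward handles the $V_+$-side. So any $s$ for which $e^{s\X_j}\zeta_j$ is in the ``bad'' complement must lie within a bounded window $[0,T]\cup[t_j-T,t_j]$ near the two endpoints, plus possibly a transition of length $\le T$ — giving $B_j\le 3T$, the desired bound with $T$ replaced by $3T$ (relabel). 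This is exactly the computation already carried out at the end of the proof of Lemma \ref{lemma:expansion}, adapted here to the two-sided situation.

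The main obstacle is making the ``trichotomy near the trapped set'' uniform in $X\sim X_0$ and globally consistent: $V_+$, $U_+$, $V_-=W_-^{\complement_{\mathrm{fiber}}}$ and their incoming counterparts were each produced by separate continuity arguments (Lemmas \ref{lemma:expansion}, \ref{lemma:contraction}), and one has to check their neighbourhoods can be chosen simultaneously small enough that (i) $V_\pm\Subset U_\pm$, (ii) $e^{T\X}V_-\Subset V_+$ and the time-reversed statement hold, and (iii) on $\pi^{-1}(K^{X_0})\cap\{p_{X_0}=0\}$ the three sets $V_+$, $V_-$ (i.e. the complement of $W_-$), and a genuinely small ``escape after time $T_1$'' region cover everything — the last point using that on $\{p_{X_0}=0\}$ restricted to $K^{X_0}$ the only directions not eventually in $U_+$ under forward flow are those in $(E_+^{X_0})^*\oplus(E_0^{X_0})^*$, which lie in the part $V_-$ is designed to expel. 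Choosing $\delta$ last, after all finitely many neighbourhoods are fixed, and invoking structural stability (Proposition \ref{theorem:stability}) to transport the splitting of $X_0$ to that of $X$, closes the uniformity gap; the bookkeeping of which window each ``bad'' time $s$ falls into is then routine, exactly as in the display at the end of the proof of Lemma \ref{lemma:expansion}.
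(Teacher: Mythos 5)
Your approach is essentially the paper's: confine the base trajectory near the trapped set via Lemma \ref{lemma:close}, then analyse the fiber direction using the cone Lemmas \ref{lemma:expansion} and \ref{lemma:contraction}, and conclude that the ``bad'' set has measure at most a uniform constant $T$. The contradiction/compactness wrapper at the start buys nothing — you yourself note that it suffices to show the bad time is uniformly bounded, which is exactly the quantitative statement — and the paper simply argues directly.

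That said, the fiber case analysis as sketched has a real gap. You propose a trichotomy into $V_+$, $V_-$, and a ``genuinely small escape region'', but this does not match the geometry: $V_+ \Subset U_+$ is itself contained in $V_- = (W_-)^{\complement_{\mathrm{fiber}}}$ (both avoid a neighbourhood of $(E_-^{X})^*\oplus(E_0^{X})^*$), so the two are not disjoint, and the complement of $V_-$ is $W_-$, which over the characteristic set $\Omega$ near the trapped set is essentially a neighbourhood of $(E_-^{X})^*$ — i.e.\ $U_-$, which is neither small nor quickly escaping. The paper therefore uses a \emph{dichotomy}, $U_-$ vs.\ $V_-$, and its Case~1 ($e^{T_0\X}\zeta \in U_-$) is the genuinely new step: either the covector stays in $U_-$ throughout the window (done), or it first exits $U_-$ at some time $s_\star$, and the exit necessarily lands in $V_-$, after which Lemma \ref{lemma:contraction} pushes it into $U_+$ and Lemma \ref{lemma:expansion} keeps it there. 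Your sketch never addresses the $U_-$ case, and ``symmetrically running the flow backward'' is not an obvious substitute in the two-sided (fixed endpoints) situation, since the paper makes no use of a time-reversed version of Lemma \ref{lemma:contraction}.

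Finally, there is a direction error: you claim the directions ``not eventually in $U_+$ under forward flow'' over $K^{X_0}\cap\{p_{X_0}=0\}$ are those in $(E_+^{X_0})^*\oplus(E_0^{X_0})^*$. It should be $(E_-^{X_0})^*\oplus(E_0^{X_0})^*$: covectors in $(E_+^{X_0})^*$ already lie in $U_+$, while it is the $(E_-^{X_0})^*\oplus(E_0^{X_0})^*$-directions (precisely the ones $W_-$ engulfs) that do not flow into $U_+$.
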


In other words, the flowline of $\zeta$ spends at least a time $t-T$ in $U_+ \sqcup~ U_-$, where there is some uniform contraction/expansion. 

\begin{proof}
We use the sets $U_\pm,V_\pm$ defined in Lemmas \ref{lemma:expansion} and \ref{lemma:contraction}. Note that by construction, $\pi(V_\pm) \subset \pi(U_\pm)$ and we set $\mc{U} :=  \pi(U_+) \cap \pi(U_-)$.
We introduce the following constants:

\begin{enumerate}
\item Let $T_0 > 0$ be the time provided by Lemma \ref{lemma:close} applied with the open neighborhood $\mc{U}$ of $K^{X_0}$ and such that for all $X$ with $\|X-X_0\|_{C^2} \leq \delta$, for all $t \geq T_0$, $y \in \mc{M}_e$ such that $\varphi^X_t(y) \in \mc{M}_e$, one has
\[
\left\{\varphi_s^X(y) ~|~ s \in [T_0, t-T_0]\right\} \subset \mc{U}.
\]
\item Let $T_1 > 0$ be the time provided by Lemma \ref{lemma:expansion}.
\item Let $T_2 > 0$ be the time provided by Lemma \ref{lemma:contraction} such that $e^{T_2 \X}V_- \Subset V_+$.
\end{enumerate}

Take a point $\zeta \in \Omega  \cap  T^*\mc{M}_e\setminus \left\{0 \right\}$ such that $e^{t \X}(\zeta) \in T^*\mc{M}_e$ for some $t \geq 2 T_0$, that is, $\varphi_s^X(\pi(\zeta)) \in \mc{U}$ for all $s \in [T_0, t-T_0]$. We treat different cases: \\

\emph{Case 1:}  Assume that $e^{T_0 \X}(\zeta) \in U_-$. If $e^{s\X}(\zeta) \in U_-$ for all $s \in [T_0,t-T_0]$, then the claim holds for $\zeta$ and $T = 2T_0$. If not, there is a time $s_\star \in [T_0,t-T_0]$ such that $e^{s_\star \X}(\zeta) \in V_-$ \and $e^{s\X}(\zeta)\in U_-$ if $s\in [T_0,s_\star]$. By Lemma \ref{lemma:contraction}, we then deduce that $\zeta' := e^{(s_\star+T_2)\X}(\zeta) \in V_+ \Subset U_+$. Observe that $\zeta' \in U_+$ and $e^{(t-(s_\star+T_2))\X}(\zeta') \in T^*\mc{M}_e$. If $t-(s_\star+T_2) \geq T_1$, from Lemma \ref{lemma:expansion} we deduce that for all $s \in [T_0,s_\star] \cup [s_\star+T_2, t-T_1]$, we have $e^{s\X}(\zeta) \in U_- \cup U_+$, that is, the flowline of $\zeta$ spends at least $t-(T_0+T_1+T_2)$ in $U_- \cup U_+$. Thus, the claim holds with $T := T_0+T_1+T_2$. If $t-(s_\star+T_2) \leq T_1$, then the flowline of $\zeta$ has spent a time at least $s_\star - T_0 \geq t-(T_0+T_1+T_2)$ in $U_-$ and the claim holds with the same time $T$ defined previously. \\

\emph{Case 2:} Eventually, if $e^{T_0\X}(\zeta) \notin U_-$, then $e^{T_0 \X}(\zeta) \in V_-$ and the claim is also straightforward, following the previous arguments.
\end{proof}

Eventually, we will need:

\begin{lemma}
\label{lemma:technique0}
Let $W_- = W_-' \cap (W_-'')^{\complement_{\mathrm{fiber}}}$, where $W_-'$ and $W_-''$ are conic neighborhoods of $\pi^{-1}(K^{X_0})$ and $(E_+^{X_0})^*$, respectively. Let $W_+$ be a small conic neighborhood of $ (E_+^{X_0})^*$. Then, there exists $T > 0$ such that for all $t \geq T$, $e^{-t\X_0}W_- \cap W_+ = \emptyset$.
\end{lemma}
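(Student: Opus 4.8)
The plan is to argue by contradiction, combining compactness on the sphere bundle, the structural‑stability control of the trapped set, and the radial–source behaviour of $(E_+^{X_0})^*$ for the forward symplectic flow. First I would reformulate the statement: since $(E_\pm^{X_0})^*$, $W_\pm$, $W_-'$, $W_-''$ are conic, everything transports to $S^*\mc{N}$, on which $e^{t\X_0}$ descends to a flow, and $e^{-t\X_0}W_-\cap W_+\neq\emptyset$ is equivalent to the existence of $\zeta\in W_+$ with $e^{t\X_0}\zeta\in W_-$. So it suffices to find $T>0$ such that no $\zeta\in W_+$ satisfies $e^{t\X_0}\zeta\in W_-$ for $t\ge T$. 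Assume this fails: there are $t_j\to\infty$ and $\zeta_j\in W_+\cap S^*\mc{N}$ with $\theta_j:=e^{t_j\X_0}\zeta_j\in W_-\cap S^*\mc{N}$.

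Next I would pin down the footpoint trajectories. Since $W_-\subset W_-'$, the point $\pi(\theta_j)$ lies in a fixed small neighbourhood of $K^{X_0}$, hence in $\mc{M}^\circ$; since $W_+$ is a small neighbourhood of $(E_+^{X_0})^*$, whose base points lie in $\Gamma_+^{X_0}\subset\mc{M}$, we have $\pi(\zeta_j)\in\mc{M}_e$. As a forward $X_0$‑trajectory leaving $\mc{M}_e$ never returns, $\varphi_s^{X_0}(\pi(\zeta_j))\in\mc{M}_e$ for all $s\in[0,t_j]$; then Lemma~\ref{lemma:close}, applied with $X=X_0$ and a small neighbourhood $\mc{U}$ of $K^{X_0}$, yields $T_0>0$ independent of $j$ with $\varphi_s^{X_0}(\pi(\zeta_j))\in\mc{U}$ for $s\in[T_0,t_j-T_0]$ once $t_j\ge 2T_0$. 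Passing to a subsequence, $\zeta_j\to\zeta_\infty\in\overline{W_+}$ and $\theta_j\to\theta_\infty\in\overline{W_-}$ in $S^*\mc{N}$; letting $j\to\infty$ in $\varphi_{-s}^{X_0}(\pi(\theta_j))=\varphi_{t_j-s}^{X_0}(\pi(\zeta_j))\in\mc{U}$ gives $\varphi_{-s}^{X_0}(\pi(\theta_\infty))\in\overline{\mc{U}}$ for every $s\ge T_0$, so (taking $\mc{U}$ small and using the non-return property once more) $\varphi_{-T_0}^{X_0}(\pi(\theta_\infty))\in\Gamma_+^{X_0}$, hence $\pi(\theta_\infty)\in\Gamma_+^{X_0}$ by flow‑invariance; in particular $(E_+^{X_0})^*$ is defined at $\pi(\theta_\infty)$.

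The crux is then to show $\theta_\infty\in(E_+^{X_0})^*$, which yields the contradiction: since $W_-''$ is open and $W_-\subset(W_-'')^{\complement_{\mathrm{fiber}}}$, any limit of points of $W_-$ must lie, in its fibre, outside $W_-''$, hence cannot belong to $(E_+^{X_0})^*\subset W_-''$, so $\theta_\infty\in\overline{W_-}$ and $\theta_\infty\in(E_+^{X_0})^*$ are incompatible. To obtain $\theta_\infty\in(E_+^{X_0})^*$ I would invoke the radial–source property of $(E_+^{X_0})^*$ for the forward symplectic lift over the hyperbolic set, in the spirit of Lemma~\ref{lemma:expansion} and \cite[Lemma 2.11]{Dyatlov-Guillarmou-16}: writing $\kappa(\zeta_j)$, over a base point near $K^{X_0}$, as the sum of its component along the continuously extended bundle $(E_+^{X_0})^*$ and a transverse component, the former is uniformly dominant because $W_+$ is a fixed small neighbourhood of $(E_+^{X_0})^*$; under $e^{s\X_0}$ the $(E_+^{X_0})^*$‑component is expanded like $e^{\lambda s}$ while the transverse one grows at most neutrally, so over the window $[T_0,t_j-T_0]$ — of length tending to $\infty$, on which the trajectory stays in $\mc{U}$ — the covector is driven exponentially close to $(E_+^{X_0})^*$, the remaining bounded flow pieces on $[0,T_0]$ and $[t_j-T_0,t_j]$ distorting it only by a bounded factor. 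Letting $j\to\infty$ gives $\theta_\infty\in(E_+^{X_0})^*$, completing the contradiction.

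I expect the main obstacle to be exactly this last step: the cone/expansion estimates for the symplectic lift are a priori available only on the tails $\Gamma_\pm^{X_0}$ (through the continuous extension of the hyperbolic splitting there), whereas here the trajectories only stay in a neighbourhood $\mc{U}$ of $K^{X_0}$, so one must propagate these estimates to a neighbourhood by the standard openness/continuity argument and control the constants uniformly over the growing windows. A secondary nuisance is that a limiting trajectory might be tangent to $\partial\mc{M}_e$; as elsewhere in the paper, this is circumvented by working in a slightly larger extension $\mc{M}_{ee}\supset\mc{M}_e$ of Anosov type, in whose interior the relevant trajectories remain.
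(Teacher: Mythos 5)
Your argument is correct in substance, but it is organized differently from the paper's. The paper proves the lemma directly and in the opposite time direction: starting from $(y,\xi)\in W_-$ and flowing \emph{backward}, it invokes the time-reversed version of the cone-contraction argument of Lemma \ref{lemma:contraction} to get a uniform $T$ such that for $t>T$ either $\rho(\varphi^{X_0}_{-t}(y))<0$ (the base point has escaped) or $e^{-t\X_0}(y,\xi)$ lies in a small conic neighborhood of $(E_0^{X_0})^*\oplus(E_-^{X_0})^*$; in both cases the image avoids $W_+$, with no compactness or limit-point analysis needed. You instead flow \emph{forward} from $W_+$, run a contradiction/compactness argument, use Lemma \ref{lemma:close} and convexity to confine the base trajectory near $K^{X_0}$ on the middle window, and then use the radial-source behaviour of $(E_+^{X_0})^*$ to force the limit covector $\theta_\infty$ into $(E_+^{X_0})^*$ over $\Gamma_+^{X_0}$, contradicting $\theta_\infty\in\overline{W_-}$. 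The dynamical input is the same (uniform cone contraction/expansion for the lifted flow near the hyperbolic set, extended by continuity from $K^{X_0}$ to a neighborhood, exactly as in Lemmas \ref{lemma:expansion} and \ref{lemma:contraction}), and the step you flag as the main obstacle is indeed the only technical content in either version; the paper simply delegates it to ``the same argument as in Lemma \ref{lemma:contraction}''. What the paper's direction buys is brevity and a directly uniform $T$, since points of $W_-$ are \emph{uniformly} separated from $(E_+^{X_0})^*$ (they lie fiberwise outside the fixed neighborhood $W_-''$), so the backward contraction needs no discussion of how close the initial covector is to the bad subspace; your direction instead needs the reading — clearly the intended one, and the one used in the application inside Lemma \ref{lemma:technique} — that ``small'' for $W_+$ means a uniform angular gap from $(E_0^{X_0})^*\oplus(E_-^{X_0})^*$ (e.g.\ closure disjoint from it), and that $W_-'$ is a small neighborhood of $\pi^{-1}(K^{X_0})$ with base in $\mc{M}$, so that the $(E_+^{X_0})^*$-component of $\zeta_j$ is uniformly dominant after the bounded-time piece $[0,T_0]$; without such a gap the exponential attraction would not be uniform over $W_+$. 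On the other hand, your forward formulation conveniently dispenses with the ``$\rho<0$'' escape alternative, because the endpoint being in $W_-$ (base near $K^{X_0}$) together with convexity forces the whole base segment to stay in $\mc{M}_e$. In short: correct, same mechanism, different and somewhat longer packaging.
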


By \emph{small} for $W_+$, it is understood that $W_+ \cap \left((E_0^{X_0})^* \oplus (E_-^{X_0})^* \right)= \emptyset$.

\begin{proof}
This follows from the fact that there is a uniform time $T>0$ so that, for each $(y,\xi)\in W_-$ 
either $\rho(\varphi_{-t}^{X_0}(y))<0$ for all $t>T$ or $e^{-t{\bf X}_0}(y,\xi)$ belongs to a small conic neighborhood of $(E_0^{X_0})^*\oplus (E_-^{X_0})^*$ 
for all $t>T$, by the same argument as in Lemma \ref{lemma:contraction}.
\end{proof}

\subsubsection{Construction of $m_\pm$.} In this paragraph, we construct the functions $m_\pm$ involved in the expression \eqref{equation:m} of the escape function $m$.  
We introduce a smooth function $m_0\in C^\infty(S^*\mc{N},[0,1])$, invariant by the antipodal map $(x,\xi) \mapsto (x,-\xi)$, such that $m_0 = 1$ in a small neighborhood of $\kappa((E^{X_0}_u)^*)$ over $K^{X_0}$ and $m_0 = 0$ on the complement of a slightly larger neighborhood of $\kappa ((E_u^{X_0})^*)$.
We will need the following lemma:

\begin{lemma}
\label{lemma:technique}
For all $T > 0$ large enough, the following holds:
\[
\left\{ \begin{array}{l} \zeta, e^{T\X_0}(\zeta) \in S^*\mc{M}_e \\ m_0(\zeta) < 1 \end{array} \right. \implies \left(\forall t \in [T,3T], m_0(e^{-t\X_0}(\zeta)) = 0 \right).
\]
\end{lemma}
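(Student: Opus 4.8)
The idea is that $m_0$ is a function of the flow direction in the spherical cotangent bundle that is supported near the unstable dual bundle $\kappa((E_u^{X_0})^*)$ over the trapped set $K^{X_0}$, so it measures whether a covector is ``almost unstable''. The hypothesis $m_0(\zeta) < 1$ says $\zeta$ is not deep inside the unstable cone over $K^{X_0}$; I want to show that then all backward iterates $e^{-t\X_0}(\zeta)$ for $t$ in the window $[T, 3T]$ have left the (slightly larger) support of $m_0$ entirely, provided $\zeta$ and its forward image at time $T$ are both in $S^*\mc{M}_e$ (so the trajectory through $\zeta$ does not immediately escape $\mc{M}_e$, either forward at time $T$ or — by the semiflow structure — at the intermediate backward times we care about). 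The point is purely about hyperbolic dynamics: backward flow of the symplectic lift contracts toward $\kappa((E_-^{X_0})^* \oplus (E_0^{X_0})^*)$, the complement of where $m_0$ lives, unless one started exactly on the unstable bundle.

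First I would fix notation: let $V^{(1)} \Supset V^{(2)}$ be the two conic neighborhoods of $\kappa((E_u^{X_0})^*)$ over $K^{X_0}$ such that $m_0 = 1$ on $V^{(2)}$ and $\supp(m_0) \subset V^{(1)}$, so the hypothesis $m_0(\zeta) < 1$ gives $\zeta \notin V^{(2)}$, and the conclusion $m_0(e^{-t\X_0}(\zeta)) = 0$ will follow once I show $e^{-t\X_0}(\zeta) \notin V^{(1)}$. Then I would invoke Lemma \ref{lemma:close}: for $T$ large, if $\zeta$ (i.e. its footpoint $y = \pi(\zeta)$) and $\varphi^{X_0}_T(y)$ both lie in $\mc{M}_e$, then $\varphi^{X_0}_{-t}(y)$ need not stay in $\mc{M}_e$, but at the relevant times the trajectory that does remain is forced toward $K^{X_0}$; more precisely, the only way $e^{-t\X_0}(\zeta)$ can be in $V^{(1)}$ (a neighborhood of the unstable bundle over $K^{X_0}$) for some $t \in [T, 3T]$ is that $\varphi^{X_0}_{-t}(y) \in \mc{M}_e$ and is close to $K^{X_0}$, and then the footpoint of $\zeta$ itself is close to $K^{X_0}$ too (having traveled a controlled time from a point near $K^{X_0}$ while staying in $\mc{M}_e$, by Lemma \ref{lemma:close} or a direct compactness argument). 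So without loss of generality we may localize the whole argument to a small neighborhood of $\pi^{-1}(K^{X_0})$.

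On $\pi^{-1}(K^{X_0})$ the argument is clean hyperbolicity: by the contraction estimate \eqref{equation:an} (equivalently, the defining properties of the dual bundles in \S\ref{sssection:geom-cons}), the backward flow $e^{-t\X_0}$ pushes any covector not on $(E_u^{X_0})^*$ exponentially toward $\kappa((E_-^{X_0})^* \oplus (E_0^{X_0})^*)$, which is disjoint from $V^{(1)}$. Concretely, there is $T_0 > 0$ so that $e^{-T_0 \X_0}\big((V^{(1)})^{\complement_{\mathrm{fiber}}} \cap \pi^{-1}(K^{X_0})\big)$ lands inside a cone disjoint from $\overline{V^{(1)}}$, and by flow-invariance of that cone under further backward iteration (the unstable dual cone is backward-trapping, just as in the proof of Lemma \ref{lemma:contraction} and Lemma \ref{lemma:technique0}), once out it stays out for all larger backward times; by continuity this persists on a neighborhood of $\pi^{-1}(K^{X_0})$. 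Choosing $T \geq T_0$ large enough then gives $e^{-t\X_0}(\zeta) \notin V^{(1)}$ for all $t \in [T, 3T]$, hence $m_0(e^{-t\X_0}(\zeta)) = 0$. The main obstacle is the bookkeeping at the boundary: making precise that ``$\zeta, e^{T\X_0}(\zeta) \in S^*\mc{M}_e$'' together with ``$e^{-t\X_0}(\zeta)$ is near the unstable bundle over $K^{X_0}$ for some $t \in [T,3T]$'' forces $\pi(\zeta)$ into a small neighborhood of $K^{X_0}$ — this is where Lemma \ref{lemma:close} (applied at time comparable to $3T$, using both the forward condition at $T$ and the backward point) does the work, and one must be slightly careful that trajectories leaving $\mc{M}_e$ in backward time never re-enter, so that the backward flow is well-defined up to the first exit time and $m_0$ (extended by its value on $S^*\mc{N}$) is genuinely $0$ there because $\supp(m_0) \cap S^*(\mc{N} \setminus \mc{M}_e)$ can be arranged to be empty or harmless.
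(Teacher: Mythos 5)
Your plan reaches the same conclusion by the same underlying dynamical mechanism as the paper (over $K^{X_0}$, backward flow pushes covectors not on $(E_u^{X_0})^*$ away from it, i.e.\ Lemma~\ref{lemma:technique0}), but the reduction step is organised differently. The paper argues by contradiction and compactness: a sequence of alleged counterexamples $(\zeta_j)$ with $T_j \to \infty$ is extracted along a convergent subsequence $\zeta_j \to \zeta_\infty$; using $T_j \to \infty$ and the fact that $m_0$ is supported over a neighbourhood of $K^{X_0} \subset \mc{M}^\circ$, one shows $\zeta_\infty \in \pi^{-1}(K^{X_0})$ exactly, while $m_0(\zeta_j)<1$ keeps $\zeta_\infty$ off $(E_+^{X_0})^*$; Lemma~\ref{lemma:technique0} then yields the contradiction. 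You instead argue directly: if the conclusion fails at some $t\in[T,3T]$, then $\varphi_{-t}^{X_0}(\pi(\zeta)) \in \mc{M}$, so by convexity $\varphi_{-T}^{X_0}(\pi(\zeta)) \in \mc{M}_e$, and Lemma~\ref{lemma:close} (applied at $\pm T$) places $\pi(\zeta)$ near $K^{X_0}$; then $m_0(\zeta)<1$ forces $\zeta$ fiberwise away from $(E_+^{X_0})^*$, and Lemma~\ref{lemma:technique0} finishes. This works, and avoids the sequence extraction, but at the cost of the uniformity bookkeeping you flag; the paper's route lands exactly on $K^{X_0}$, which makes that step clean. Two fixable slips: the set you flow backward should be $(V^{(2)})^{\complement_{\mathrm{fiber}}}$, not $(V^{(1)})^{\complement_{\mathrm{fiber}}}$, since $m_0(\zeta)<1$ only gives $\zeta\notin V^{(2)}$; and ``the unstable dual cone is backward-trapping'' should read that the \emph{complement} of a neighbourhood of $(E_+^{X_0})^*$ is backward-invariant (the unstable dual cone itself is forward-trapping). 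Similarly, ``having traveled a controlled time from a point near $K^{X_0}$'' is the wrong heuristic; it is the two-sided confinement fed into Lemma~\ref{lemma:close} that localises $\pi(\zeta)$.
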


\begin{proof}
We argue by contradiction. Assume that
\begin{itemize}
\item there exists an increasing sequence of values $(T_j)_{j \in \Z_{\geq 0}}$ such that $T_j \to +\infty$,
\item a sequence of points $(\zeta_j)_{j \in \Z_{\geq 0}}$ such that $\zeta_j, e^{T_j\X_0}(\zeta_j) \in S^*\mc{M}_e$, $m_0(\zeta_j) < 1$,
\item and a sequence of values $(S_j)_{j \in \Z_{\geq 0}}$ such that $S_j \geq T_j$ and $m_0(e^{-S_j \X_0}(\zeta_j)) > 0$.
\end{itemize}
By compactness of $S^*\mc{M}_e$, up to extraction, we can always assume that $\zeta_j \to \zeta_\infty$. Observe that $\zeta_\infty \in \pi^{-1}(K^{X_0})$ as $T_j \to +\infty$: indeed, since $T_j \to \infty$, we have that $\zeta_\infty \in \pi^{-1}(\Gamma_-^{X_0})$; if $\zeta_\infty \in \pi^{-1}(\Gamma_-^{X_0} \setminus K^{X_0})$, the exit time from $\mc{M}$ in the past of $\zeta_\infty$ is finite and since $S_j \to +\infty$, $m_0(e^{-S_j \X_0} \zeta_j) > 0$ and $m_0$ vanishes outside of $\mc{M}$, we would get a contradiction for $j \geq 0$ large enough.

Since $m_0(\zeta_j) < 1$ and $m_0=1$ near $\kappa((E_u^{X_0})^*)$, we can find $V_-$, a small neighborhood of $\pi^{-1}(K^{X_0})$ whose closure is not intersecting $(E_-^{X_0})^*$ and such that $\zeta_\infty \in V_-$. Let $V_+$ be a small neighborhood of $\supp(m_0)$. By Lemma \ref{lemma:technique0}, there is $T > 0$ such that for all $t \geq T$, $e^{-t\X_0}V_- \cap V_+ = \emptyset$. In particular, for $j \geq 0$ large enough, $\zeta_j \in V_-$ and thus $e^{-S_j\X_0}(\zeta_j) \notin V_+$, that is $m_0(e^{-S_j\X_0}(\zeta_j)) = 0$. But this contradicts $m_0(e^{-S_j\X_0}(\zeta_j)) > 0$.
\end{proof}

We then set for $T > 0$ large enough satisfying Lemma \ref{lemma:technique}:
\begin{equation}
\label{equation:m1}
m_1(\zeta) := \dfrac{1}{2T} \int_T^{3T} m_0(e^{-t \X_0}(\zeta)) \dd t.
\end{equation}

\begin{lemma}
\label{lemma:m1}
The function $m_1 \in C^\infty(S^*\mc{N}, [0,1])$ satisfies the following properties:
\begin{enumerate}
\item $m_1 = 1$ near $(E_+^{X_0})^* \cap \pi^{-1}(\mc{M}_e)$;
\item $\supp(m_1) \subset \pi^{-1}(\Sigma_+)$ and $\supp(m_1)$ is contained in a small neighborhood of $(E_+^{X_0})^*$;
\item $\X_0m_1 \geq 0$ on $\pi^{-1}(\mc{M}_e)$;
\item There exist $\eps_0, \delta_0 > 0$ such that: if $\zeta \in \pi^{-1}(\mc{M}_e)$ and $|m_1(\zeta)-1/2| \leq \eps_0$, then $\X_0m_1(\zeta) \geq \delta_0$.
\end{enumerate}
\end{lemma}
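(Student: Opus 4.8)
The plan is to verify each of the four properties of $m_1$ directly from its definition \eqref{equation:m1} as a time-average of $m_0 \circ \varphi^{\X_0}_{-t}$. Throughout I would use that $m_0$ is smooth, antipodal-invariant, supported near $\kappa((E_u^{X_0})^*)$ over $K^{X_0}$ (hence over $\mc{M}_e$ its support is contained in $\pi^{-1}(\mc{M}_e)$ by the convexity/flowout conventions), and equals $1$ near $\kappa((E_u^{X_0})^*)$. Smoothness and antipodal-invariance of $m_1$ are immediate: $m_1$ is an integral over a compact time interval of a smooth antipodal-invariant function along a smooth flow, and the range $[0,1]$ follows since $0 \le m_0 \le 1$ and the average is normalized by $1/(2T)$.

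For item (1): if $\zeta$ lies in a sufficiently small neighborhood of $(E_+^{X_0})^* = (E_s^{X_0})^*$ over $\mc{M}_e$, then for all $t \in [T,3T]$ the point $e^{-t\X_0}(\zeta)$ stays in a small conic neighborhood of $(E_s^{X_0})^*$ (the dual stable bundle is backward-flow-invariant, and by continuity a neighborhood is contracted onto a neighborhood under $e^{-t\X_0}$; here one uses that $(E_s^{X_0})^*$ is the forward-contracting dual direction so it is preserved under the backward flow, with the footpoint staying in $\mc{M}_e$ for the relevant points near $K^{X_0}$). On that neighborhood $m_0 \equiv 1$, so $m_1(\zeta) = \frac{1}{2T}\int_T^{3T} 1\, dt = 1$. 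For item (2): $\supp(m_1)$ consists of $\zeta$ such that $m_0(e^{-t\X_0}\zeta) > 0$ for some $t \in [T,3T]$; since $\supp(m_0)$ sits in a small neighborhood of $\kappa((E_u^{X_0})^*)$ over $K^{X_0}$, pushing forward by $e^{t\X_0}$ for $t\in[T,3T]$ lands in a small neighborhood of $(E_s^{X_0})^* = (E_+^{X_0})^*$ (forward flow contracts onto the stable codirection), and the footpoints are flowouts of $K^{X_0}$, hence in $\Sigma_+$ by \eqref{equation:sigma}; choosing $T$ large and the neighborhoods small makes this neighborhood as small as desired.

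Items (3) and (4) are the crux and rely on the fundamental-theorem-of-calculus identity
\[
\X_0 m_1(\zeta) = \frac{1}{2T}\big( m_0(e^{-T\X_0}(\zeta)) - m_0(e^{-3T\X_0}(\zeta)) \big),
\]
valid whenever the footpoints of $e^{-t\X_0}(\zeta)$ remain in $\mc{M}_e$ for $t \in [T,3T]$ (otherwise $m_0$ vanishes there and the identity still holds trivially with both terms zero, using that $m_0$ is supported in $\pi^{-1}(\mc M_e)$ — more care: if $\zeta \in \pi^{-1}(\mc M_e)$ but $e^{-t\X_0}\zeta$ exits, one uses that $m_0 \circ e^{-t\X_0}$ is then identically $0$ near such $t$, so the derivative identity is unaffected; when $e^{T\X_0}\zeta$ exists in $\mc{M}_e$, Lemma~\ref{lemma:technique} is exactly what rules out the bad configuration where $m_0(e^{-3T\X_0}\zeta) > 0$). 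For (3): when $m_0(e^{-T\X_0}\zeta) = 1$ the difference is $\ge 0$; when $m_0(e^{-T\X_0}\zeta) < 1$, Lemma~\ref{lemma:technique} (applied to $e^{-T\X_0}\zeta$ in place of $\zeta$, with the hypothesis $e^{T\X_0}\zeta \in S^*\mc{M}_e$) forces $m_0(e^{-t\X_0}\zeta) = 0$ for all $t \in [2T,\dots]$, in particular $m_0(e^{-3T\X_0}\zeta) = 0$, so the difference is $\ge 0$ again. For (4): if $|m_1(\zeta) - 1/2| \le \eps_0$ then $m_1(\zeta) \le 1/2 + \eps_0 < 1$, so $\zeta$ cannot lie in the set where $m_0 \circ e^{-t\X_0} \equiv 1$ for all $t\in[T,3T]$; combined with $m_1(\zeta) \ge 1/2 - \eps_0 > 0$, there is some $t_0 \in [T,3T]$ with $m_0(e^{-t_0\X_0}\zeta) > 0$. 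I expect the main obstacle to be making the quantitative statement (4) precise: one must show that the configuration "$m_1(\zeta)$ close to $1/2$" forces a \emph{uniform} positive lower bound $\delta_0$ on $m_0(e^{-T\X_0}\zeta) - m_0(e^{-3T\X_0}\zeta)$. The argument is a compactness/contradiction one: the set $\{|m_1 - 1/2| \le \eps_0\} \cap \pi^{-1}(\mc{M}_e)$ is compact; on it $\X_0 m_1 \ge 0$ by (3); if $\X_0 m_1$ vanished at some such point, then $m_0(e^{-T\X_0}\zeta) = m_0(e^{-3T\X_0}\zeta)$, and one shows via Lemma~\ref{lemma:technique} that either both are $1$ (forcing $m_1(\zeta)=1$, contradicting $m_1 \le 1/2+\eps_0$ for $\eps_0 < 1/2$) or... this needs the stronger monotonicity along the flow: actually $m_1$ is strictly increasing in the relevant region because $m_0$ transitions from $0$ to $1$ across the support, so picking $\eps_0$ small enough confines $\zeta$ to the transition zone where $m_0(e^{-T\X_0}\zeta) \in (0,1)$ while $m_0(e^{-3T\X_0}\zeta) = 0$ by Lemma~\ref{lemma:technique}, giving a definite gap; then $\delta_0 := \frac{1}{2T}\inf$ of that gap over the compact set is positive. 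I would write this out as the contradiction argument, using Lemma~\ref{lemma:technique} to kill the $e^{-3T\X_0}$ term and the strict transition of $m_0$ to bound the $e^{-T\X_0}$ term below.
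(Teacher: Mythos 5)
Your sketch for (1)--(3) and the identity $\X_0 m_1 = \frac{1}{2T}\bigl(m_0\circ e^{-T\X_0} - m_0\circ e^{-3T\X_0}\bigr)$ are the paper's approach, up to the need (in (3)) to also treat the case $e^{-T\X_0}\zeta\notin\pi^{-1}(\mc{M}_e)$, where Lemma~\ref{lemma:technique} does not apply but both terms vanish anyway, since $m_0$ is supported over $\mc{M}$ and, by convexity of $\mc{M}_e$, the backward trajectory that has exited never returns. The genuine gap is in (4), and the specific step you propose would fail. You claim that $m_0(e^{-T\X_0}\zeta) = m_0(e^{-3T\X_0}\zeta) = 1$ forces $m_1(\zeta) = 1$: it does not. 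The argument from Lemma~\ref{lemma:technique} (take $\zeta_0 := e^{-t_0\X_0}\zeta$ and $t_1 := 3T-t_0$, which requires $t_0\leq 2T$ to have $t_1\geq T$) only yields $m_0\circ e^{-t\X_0}(\zeta)\equiv 1$ on $[T,2T]$, and then $m_1(\zeta) > 1/2$ strictly (by continuity of $m_0$ near $t=3T$); nothing constrains $m_0$ on $(2T,3T)$, so $m_1 = 1$ is unjustified. Likewise your concluding ``transition zone'' claim that $m_0(e^{-T\X_0}\zeta)\in(0,1)$ at the relevant points is wrong: at any $\zeta$ with $\X_0 m_1(\zeta) = 0$, the common value $m_0(e^{-T\X_0}\zeta)=m_0(e^{-3T\X_0}\zeta)$ is always either $1$ (then $m_1>1/2$) or $0$ (then $m_1<1/2$, since Lemma~\ref{lemma:technique}, or convexity, kill $m_0$ on $[2T,3T]$ while $m_0(e^{-T\X_0}\zeta)=0$ makes $\int_T^{2T}m_0<T$); the open interval $(0,1)$ never occurs, so the ``definite gap'' you want does not exist pointwise.

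The way to close (4), as in the paper, is to observe that the case analysis actually proves $m_1\neq 1/2$ on the compact set $A:=\{\X_0 m_1 = 0\}\cap\pi^{-1}(\mc{M}_e)$, whence $|m_1-1/2|\geq\eps_1>0$ on $A$ by compactness. Then, for any $\eps_0<\eps_1$, the compact set $B:=\{|m_1-1/2|\leq\eps_0\}\cap\pi^{-1}(\mc{M}_e)$ is disjoint from $A$; combined with $\X_0 m_1\geq 0$ from (3), this gives $\X_0 m_1>0$ on $B$, and compactness of $B$ yields a uniform $\delta_0>0$. Your attempt to bound $\X_0 m_1$ from below directly and pointwise on $B$ cannot work, because $\X_0 m_1$ is allowed to be positive but arbitrarily small off $A$; compactness must be invoked on the zero set $A$ of the derivative before the uniform bound on $B$ follows.
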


\begin{proof}
We prove each point separately.

(1,2) Taking $T > 0$ large enough in \eqref{equation:m1}, the first two items are immediate to check. \\

(3) For $\zeta \in T^*\mc{M}_e$, we have:
\[
\X_0  m_1(\zeta) = \dfrac{1}{2T} \left( m_0(e^{-T\X_0}(\zeta)) - m_0(e^{-3T\X_0}(\zeta)) \right),
\]
and we want to show that $\X_0 m_1 \geq 0$ on $\pi^{-1}(\mc{M}_e)$. Observe that if $m_0(e^{-T\X_0}(\zeta)) = 1$, then the claim $\X_0 m_1(\zeta) \geq 0$ is immediate. We can thus assume that $m_0(e^{-T\X_0}(\zeta)) < 1$. If $e^{-T\X_0}(\zeta) \notin \pi^{-1}(\mc{M}_e)$, then $m_0(e^{-T\X_0}(\zeta)) = 0$ and by convexity, $m_0(e^{-3T\X_0}(\zeta)) = 0$ and $\X_0 m_1(\zeta) = 0$. If $e^{-T\X_0}(\zeta) \in \pi^{-1}(\mc{M}_e)$, we can apply Lemma \ref{lemma:technique} which implies that $m_0(e^{-3T \X_0}(\zeta)) = 0$ and thus we also obtain $\X_0 m_1(\zeta) \geq 0$. \\

(4) In order to show the last item, it suffices to show that on the compact set 
\[
\left\{\X_0 m_1 = 0\right\} \cap \pi^{-1}(\mc{M}_e),
\]
one has $|m_1-1/2| \geq \eps_1$, for some positive $\eps_1 > 0$, that is, the continuous function $|m_1-1/2|$ does not vanish on this set. Let $\zeta \in \pi^{-1}(\mc{M}_e)$ be such that $\X_0 m_1(\zeta) = 0$. Then $m_0(e^{-T\X_0}\zeta) = m_0(e^{-3T\X_0}\zeta)$.

Assume that $m_0(e^{-T\X_0}\zeta) < 1$. If $e^{-T\X_0}\zeta \notin \pi^{-1}(\mc{M}_e)$, then by convexity of $\mc{M}_e$, $e^{-t \X_0}(\zeta) \notin \pi^{-1}(\mc{M}_e)$ for all $t \geq T$ and thus $m_1(\zeta) = 0$, that is $|m_1-1/2| = 1/2 \neq 0$. We can thus assume that $e^{-T \X_0}(\zeta) \in \pi^{-1}(\mc{M}_e)$. By Lemma \ref{lemma:technique}, we get that $m_0(e^{-3T \X_0}(\zeta)) = 0 = m_0(e^{-T \X_0}(\zeta))$. Lemma \ref{lemma:technique} also gives us that $m_0(e^{-t \X}(\zeta)) = 0$ for all $t \in [2T,3T]$. As a consequence:
\[
m_1(\zeta) = \dfrac{1}{2T} \int_T^{3T} m_0(e^{-t\X_0}\zeta) \dd t = \dfrac{1}{2T} \int_T^{2T} m_0(e^{-t \X_0}\zeta) \dd t < 1/2,
\]
so $|m_1(\zeta)-1/2| \neq 0$.

We now assume that $m_0(e^{-T\X_0}(\zeta))=1 = m_0(e^{-3T\X_0}(\zeta))$. We claim that $m_0(e^{-t\X_0}\zeta)=1$ for all $t \in [T,2T]$. Indeed, assume that there exists some $t_0 \in [T,2T]$ such that $\zeta_0 := e^{-t_0\X_0}(\zeta)$ satisfies $m_0(\zeta_0) < 1$. By Lemma \ref{lemma:technique}, since $\zeta_0, e^{T \X_0}(\zeta_0) \in S^*\mc{M}_e$, we obtain that $m_0(e^{-t \X_0}(\zeta_0)) = 0$ for all $t \geq T$. Taking $t_1 := 3T-t_0 \geq T$, we deduce that
\[
m_0(e^{-t_1 \X_0}(\zeta_0)) = 0 = m_0(e^{-(3T-t_0)\X_0} e^{-t_0\X_0}(\zeta)) = m_0(e^{-3T\X_0}(\zeta)),
\]
which is a contradiction. We then deduce that
\[
m_1(\zeta) > \dfrac{1}{2T} \int_T^{2T} m_0(e^{-t \X_0}(\zeta)) \dd t = 1/2,
\]
that is $|m_1(\zeta)-1/2| \neq 0$. This eventually proves the fourth item.
\end{proof}

We now introduce:
\begin{equation}
\label{equation:m+}
m_+ := \chi(m_1) \in C^\infty(S^*\mc{N},[0,1]),
\end{equation}
where $\chi \in C^\infty(\R)$ is a smooth cutoff function such that: $\chi' \geq 0$, $\chi = 0$ on $(-\infty,-1/2-\eps_0]$, and $\chi=1$ on $[1/2+\eps_0,+\infty)$, where $\eps_0 > 0$ is the constant provided by Lemma \ref{lemma:m1}. By construction, this function takes value $1$ near $(E_+^{X_0})^*$. By the same process, one can also construct a function $m_- \in C^\infty(S^*\mc{N},[0,1])$ such that $m_- = 1$ near $(E_-^{X_0})^*$.

\begin{lemma}
\label{lemma:m-plus}
There exists $\delta > 0$ small enough such that for all smooth vector fields $X$ with $\|X-X_0\|_{C^2(\mc{M},T\mc{M})} < \delta$, the functions $m_\pm \in C^\infty(S^*\mc{N}, [0,1])$ satisfy the following properties:
\begin{enumerate}
\item $m_\pm = 1$ near $(E_\pm^{X})^* \cap \pi^{-1}(\mc{M}_e)$;
\item $\supp(m_\pm) \subset \pi^{-1}(\Sigma_\pm)$ and $\supp(m_\pm)$ is contained in a small neighborhood of $(E_\pm^{X})^*$;
\item There exists $\delta_1 > 0$ small such that
\begin{align}
\supp(m_\pm) \subset \pi^{-1}\left(\left\{\rho>-(1-\delta_1)\rho_0\right\} \right), \\
\supp(m_\pm) \cap \pi^{-1}\left( \mc{M}^{\complement} \right) \subset \left\{ \pm \widetilde{X}_0\rho < -\delta_1\right\};
\end{align}
\item $\pm \X m_\pm \geq 0$ on $\pi^{-1}(\mc{M}_e)$.
\end{enumerate}
\end{lemma}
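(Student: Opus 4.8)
The strategy is to transfer the four properties already established for the ``reference'' functions $m_\pm$ at the vector field $X_0$ (namely Lemma \ref{lemma:m1} and its analogue for the incoming direction) to all nearby vector fields $X$, using the structural stability Proposition \ref{theorem:stability} and the uniform cone estimates of Lemmas \ref{lemma:close}--\ref{lemma:technique0}. The functions $m_\pm$ are fixed once and for all (they depend only on $X_0$ and the choices of neighborhoods made in the construction \eqref{equation:m+}), so the content of the lemma is that the \emph{geometric} objects $(E_\pm^X)^*$, $\Sigma_\pm$, and the flow direction $\X$ vary little with $X$, and that the constructions were done with enough room to absorb this variation. I will prove the statement for the outgoing ($+$) direction; the incoming case is identical after reversing the sign of the vector field.

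First I would treat item (1): by Lemma \ref{lemma:m1}(1), $m_+ = \chi(m_1) = 1$ on a neighborhood of $(E_+^{X_0})^* \cap \pi^{-1}(\mc{M}_e)$. By the structural stability Proposition \ref{theorem:stability}, for $\|X-X_0\|_{C^2}\leq\delta$ with $\delta$ small enough the trapped set $K^X$, the tails $\Gamma_\pm^X$, and hence (by the continuity of the stable/unstable bundles under $C^2$-perturbations, cf.\ \cite[Lemma 2.10 and Section 2]{Dyatlov-Guillarmou-16}) the dual bundles $(E_+^X)^*$, lie in any prescribed conic neighborhood of $(E_+^{X_0})^*$; choosing $\delta$ so that this neighborhood is inside $\left\{m_+ = 1\right\}$ gives (1). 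For item (2): by Lemma \ref{lemma:m1}(2), $\supp(m_+)\subset \pi^{-1}(\Sigma_+)$ and $\supp(m_+)$ sits in a small neighborhood of $(E_+^{X_0})^*$; the set $\Sigma_+$ was \emph{defined} in \eqref{equation:sigma} as the union over all $X\sim X_0$ of the forward flowouts of $\mc{M}$, so it already contains the relevant flowout for every such $X$, and the conic-neighborhood statement is unchanged since $m_+$ does not depend on $X$. For item (3), since $\supp(m_+)$ is a \emph{fixed} compact set contained in the open set $\pi^{-1}(\left\{\rho>-\rho_0\right\})$ and, by construction via \eqref{equation:m1}, is disjoint from $\left\{\rho=-\rho_0\right\}\cap\left\{\widetilde X_0\rho=0\right\}$ — more precisely, by Lemma \ref{lemma:m1}(2) combined with the fact that $m_0$ is supported in $\pi^{-1}(\mc{M})$ so $m_1$, being an average of $m_0$ along $\X_0$-orbits over times in $[T,3T]$, is supported where the backward orbit has already re-entered $\mc{M}$ — one obtains a uniform $\delta_1>0$ with $\supp(m_+)\subset\pi^{-1}(\left\{\rho>-(1-\delta_1)\rho_0\right\})$ and $\supp(m_+)\cap\pi^{-1}(\mc{M}^\complement)\subset\left\{\widetilde X_0\rho<-\delta_1\right\}$; replacing $\widetilde X_0$ by $\widetilde X$ changes $\widetilde X\rho$ by at most $C\|X-X_0\|_{C^0}$ uniformly on the compact $\supp(m_+)$, so shrinking $\delta$ preserves the (strict) inequality with $\delta_1/2$ in place of $\delta_1$.

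The main obstacle is item (4), the monotonicity $\X m_+ \geq 0$ on $\pi^{-1}(\mc{M}_e)$ for the \emph{perturbed} field $X$, since $m_+$ was constructed to be monotone only along $\X_0$. Here I would use Lemma \ref{lemma:m1}(4): there are $\eps_0,\delta_0>0$ with $\X_0 m_1 \geq \delta_0$ wherever $|m_1-1/2|\leq\eps_0$ on $\pi^{-1}(\mc{M}_e)$, and $m_+=\chi(m_1)$ with $\chi'\geq 0$ supported in $[-1/2-\eps_0,1/2+\eps_0]$, so $\X_0 m_+ = \chi'(m_1)\,\X_0 m_1 \geq 0$ everywhere, with $\X_0 m_+>0$ only on the region $\left\{|m_1-1/2|<\eps_0\right\}$ where $\X_0 m_1\geq\delta_0$. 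Since $\chi'(m_1)$ is a fixed bounded function and $\X m_+ = \chi'(m_1)\,\X m_1$, it suffices to show $\X m_1\geq 0$ on $\supp(\chi'(m_1))\cap\pi^{-1}(\mc{M}_e)$; and on that set $\X_0 m_1\geq\delta_0>0$, while $|\X m_1 - \X_0 m_1| = |(X-X_0)m_1| \leq \|X-X_0\|_{C^0}\,\|dm_1\|_{C^0} \leq C\delta$, so choosing $\delta$ with $C\delta<\delta_0$ forces $\X m_1 \geq \delta_0 - C\delta > 0$ there, hence $\X m_+ = \chi'(m_1)\X m_1 \geq 0$ on all of $\pi^{-1}(\mc{M}_e)$. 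The one technical point to check carefully is that the bad set $\left\{\chi'(m_1)\neq 0, \ \X_0 m_1 = 0\right\}$ is empty on $\pi^{-1}(\mc{M}_e)$ — which is exactly the content of Lemma \ref{lemma:m1}(4), slightly enlarging $\eps_0$ in the definition of $\chi$ if necessary so that $\left\{|m_1-1/2|\leq\eps_0\right\}\supset\supp(\chi'\circ m_1)$ — and that the region where $m_+$ is locally constant ($\chi'(m_1)=0$) contributes $\X m_+=0$ trivially. Assembling items (1)--(4) with a single $\delta>0$ chosen smaller than all the finitely many thresholds above completes the proof.
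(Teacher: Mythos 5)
Your proof is correct and matches the paper's argument exactly: items (1)--(3) follow from structural stability (Proposition \ref{theorem:stability}) together with the fact that $m_\pm$ are fixed functions depending only on $X_0$, and item (4) uses the chain rule $\X m_\pm = \chi'(m_1)\,\X m_1$, the quantitative lower bound $\X_0 m_1 \geq \delta_0$ on $\supp(\chi'(m_1))\cap\pi^{-1}(\mc{M}_e)$ from Lemma \ref{lemma:m1}(4), and a $C^1$-perturbation estimate on $(\X-\X_0)m_1$ to conclude $\X m_1 > 0$ on that set for $\delta$ small enough. This is precisely the computation $\X m_+ = \big((\X-\X_0) m_1 + \X_0 m_1\big)\chi'(m_1) \geq \big(\delta_0 - \|X-X_0\|_{C^0}\|m_1\|_{C^1}\big)\chi'(m_1)\geq 0$ appearing in the paper.
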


We will argue on $m_+$ as the proof is similar for $m_-$.

\begin{proof}
We prove each item individually.

(1,2,3) are straightforward to check with $\delta_1 > 0$ small enough. The fact that $X$ and $X_0$ are $C^2$-close implies by the structural stability Proposition \ref{theorem:stability} that $\cup_{X \sim X_0} (E_\pm^X)^*$ are contained in a small neighborhood of $(E_\pm^{X_0})^*$ where $m_\pm = 1$. \\

(4) Observe that
\[
\X m_+ = \X m_1~ \chi'(m_1) = \left((\X-\X_0) m_1 + \X_0 m_1\right)\chi'(m_1).
\]
The nonnegative function $\chi'(m_1) \geq 0$ vanishes everywhere, except on the set $\left\{|m_1-1/2| \leq \eps_0\right\}$. Observe that on $\left\{|m_1-1/2| \leq \eps_0\right\}$, we have by Lemma \ref{lemma:m1} that:
\[
(\X-\X_0) m_1 + \X_0 m_1 \geq \delta_0 - \|X-X_0\|_{C^0}\|m_1\|_{C^1} \geq \delta_0/2,
\]
provided $\delta \leq \delta_0/(2\|m_1\|_{C^1})$. As a consequence, we deduce that $\X m_+ \geq 0$ on $\pi^{-1}(\mc{M}_e)$.
\end{proof}

\subsubsection{Construction of the bump functions $\chi_\pm$}  In this paragraph, we construct the bump functions $\chi_\pm$ involved in the expression \eqref{equation:m} of the escape function $m$. 

\begin{lemma}
\label{lemma:chi-plus}
There exist $\delta_1,\delta > 0$ small enough, and cutoff functions $\chi_\pm \in C^\infty(\mc{N},[0,1])$ such that for all smooth vector fields $X$ such that $\|X-X_0\|_{C^1(\mc{M},T\mc{M})} < \delta$, the following holds:
\begin{enumerate}
\item $\supp(\chi_\pm) \subset \left\{-2 \rho_0 < \rho < -\delta_1\right\} \cap \left\{ \pm \widetilde{X}_0 \rho < -\delta_1 \right\}$,
\item $X\chi_\pm \geq 0$,
\item $X \chi_\pm > \delta^3_1\rho_0/2$ on $\left(\left\{-(1-\delta_1) \rho_0 < \rho < 0\right\} \cap \left\{ \pm \widetilde{X}_0 \rho < -\delta_1 \right\}\right) \setminus \mc{M}_e$.
\end{enumerate}
\end{lemma}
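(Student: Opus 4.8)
The plan is to build $\chi_\pm$ by hand, as a product
\[
\chi_\pm = a_\pm(\rho)\, b_\pm(\widetilde{X}_0\rho),
\]
of a bump function in the boundary defining function $\rho$ and a bump function in $\widetilde{X}_0\rho$, tailored to the structure of the extension recalled in \S\ref{sssection:extension}. Recall that $\widetilde{X}_0$ is nowhere vanishing on $\{\rho>-2\rho_0\}$, that $X_0=\psi\widetilde{X}_0$ with $\psi$ of the same sign as $\rho+\rho_0$ and vanishing to first order on $\{\rho=-\rho_0\}$, so that the whole hypersurface $\{\rho=-\rho_0\}$ consists of zeros of $X_0$ and behaves, in the collar $\{-2\rho_0<\rho<0\}$, as an \emph{attractor} of the $X_0$--flow inside the ``solidly outgoing'' region $\{\widetilde{X}_0\rho<-\delta_1\}$ (and of the ``solidly incoming'' region $\{\widetilde{X}_0\rho>\delta_1\}$); and that, after possibly shrinking $\rho_0$ and choosing the extension $\widetilde{X}_0$ suitably, each level $\{\rho=c\}$ with $c\in(-2\rho_0,0)$ is strictly convex with respect to $\widetilde{X}_0$, i.e.\ $\widetilde{X}_0\rho=0\Rightarrow\widetilde{X}_0^2\rho<0$.

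First I would fix $a_\pm\in C^\infty(\R,[0,1])$ supported in $(-2\rho_0,-\delta_1)$, equal to $1$ near $\rho=-\rho_0$, strictly monotone on each of $(-2\rho_0,-\rho_0)$ and $(-\rho_0,-\delta_1)$ with monotonicity \emph{opposite to the sign of $\psi$} (so that on $\supp(\chi_\pm)$, where $\mp\widetilde{X}_0\rho<0$, one has $\psi(\rho)\,a_\pm'(\rho)\,\widetilde{X}_0\rho\ge0$), and with $|a_\pm'|$ bounded below on $[-(1-\delta_1)\rho_0,-\rho_0/2]$. Since there $\psi\gtrsim\delta_1\rho_0$ and $|\widetilde{X}_0\rho|>\delta_1$ on the region of item (3), this already yields $X_0\chi_\pm=\psi\,a_\pm'(\rho)\,(\widetilde{X}_0\rho)\,b_\pm\gtrsim\delta_1^2>\delta_1^3\rho_0/2$ wherever the second factor $b_\pm$ is locally constant, which gives (3) there. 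I would then take $b_\pm\in C^\infty(\R,[0,1])$ equal to $1$ where $\pm\widetilde{X}_0\rho$ is very negative, supported in $\{\pm\widetilde{X}_0\rho<-\delta_1\}$, with its \emph{transition shell} squeezed into $\{\,|\widetilde{X}_0\rho|\ \text{comparable to}\ \delta_1\}$: for $\delta_1$ small this shell lies in the strictly convex region where $\widetilde{X}_0^2\rho<-c_0<0$, so the cross term $a_\pm(\rho)\,b_\pm'(\widetilde{X}_0\rho)\,\widetilde{X}_0^2\rho$ appearing in $X_0\chi_\pm=\psi\big(a_\pm'(\widetilde{X}_0\rho)b_\pm+a_\pm b_\pm'\,\widetilde{X}_0^2\rho\big)$ has a \emph{definite} sign, which combined with the sign of $a_\pm'$ one checks makes $X_0\chi_\pm\ge0$. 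Item (1) is then immediate, and items (2), (3) pass to the perturbed vector fields: writing $X=\psi\widetilde{X}$ with $\|\widetilde{X}-\widetilde{X}_0\|_{C^1(\mc{N})}\le C\|X-X_0\|_{C^1(\mc{M})}<C\delta$ (the extension estimate of \S\ref{sssection:extension}), one has $X\chi_\pm=X_0\chi_\pm+\psi(\widetilde{X}-\widetilde{X}_0)\chi_\pm=X_0\chi_\pm+\mathcal{O}(\delta\|\chi_\pm\|_{C^1})$, so a definite lower bound for $X_0\chi_\pm$ survives once $\delta$ is small compared with $\delta_1$, and the structural stability of Proposition \ref{theorem:stability} guarantees that the geometric features used (the attracting hypersurface, the convexity) are stable.

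The main obstacle is the \emph{sign bookkeeping} in item (2): one must check $X_0\chi_\pm\ge0$ on \emph{all} of $\mc{N}$, which is delicate because $\chi_\pm$ is forced to straddle the hypersurface $\{\rho=-\rho_0\}$ across which $\psi$ changes sign, and to taper to $0$ through the transition shell of $b_\pm$ --- moreover the support constraint (1) forces this shell \emph{inside} $\{\pm\widetilde{X}_0\rho<-\delta_1\}$ while (3) is wanted up to the boundary of that set, so (3) in the shell must be handled by the same sign analysis as (2), possibly at the cost of relabelling the small constants. On the side of $\{\rho=-\rho_0\}$ toward which the flow contracts, the main term and the cross term both come out nonnegative thanks to the convexity input; on the other side the cross term has the opposite sign, and making the argument go through there requires choosing the transition shell of $b_\pm$, the width of the region where $a_\pm$ may be non-constant, and the parameters $\delta_1,\delta$ in the right nested order, possibly with a further local modification of $\chi_\pm$ near $\{\rho=-\rho_0\}$. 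This is the only genuinely technical point; everything else is a routine verification with the explicit formulas above.
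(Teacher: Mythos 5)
Your ansatz $\chi_\pm = a_\pm(\rho)\,b_\pm(\widetilde{X}_0\rho)$ is exactly the paper's (they write $\chi_1(\rho)\chi_2(\widetilde{X}_0\rho)$), and your split of $X\chi_\pm$ into a monotone term $\psi(\widetilde{X}\rho)\,a_\pm' b_\pm$, a convexity cross term $\psi(\widetilde{X}_0^2\rho)\,a_\pm b_\pm'$, and an $O(\delta)$ error matches the paper's computation. But the ``sign bookkeeping on the other side of $\{\rho=-\rho_0\}$'', which you correctly flag as the delicate point, cannot be resolved by calibrating widths and constants as you hope. Write $s:=\rho+\rho_0$ and work near $\{\rho=-\rho_0\}$ on the side $s<0$, inside the transition shell $\{\widetilde{X}_0\rho\in(-2\delta_1,-\delta_1)\}$ of $b_\pm$. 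Because $a_\pm$ attains its maximum at $\rho=-\rho_0$, one has $a_\pm'(-\rho_0)=0$, hence $a_\pm'(\rho)=O(s)$; and $\psi\sim s$. So the helpful monotone term $\psi(\widetilde{X}\rho)\,a_\pm'b_\pm$ is nonnegative but only $O(s^2)$, while the cross term $\psi(\widetilde{X}_0^2\rho)\,a_\pm b_\pm'\sim s\cdot\widetilde{X}_0^2\rho\cdot a_\pm b_\pm'$, with $a_\pm\approx1$, $b_\pm'\neq0$, and $\widetilde{X}_0^2\rho\le -c<0$ (strict convexity on the transition shell), is of order $|s|$ and, since $s<0$, \emph{negative}. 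A second-order zero cannot compensate a first-order zero, so $X\chi_+<0$ on a nonempty open set however $\delta_1,\delta$ or the profiles of $a_\pm,b_\pm$ are tuned; a pure product of one-variable cutoffs cannot satisfy item (2) across $\{\rho=-\rho_0\}$.

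For what it is worth, the paper's own computation encounters the same obstruction: the displayed lower bound on the cross and error terms there, of the form $(C\delta-c)\psi\chi_1\chi_2'$, is nonnegative only where $\psi\chi_1\chi_2'\le0$, which, given $\chi_1\ge0$ and $\chi_2'\le0$, means where $\psi\ge0$, i.e.\ $\rho\ge-\rho_0$; the region $\rho<-\rho_0$ is left unaddressed. So your ``further local modification near $\{\rho=-\rho_0\}$'' is not an optional clean-up but the crux of item (2): you must leave the product ansatz (for instance by turning $b_\pm$ into a two-variable cutoff whose transition region is pushed off $\{\rho<-\rho_0\}$, or by otherwise killing $b_\pm'$ there), and then re-verify the support constraint (1) and the quantitative bound (3) for the modified function. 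Secondarily, note that Proposition~\ref{proposition:escape} actually needs $X\chi_--X\chi_+\le0$, which requires $X\chi_-\le0\le X\chi_+$; so item (2) should read $\pm X\chi_\pm\ge0$ rather than $X\chi_\pm\ge0$. You reproduce the sign convention of the statement, which is consistent with the text, but the discrepancy with the downstream use is worth flagging.
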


\begin{proof}
We only deal with $\chi_+$, the proof being similar for $\chi_-$. First of all, for $j=1,2$, we define functions $\chi_{j} \in C^\infty(\R)$ depending on some parameter $\delta_1 > 0$ which will be chosen small enough in the end. The function $\chi_1 \in C^\infty_{\comp}(\R)$ is defined such that (see Figure \ref{figure:chi1}):
\begin{itemize}
\item $\supp(\chi_1) \subset \left\{-2\rho_0 < \rho < -\delta_1\right\}$;
\item $\chi_1 \geq 0, \chi_1(-\rho_0) = 1, \chi_1'(-\rho_0)=0$;
\item $\chi_1' \geq 0$ on $\left\{-2\rho_0 < \rho < -\rho_0\right\}$, $\chi_1' \leq 0$ on $\left\{-\rho_0 < \rho < -\delta_1\right\}$;
\item $\chi'_1 \leq - \delta_1$ on $\left\{-\rho_0(1-\delta_1) \leq \rho \leq -2\delta_1\right\}$.
\end{itemize}
The function $\chi_2 \in C^\infty(\R)$ is defined such that:
\begin{itemize}
\item $\supp(\chi_2) \subset (-\infty,-\delta_1]$;
\item $\chi_2 \geq 0$;
\item $\chi_2 = 1$ on $(-\infty,-2\delta_1]$.
\end{itemize}

\begin{center}
\begin{figure}[htbp!]
\includegraphics[scale=0.8]{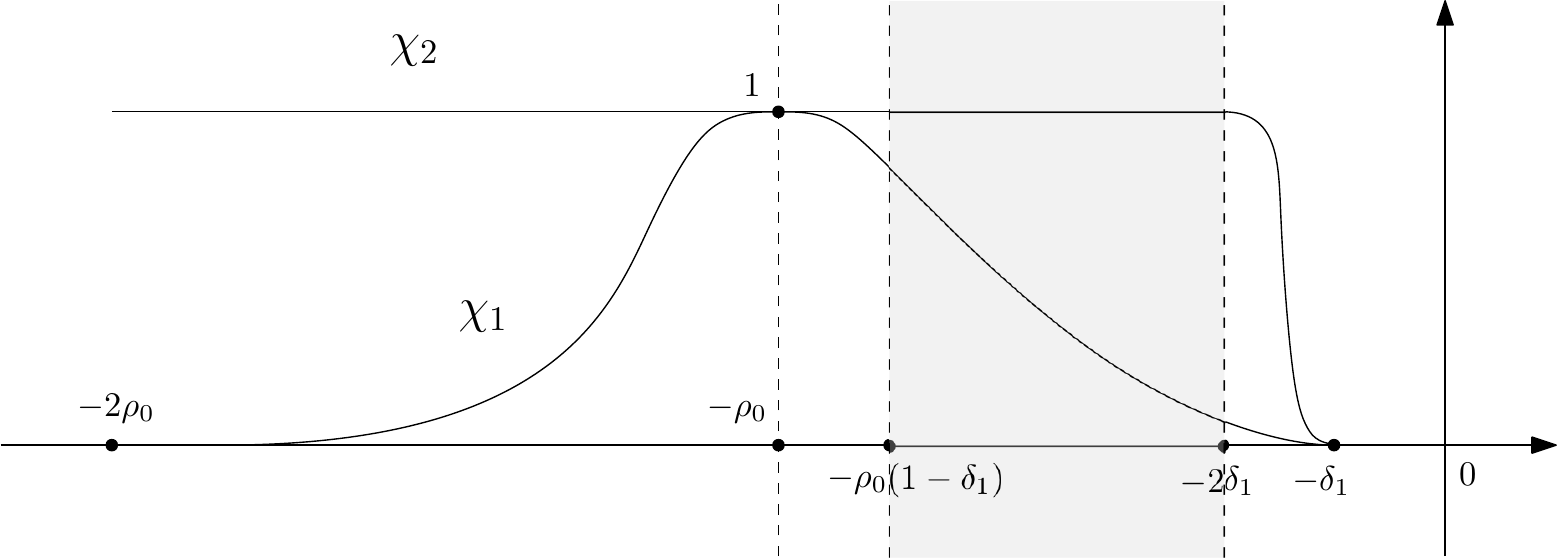}
\caption{The cutoff functions $\chi_1$ and $\chi_2$.}
\label{figure:chi1}
\end{figure}
\end{center}

We then set
\begin{equation}
\label{equation:chi-plus}
\chi_+ := \chi_1(\rho) \chi_2(\widetilde{X}_0 \rho),
\end{equation}
and we claim that it satisfies the required properties. Recall from \S \ref{sssection:geom-cons} that $X = \psi \widetilde{X}$, where $\widetilde{X}$ is some smooth extension of the vector field $X$, initially defined on $\mc{M}$ to the closed manifold $\mc{N}$.

We have:
\begin{align}
\label{equation:derivee}
X \chi_+ & = X \rho \chi_1'(\rho) \chi_2(\widetilde{X}_0\rho) + (X \widetilde{X}_0\rho) \chi_1(\rho) \chi_2'(\widetilde{X}_0\rho) \\
& = \psi \cdot (\widetilde{X}\rho) \chi_1'(\rho) \chi_2(\widetilde{X}_0\rho) + \psi \cdot (\widetilde{X}_0^2\rho) \chi_1(\rho)\chi_2'(\widetilde{X}_0\rho)+ \psi \cdot \left((\widetilde{X}-\widetilde{X}_0)\widetilde{X}_0\rho\right) \chi_1(\rho) \chi_2'(\widetilde{X}_0\rho)\nonumber,
\end{align}
where we writeWe study separately the three terms of \eqref{equation:derivee}. \\

We study the first term in \eqref{equation:derivee}. On $\supp(\chi_2(\widetilde{X}_0\rho))$, one has $\widetilde{X}_0\rho \leq -\delta_1$. Thus, assuming $\|X-X_0\|_{C^0(\mc{M},T\mc{M})} < \delta$ is small enough (depending on $\delta_1$), we obtain that $\widetilde{X}\rho \leq -\delta_1/2$ on $\supp(\chi_2(\widetilde{X}_0\rho))$. As a consequence, we obtain (note that  $\psi\chi_1'\leq 0$):
\[
 \psi \cdot (\widetilde{X}\rho) \chi_1'(\rho) \chi_2(\widetilde{X}_0\rho) \geq -\frac{\delta_1\psi}{2} \chi_1'(\rho) \chi_2(\widetilde{X}_0\rho) \geq 0.
\]

Moreover, on the set $\left(\left\{-(1-\delta_1) \rho_0 < \rho < -2\delta_1\right\} \cap \left\{ \widetilde{X}_0 \rho < -\delta_1 \right\}\right)$, using that 
$\psi = \rho+\rho_0$ near $\left\{\rho=-\rho_0\right\}$ (so $\psi \geq \delta_1 \rho_0$ on the former set), and that $\chi_1'(\rho) \leq -\delta_1$, we obtain that this can be bounded from below by:
\begin{equation}
\label{equation:bb}
\psi \cdot (\widetilde{X}\rho) \chi_1'(\rho) \chi_2(\widetilde{X}_0\rho) \geq \frac{\delta^2_1\psi}{2} \geq \frac{\delta_1^3 \rho_0}{2} > 0.
\end{equation}

We now deal with the second and third term. The strict convexity property of the level sets $\left\{\rho = c\right\}$ (for $c \in [-2\rho_0,0]$) with respect to $\widetilde{X}_0$ reads: $\widetilde{X}_0\rho = 0 \implies \widetilde{X}^2_0\rho < 0$. Since $\left\{\widetilde{X}_0\rho = 0\right\} \cap \left\{-2\rho_0 \leq \rho \leq 0\right\}$ is compact, we deduce that there exists $\delta_1 > 0$ small enough such that, on the set $\left\{|\widetilde{X}_0\rho| \leq 2\delta_1\right\}$, one has $\widetilde{X}_0^2\rho\leq-c< 0$ for some constant $c = c(\delta_1) > 0$. Using that $\mathrm{supp}(\chi_2'(\widetilde{X}_0\rho))$ has support in $\left\{|\widetilde{X}_0\rho| \leq 2 \delta_1\right\}$ and assuming $\|X-X_0\|_{C^0(\mc{M},T\mc{M})} \leq \delta$, we obtain the existence of some constant $C > 0$ (depending on $\delta_1$ but independent of $\delta > 0$) such that:
\[
\begin{split}
 \psi \cdot (\widetilde{X}_0^2\rho) \chi_1(\rho)\chi_2'(\widetilde{X}_0\rho)  + \psi \cdot \left((\widetilde{X}-\widetilde{X}_0)\widetilde{X}_0\rho\right) \chi_1(\rho) \chi_2'(\widetilde{X}_0\rho)  \geq (C\delta- c) \psi \chi_1(\rho) \chi_2'(\widetilde{X}_0\rho).
 \end{split}
\]
Taking $\delta \leq c/(2C)$ small enough (depending on $\delta_1 > 0$), we obtain that this last term is non-negative. 

Overall, we have thus proved (1) and (2), and (3) directly follows from (2) together with \eqref{equation:bb}, since we can take $\delta_1 > 0$ small enough so that $\left\{\rho \geq -2\delta_1\right\} \subset \mc{M}_e$. \end{proof}

\subsubsection{Piecing together the functions} The various sets appearing in the previous constructions and the functions $m_\pm, \chi_\pm$ can be seen in Figure \ref{figure:construction}. We now piece together the previous constructions and prove Proposition \ref{proposition:escape}.

\begin{proof}[Proof of Proposition \ref{proposition:escape}]
Define $m$ by \eqref{equation:m}, where $m_\pm$ and $\chi_\pm$ are provided by Lemmas \ref{lemma:m-plus} and \ref{lemma:chi-plus}, and the constant $\delta_1 > 0$ is chosen small enough so that both Lemmas \ref{lemma:m-plus}, \ref{lemma:chi-plus} hold.

Since $\chi_\pm$ have support outside of $\mc{M}$, $m_\pm=1$ near $(E_\pm^{X})^* \cap \pi^{-1}(\mc{M})$ and $m = m_- - m_+$ on $\pi^{-1}(\mc{M})$, we get that the points (1,2,3) are verified. The fact that $\supp(m) \subset \left\{\rho > -2\rho_0\right\}$ is also straightforward by Lemmas \ref{lemma:m-plus}, \ref{lemma:chi-plus}, which proves (4). Eventually, (5) is also immediate to verify.

We now show that (6) holds if we take $\eta > 0$ small enough. By Lemmas \ref{lemma:m-plus}, item (4), and \ref{lemma:chi-plus}, item (2), the condition $\X m \leq 0$ holds on $\pi^{-1}(\mc{M}_e)$. On the set $\left\{\rho \leq -\rho_0(1-\delta_1)\right\}$, we have $m_\pm = 0$ and thus by Lemma \ref{lemma:chi-plus}, the inequality $\X m \leq 0$ also holds. It remains to check the inequality on $\left\{\rho \geq -\rho_0(1-\delta_1)\right\} \cap (\mc{M}_e)^\complement$. But there, we have by Lemma \ref{lemma:chi-plus}, (3):
\[
\begin{split}
\X m & = \X m_- - \X m_+ + \eta^{-1}(\pi^* X\chi_--\pi^*X\chi_+) \\
& \leq \|m_-\|_{C^1}+\|m_+\|_{C^1} - \eta^{-1} \frac{\delta_1^3 \rho_0}{2} \leq 0,
\end{split}
\]
if $\eta > 0$ is chosen small enough.
\end{proof}

\subsection{Meromorphic extension of the resolvent}

\label{ssection:meromorphic}

We now study the meromorphic extension of the resolvent on anisotropic Sobolev spaces, and its dependence with respect to the vector field $X$. This is the main difference with \cite{Dyatlov-Guillarmou-16}. We will be particularly interested by the resolvent at $z=0$, namely $R_g$, for our application.

\subsubsection{Global resolvent on uniform anisotropic Sobolev spaces}

In the following, we assume that an arbitrary metric $h$ was chosen on $T\mc{N} \to \mc{N}$. This induces a metric $h^\sharp$ on $T^*\mc{N} \to \mc{N}$ and for $(y,\xi) \in T^*\mc{N}$, we will write $\langle \xi \rangle := (1+h^\sharp_y(\xi,\xi))^{1/2}$ (the $y$ is dropped from the Japanese bracket notation in order to avoid repetition). For $\varrho \in (1/2,1]$, we denote by $S_{\varrho}^k(T^*\mc{N})$ (resp. $\Psi^k_\varrho (\mc{N})$) the Fréchet space of symbols of order $k$, i.e. $a\in S^k(T^*\mc{N})$ if in local coordinates 
\[ \forall \alpha,\beta, \exists C>0, \quad |\pl_\xi^\alpha\pl_x^\beta a(y,\xi)|\leq C\cjg \xi\cjd^{k-\varrho|\alpha|+(1-\varrho)|\beta|}\]
 (resp. the space of pseudodifferential operators of order $k$ obtained by quantization of symbols in $S^k_\varrho(T^*\mc{N})$). We shall remove the $\varrho$ index from the notation when $\varrho=1$. Note that $k$ can be a real number, but also a variable \emph{order function}, see \cite[Appendix A]{Faure-Roy-Sjostrand-08} for further details. 

The function $m \in C^\infty(S^*\mc{N},[-1,1])$ constructed in \S\ref{ssection:escape} yields a smooth, $0$-homogeneous function $m \in C^\infty(T^*\mc{N} \setminus \left\{0\right\},[-1,1])$, still denoted by $m$ which decreases along all flow lines of $\X$, the Hamiltonian vector field induced by $X$ (and $X$ is close to $X_0$). We can always modify $m$ in a small neighborhood of the $0$-section in $T^*\mc{N}$ to obtain a new function — still denoted by the same letter $m$ to avoid unnecessary notation — such that $m \in C^\infty(T^*\mc{N},[-1,1])$ and $\X m (y,\xi) \leq 0$ for all $(y,\xi) \in T^*\mc{N}$ such that $\langle \xi \rangle > 1$. \\

Define a \emph{regularity pair} as a pair of indices $\mathbf{r} := (r_\bot,r_0)$, where $r_\bot > r_0 \geq 0$. Given such a regularity pair $\textbf{r}$, we introduce (for all $\eps>0$ small enough):
\begin{equation}
A_{\mathbf{r}} := \Op\left(\langle \xi \rangle^{(r_\bot m(y,\xi)-r_0)/2}\right)^* \Op\left(\langle \xi \rangle^{(r_\bot m(y,\xi)-r_0)/2}\right) \in \Psi_{1-\eps}^{r_\bot m - r_0}(\mc{N}).
\end{equation}
This is an elliptic and formally selfadjoint pseudodifferential operator belonging to an \emph{anisotropic class}, see \cite[Appendix A]{Faure-Roy-Sjostrand-08} for further details. As a consequence, up to a modification by a finite-rank formally selfadjoint smoothing operator, we can assume that $A_{\mathbf{r}}$ is invertible. 

\begin{definition}
\label{definition:anisotropic}
We define the \emph{scale of anisotropic Sobolev spaces} with regularity $\mathbf{r} := (r_\bot,r_0), r_\bot > r_0 \geq 0$, as:
\[
\mc{H}^{\mathbf{r}}_\pm(\mc{N}) := A_{\mathbf{r}}^{\mp 1}(L^2(\mc{N})), \qquad \|f\|_{\mc{H}^{\mathbf{r}}_\pm(\mc{N})} := \|A_{\mathbf{r}}^{\pm 1} f\|_{L^2(\mc{N})}.
\]
\end{definition}

We formulate important remarks:

\begin{remark}
\begin{enumerate}
\item The spaces $\mc{H}^{\mathbf{r}}_\pm(\mc{N})$ are Hilbert spaces, equipped with the scalar product
\[
\langle \cdot, \cdot \rangle_{\mc{H}^{\mathbf{r}}_\pm(\mc{N})} := \langle A_{\mathbf{r}}^{\pm 1} \cdot, A_{\mathbf{r}}^{\pm 1} \cdot\rangle_{L^2(\mc{N})}.
\]

\item This scale of spaces is \emph{independent} of the vector field $X$, as long as it is close enough to $X_0$ in the $C^2$-topology, since the escape function $m$ is independent of the vector field. This will be important when studying the regularity of the meromorphic extension of the resolvent $z \mapsto R^X_\pm(z)$ (given by  \eqref{equation:rpm}) with respect to the vector field $X$.

\item Distributions in $\mc{H}^{\mathbf{r}}_+(\mc{N})$ are microlocally in $H^{r_\bot-r_0}(\mc{N})$ near $(E_-^X)^*$, $H^{-r_0}(\mc{N})$ near $(E_0^X)^*$, $H^{-r_\bot-r_0}(\mc{N})$ near $(E_+^X)^*$ (in the sense that after application of an $A\in \Psi^0(\mc{N})$ with wavefront set in the discussed region they have the announced regularity). The choice of regularity is arbitrary here and we did not try to optimize it. The only crucial point is that distributions in $\mc{H}^{\mathbf{r}}_+(\mc{N})$ have positive Sobolev regularity near $(E_-^X)^*$ while they have negative Sobolev regularity near $(E_+^X)^*$. 
\end{enumerate}
\end{remark}

We let $q \in C^\infty(\mc{N}, [0,1])$ be a smooth cutoff function such that:
\begin{itemize}
\item $\mathrm{supp}(q)$ is contained in the complement of a small open neighborhood of $\mc{M}$,
\item $q = 1$ on the complement of some slightly larger open neighborhood of $\mc{M}$,
\item the closure of the set $\left\{q <1\right\}$ is strictly convex with respect to all the vector fields $X$ for $\|X-X_0\|_{C^2} \leq \delta$ small enough.
\end{itemize}
Given a regularity pair $\mathbf{r} := (r_\bot,r_0)$ and a constant $\omega > 0$, we define for $X$ close enough to $X_0$ and $\Re(z) \gg 0$ large enough:
\begin{equation}
\label{equation:rpm}
R_{\mp}^X(z) := -\int_0^{+\infty} e^{-tz} e^{-\omega \int_0^t (\varphi_{\mp s}^X)^*q \dd s} e^{\mp tX} \dd t,
\end{equation}
Although we do not indicate it in the notation, $R_\mp^X(z)$ \emph{does} depend on a choice of $\omega$.
This satisfies the identity on $C^\infty(\mc{N})$:
\[
(\mp X-z- \omega q)R_{\mp}^X(z) = \mathbbm{1}_{\mc{N}}.
\]
The constant $\omega > 0$ will be fixed later. \\

The aim of this section is to study the meromorphic extension of the resolvent $z \mapsto R^X_+(z)$, for $X$ close to $X_0$, in the anisotropic Sobolev spaces of Definition \ref{definition:anisotropic}, and the dependence with respect to the vector field $X$.

\begin{theorem}
\label{theorem:meromorphic}
There exists $C_\star, \delta_\star,\Lambda > 0$ such that the following holds. For all $\delta \leq \delta_\star$, for all regularity pairs $\mathbf{r} = (r_\bot,r_0)$, there exists a choice of constant $\omega := \omega(\mathbf{r}) > 0$ large enough such that for all smooth vector fields $X$ on $\mc{M}$ such that $\|X-X_0\|_{C^2(\mc{M},T\mc{M})} \leq \delta$, the family
 \[
z \mapsto R_-^X(z) = (-X-z- \omega(\mathbf{r}) q)^{-1} \in \mc{L}(\mc{H}^{\mathbf{r}}_+),
\]
initially defined for $\Re(z) \gg 1$ by \eqref{equation:rpm} and holomorphic for $\Re(z) \gg 1$ large enough, extends to a meromorphic family of operators on the half-space $\left\{\Re(z) > -\Lambda (r_\bot-r_0) + C_\star \delta \right\}$. The same holds for $R_+^X(z)$ on the space $\mc{H}^{\mathbf{r}}_-$. 

Moreover, if $z_0 \in \left\{\Re(z) > -\Lambda (r_\bot-(r_0+2)) + C_\star \delta \right\}$ is not a pole of $z \mapsto R^{X_0}(z)$, then there exists $\eps_0 > 0$ such that the map
\[
C^\infty(\mc{N},T\mc{N}) \times D(z_0,\eps_0) \ni (X,z) \mapsto R^{X}(z) \in \mc{L}(\mc{H}^{(r_\perp,r_0)}_+,\mc{H}^{(r_\perp,r_0+2)}_+),
\]
is $C^2$-regular\footnote{Even though we only need $C^2$, our proof actually shows it is $C^k$ for all $k\in \mathbb{N}$.} with respect to $X$ and holomorphic in $z$, where $D(z_0,\eps_0) \subset \C$ is the disk centred at $z_0$, of radius $\eps_0$.
\end{theorem}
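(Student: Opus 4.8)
The plan is to construct the meromorphic extension of $R_-^X(z)$ on $\mc{H}^{\mathbf{r}}_+$ following the Vasy--Dyatlov--Zworski framework as carried out in \cite{Dyatlov-Guillarmou-16}, but carrying the dependence on $X$ throughout. First I would fix the escape function $m$ from Proposition \ref{proposition:escape}, which is \emph{independent} of $X$ for $\|X-X_0\|_{C^2}\le\delta$, and hence the anisotropic spaces $\mc{H}^{\mathbf{r}}_\pm$ are a fixed scale. The operator $P^X(z):=-X-z-\omega q$ acts on $\mc{H}^{\mathbf{r}}_+$; conjugating by $A_{\mathbf{r}}$, one has $A_{\mathbf{r}}P^X(z)A_{\mathbf{r}}^{-1}=-X-z-\omega q + [A_{\mathbf{r}},X]A_{\mathbf{r}}^{-1}$, and the commutator term, computed by symbolic calculus, has principal symbol $-\tfrac{1}{2}(r_\bot\,\X m)\,|\xi|^{-1}|\xi| \le 0$ modulo $S^0_{1-\eps}$; this is the source of the Fredholm estimate. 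The standard positive-commutator / propagation argument gives: for $\Re(z) > -\Lambda(r_\bot-r_0)+C_\star\delta$, there is $C>0$ with
\[
\|u\|_{\mc{H}^{\mathbf{r}}_+} \le C\big(\|P^X(z)u\|_{\mc{H}^{\mathbf{r}}_+} + \|u\|_{\mc{H}^{-N}}\big),
\]
with $C$ \emph{uniform} in $X$ with $\|X-X_0\|_{C^2}\le\delta$ — this uniformity is exactly where one must be careful, since the radial points $(E_\pm^X)^*$ move with $X$; but they stay in the fixed neighborhoods on which $m=\pm1$, and the threshold condition $\Re(z)>-\Lambda(r_\bot-r_0)$ is governed by the (uniform) expansion rates from Lemmas \ref{lemma:expansion}, \ref{lemma:contraction}, \ref{lemma:time}, plus an $O(\delta)$ error from $\X-\X_0$. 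Analytic Fredholm theory then gives the meromorphic extension of $R_-^X(z)=P^X(z)^{-1}$.

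\textbf{Regularity in $X$.} For the second part, the key identity is the resolvent/second-resolvent formula. For $X,X'$ near $X_0$ and $z_0$ not a pole of $R^{X_0}(z)$, write $P^{X'}(z)=P^X(z)-(X'-X)$, so on a disk $D(z_0,\eps_0)$ where $R^{X_0}(z)$ is holomorphic,
\[
R^{X'}(z) = \big(\Id - R^X(z)(X'-X)\big)^{-1} R^X(z),
\]
provided $R^X(z)(X'-X)$ has small operator norm. The difficulty is the mapping property: $X'-X$ is first order, so it does not map $\mc{H}^{\mathbf{r}}_+$ to itself; this is why the target space must be shifted to $\mc{H}^{(r_\bot,r_0+2)}_+$. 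The point is that $R^X(z):\mc{H}^{(r_\bot,r_0)}_+\to\mc{H}^{(r_\bot,r_0)}_+$, and then the loss of two derivatives in $r_0$ (rather than one) when composing with the first-order operator $X'-X$ and re-applying $R^X(z)$ gives a \emph{compact}, hence small for $\|X'-X\|_{C^1}$ small, operator $R^X(z)(X'-X):\mc{H}^{(r_\bot,r_0)}_+\to\mc{H}^{(r_\bot,r_0+2)}_+$; one checks $\|R^X(z)(X'-X)\|\le C\|X'-X\|_{C^1}$ using the uniform Fredholm estimate above (valid on the larger half-plane $\Re(z)>-\Lambda(r_\bot-(r_0+2))+C_\star\delta$, which is why that threshold appears in the statement). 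The Neumann series $\sum_{k\ge0}\big(R^X(z)(X'-X)\big)^k R^X(z)$ then converges in $\mc{L}(\mc{H}^{(r_\bot,r_0)}_+,\mc{H}^{(r_\bot,r_0+2)}_+)$, exhibiting $X\mapsto R^X(z)$ as a locally uniformly convergent power series in $X$ — in particular it is real-analytic, \emph{a fortiori} $C^k$ for every $k$, with respect to $X$, and holomorphic in $z$ on $D(z_0,\eps_0)$ since each summand is (being built from $R^X(z)$, whose holomorphy in $z$ is part of the first statement, and the $z$-independent operator $X'-X$).

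\textbf{Main obstacle.} The genuinely delicate point is establishing the \emph{uniformity in $X$} of the anisotropic Fredholm estimate and the threshold $-\Lambda(r_\bot-r_0)+C_\star\delta$. One must verify that all the microlocal ingredients — the escape/propagation estimate at the radial sources and sinks $(E_\pm^X)^*$, the elliptic and hyperbolic propagation in between, and the absorption by $\omega q$ in the non-convex region near $\partial\mc{M}$ — hold with constants depending only on $\delta$ and on $\mathbf{r}$, not on the individual vector field $X$. This is precisely what the careful constructions of \S\ref{ssection:escape} (the \emph{uniform} escape function, Lemmas \ref{lemma:close}--\ref{lemma:technique0}, due to the Bonthonneau-type averaging) are designed to provide: the cones $U_\pm$, $V_\pm$ and the uniform expansion rate $\lambda$ are chosen once and for all to work simultaneously for every $X$ with $\|X-X_0\|_{C^2}\le\delta$. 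Granting these, the $X$-dependence of $P^X(z)$ enters only through the zeroth- and first-order terms $X$ and $[A_{\mathbf{r}},X]A_{\mathbf{r}}^{-1}$, whose seminorms are controlled by $\|X\|_{C^2}$, so the constant $\omega(\mathbf{r})$ and the remainder bounds can be chosen uniformly, and the rest is soft functional analysis. I would also note, as in the footnote, that the Neumann series argument gives $C^k$ regularity for all $k$ at no extra cost, since one never differentiates more than the series already encodes.
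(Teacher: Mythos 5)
For the meromorphic extension in the first part, you propose a genuinely different route from the paper: a Vasy-style radial-point/positive-commutator argument giving a uniform Fredholm estimate $\|u\|_{\mc{H}^{\mathbf{r}}_+}\le C(\|P^X(z)u\|_{\mc{H}^{\mathbf{r}}_+}+\|u\|_{\mc{H}^{-N}})$. The paper instead constructs an explicit approximate inverse from the conjugated propagator $e^{-tX}$, bounds the remainder term via Egorov's theorem and Calder\'on--Vaillancourt (Proposition~\ref{proposition:crucial-bound} and Lemma~\ref{lemma:bound-symbol}), obtaining $\mathbbm{1}+B^X(z)+K^X(z)$ with $\|B^X(z)\|_{L^2}<1$ and $K^X(z)$ smoothing, and invokes analytic Fredholm theory. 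Both routes rest on the uniform escape function of Proposition~\ref{proposition:escape}, which you correctly flag as the crux of uniformity; yours is plausible but would require redoing the radial-point machinery with explicit tracking of the threshold constant $\Lambda$ and its $O(\delta)$ error.

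However, the second part of your argument has a genuine gap. The Neumann series $\sum_{k\ge0}\big(R^X(z)(X'-X)\big)^kR^X(z)$ does \emph{not} converge in $\mc{L}(\mc{H}^{(r_\bot,r_0)}_+,\mc{H}^{(r_\bot,r_0+2)}_+)$: since $X'-X$ is first order it maps $\mc{H}^{(r_\bot,s)}_+$ only into $\mc{H}^{(r_\bot,s+1)}_+$, so the $k$-th term lands in $\mc{H}^{(r_\bot,r_0+k)}_+$, a strictly larger space for each $k$, and the series is not a series in any fixed Banach space. Real-analyticity therefore does not follow; the paper claims only $C^2$, obtained from the finite Taylor expansion \eqref{equation:close-diff1}--\eqref{equation:close-diff2} (and $C^k$ for each finite $k$ with a correspondingly adjusted target space), which is exactly why the target is shifted by $+2$ in $r_0$ and the more restrictive threshold $\Re(z_0)>-\Lambda(r_\bot-(r_0+2))+C_\star\delta$ is imposed. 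Moreover, discarding the Neumann series also removes your only mechanism for proving that $R^{X}(z)$ is pole-free on $D(z_0,\eps_0)$ for $X$ near $X_0$. This is not automatic from the first part (which gives only meromorphicity) and is exactly the content of the paper's Step~2 (``continuity of resonances''): one shows that after a perturbation of the cutoff $\chi_T$ one can guarantee $\ker(\mathbbm{1}+B^X(z)+K^X(z))|_{L^2}=\{0\}$ on the whole disk, ruling out spurious kernel elements. You would need to supply that argument, or an equivalent upper-semicontinuity statement for resonances in $X$, before the resolvent identity and the finite Taylor expansion can be applied.
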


As usual, the poles do not depend on the choices made in the construction of the spaces. The rest of \S\ref{ssection:meromorphic} is devoted to the proof of Theorem \ref{theorem:meromorphic}. Theorem \ref{theorem:meromorphic} obviously implies Theorem \ref{axiomAsmooth} stated in the introduction, since the resolvent on $\M$ can be expressed in terms of the resolvent on $\mc{N}$ and the restriction to $\M$ (as in Lemma \ref{lemma:res} below in the analogous case of geodesic vector fields).

\subsubsection{Parametrix construction} Denote by $\mu$ a smooth measure on $\mc{N}$ which restricts to the Liouville measure on $\mc{M}$. Note that $X_0$ is volume-preserving on $\mc{M}$ and, up to minor modifications, we can also assume that the extension of $X_0$ to $\mc{N}$ is volume-preserving on $\mc{M}_e$ (but not on $\mc{N}$, since $X_0$ vanishes on $\left\{\rho=-\rho_0\right\}$). In order to shorten notation, we will write $L^2(\mc{N}) := L^2(\mc{N},\mu)$.

For $T > 0$, consider a smooth cutoff function $\chi_T \in C^\infty_{\comp}(\R_+)$, depending smoothly on $T$, such that $\chi_T = 1$ on $[0,T]$, $-2 \leq \chi_T' \leq 0$, and $\chi_T = 0$ on $[T+1,\infty)$. For $\Re(z) \gg 1, \omega \geq 1$, the following identity holds on $C^\infty(\mc{N})$:
\begin{equation}
\label{equation:identite}
\begin{split}
- \int_0^{+\infty} \chi_T(t) e^{-tz}  &e^{- \int_0^t (\varphi^X_{-s})^*(\omega q)~ \dd s}  e^{-tX} \dd t ~~~(-X-z- \omega q) \\
& = \mathbbm{1} + \int_0^{+\infty} \chi_T'(t) e^{-tz} e^{- \int_0^t (\varphi^X_{-s})^*(  \omega q) ~\dd s} e^{-tX} \dd t.
\end{split}
\end{equation}
We now \emph{fix} once for all a regularity pair $\mathbf{r} := (r_\bot,r_0)$, and set $r := r_0 + r_\bot$. The constant $\omega \geq 1$ will be chosen to depend on $\mathbf{r}$ later. We conjugate the equality \eqref{equation:identite} by $A_{\mathbf{r}}$. We obtain:
\begin{equation}
\label{equation:identite2}
\begin{split}
&- A_{\mathbf{r}} \int_0^{+\infty} \chi_T(t) e^{-tz}  e^{- \int_0^t (\varphi^X_{-s})^*( \omega q)~ \dd s}  e^{-tX} A_{\mathbf{r}}^{-1} \dd t ~~~A_{\mathbf{r}}(-X-z- \omega q)A_{\mathbf{r}}^{-1} \\
& = \mathbbm{1} + \int_0^{+\infty} \chi_T'(t) e^{-tz} e^{-tX} \underbrace{e^{tX}A_{\mathbf{r}} e^{- \int_0^t (\varphi^X_{-s})^*(\omega q) ~\dd s} A_{\mathbf{r}}^{-1}e^{-tX}}_{:=B_1^X(t)}\underbrace{e^{tX}A_{\mathbf{r}} e^{-tX} A_{\mathbf{r}}^{-1}}_{:=B_2^X(t)}~~\dd t. 
\end{split}
\end{equation}

Since the second term on the right-hand side of \eqref{equation:identite2} is defined as an integral over time in the flow direction $e^{-tX}$, it is smoothing outside $\left\{p_X=0\right\}$. We let $\Omega' \Subset \Omega$ be two open nested conic neighborhoods of $\left\{p_{X_0}=0\right\}$ in $T^*\mc{N} \cap \left\{\rho > -\rho_0\right\}$. Note that by continuity, these are also conic neighborhoods of $\left\{p_{X}=0\right\}$ for all $X\sim X_0$. We let $e \in S^0(T^*\mc{N})$ be a symbol of order $0$ such that $e=0$ outside $\Omega$ and $e=1$ on $\Omega'$ and we set $E := \Op(e)$. We then decompose the second term on the right-hand side of \eqref{equation:identite2} as:
\begin{equation}
\label{equation:identite3}
\int_0^{+\infty} \chi_T'(t) e^{-tz} e^{-tX} B_1^X(t) B_2^X(t) ~\dd t = \int_0^{+\infty} \chi_T'(t)  e^{-tz} e^{-tX}  E B^X_1(t) B_2^X(t) ~\dd t + K^X_1(T,z),
\end{equation}
where
\[
K^X_1(T,z) := \int_0^{+\infty} \chi_T'(t) e^{-tz} e^{-tX} (\mathbbm{1}-E) B_1^X(t) B_2^X(t) ~\dd t,
\]
and $K^X_1(T,z) \in \Psi^{-\infty}(\mc{N})$. In order to prove that $K_1^X(T,z)$ is smoothing, we can remark that $K^X_1(T,z)=E'K^X_1(T,z)$ for some $E'\in \Psi^0(\mc{N})$
with microsupport that does not intersect a conic neighborhood of  $\{p_X=0\}$, and then show that $X^kK^X_1(T,z)\in \mc{L}(L^2)$ for all $k\in \N$, using that 
$X^ke^{-tX}=(-\pl_t)^ke^{-tX}$ and integrating by part in $t$, and finally use that $E'(C-X^2)^{-1}\in \Psi^{-2}(\mc{N})$ for some $C\gg 1$ since $C-X^2$ is elliptic on the microsupport  of $E'$. 
The dependence of $K^X_1(T,z)$ on its parameters is holomorphic in $z \in \C$ and smooth in the variables $T \in \R, X \in C^\infty(\mc{M},T\mc{M})$. \\

Below, we use the notation $\mc{L}(\mc{H})$ to denote continuous linear operators on a Hilbert space $\mc{H}$ and $\mc{K}(\mc{H})$ for compact operators.

\begin{proposition}
\label{proposition:crucial-bound}
There exist $C_\star,\delta_\star,\Lambda > 0$ such that the following holds. For all regularity pairs $\mathbf{r}$, there exist $C(\mathbf{r}),\omega(\mathbf{r}) > 0$ such that for all smooth vector fields $\|X-X_0\|_{C^2} \leq \delta$ with $\delta \leq \delta_\star$, for all $t \geq 0$, there exist (Fourier Integral) operators $M^X(t) \in \mc{L}(L^2(\mc{N}))$ and $S^X(t) \in \mc{K}(L^2(\mc{N}))$ such that
\[
e^{-tX} E B_1^X(t) B_2^X(t) = M^X(t) + S^X(t),
\]
and
\[
\|M^X(t)\|_{L^2(\mc{N})} \leq C(\mathbf{r}) e^{\left(-\Lambda(r_\bot-r_0) + C_\star \delta\right)t}.
\]
Moreover, the map
\[
\begin{split}
\R \times C^\infty(\mc{M},T\mc{M}) \ni (t,X) \mapsto (M^X(t),S^X(t)) \in \mc{L}(L^2(\mc{N})) \times \mc{K}(L^2(\mc{N}))
\end{split}
\]
is smooth.
\end{proposition}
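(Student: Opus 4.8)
\textbf{Proof plan for Proposition \ref{proposition:crucial-bound}.}

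The plan is to combine an Egorov-type analysis of the conjugated operators $B_1^X(t)$ and $B_2^X(t)$ with the contraction of the escape function along the flow, tracking constants uniformly in $X$. First I would deal with $B_1^X(t)$: the operator inside the conjugation, $e^{-\int_0^t (\varphi_{-s}^X)^*(\omega q)\,\dd s}$, is multiplication by a positive function, and conjugating by $A_{\mathbf r}$ produces (by the pseudodifferential calculus of the anisotropic class $\Psi_{1-\eps}$) a bounded operator whose norm is controlled by the sup of that function; crucially, by the support properties of $q$ (supported off a neighbourhood of $\mc M$ where $\{q<1\}$ is strictly convex with respect to all $X\sim X_0$), this weight is bounded by $1$ on all trajectories that matter, and in fact contributes the extra decay $e^{-\omega t}$ for trajectories leaving $\mc M_e$. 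So $B_1^X(t)$ is uniformly bounded on $L^2(\mc N)$, up to a compact remainder, with a constant depending only on $\mathbf r$ and $\delta_\star$.

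The heart is $B_2^X(t) = e^{tX}A_{\mathbf r}e^{-tX}A_{\mathbf r}^{-1}$, composed with the microlocal cutoff $E$ restricting to a conic neighbourhood $\Omega$ of $\{p_X=0\}$. By Egorov's theorem, $e^{tX}A_{\mathbf r}e^{-tX}$ is, modulo smoothing, a pseudodifferential operator whose symbol is $\langle\xi\rangle^{(r_\bot m(\varphi_t^{\X}(y,\xi)) - r_0)}$ composed with the symbol transport along the symplectic lift — here one must be a little careful because $\varphi_t^{\X}$ rescales $\xi$, but on the spherical bundle $m$ is homogeneous of degree $0$ so only its value at the projected point $\kappa(\varphi_t^{\X}(y,\xi))$ enters. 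Hence $B_2^X(t)$ is, modulo smoothing, a pseudodifferential operator of variable order $r_\bot(m\circ\varphi_t^{\X} - m)$, and its $L^2\to L^2$ operator norm is governed by $\sup \langle\xi\rangle^{r_\bot(m(\varphi_t^{\X}(y,\xi)) - m(y,\xi))/2}$ over the microsupport — i.e. over $\Omega$ near $\{p_X=0\}$, with the flow running for time $t$ inside $\mc M_e$. Now I invoke Lemma \ref{lemma:time}: any bicharacteristic starting and ending in $\Omega\cap T^*\mc M_e$ spends all but a bounded time $T$ in $U_+\sqcup U_-$, where Lemma \ref{lemma:expansion} gives uniform expansion $|e^{s\X}\zeta|\ge Ce^{\lambda s}|\xi|$; since $m\le 0$ there and $\X m\le 0$ always, $m(\varphi_t^{\X}(y,\xi))$ is dominated by $-$(something proportional to the time spent with genuine expansion), which, combined with the $\log\langle\xi\rangle$ weights, yields exactly a bound $C(\mathbf r)e^{(-\Lambda(r_\bot - r_0) + C_\star\delta)t}$ on the principal part; the constant $C_\star\delta$ comes from the $C^2$-discrepancy between $X$ and $X_0$ affecting the expansion rate $\lambda$ and the cutoff times, while $-\Lambda(r_\bot-r_0)$ records that the weight drops from $+ (r_\bot - r_0)\log\langle\xi\rangle$ near $(E_-^X)^*$ to $-(r_\bot+r_0)\log\langle\xi\rangle$ near $(E_+^X)^*$ with uniform rate. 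The $-r_0$ shift is absorbed into $\Lambda$ harmlessly, or one keeps $r_\bot - r_0$ as the net available gap.

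I would then define $M^X(t)$ to be the principal (genuinely bounded, norm-controlled) piece produced by the pseudodifferential calculus above — e.g. $e^{-tX}E$ composed with a suitable quantization of the transported symbol and the $B_1$ weight — and $S^X(t)$ to be everything else: the smoothing remainders from Egorov and from the symbol calculus, plus the contributions from the region where the bicharacteristic has not yet reached $U_\pm$ (bounded time, hence bounded and with compactly supported — thus compactly microlocalized — frequency content, so compact on $L^2$). These remainders are negative-order pseudodifferential operators or genuinely smoothing, hence compact on $L^2(\mc N)$. Smoothness in $(t,X)$ of both pieces follows because every ingredient — the flow $\varphi_t^X$ and its differential (smooth in $(t,X)$ by ODE theory, cf. the estimates of Lemma \ref{lemma:bound-dell}), the transported symbols, the weight $e^{-\int_0^t(\varphi_{-s}^X)^*(\omega q)\dd s}$, and the fixed operator $E$ and $A_{\mathbf r}$ — depends smoothly on $(t,X)$, and the pseudodifferential calculus is continuous in its symbol seminorms; one differentiates under the symbol quantization and checks the resulting expressions still lie in the relevant operator classes with the same type of bounds.

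\textbf{Main obstacle.} The delicate point is making the exponential bound on $\|M^X(t)\|$ genuinely uniform in $X$: one has to verify that the expansion rate $\lambda$, the uniform times $T$ in Lemmas \ref{lemma:close}, \ref{lemma:expansion}, \ref{lemma:contraction}, \ref{lemma:time}, and the conic neighbourhoods $U_\pm, V_\pm$ can all be chosen independently of $X$ within a $C^2$-ball (which is exactly what those lemmas were set up to provide, via Robinson structural stability), and then to convert the pointwise symbol decay $m(\varphi_t^{\X}\zeta) - m(\zeta) \le -c(t-T)$ into an operator-norm decay without losing uniformity when the Egorov symbol is quantized in the variable-order anisotropic class $\Psi_{1-\eps}$ — this requires that the symbol seminorm bounds in that class degrade at most by $e^{C_\star\delta t}$, which in turn rests on the Grönwall-type bounds $\|d\varphi_t^X\|\le Ce^{Ct}$ with $C$ uniform in $X\sim X_0$. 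Keeping these constants honestly uniform, rather than implicitly $X$-dependent, is the crux of the whole section and the reason the uniform escape function of \S\ref{ssection:escape} was built.
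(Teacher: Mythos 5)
Your overall strategy coincides with the paper's: compute the principal symbol of $E B_1^X(t)B_2^X(t)$ via Egorov, split bicharacteristics using Lemma~\ref{lemma:time} and the uniform cones $U_\pm$ of Lemma~\ref{lemma:expansion}, exploit the cocycle structure, and convert symbol decay into operator-norm decay modulo compact remainders via Calderón--Vaillancourt; the smoothness in $(t,X)$ is argued similarly. So the architecture is right.

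The step that does not close as written is the passage from symbol bound to operator-norm bound. You propose to write $M^X(t)$ as $e^{-tX}E$ composed with a quantization of the transported symbol, bounding the result by the symbol's sup. But $e^{-tX}$ is \emph{not} unitary on $L^2(\mc N,\mu)$: since $X=\psi\widetilde X$ is not volume-preserving off $\mc M_e$, one has $\|e^{-tX}f\|_{L^2}^2 = \int |f|^2\,|\det d\varphi_{-t}^X|\,d\mu$, and $\|e^{-tX}\|_{L^2\to L^2}$ can grow exponentially in $t$. Bounding the $\Psi$DO factor's symbol therefore does not by itself control the norm of the product. The paper resolves this by inserting the half-density compensation $a^X(t)(y)=|\det d\varphi_{-t}^X(\varphi_t^X(y))|^{-1/2}$ and writing $e^{-tX}EB_1B_2 = \bigl(e^{-tX}(a^X(t))^{-1}\bigr)\bigl(a^X(t)EB_1B_2\bigr)$: the left factor is genuinely unitary (Lemma~\ref{lemma:unitary}), and the Jacobian growth $e^{\nu t}$, with $\nu=\sup\|\Div_\mu X\|_\infty$, is then absorbed at the symbol level by taking $\omega(\mathbf r)$ large enough (Lemma~\ref{lemma:bound-symbol1}). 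Your plan omits this compensation, and without it the stated estimate on $\|M^X(t)\|_{L^2}$ does not follow from the computation you describe. A minor secondary imprecision: the decay from $B_2$ is not driven by the change of order $r_\bot(m\circ e^{t\X}-m)$ alone, which is a bounded quantity since $m\in[-1,1]$, but by the factor $\bigl(\langle e^{t\X}\zeta\rangle/\langle\zeta\rangle\bigr)^{m_{\mathbf r}(\zeta)}$, i.e.\ by the exponential dilation of $\xi$ under the symplectic lift weighted by the sign and size of $m_{\mathbf r}$ at the starting point; and $m=+1$, not $\le 0$, on $U_-$.
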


The rest of this paragraph is devoted to the proof of Proposition \ref{proposition:crucial-bound}. It is split into several sub-lemmas. Given a regularity pair ${\bf{r}} = (r_\bot, r_0)$, in order to simplify notation we introduce:
\begin{equation}
\label{equation:mr}
m_{\mathbf{r}} := r_\bot m - r_0.
\end{equation} 
We start with:

\begin{lemma}
\label{lemma:order-zero}
For all $t \in \R, 1/2 < \varrho < 1$, $B^X_1(t), B^X_2(t) \in \Psi^0_{\varrho}(\mc{N})$ with principal symbols 
\[
\sigma_{B_1^X(t)}(y,\xi)=e^{- \omega\int_0^t (\varphi^X_{s})^*(q)(y) ~\dd s} , \quad  \sigma_{B_2^X(t)}(y,\xi)=\dfrac{\langle e^{t\X}(y,\xi) \rangle^{m_{\mathbf{r}}(e^{t\X}(y,\xi))}}{\langle \xi \rangle^{m_{\mathbf{r}}(y,\xi)}}.
\]
 \end{lemma}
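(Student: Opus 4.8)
The statement is a standard Egorov-type computation, but one needs to track the $X$-dependence carefully. I will treat $B_1^X(t)$ and $B_2^X(t)$ separately, since the former comes from conjugating a multiplication operator and the latter from conjugating the anisotropic weight $A_{\mathbf r}$.

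\emph{Step 1: the operator $B_1^X(t)$.} Here $A_{\mathbf r}$ has principal symbol $\langle \xi\rangle^{m_{\mathbf r}}$, while $e^{-\int_0^t(\varphi_{-s}^X)^*(\omega q)\,\dd s}$ is a smooth function on $\mc N$ (a multiplication operator, order $0$ with scalar symbol independent of $\xi$). Conjugating a zeroth-order multiplication operator by $e^{tX}$ produces the multiplication operator $e^{-\omega\int_0^t(\varphi_s^X)^*q\,\dd s}$ (just push forward by the flow), which lies in $\Psi^0_1(\mc N)\subset\Psi^0_\varrho(\mc N)$. Conjugating further by $A_{\mathbf r}^{\mp 1}$ changes nothing at the level of the principal symbol because $A_{\mathbf r}$ and a multiplication operator have disjoint ``variables'' (one is a Fourier multiplier-type weight, the other is $\xi$-independent); more precisely $A_{\mathbf r} f A_{\mathbf r}^{-1} = f + [\text{order }-1]$ by the symbolic calculus, and the commutator is lower order. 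Hence $B_1^X(t)\in\Psi^0_\varrho(\mc N)$ (the loss to $\varrho<1$ comes from the anisotropic class $\Psi^{m_{\mathbf r}}_{1-\eps}$ in which $A_{\mathbf r}$ lives) with the stated principal symbol $e^{-\omega\int_0^t(\varphi_s^X)^*q(y)\,\dd s}$.

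\emph{Step 2: the operator $B_2^X(t)$, i.e. Egorov for the weight.} Write $A_{\mathbf r} = \Op(a)^*\Op(a)$ with $a = \langle\xi\rangle^{m_{\mathbf r}/2}$, so $A_{\mathbf r}$ has principal symbol $\langle\xi\rangle^{m_{\mathbf r}(y,\xi)}$. Then $B_2^X(t) = e^{tX}A_{\mathbf r}e^{-tX}A_{\mathbf r}^{-1}$. The point is that $e^{tX}\Op(b)e^{-tX}$ is, by Egorov's theorem, a pseudodifferential operator (in the anisotropic class) with principal symbol $b\circ e^{t\X}$ — here $\X$ is the symplectic lift of $X$ and $e^{t\X}(y,\xi)=(\varphi_t^X(y),(d\varphi_t^X(y))^{-\top}\xi)$ as in \eqref{eq:symplectic-lift}, so that the Hamiltonian flow of $p_X$ transports symbols. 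Applying this with $b = \langle\xi\rangle^{m_{\mathbf r}}$ gives that $e^{tX}A_{\mathbf r}e^{-tX}$ has principal symbol $\langle e^{t\X}(y,\xi)\rangle^{m_{\mathbf r}(e^{t\X}(y,\xi))}$, and composing with $A_{\mathbf r}^{-1}$ (principal symbol $\langle\xi\rangle^{-m_{\mathbf r}(y,\xi)}$) and multiplying symbols yields the claimed $\sigma_{B_2^X(t)} = \langle e^{t\X}(y,\xi)\rangle^{m_{\mathbf r}(e^{t\X}(y,\xi))}/\langle\xi\rangle^{m_{\mathbf r}(y,\xi)}$. The fact that this quotient is a genuine symbol of order $0$ in the class $S^0_\varrho$ (for some $\varrho\in(1/2,1)$, $\varrho$ depending on how much $m$, a variable order function, varies along the flow) is exactly the content of the anisotropic calculus of \cite[Appendix A]{Faure-Roy-Sjostrand-08}: the key estimate is that $m(e^{t\X}(y,\xi)) - m(y,\xi)$ is $O(t)$-Lipschitz and $0$-homogeneous, so differentiating the quotient produces at worst the allowed powers of $\langle\xi\rangle$, with constants growing polynomially (in fact linearly) in $t$. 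This is why $t$ is allowed to be any real number: the seminorms of $B_i^X(t)$ in $\Psi^0_\varrho$ are finite for each fixed $t$ (the growth in $t$ is tracked separately, and enters Proposition \ref{proposition:crucial-bound}, not this lemma).

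\emph{Step 3: smoothness in $(t,X)$ and conclusion.} The flow $\varphi_t^X$ and its differential $d\varphi_t^X$ depend smoothly on $(t,X)\in\R\times C^\infty(\mc M,T\mc M)$ (ODE dependence on parameters and initial data), hence so do the symbols above and all their symbolic seminorms; combined with the smoothness of $X\mapsto A_{\mathbf r}$ being trivial ($A_{\mathbf r}$ does not depend on $X$), this gives the smoothness statement claimed later. The main (mild) obstacle here is purely bookkeeping: one must verify that the Egorov remainder — the difference between $e^{tX}A_{\mathbf r}e^{-tX}$ and the quantization of $b\circ e^{t\X}$ — is genuinely one order lower \emph{in the anisotropic class} $\Psi^{m_{\mathbf r}-1}_\varrho$ with seminorms controlled by (a polynomial in) $t$, since the variable order $m_{\mathbf r}$ interacts with the $\varrho<1$ calculus; this follows by writing $e^{tX}A_{\mathbf r}e^{-tX}$ as the solution of the Heisenberg evolution equation $\partial_t(e^{tX}A_{\mathbf r}e^{-tX}) = e^{tX}[X,A_{\mathbf r}]e^{-tX}$, iterating Duhamel, and using $[X,\Psi^{k}_\varrho]\subset\Psi^{k}_\varrho$ (the commutator does \emph{not} gain a full order for variable orders, but it does not lose one either, and the leading symbol is $\{p_X,\cdot\}$ which integrates to the transport $b\mapsto b\circ e^{t\X}$). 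Everything else is a direct application of the symbolic calculus recalled in the cited references.
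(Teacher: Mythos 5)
Your proof is correct and is essentially the paper's proof spelled out: the paper disposes of Lemma \ref{lemma:order-zero} with a one-line citation of Egorov's lemma (referring to \cite[Section 2.4.1]{Lefeuvre-thesis}), and your Steps 1--3 are exactly the Egorov computation that reference carries out, including the Heisenberg/Duhamel derivation, the push-forward identity $e^{tX} f\, e^{-tX} = f\circ\varphi_t^X$ for the $B_1^X$ factor, and the standard observation that the variable order $m_{\mathbf r}$ costs the $\log\langle\xi\rangle$ factors that force $\varrho<1$.
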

 
 \begin{proof}
This follows directly from Egorov's Lemma, see \cite[Section \S 2.4.1]{Lefeuvre-thesis}.
 \end{proof}
 
In particular, Lemma \ref{lemma:order-zero} shows that the integrand $e^{-tX} B^X_1(t)B^X_2(t)$ on the right-hand side of \eqref{equation:identite2} is a Fourier Integral Operator (FIO). We let
\begin{equation}
\label{equation:compensation}
a^X(t)(y) := |\det d\varphi_{-t}^X(\varphi_t^X(y))|^{-1/2},
\end{equation}
where the Jacobian is defined with respect to the measure $\dd \mu$ on $\mc{N}$.

\begin{lemma}
\label{lemma:unitary}
For all $t \in \R$, $\|e^{-tX} (a^X(t))^{-1}\|_{\mc{L}(L^2(\mc{N}))} = 1$. Moreover, for all $y \in \mc{N}, t \in \R$:
\[
a^X(t)(y) \leq \exp\left(\int_0^t |\Div_\mu X|(\varphi_s^X(y)) \dd s\right).
\]
\end{lemma}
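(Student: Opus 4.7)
\emph{Plan of proof.} Both assertions are standard consequences of the change-of-variables formula and Liouville's formula for the Jacobian of the flow of $X$; I would treat the two parts in sequence.

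For the first identity, the plan is to unpack the operator $e^{-tX}(a^X(t))^{-1}$ acting on $f\in L^2(\mc{N},\mu)$ using the convention $(e^{-tX}f)(y)=f(\varphi^X_{-t}(y))$, so that
\[
\big(e^{-tX}(a^X(t))^{-1}f\big)(y)=(a^X(t))^{-1}(\varphi^X_{-t}(y))\,f(\varphi^X_{-t}(y)).
\]
Squaring, and making the change of variables $z=\varphi^X_{-t}(y)$ with Jacobian $|\det d\varphi^X_t(z)|$, the resulting $L^2$-integral becomes $\int_{\mc{N}}(a^X(t))^{-2}(z)|f(z)|^2|\det d\varphi^X_t(z)|\,d\mu(z)$. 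The chain rule applied to $\varphi^X_{-t}\circ\varphi^X_t=\mathrm{id}$ at $z$ gives the identity $|\det d\varphi^X_{-t}(\varphi^X_t(z))|=|\det d\varphi^X_t(z)|^{-1}$, hence $(a^X(t))^{-2}(z)|\det d\varphi^X_t(z)|=1$ by definition \eqref{equation:compensation}. This proves $\|e^{-tX}(a^X(t))^{-1}f\|_{L^2}=\|f\|_{L^2}$. Since $a^X(t)$ is bounded and non-vanishing on $\mc{N}$, the operator is invertible, hence an isometry.

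For the second estimate, the plan is to invoke Liouville's formula for the flow of a vector field with respect to the smooth measure $\mu$: for every $y\in\mc{N}$ and every $t$,
\[
\det d\varphi^X_t(y)=\exp\!\left(\int_0^t(\Div_\mu X)(\varphi^X_s(y))\,ds\right),
\]
which is positive, so the absolute value may be omitted. Combining with the identity $|\det d\varphi^X_{-t}(\varphi^X_t(y))|^{-1}=\det d\varphi^X_t(y)$ derived above, one gets from \eqref{equation:compensation}
\[
a^X(t)(y)=\exp\!\left(\tfrac{1}{2}\int_0^t(\Div_\mu X)(\varphi^X_s(y))\,ds\right).
\]
Bounding the exponent by its absolute value (and using $\tfrac{1}{2}\leq 1$) yields the claimed inequality $a^X(t)(y)\leq \exp\bigl(\int_0^t|\Div_\mu X|(\varphi^X_s(y))\,ds\bigr)$, with the obvious analogous statement for $t<0$ upon reversing the orientation of the time interval.

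There is no real obstacle here; the only point of care is the bookkeeping of the direction of the change of variables and of the sign of $t$ in the Liouville formula, which is why the factor $\tfrac12$ in $a^X(t)$ exactly absorbs the absolute value placed on $\Div_\mu X$.
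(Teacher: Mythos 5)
Your proof is correct and follows essentially the same route as the paper: the unitarity claim is the same change-of-variables computation (with the chain-rule identity $|\det d\varphi^X_{-t}(\varphi^X_t(z))|=|\det d\varphi^X_t(z)|^{-1}$ cancelling $(a^X(t))^{-2}$ against the Jacobian), and the pointwise bound on $a^X(t)$ comes from the same Liouville formula $\Div_\mu X\circ\varphi^X_t=\partial_t\log|\det d\varphi^X_t|$ that the paper invokes. Your version just spells out the intermediate exact formula $a^X(t)(y)=\exp\bigl(\tfrac12\int_0^t(\Div_\mu X)(\varphi^X_s(y))\,ds\bigr)$ before estimating, which is fine.
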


\begin{proof}
We have
\[
\int_{\mc{N}}|e^{-tX}((a^X(t))^{-1}f)|^2d\mu= \int_{\mc{N}}(a^X(t))^{-2}|f|^2 \, |\det d\varphi^X_t|d\mu=\|f\|_{L^2}^2.
\]
The estimate on $a^X(t)(y)$ follows directly from the fact that  $\Div_\mu X \circ \varphi_t = \pl_t(\log |\det d\varphi^X_t|)$.
\end{proof}

By Lemma \ref{lemma:order-zero}, the operator $a^X(t) E B^X_1(t) B^X_2(t)$ is a pseudodifferential operator of order $0$. By the Calderón-Vaillancourt Theorem \cite[Theorem 4.5]{Grigis-Sjostrand-94}, up to a compact remainder in $\mc{K}(L^2(\mc{N}))$, its norm on $L^2(\mc{N})$ is given by the $\limsup$ of its principal symbol as $|\xi| \to \infty$. We now bound the $\limsup$ of its principal symbol. 

 \begin{lemma}
 \label{lemma:bound-symbol}
There exists $\delta_\star, C_\star,\Lambda > 0$ such that the following holds. For all regularity pairs $\mathbf{r} := (r_\bot,r_0)$, there exists $C(\mathbf{r}), \omega(\mathbf{r}) > 0$ such that for all smooth vector fields $X$ with $\|X-X_0\|_{C^2(\mc{M},T\mc{M})} \leq \delta$, where $\delta \leq \delta_\star$, for all $t \geq 0$, 
 \[
 \limsup_{(y,\xi) \in T^*\mc{N}, |\xi|\to \infty}  \sigma_{a^X(t) E B_1^X(t)B_2^X(t)}(y,\xi) \leq C(\mathbf{r}) e^{\left(-\Lambda (r_\bot-r_0) +C_\star\delta\right)t}.
 \]
 \end{lemma}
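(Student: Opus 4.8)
The strategy is to estimate the principal symbol of the pseudodifferential operator $a^X(t)EB_1^X(t)B_2^X(t)$, which by Lemma \ref{lemma:order-zero} has principal symbol (modulo lower order, equivalently up to the $\limsup_{|\xi|\to\infty}$)
\[
\sigma(y,\xi) = a^X(t)(y)\, e(y,\xi)\, e^{-\omega\int_0^t(\varphi_s^X)^*(q)(y)\,\dd s}\, \frac{\langle e^{t\X}(y,\xi)\rangle^{m_{\mathbf r}(e^{t\X}(y,\xi))}}{\langle \xi\rangle^{m_{\mathbf r}(y,\xi)}}.
\]
I would bound each of the three factors separately along flowlines. By Lemma \ref{lemma:unitary}, $a^X(t)(y)\leq \exp(\int_0^t|\Div_\mu X|(\varphi_s^X(y))\,\dd s)$; since $\Div_{\mu}X_0$ vanishes on $\mc{M}_e$ (the flow preserves Liouville measure there) and $\|X-X_0\|_{C^2}\leq\delta$, we get $|\Div_\mu X|\leq C_\star\delta/3$ on $\mc{M}_e$, hence $a^X(t)(y)\leq e^{C_\star\delta t/3}$ whenever the orbit segment $\{\varphi_s^X(y):s\in[0,t]\}$ stays in $\mc{M}_e$. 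The damping factor $e^{-\omega\int_0^t(\varphi_s^X)^*q\,\dd s}\leq 1$ always, and it is exponentially small ($\leq e^{-\omega t/2}$, say) precisely when the orbit spends a definite fraction of its time in $\{q=1\}$, i.e. outside a neighbourhood of $\mc{M}$.

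The heart of the matter is the symbol ratio $\langle e^{t\X}(y,\xi)\rangle^{m_{\mathbf r}(e^{t\X}(y,\xi))}/\langle\xi\rangle^{m_{\mathbf r}(y,\xi)}$. Here $m_{\mathbf r}=r_\bot m-r_0$ with $m$ the uniform escape function of Proposition \ref{proposition:escape}, which is $0$-homogeneous, decreasing along $\X$ (so $m(e^{t\X}\zeta)\leq m(\zeta)$), equals $+1$ near $(E_-^X)^*$ and $-1$ near $(E_+^X)^*$, and has support in a small conic neighbourhood of $(E_\pm^X)^*$ over $\mc{M}$. The standard argument (cf. \cite[Lemma 2.11]{Dyatlov-Guillarmou-16}, \cite[Appendix A]{Faure-Roy-Sjostrand-08}) is a three-region analysis in $(y,\xi)\in\Omega\cap T^*\mc{M}_e$: where both $\zeta$ and $e^{t\X}\zeta$ lie in $U_+$ (resp. $U_-$) one uses Lemma \ref{lemma:expansion}, $\langle e^{t\X}\zeta\rangle\geq Ce^{\lambda t}\langle\xi\rangle$, together with $m\equiv -1$ near $(E_+^X)^*$ (resp. $m\equiv +1$ near $(E_-^X)^*$), so that the ratio behaves like $(Ce^{\lambda t})^{-r_\bot-r_0}$ (resp. one uses that the exponent $m_{\mathbf r}\leq r_\bot-r_0$ stays bounded while $\langle e^{t\X}\zeta\rangle$ may be small); in the transitional region one exploits Lemma \ref{lemma:time}, which guarantees the orbit spends all but a bounded time $T$ inside $U_+\sqcup U_-$, so only a bounded factor $C(\mathbf r)e^{CT}$ is lost. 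Combining: for orbits staying in $\mc{M}_e$ the ratio is $\leq C(\mathbf r)e^{-\Lambda(r_\bot-r_0)t}$ for some universal $\Lambda>0$ (one can take $\Lambda=\lambda$, and $r_\bot+r_0\geq r_\bot-r_0$ gives the cleaner form), absorbing the $e^{C_\star\delta t/3}$ from $a^X$. For orbits that leave $\mc{M}_e$: either the orbit leaves $\{\rho>-\rho_0\}$ forever — then $E$ and the flow kill the contribution after a bounded time since the support of $E$ near $\{\rho=-\rho_0\}$ avoids $\{\widetilde X_0\rho=0\}$ and the flow pushes the footpoint out — or it re-enters, in which case the damping factor $e^{-\omega\int_0^t q}$ contributes $e^{-c\omega t}$, and choosing $\omega=\omega(\mathbf r)$ large enough dominates the (at most bounded-exponent, hence at most polynomially growing in $t$ after the non-convex correction) symbol ratio. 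Smoothness of $(t,X)\mapsto(M^X(t),S^X(t))$ follows from smooth dependence of all the FIO/$\Psi$DO data (flow, Jacobian, symbols) on $(t,X)$, exactly as in the smooth dependence part of the parametrix construction; the splitting $M^X=a^X(t)^{-1}\cdot(\text{quantization of }\sigma)$ and $S^X$ the Calderón–Vaillancourt compact remainder then gives the claim, using Lemma \ref{lemma:unitary} that $e^{-tX}(a^X(t))^{-1}$ is an $L^2$-isometry so that $\|M^X(t)\|_{L^2}$ is governed by $\sup|\sigma|$.

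The main obstacle is making the three-region / transitional analysis of the symbol ratio genuinely \emph{uniform} in $X$ with all constants depending only on the fixed escape function $m$ and on $r_\bot,r_0$ (and on $\delta$ only through the benign $e^{C_\star\delta t}$). This is where one must carefully invoke the uniform cone-contraction Lemmas \ref{lemma:expansion}–\ref{lemma:time}, whose neighbourhoods $U_\pm,V_\pm$ and times $T$ were built to work simultaneously for all $X\sim X_0$, and the structural stability Proposition \ref{theorem:stability} to ensure $(E_\pm^X)^*$ sit inside the regions where $m=\pm1$. A secondary subtlety is handling the \emph{non-convex correction}: near $\{\rho=-\rho_0\}$ the flow is not volume preserving and $m_{\mathbf r}$ need not be monotone along $\X$ in the full tangential/non-convex directions, but the support conditions (4),(5) in Proposition \ref{proposition:escape} on $m$ confine this issue to a region handled by the $q$-damping with $\omega$ large, so no new phenomenon arises. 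Once these uniformities are in place, the exponent $-\Lambda(r_\bot-r_0)+C_\star\delta$ emerges by collecting the gains and losses, completing the proof of Lemma \ref{lemma:bound-symbol}.
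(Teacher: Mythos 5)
Your proposal is correct and follows essentially the same approach as the paper's proof: compute the principal symbol via Lemma \ref{lemma:order-zero}, exploit its cocycle structure to split flowlines into segments, bound the segments in $\mc{M}_e$ by the uniform cone contraction (Lemmas \ref{lemma:expansion}–\ref{lemma:time}) combined with the monotonicity and support properties of the escape function $m$, bound the segments in $\{q=1\}$ by choosing $\omega(\mathbf{r})$ large, absorb the Jacobian factor via $a^X(t)\leq e^{C_\star\delta t}$ on $\mc{M}_e$, and apply Calderón--Vaillancourt to split off the compact part; the smooth dependence in $(t,X)$ comes from smooth dependence of the FIO/$\Psi$DO data. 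Two small corrections: the paper takes $\Lambda=\min(1,\lambda)$ rather than $\Lambda=\lambda$, so that the $\{q=1\}$ rate $e^{-(r_\bot+r_0)t}$ from Lemma \ref{lemma:bound-symbol1} also dominates $e^{-\Lambda(r_\bot-r_0)t}$ when gluing segments, and by Proposition \ref{proposition:escape}(6) the escape function $m$ (hence $m_{\mathbf{r}}$) \emph{is} monotone along $\X$ everywhere including near $\{\rho=-\rho_0\}$, so your worry about a ``non-convex correction'' to monotonicity is unfounded — the issue near $\{\rho=-\rho_0\}$ is solely the loss of volume preservation, which, as you note, the $\omega q$-damping compensates.
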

  \begin{proof}
For $(y,\xi) \in T^*\mc{N}$, we have by Lemma \ref{lemma:order-zero}:
 \begin{equation}
 \label{equation:principal}
 \sigma_{a^X(t) E B^X_1(t)B^X_2(t)}(y,\xi) = e(y,\xi) \exp\left(\int_0^t \left(\frac{1}{2}\Div_\mu X- \omega q\right)(e^{sX}(y)) \dd s\right) \dfrac{\langle e^{t\X}(y,\xi) \rangle^{m_{\mathbf{r}}(e^{t\X}(y,\xi))}}{\langle \xi \rangle^{m_{\mathbf{r}}(y,\xi)}}.
 \end{equation}
Modulo the term $e(y,\xi) \leq 1$ which we can neglect, this is a \emph{cocycle} over the flow of $\X$ as it satisfies the relation
 \begin{equation}
 \label{equation:cocycle}
 \sigma_{B^X_1(t')B^X_2(t')}(e^{t\X}(y,\xi))\sigma_{B^X_1(t)B^X_2(t)}(y,\xi) = \sigma_{B^X_1(t'+t)B^X_2(t'+t)}(y,\xi),
 \end{equation}
 for all $t,t' \in \R$.
 
First, we need the following:
\begin{lemma}
 \label{lemma:bound-symbol1}
For all regularity pairs $\mathbf{r}=(r_\bot,r_0)$, there exist constants $C(\mathbf{r}),\omega(\mathbf{r}) > 0$ such that for all $(y,\xi) \in T^*\mc{N}$, $\omega > \omega(\mathbf{r})$, and for all $t \geq 0$:
\[
\left\{e^{s\X}(y,\xi) ~|~ s \in [0,t]\right\} \subset \pi^{-1}\left(\left\{q=1\right\}\right) \implies \limsup_{(y,\xi) \in T^*\mc{N}, |\xi| \to \infty}  \sigma_{a^X(t) E B^X_1(t)B^X_2(t)}(y,\xi)  \leq C(\mathbf{r}) e^{-rt}, 
\]
where $r:=r_\bot+r_0$.
\end{lemma}

\begin{proof}
Define $\nu := \sup_{\|X-X_0\|_{C^2} \leq \delta} \|\Div_\mu X\|_{L^\infty(\mc{N})}$. We have, if $q(\varphi_s(x))=1$ for $s\in [0,t]$,
\[
\begin{split}
\sigma_{a^X(t) E B_1(t)B_2(t)}(y,\xi) & \leq e^{\nu t} e^{-\omega t} \dfrac{\langle e^{t\X}(y,\xi) \rangle^{m_{\mathbf{r}}(e^{t\X}(y,\xi))}}{\langle \xi \rangle^{m_{\mathbf{r}}(y,\xi)}} \\
& = e^{(\nu- \omega) t} \langle e^{t\X}(y,\xi)\rangle^{m_{\mathbf{r}}(e^{t\X}(y,\xi))-m_{\mathbf{r}}(y,\xi)} \left(\dfrac{\langle e^{t\X}(y,\xi)\rangle}{\langle \xi \rangle}\right)^{m_{\mathbf{r}}(y,\xi)}.
\end{split}
\]
By construction, $m_{\mathbf{r}}$ is non-increasing along the flow lines of $\X$ outside a neighborhood of the $0$-section in $T^*\mc{N}$, see Proposition \ref{proposition:escape}, item (6). This implies that
\[
\limsup_{(y,\xi) \in T^*\mc{N}, |\xi| \to \infty} ~~ \langle e^{t\X}(y,\xi)\rangle^{m_{\mathbf{r}}(e^{t\X}(y,\xi))-m_{\mathbf{r}}(y,\xi)} \leq 1.
\]
Moreover, there exist a uniform exponent $\lambda > 0$ and $C > 0$ (depending only on $X_0$) such that for all $X \sim X_0$, for all $t \geq 0, (y,\xi) \in T^*\mc{N}$, one has:
\begin{equation}
\label{equation:uniform-bound}
\langle e^{t\X}(y,\xi) \rangle \leq Ce^{\lambda t}\langle \xi \rangle.
\end{equation}
Using \eqref{equation:uniform-bound} and taking the $\limsup$ as $|\xi| \to \infty$, we then obtain:
\[
\limsup_{(y,\xi) \in T^*\mc{N}, |\xi| \to \infty}  \sigma_{a^X(t) E B^X_1(t)B^X_2(t)}(y,\xi) \leq C(\mathbf{r}) e^{(\nu- \omega+r\lambda) t}.
\]
Taking $\omega(\mathbf{r}) := \nu + r + r\lambda$, we obtain the announced result.
\end{proof}

From now on, given a regularity pair $\mathbf{r}$, the constant $\omega$ in \eqref{equation:identite2} will always be taken to be fixed, equal to $\omega := \omega(\mathbf{r}) > 0$ provided by Lemma \ref{lemma:bound-symbol1}. Next we need the following:
\begin{lemma}
 \label{lemma:bound-symbol2}
There exists $C_\star,\Lambda_1 > 0$ such that the following holds. For all regularity pairs $\mathbf{r}$, there exists a constant $C(\mathbf{r}) > 0$ such that for all $X$ with $\|X-X_0\|_{C^2} \leq \delta$, $(y,\xi) \in T^*\mc{N}$, for all $t \geq 0$:
\[
(y,\xi), e^{t \X}(y,\xi) \in T^*\mc{M}_e \implies \limsup_{|\xi| \to \infty} \sigma_{a^X(t) E B^X_1(t)B^X_2(t)}(y,\xi) \leq C(\mathbf{r}) e^{\left(-\Lambda_1 (r_\bot-r_0) + C_\star \delta\right) t}.
\]
\end{lemma}

\begin{proof}
We start with a preliminary observation: there exists a constant $C_\star > 0$ such that if $\|X-X_0\|_{C^2(\mc{M},T\mc{M})} \leq \delta$ and $y, \varphi_t^{X}(y) \in \mc{M}_e$, then 
\begin{equation}
\label{equation:div}
a^X(t)(y) \leq e^{C_\star \delta t}.
\end{equation}
This simply follows from the fact that $X_0$ is volume preserving on $\mc{M}_e$ (that is $a^{X_0}(t)=1$).

We now consider the sets $U_\pm$ given by Lemma \ref{lemma:expansion}. These sets can always be constructed so that $U_\pm \subset \left\{m=\pm 1\right\}$. We also consider the sets $V_\pm$ given by Lemma \ref{lemma:contraction}. Denote by $T > 0$ the time provided by Lemma \ref{lemma:time}. If $t \leq T$, namely if the time is uniformly bounded, then the claim is immediate as $a^X(t) E B^X_1(t)B^X_2(t)$ is of order $0$ by Lemma \ref{lemma:order-zero} and depends continuously on time. If $t \geq T$, and $(y,\xi), e^{t \X}(y,\xi) \in T^*\mc{M}_e \cap \WF(E)$, then the flow line $\left\{e^{s\X}(y,\xi) ~|~ s \in [0,t]\right\}$ passes at least a time $t-T$ in $U_+ \sqcup U_-$. We can thus introduce $0 \leq s_0 < s_1 \leq t$ such that for all $s \in [0,s_0], e^{s\X}(y,\xi) \in U_-$ and for all $s \in [s_1,t], e^{s\X}(y,\xi) \in U_+$ and we have $s_0 + (t-s_1) \geq t-T$. Hence, using the cocycle relation \eqref{equation:cocycle} and that $\sigma_E \in [0, 1]$:
\begin{equation}
\label{equation:split}
\begin{split}
\sigma_{a^X(t) EB_1^X(t)B_2^X(t)}(y,\xi) \leq&  \sigma_{a^X(t-s_1) B_1^X(t-s_1)B_2^X(t-s_1)}\left(e^{s_1\X}(y,\xi)\right) \\
& \hspace{2cm}  \cdot \sigma_{a^X(s_1-s_0) B_1^X(s_1-s_0)B_2^X(s_1-s_0)}\left(e^{s_0 \X}(y,\xi)\right) \\
& \hspace{2cm} \cdot \sigma_{a^X(s_0) B_1^X(s_0)B_2^X(s_0)}(y,\xi).
\end{split}
\end{equation}
Note that it suffices to bound the terms on the right hand side of \eqref{equation:split} on $\WF(E)$, that is, on a conic neighbourhood of $\cup_{X \sim X_0} \left\{p_X = 0\right\}$, since otherwise $\sigma_E = 0$ and the symbol on the left hand side vanishes. 

Since $s_1-s_0 \leq T$ (independent of $t$) and $\sigma_{B_1^X(t)B_2^X(t)} \in \Psi^0_{\varrho}(\mc{N})$ for all $t \geq 0$ by Lemma \ref{lemma:order-zero}, we get that the middle term in \eqref{equation:split} is bounded uniformly by some constant, that is,
\begin{equation}
\label{equation:uzero}
\sigma_{a^X(s_1-s_0) B_1(s_1-s_0)B_2(s_1-s_0)}\left(e^{s_0 \X}(y,\xi)\right) \leq C(\mathbf{r})
\end{equation}
for some $C(\mathbf{r}) > 0$ which is independent of the point $(y,\xi) \in T^*\mc{N}$ and of the time $t$. As to the third factor in \eqref{equation:split}, we have, using that $m_{\mathbf{r}}=r_\bot-r_0$ on $U_-$, that $q$ vanishes in $\mc{M}$, and \eqref{equation:div}
\begin{equation}
\label{equation:contr}
\begin{split}
 \sigma_{a^X(s_0)B_1(s_0)B_2(s_0)}(y,\xi)  & \leq e^{C_\star \delta s_0} e^{-\int_0^{s_0}   \omega(\mathbf{r}) q(e^{sX}(y)) \dd s} \dfrac{\langle e^{s_0 \X}(y,\xi) \rangle^{m_{\mathbf{r}}(e^{s_0\X}(y,\xi))}}{\langle \xi \rangle^{m_{\mathbf{r}}(y,\xi)}} \\
 &  \leq C(\mathbf{r}) e^{C_\star \delta s_0}  \left(\dfrac{\langle e^{s_0 \X}(y,\xi) \rangle}{\langle \xi \rangle}\right)^{r_\bot-r_0}.
 \end{split}
\end{equation}
Using the uniform contraction rate on $U_-$ of Lemma \ref{lemma:expansion}, we get that $|e^{s_0 \X}(y,\xi)| \leq C e^{-\lambda s_0} |\xi|$, for some uniform constants $C, \lambda > 0$ depending only on $X_0$. Taking the $\limsup$ as $|\xi| \to \infty$ in \eqref{equation:contr}, we thus obtain
\begin{equation}
\label{equation:umoins}
\limsup_{|\xi| \to \infty}  \sigma_{a^X(s_0) B_1(s_0)B_2(s_0)}(y,\xi) \leq C(\mathbf{r}) e^{C_\star \delta s_0} e^{-\lambda  s_0 (r_\bot-r_0)}.
\end{equation}
Similarly, using the expansion rate on $U_+$ of Lemma \ref{lemma:expansion} and that $m_{\mathbf{r}} = -r_\bot-r_0$ on $U_+$, the first term in \eqref{equation:split} can be bounded by
\begin{equation}
\label{equation:uplus}
\limsup_{|\xi| \to \infty} \sigma_{a^X(t-s_1) B_1(t-s_1)B_2(t-s_1)}\left(e^{s_1\X}(y,\xi)\right) \leq C(\mathbf{r}) e^{C_\star \delta (t-s_1)} e^{-\lambda (t-s_1) (r_\bot+r_0)}.
\end{equation}
Taking $\Lambda_1 := \lambda$, and combining \eqref{equation:uzero}, \eqref{equation:umoins}, \eqref{equation:uplus} in \eqref{equation:split} completes the proof.
\end{proof}

We can now end the proof of Lemma \ref{lemma:bound-symbol}. Given $(y,\xi) \in T^*\mc{N}$, the flowline of $(y,\xi)$ under $e^{t\X}$ can be schematically described by one of the five following possibilities:
\begin{align}
& \left\{q=1\right\}, \label{equation:trivial1} \\
& \mc{M}_e\label{equation:stayinMe},\\
 &\left\{q = 1\right\} \to \left\{0 < q < 1\right\} \to \left\{q=1\right\}, \label{equation:trivial2}\\
&\left\{q = 1\right\} \to \left\{0 < q < 1\right\} \to \mc{M}_e, \label{equation:dur1}\\
& \mc{M}_e \to  \left\{0 < q < 1\right\} \to \left\{q = 1\right\}, \label{equation:dur2} \\
&\left\{q = 1\right\} \to \left\{0 < q < 1\right\} \to  \mc{M}_e \to  \left\{0 < q < 1\right\} \to \left\{q = 1\right\}.\label{equation:dur3}
\end{align}
Note that, for any flow line, there is a maximum time, bounded by some uniform constant $T_\star > 0$ spent in the region $\left\{0 < q < 1\right\}$. As a consequence, if the flowline of $(y,\xi)$ falls in one of the cases \eqref{equation:trivial1}, \eqref{equation:trivial2}, we get, using the cocycle relation \eqref{equation:cocycle} and Lemma \ref{lemma:bound-symbol1}:
\[
\limsup_{|\xi| \to \infty} \sigma_{a^X(t)EB_1(t)B_2(t)}(y,\xi) \leq C(\mathbf{r}) e^{-r t}.
\]
As to  \eqref{equation:stayinMe}, \eqref{equation:dur1}, \eqref{equation:dur2}, the bound will be obtained similarly to the bound for \eqref{equation:dur3}, which we now study.

So we assume that the flowline $\gamma$ of $(y,\xi)$ under $e^{t\X}$ passes successively through the five sets of \eqref{equation:dur3}. Define the times $s_{0}, s_1 \geq 0$ such that
\[
\begin{split}
\forall s \in [0,s_0], ~~~\varphi_s^X(y)& \in \left\{q=1\right\},  \forall s \in [s_0,s_1], ~~~\varphi_s^X(y) \in \left\{q < 1\right\} \cup \mc{M}_e, \\
& \forall s \in [s_1,t], ~~~\varphi_s^X(y) \in \left\{q=1\right\}.
\end{split}
\]
Combining the cocycle relation \eqref{equation:cocycle} and Lemmas \ref{lemma:bound-symbol1}, \ref{lemma:bound-symbol2}, we get on $\WF(E)$:
\[
\begin{split}
& \limsup_{|\xi| \to \infty} \sigma_{a^X(t)E B_1(t)B_2(t)}(y,\xi) \\
&  \leq \limsup_{|\xi| \to \infty}  \sigma_{a^X(t-s_1) B_1(t-s_1)B_2(t-s_1)}\left(e^{s_1\X}(y,\xi)\right)  \\
& \cdot \limsup_{|\xi| \to \infty}  \sigma_{a^X(s_1-s_0) B_1(s_1-s_0)B_2(s_1-s_0)} \left(e^{s_0 \X}(y,\xi)\right)   \cdot \limsup_{|\xi| \to \infty}  \sigma_{a^X(s_0)B_1(s_0)B_2(s_0)}(y,\xi) \\
& \leq C_r e^{-r(t-s_1)} \cdot C_r e^{(-(r_\bot-r_0) \Lambda_1 + C_\star \delta) (s_1-s_0)} \cdot C_r e^{-r s_0}  \leq C_r e^{(-(r_\bot-r_0) \Lambda + C_\star \delta)t},
\end{split}
\]
by taking $\Lambda := \min(1,\Lambda_1)$. This concludes the proof.
\end{proof}

We now complete the proof of Proposition \ref{proposition:crucial-bound}.

\begin{proof}[Proof of Proposition \ref{proposition:crucial-bound}]
Write
\[
e^{-tX} E B_1(t) B_2(t) = e^{-tX} (a^X(t))^{-1} a^X(t) E B_1(t) B_2(t).
\]
By Lemma \ref{lemma:unitary}, $e^{-tX} (a^X(t))^{-1} \in \mc{L}(L^2(\mc{N}))$ is unitary. By Lemma \ref{lemma:bound-symbol}, $a^X(t) E B_1(t) B_2(t)$ is a pseudodifferential operator of order $0$ such that
\[
\limsup_{(y,\xi) \in T^*\mc{N},|\xi| \to \infty} \sigma_{a^X(t) E B_1(t) B_2(t)}(y,\xi) \leq C(\mathbf{r}) e^{\left(-(r_\bot-r_0)\Lambda + C_\star \delta \right)t}.
\]
By the Calderón-Vaillancourt Theorem \cite[Theorem 4.5]{Grigis-Sjostrand-94} for pseudodifferential operators, we can thus write
\[
a^X(t) E B_1(t) B_2(t) = M^X_0(t) + S^X_0(t),
\]
where $M^X_0(t)$ is a pseudodifferential operator of order $0$ and $S^X_0(t)$ is smoothing and
\[
\|M^X_0(t)\|_{\mc{L}(L^2(\mc{N}))} \leq 2 C(\mathbf{r}) e^{\left(-(r_\bot-r_0)\Lambda + C_\star \delta \right)t}.
\]
Moreover, it is straightforward to check that these operators can be constructed so that they depend smoothly on the parameters $t \in \R$ and $X \in C^\infty(\mc{M},T\mc{M})$ as $a^X(t),B_1(t),B_2(t)$ depend in an explicit (and smooth) fashion on $X$, and the decomposition in the Calderón-Vaillancourt Theorem depends smoothly on the operator. As a consequence, setting $M^X(t) := e^{-tX} (a^X(t))^{-1} M^X_0(t)$ and $S^X(t) := e^{-tX} (a^X(t))^{-1} S^X_0(t)$, we have
\[
e^{-tX} E B_1(t) B_2(t) = M^X(t) + S^X(t),
\]
and this concludes the proof.
\end{proof}

\subsubsection{Meromorphic extension on the closed manifold}

We now prove Theorem \ref{theorem:meromorphic}. 
\begin{proof}[Proof of Theorem \ref{theorem:meromorphic}]
\emph{Step 1: meromorphic extension.} Fix ${\mathbf{r}}=(r_\bot,r_0)$ with $r_\bot > r_0$, and consider $z \in \C$ such that $\Re(z) > -\Lambda (r_\bot-r_0) + C_\star \delta$. By Proposition \ref{proposition:crucial-bound}, we can consider a time $T > 0$ large enough, depending on ${\mathbf{r}}$, so that: 
\begin{equation}
\label{equation:condition-time}
\forall t \geq T, ~~~~  e^{-\Re(z)t} \|M^X(t)\|_{\mc{L}(L^2(\mc{N}))} < 1/6.
\end{equation}

Using \eqref{equation:identite2} and \eqref{equation:identite3}, we thus obtain:
\[
\int_0^{+\infty} \chi_T'(t) e^{-tz} e^{-tX} B^X_1(t) B^X_2(t) ~\dd t = B^X(z) + K^X(z),
\]
where
\[
\begin{split}
B^X(z) := \int_0^{+\infty} \chi_T'(t) e^{-tz} M^X(t)  \dd t,
\end{split}
\]
and $K^X(z) \in \Psi^{-\infty}(\mc{N})$ is the remainder. It is immediate to check that both $B^X(z)$ and $K^X(z)$ depends holomorphically on $z$ and smoothly on $X \in C^\infty(\mc{M},T\mc{M})$ as operators in $\mc{L}(L^2(\mc{N}))$.

Using that $\|\chi'_T\|_{L^\infty}\leq 2$, we get:
\begin{equation}
\label{equation:key-bound}
\begin{split}
\|B^X(z)\|_{\mc{L}(L^2(\mc{N}))} & \leq 2 \int_T^{T+1} e^{-\Re(z) t}\|M^X(t)\|_{\mc{L}(L^2(\mc{N}))}  ~\dd t \leq 1/3 < 1.
\end{split}
\end{equation}
The equality \eqref{equation:identite2} then reads:
\begin{equation}\label{eq:identite3}
\begin{split}
- A_{\mathbf{r}} \int_0^{+\infty} \chi_T(t) e^{-tz}  e^{- \int_0^t (\varphi^X_{-s})^*(\omega q)~ \dd s}   e^{-tX} A_{\mathbf{r}}^{-1} \dd t ~~~\underbrace{A_{\mathbf{r}}(-X-z-\omega q)A_{\mathbf{r}}^{-1}}_{=: -P^X - z} \\
 = \mathbbm{1} + B^X(z) + K^X(z),
\end{split}
\end{equation}
and $\mathbbm{1}+B^X(z)$ is invertible while $K^X(z)$ is compact. Moreover, for $\Re(z) \gg 1$, $\mathbbm{1} + B^X(z) + K^X(z)$ is invertible on $\mc{L}(L^2(\mc{N}))$ since the $L^2$-norm of $B^X(z) + K^X(z)$ is exponentially decaying as $\Re(z) \to +\infty$. By the Fredholm analytic theorem \cite[Theorem D.4]{Zworski-12}, we deduce that
\[
z \mapsto (\mathbbm{1} + B^X(z) + K^X(z))^{-1} \in \mc{L}(L^2(\mc{N}))
\]
is a meromorphic family of operators on $\left\{\Re(z) > -\Lambda (r_\bot-r_0) + C_\star \delta \right\}$. Equivalently, 
\[
z \mapsto -X-z-\omega(\mathbf{r}) q,
\]
is a holomorphic family of Fredholm operators\footnote{Note that this is an unbounded family of operators. Since Fredholm operators are continuous by definition, one has to consider the operators on their domain $\mc{D}(\mc{H}^{\mathbf{r}}_+) := \left\{ f \in \mc{H}^{\mathbf{r}}_+ ~|~ Xf \in \mc{H}^{\mathbf{r}}_+\right\}$.} of index $0$ on the anisotropic space $\mc{H}^r_+(\mc{N})$ that is invertible for $\Re(z) \gg 1$. Thus
\[
z \mapsto R_-^X(z) = (-X-z-\omega(\mathbf{r}) q)^{-1} \in \mc{L}(\mc{H}^{\mathbf{r}}_+)
\]
is a meromorphic family of operators on $\left\{\Re(z) > -\Lambda (r_\bot-r_0) + C_\star \delta \right\}$. This proves the first part of the theorem; we next study the dependence in $X$ and $z$.

\medskip
\emph{Step 2: continuity of resonances.} Assume $z_0$ is not a pole of $z \mapsto R^{X_0}(z)$ and furthermore that it does not have any poles in the closed disk $D(z_0, \varepsilon_0) \subset \mathbb{C}$ (since the resolvent is meromorphic, such $\varepsilon_0 > 0$ exists). We first show that for $X$ sufficiently close to $X_0$ in $C^N$ for some $N$ large enough, the map $z \mapsto R^X(z)$ does not have any poles in $D(z_0, \varepsilon_0)$. Let $z \in D(z_0, \varepsilon_0)$; we will use the identity \eqref{eq:identite3}. We first claim that we may pick the cutoff function $\chi$ suitably and $T$ sufficiently large such that 
\[\ker(\mathbbm{1} + B^X(z) + K^X(z))|_{L^2} = 0.\]
Note that, as we will see below, this kernel could be non-zero even if $z$ is not a resonance of $-X - q\omega$; we will show that generically this does not happen. We will argue by assuming that there is non-zero $u \in L^2(\mc{N})$ such that $(\mathbbm{1} + B^X(z) + K^X(z))u = 0$. Since $K^X(z) \in \Psi^{-\infty}(\mc{N})$, we get
\[(\mathbbm{1} + B^X(z)) u \in C^\infty(\mc{N}) \subset \mc{D}({L^2}) = \{f \in L^2(\mc{N}) \mid Xf \in L^2(\mc{N})\},\] 
and since $\mathbbm{1} + B^X(z)$ is invertible on $\mc{D}(L^2)$ (and on $L^2(\mc{N})$, by construction), we conclude that $u \in \mc{D}(L^2)$. Since $P^X + z$ commutes with $\mathbbm{1} + B^X(z) + K^X(z)$, we have that $P^X + z$ acts on $\ker(\mathbbm{1} + B^X(z) + K^X(z))|_{L^2}$, which is a finite dimensional space by the Fredholm property shown above. Therefore, we can pick $u$ such that $(P^X + z + \lambda) u = 0$ for some $\lambda \in \mathbb{C}$; by assumption, we have $\lambda \neq 0$. Write $u = A_{\mathbf{r}} v$ for some $v \in \mc{H}^{\mathbf{r}}_+$. This implies
\[e^{-tX} v  = e^{(z + \lambda)t} e^{\int_0^t (\varphi_{-s}^X)^*(q\omega)\, ds} v, \quad \forall t \in \mathbb{R},\]
and hence
\begin{align*}
	0 = (\mathbbm{1} + B^X(z) + Q^X(z))u &= -A_{\mathbf{r}} \Big(\mathbbm{1} + \int_0^{+\infty} \chi_T'(t) e^{-tz} e^{- \int_0^t (\varphi^X_{-s})^*(q \omega ) ~\dd s} e^{-tX} \dd t\Big) v\\ 
	&= -\Big(1 +\underbrace{\int_T^{T + 1} \chi_T'(t) e^{\lambda t}\,dt}_{F(\chi_T, \lambda) := }\Big) u.
\end{align*}
If $\Re(\lambda) \leq 0$, the integral in the last equality can be bounded by $\|\chi_T'\|_{C^0} e^{T \Re(\lambda)}$; then
\begin{equation}\label{eq:loc1}
	\|\chi_T'\|_{C^0} e^{T \Re(\lambda)} < 1 \iff \Re(\lambda) < -\frac{1}{T}\log(\|\chi_T'\|_{C^0}).
\end{equation}
Moreover, integrating by parts once we have
\[\int_T^{T + 1} \chi_T'(t) e^{\lambda t}\,dt = -\frac{1}{\lambda} \int_T^{T + 1} \chi''_T(t) e^{\lambda t}\, dt\]
which is in absolute value bounded by $\frac{1}{|\lambda|} \|\chi_T''\|_{C^0} e^{(T + 1)|\Re(\lambda)|}$. Then
\begin{equation}\label{eq:loc2}
	\frac{1}{|\lambda|} \|\chi_T''\|_{C^0} e^{(T + 1)|\Re(\lambda)|} < 1 \iff |\Re(\lambda)| < \frac{\log |\lambda| - \log \|\chi_T''\|_{C^0}}{T + 1}.
\end{equation}
Using \eqref{eq:loc1} and taking $T$ large enough (changing $\chi_T$ in such a way that $\chi_T|_{[T, T+1]}$ is the same as before after a translation), we conclude $1 + F(\chi_T, \lambda)$ has no zeroes (in $\lambda$) in $\{\Re(\la) > -\kappa\}$ where $\kappa = \kappa(T) > 0$ can be chosen arbitrarily small; we conclude that $z + \lambda$ is a resonance of $-X - q\omega$. Using additionally \eqref{eq:loc2}, we conclude that $z + \lambda$ belongs to a finite set of resonances $\mc{S} \subset \mathbb{C}$ of $-X - q\omega$ (in the regions defined by \eqref{eq:loc1} and \eqref{eq:loc2}; note that there are no resonances with sufficiently large real part). Observe that the set $\mc{S}$ depends only on $T$, $\|\chi_T'\|_{C^0}$, and $\|\chi''_T\|_{C^0}$. Enumerate elements of the set $\mc{S} - z$ by $\lambda_1, \dotsc, \lambda_k$ for some $k \geq 0$.  

We now perturb $\chi_T$ by considering $\chi_T + s \eta_T$, where $\eta_T \in C_c^\infty((T, T + 1))$ is a smooth cutoff function and $s \in \mathbb{R}$ is small in absolute value. Assume $F(\chi_T, \lambda) = -1$ and $\Re(e^{i\Im(\lambda) t})$ to be positive on an interval $(T_1, T_2) \subset (T, T + 1)$ (we argue similarly if it is negative), where $\lambda \in \mc{S} - z$. Taking $\eta \neq 0$ to be non-negative and supported on $(T_1, T_2)$, there is an $s > 0$ small enough such that 
\[1 + F(\chi_T + s\eta, \lambda) = - \lambda s \int_T^{T + 1} \eta(t) e^{\lambda t}\, dt \neq 0.\] 
Arguing inductively, we ensure that $F(\widetilde{\chi}_T, \lambda_i) \neq -1$ for $i = 1, \dotsc, k$ for some new cutoff function $\widetilde{\chi}_T$ (satisfying all the previously set out conditions of $\chi_T$). We conclude that $\ker(\mathbbm{1} + B^X(z) + K^X(z))|_{L^2} = \{0\}$ with these new choices of $T$ and $\chi_T$, proving the claim.

As previously explained, since $B^X(z') + K^X(z')$ depend continuously on $X$ and $z'$ in $\mc{L}(L^2)$, there is an $\varepsilon(z) > 0$ small enough such that for $\|X - X_0\|_{C^N} < \varepsilon(z)$ and $|z - z'| < \varepsilon(z)$, we have $\mathbbm{1} + B^X(z) + K^X(z)$ invertible on $L^2$ (since it has empty kernel and is Fredholm of index zero). This implies that there are no resonances in $D(z, \varepsilon(z))$ for $z \in D(z_0, \varepsilon_0)$. By compactness of $D(z_0, \varepsilon_0)$, we conclude that there is an $\varepsilon > 0$ such that there are no resonances in $D(z_0, \varepsilon_0)$ for $\|X - X_0\|_{C^N} < \varepsilon$, proving the originally sought claim.\footnote{A different proof of this step can be found in \cite{Bonthonneau-20}.}

\medskip
\emph{Step 3: smoothness of the resolvent.} Now, using the following resolvent identity valid for $z \in D(z_0, \varepsilon_0)$ and $X$ close to $X_0$ in $C^N$
\begin{equation*}
	R_-^X(z)-R_-^{X'}(z) = R_-^{X'}(z)(X-X')R_-^{X}(z),
\end{equation*}
we obtain that $X \mapsto R^X_-(z)$ is twice differentiable in $X$, uniformly in $z \in D(z_0,\eps_0)$, with
 \begin{align}
 & \label{equation:close-diff1}
\partial_X (R^X_-(z)).Y = R^X_-(z) Y R^X_-(z), \\
& \label{equation:close-diff2} \partial^2_X(R^X_-(z)).(Y,Y') = R^X_-(z) Y' R^X_-(z)  Y R^X_-(z) + R^X_-(z) Y R^X_-(z)  Y' R^X_-(z).
\end{align}
where $Y,Y' \in C^\infty(\mc{N},T\mc{N})$.

Using the first part of Theorem \ref{theorem:meromorphic}, namely the boundedness of $R_-^X(z)$ on the spaces $\mc{H}^{\mathbf{r}}_+$ for $X$ close to $X_0$ in $C^2$-norm, we deduce that the first derivative \eqref{equation:close-diff1} is bounded as a map
\[
\mc{H}^{(r_\bot,r_0)}_+ \overset{R_-^{X}(z)}{\longrightarrow} \mc{H}^{(r_\bot,r_0)}_- \overset{Y}{\longrightarrow} \mc{H}^{(r_\bot,r_0+1)}_-  \overset{R_-^{X}(z)}{\longrightarrow} \mc{H}^{(r_\bot,r_0+1)}_-,
\]
and similarly the second derivative \eqref{equation:close-diff2} is bounded as a map $\mc{H}^{(r_\bot,r_0)}_- \to \mc{H}^{(r_\bot,r_0+2)}_-$, and this holds for all $X$ close enough to $X_0$ in the $C^N$-topology, with $N \gg 1$ large enough, and for all $z \in D(z_0,\eps_0)$. Moreover, the dependence on $z$ in \eqref{equation:close-diff1} and \eqref{equation:close-diff2} is holomorphic. This completes the proof of Theorem \ref{theorem:meromorphic}.
\end{proof}

\subsection{Smoothness of the scattering map with respect to the metric} 

\label{ssection:final}

The goal of this paragraph is to prove Proposition \ref{regularityR}. We start with the following:

\begin{lemma}
\label{lemma:res}
If $R_g$ and $R_{g_e}$ are the resolvents defined in \eqref{defofRg} for $(M,g)$ and $(M_e,g_e)$, we have for 
$X=\psi \widetilde{X}_g$ defined in \S \ref{sssection:extension} that, for all $z \in \mathbb{C}$
\[
R_g(z) = \mathbf{1}_{\mc{M}} R_+^{X}(z) \mathbf{1}_{\mc{M}}, \quad R_{g_e}(z) = \mathbf{1}_{\mc{M}_e} R_+^{X}(z) \mathbf{1}_{\mc{M}_e},
\]
when acting respectively on $C_{\comp}^\infty(\mc{M}^\circ)$ and $C_{\comp}^\infty(\mc{M}_e^\circ)$.
\end{lemma}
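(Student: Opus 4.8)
\textbf{Plan of proof for Lemma \ref{lemma:res}.}

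The plan is to compare the two explicit integral formulas for the resolvents. Recall that for $\Re(z)\gg1$ one has the definition \eqref{defofRg} on $\mc{M}$, namely $R_g(z)f(y)=-\int_0^{\tau_g(y)}e^{-zt}f(\varphi_t^g(y))\,dt$, and similarly $R_{g_e}(z)$ on $\mc{M}_e$ with $\tau_{g_e}$ replacing $\tau_g$. On the other side, $R_+^X(z)$ is defined on $\mc{N}$ by \eqref{equation:rpm} with the weight $e^{-\omega\int_0^t(\varphi_s^X)^*q\,ds}$ attached to the flow $e^{tX}$; here $X=\psi\widetilde X_g$ is the vector field constructed in \S\ref{sssection:extension}, which agrees with $X_g$ on $\mc{M}$, with $X_{g_e}$ on $\mc{M}_e$, vanishes on $\{\rho=-\rho_0\}$, and has the property that no trajectory that leaves $\mc{M}$ (resp. $\mc{M}_e$) ever returns. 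The first thing I would do is observe that the cutoff function $q$ in the construction of $R_+^X(z)$ can be taken supported in the complement of a neighbourhood of $\mc{M}_e$ (this is exactly the hypothesis on $q$ imposed in \S\ref{ssection:meromorphic}), so that for $y\in\mc{M}_e$ and $t\geq0$ with $\varphi_t^X(y)\in\mc{M}_e$ the weight $e^{-\omega\int_0^t(\varphi_{-s}^X)^*q\,ds}$ equals $1$ along the whole trajectory; in fact, since trajectories never re-enter $\mc{M}_e$ once they leave, the weight is $\equiv1$ as long as $\varphi_t^X(y)$ stays in $\mc{M}_e$, and for later times it is irrelevant because of the $\mathbf 1_{\mc{M}_e}$ on the left.

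Concretely, for $f\in C_{\comp}^\infty(\mc{M}^\circ)$ and $y\in\mc{M}$, I would compute, for $\Re(z)\gg1$,
\[
\mathbf 1_{\mc{M}}R_+^X(z)\mathbf 1_{\mc{M}}f(y)=-\int_0^{+\infty}e^{-zt}\,e^{-\omega\int_0^t(\varphi_{-s}^X)^*q(y)\,ds}\,f(\varphi_t^X(y))\,dt,
\]
and then note that $f(\varphi_t^X(y))=f(\varphi_t^{g}(y))$ vanishes whenever $\varphi_t^g(y)\notin\mc{M}^\circ$, i.e. for $t>\tau_g(y)$ (using that $\mc{M}$ is never re-entered and $X|_{\mc{M}}=X_g$, $\varphi_t^X|_{\mc{M}}=\varphi_t^g$ while the trajectory stays in $\mc{M}$), and that on $\{0\le t\le\tau_g(y)\}$ the weight is $1$ since $q\equiv0$ on a neighbourhood of $\mc{M}$. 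This gives $\mathbf 1_{\mc{M}}R_+^X(z)\mathbf 1_{\mc{M}}f=R_g(z)f$ for $\Re(z)\gg1$. The identical argument with $\mc{M}_e$, $\tau_{g_e}$, $\varphi_t^{g_e}$ in place of $\mc{M}$, $\tau_g$, $\varphi_t^g$ (now using $X|_{\mc{M}_e}=X_{g_e}$ and that $q$ vanishes near $\mc{M}_e$) yields $\mathbf 1_{\mc{M}_e}R_+^X(z)\mathbf 1_{\mc{M}_e}f=R_{g_e}(z)f$ for $\Re(z)\gg1$.

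Finally, to pass from $\Re(z)\gg1$ to all $z\in\C$, I would invoke the meromorphic continuation: by Theorem \ref{theorem:meromorphic}, $z\mapsto R_+^X(z)$ extends meromorphically on $\mc{L}(\mc{H}^{\mathbf r}_+)$ to $\{\Re(z)>-\Lambda(r_\bot-r_0)+C_\star\delta\}$, hence (letting $r_\bot-r_0\to\infty$, i.e. choosing the regularity pair appropriately) on all of $\C$ as a map $C_{\comp}^\infty(\mc{N}^\circ)\to\mc{D}'(\mc{N}^\circ)$; meanwhile $R_g(z)$, $R_{g_e}(z)$ are the resolvents of \eqref{defofRg} whose meromorphic continuation is given by Guillarmou \cite{Guillarmou-17-2} (this is recalled in \S\ref{ssection:hyperbolic-trapped-set} and used throughout, e.g. in Lemma \ref{SchwartzkernelSg}). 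Both sides of each claimed identity are therefore meromorphic families of operators $C_{\comp}^\infty\to\mc{D}'$ on $\C$ agreeing on the half-plane $\{\Re(z)\gg1\}$, so they agree everywhere by analytic continuation. The only mild subtlety — and the one point I would treat carefully — is checking that multiplication by the sharp cutoffs $\mathbf 1_{\mc{M}}$, $\mathbf 1_{\mc{M}_e}$ of the Schwartz kernel of $R_+^X(z)$ makes sense and depends meromorphically on $z$: this follows because the wavefront set of the kernel of $R_+^X(z)$ (as in \cite{Dyatlov-Guillarmou-16}) is contained in the union of the conormal to the diagonal, the flow-out, and $(E_+^X)^*\times(E_-^X)^*$, none of which meets $N^*(\partial\mc{M}\times\mc{N})\cup N^*(\mc{N}\times\partial\mc{M})$ because $\partial\mc{M}$ is strictly convex and $K^X\cap\partial\mc{M}=\emptyset$; hence the restriction to $\mc{M}\times\mc{M}$ (and to $\mc{M}_e\times\mc{M}_e$) is well defined, exactly as in the proof of Lemma \ref{SchwartzkernelSg}.
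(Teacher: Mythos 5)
Your proposal is correct and follows essentially the same route as the paper: compare the explicit integral formulas for $\Re(z)\gg1$, use the convexity of $\partial\mc{M}$ (so the flow never re-enters $\mc{M}$) together with $q\equiv0$ near $\mc{M}_e$ to see the weight is trivial and the integral truncates at $\tau_g(y)$, then extend by analytic continuation. The paper's version is terser (it leaves the role of the weight $q$ and the continuation step implicit), and you have a harmless sign slip writing $\varphi_{-s}^X$ instead of $\varphi_s^X$ in the weight for $R_+^X$; neither changes the substance.
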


\begin{proof}
This is an obvious consequence of the following fact: for $f\in C_{\comp}^\infty(\mc{M}^\circ)$, writing $u_z=(R_g(z)f)|_{\mc{M}}$, if $\Re(z)\gg 1$, we have
\[
u_z(y)=-\int_{0}^{\tau_g(y)}e^{-zt}f(\varphi^g_t(y))dt
\]
and similarly for $R_{g_e}(z)$. Indeed, if $y\in \mc{M}$, the flow line $\gamma:=\cup_{t\geq 0}\varphi^g_t(y)$ 
is contained in $\{\rho>-\rho_0\}$ and the convexity of $\mc{M}$ implies that $\gamma\cap \mc{M}=\cup_{t\in [0,\tau_g(y))}\varphi^g_t(y)$.
\end{proof}

We can now complete the proof of Proposition \ref{regularityR}.

\begin{proof}[Proof of Proposition \ref{regularityR}]
Let $\omega \in C^\infty(\partial_+ \mc{M})$. Observe that by Lemmas \ref{SchwartzkernelSg} and \ref{lemma:res}:
\[
\chi \mc{S}_g(\omega) = \chi \left. \left[R_{g_e}(\tilde\chi \omega \delta_{\partial_+\mc{M}})\right]\right|_{\partial_-\mc{M}},
\]
where $\tilde \chi$ is some smooth cutoff function equal to $1$ everywhere except in a neighborhood of $\partial_0 \mc{M}$ and where 
$\tilde \chi \omega \delta_{\partial_+\mc{M}} \in \mc{D}'(\mc{N})$ denotes the distribution defined by:
\[
\langle \tilde \chi \omega  \delta_{\partial_+\mc{M}}, \varphi \rangle := \int_{\partial_+\mc{M}} \tilde \chi \omega \varphi ~\dd \mu_{\partial}.
\]

Let $u := \tilde\chi \omega \delta_{\partial_+\mc{M}}$. Since $\partial_+ \mc{M}$ is of codimension $1$, $u \in H^{-1/2-\eps}(\mc{N})$ for all $\eps > 0$. Let $N^*\pl_+\mc{M}\subset T_{\pl_+\mc{M}}^*\mc{N}$ be the conormal of $\pl_+\mc{M}$ in $\mc{N}$ (i.e. $N^*\pl_+\mc{M}(T\pl_+\mc{M})=0$). 
By a standard argument of distribution theory, the wavefront set of $u$ satisfies ${\rm WF}(u)\subset N^*\pl_+\mc{M}$.

The escape function $m$ provided by Proposition \ref{proposition:escape} can be constructed so that, over $\mc{M}$, it has only support in a small conic neighborhood of $(E_-^{X_0})^*$ and $(E_+^{X_0})^*$. In particular, this construction can be achieved so that
\begin{equation}
N^*\pl_+\mc{M} \cap \supp(m) = \emptyset.
\end{equation}
Indeed, a covector $V^*\in T_{\pl_+\mc{M}}^*\mc{N}$ so that $V^*\in (E_+^{X_0})^*$ must satisfy $V^*(X_0)=0$ and $V^*(W)$ for all $W\in T\pl_+\mc{M}$, but since $X_0$ is transverse to $\pl_-\mc{M}$, one gets $V^*=0$.
 We now take a regularity pair $\mathbf{r} := (r_\bot,r_0)$ with $1/2 < r_0 < 1$, $r_0 + 2 < r_\bot < 3$, and a small $\delta > 0$, such that $-\Lambda(r_\bot-(r_0+2)) +C_\star \delta< 0$. By the previous discussion, $u \in \mc{H}_-^{\mathbf{r}}(\mc{N})$, i.e. since $\mc{H}_-^{\mathbf{r}}(\mc{N})$ is microlocally equivalent to $H^{-r_0}$ near $N^*\pl_+ \mc{M}$. Denote by $\theta\in C_{\rm comp}^\infty(\mc{M}_e^\circ)$ a cutoff function equal to $1$ near $\mc{M}$.
We claim that the map
\[
 C^\infty(M,\otimes^2_S T^*M) \ni g \mapsto  \theta R_{g_e}\theta \in \mc{L}\left(\mc{H}^{(r_\bot,r_0)}_-(\mc{N}), \mc{H}^{(r_\bot,r_0+2)}_-(\mc{N})\right)
 \]
is $C^2$ for $g$ close to $g_{0}$.  
Indeed, similarly to the proof of Theorem \ref{theorem:meromorphic} (alternatively we could simply use Theorem 1.10 along with the fact that $g \mapsto X_g$ is smooth; we give a direct argument instead), we can use the resolvent identity (recall $X=\psi \widetilde{X}_{g}$ and $X_0=\psi \widetilde{X}_{g_0}$)
\[\theta R_{g_e}\theta -\theta R_{g_{0e}}\theta =\theta R_+^{X}(0)(X_0-X)R_+^{X_0}(0)\theta \] 
to deduce that $g\mapsto \theta R_{g_e}\theta$ is differentiable twice with
 \begin{align}
& \label{equation:diff1} \partial_g \theta R_{g_e}\theta = -\theta R_+^{X}(0) (\partial_g X) R_+^{X}(0)\theta , \\
& \label{equation:diff2} \partial^2_g  \theta R_{g_e}\theta = 2 \theta R_+^{X}(0) (\partial_g X) R_+^{X}(0) (\partial_g X) R_+^{X}(0)\theta - \theta R_+^{X}(0)(\partial^2_g X)
R_+^{X}(0)\theta .
\end{align}
The first derivative \eqref{equation:diff1} is bounded as a map
\[
\mc{H}^{(r_\bot,r_0)}_- \overset{R_+^{X}(0)}{\longrightarrow} \mc{H}^{(r_\bot,r_0)}_- \overset{ \partial_g X}{\longrightarrow} \mc{H}^{(r_\bot,r_0+1)}_-  \overset{R_+^{X}(0)}{\longrightarrow} \mc{H}^{(r_\bot,r_0+1)}_-,
\]
and similarly the second derivative \eqref{equation:diff2} is bounded as a map $\mc{H}^{(r_\bot,r_0)}_- \to \mc{H}^{(r_\bot,r_0+2)}_-$, and this holds for all $g$ close enough to $g_0$ in the $C^N$-topology, with $N \gg 1$ large enough.

As a consequence, 
 \[
 C^\infty(M,\otimes^2_S T^*M) \ni g \mapsto  \theta R_{g_e}\theta u=  \theta R_{g_e}u\in \mc{H}^{(r_\bot,r_0+2)}_-(\mc{N})
 \]
 is $C^2$-regular for $g$ close to $g_0$. Note that, as $r_\bot + r_0 + 2 < 6$
 \[
 \mc{H}^{(r_\bot,r_0+2)}_-(\mc{N}) \hookrightarrow H^{-6}(\mc{N}).
 \]
 Moreover, it satisfies $X_{g_e} \theta R_{g_e} u = 0$ near $\partial_-\mc{M}$ so $\WF(\theta R_{g_e}u) \subset \left\{p_{X_{g_e}}=0\right\}$. The restriction 
 $\chi \left.[\theta R_{g_e}u]\right|_{\partial_-\mc{M}}=\chi \left.[R_{g_e}u]\right|_{\partial_-\mc{M}} \in H^{-6}(\partial_-\mc{M})$ is therefore well-defined and depends in a $C^2$-fashion on the metric $g \in C^N(M,\otimes^2_S T^*M)$, proving the first part of Proposition \ref{regularityR}.
 
Using \eqref{equation:diff1} and \eqref{equation:diff2}, writing $g=g_{0}+h$ with 
$\|h\|_{C^N}\leq \delta$ for $\delta>0$ small and $N$ chosen large,
we have by Taylor expansion, for $u= \tilde{\chi} \omega \delta_{\partial_+ \mc{M}}$ as above:
\begin{equation}\label{TaylorexpRg}
\begin{split} 
\theta R_{g_e}u = \theta R_{g_{0e}}u - \theta R_+^{X_0}(0)((\partial_g X)|_{g=g_0}.h) R_+^{X_0}(0)u +  \int_0^1(1-t)\pl_g^2(\theta R_{g_{0e}+th}u).(h,h)dt.
\end{split}
\end{equation}
Let $Y_g(h):=\pl_gX(h)\in C^{\infty}(\mc{N},T\mc{N})$ for any metric $g$ smooth close to $g_0$ in $C^N(M, \otimes_S^2 T^*M)$. For all $k\geq 1$, one has  
$\|Y_g(h)\|_{C^k(\mc{N},T\mc{N})}\leq C_k\|h\|_{C^{k+1}}$ for some $C_k> 0$ depending uniformly on $\|g\|_{C^{k+1}}$. 
Let $Z_g(h,h)=\pl^2_gX(h,h)\in C^{\infty}(\mc{N},T\mc{N})$. One has $\|Z_g(h,h)\|_{C^k(\mc{N},T\mc{N})}\leq C_k\|h\|^2_{C^{k+2}}$ for some $C_k>0$ depending uniformly on $\|g\|_{C^{k+2}}$.  Then the remainder term in \eqref{TaylorexpRg} satisfies, for $g_e(t)$ the extension of $g(t)=g_0+th$ (with $t\in [0,1]$) and $X(t)=\psi \widetilde{X}_{g(t)}$,
\[ \begin{split}
\pl_g^2(\theta R_{g_e(t)}u)(h,h)=  2 \theta R_+^{X(t)}(0) Y_{g(t)}(h) R_+^{X(t)}(0) Y_{g(t)}(h)  R_+^{X(t)}(0)u  \\ 
 \hspace{5cm} - \theta R_+^{X(t)}(0) Z_{g(t)}(h,h)R_+^{X(t)}(0)u.
\end{split}\]
By the analysis above, for $\delta>0$ small and $N>0$ large enough, there exists a constant $C>0$ such that for $h=g(1) - g_0$ such that $\|h\|_{C^N}\leq \delta$:
\[
\begin{split}
& \sup_{t\in [0,1]} \|R_+^{X(t)}u\|_{\mc{H}^{(r_\bot,r_0+j)}_-(\mc{N})}\leq C,  \forall j \in \left\{0,1,2\right\},\\
&\sup_{t\in [0,1]} \|Y_{g(t)}(h)\|_{\mc{H}^{(r_\bot,r_0 + j)}_-\to \mc{H}^{(r_\bot,r_0+1 + j)}_-}\leq C\|h\|_{C^N}, \forall j \in \left\{0,1\right\},\\
&\sup_{t\in [0,1]}\|Z_{g(t)}(h,h)\|_{\mc{H}^{(r_\bot,r_0)}_-\to \mc{H}^{(r_\bot,r_0+2)}_-}\leq C\|h\|^2_{C^N}.
\end{split}
\] 
Combining the last inequalities with \eqref{TaylorexpRg}, this shows \eqref{Taylororder2S} by applying the restriction to $\pl_- \mc{M}$ on the left of \eqref{TaylorexpRg}. Note that, in turn, this gives an expression of $\pl_g\mc{S}_g|_{g= g_0}$ in terms of $R_+^{X_0}(0)$ and $\pl_gX|_{g=g_0}$. This concludes the proof.
\end{proof}

\bibliographystyle{alpha}
\bibliography{Biblio}

\begin{thebibliography}{{Lef}19a}

\bibitem[BI10]{Burago-Ivanov-10}
Dmitri Burago and Sergei Ivanov.
\newblock Boundary rigidity and filling volume minimality of metrics close to a
  flat one.
\newblock {\em Ann. of Math. (2)}, 171(2):1183--1211, 2010.

\bibitem[BK85]{Burns-Katok-85}
Keith Burns and Anatole Katok.
\newblock Manifolds with nonpositive curvature.
\newblock {\em Ergodic Theory Dynam. Systems}, 5(2):307--317, 1985.

\bibitem[Bon20]{Bonthonneau-20}
Yannick~Guedes Bonthonneau.
\newblock Perturbation of {R}uelle resonances and {F}aure-{S}j\"{o}strand
  anisotropic space.
\newblock {\em Rev. Un. Mat. Argentina}, 61(1):63--72, 2020.

\bibitem[CDS00]{Croke-Dairbekov-Sharafutdinov-00}
Christopher~B. Croke, Nurlan~S. Dairbekov, and Vladimir~A. Sharafutdinov.
\newblock Local boundary rigidity of a compact {R}iemannian manifold with
  curvature bounded above.
\newblock {\em Trans. Amer. Math. Soc.}, 352(9):3937--3956, 2000.

\bibitem[CH16]{Croke-Herreros-16}
Christopher~B. Croke and Pilar Herreros.
\newblock Lens rigidity with trapped geodesics in two dimensions.
\newblock {\em Asian J. Math.}, 20(1):47--57, 2016.

\bibitem[CL21]{Cekic-Lefeuvre-21-2}
Mihajlo {Ceki{\'c}} and Thibault {Lefeuvre}.
\newblock {Generic injectivity of the X-ray transform}.
\newblock {\em arXiv e-prints}, page arXiv:2107.05119, July 2021.

\bibitem[Cro90]{Croke-90}
Christopher~B. Croke.
\newblock Rigidity for surfaces of nonpositive curvature.
\newblock {\em Comment. Math. Helv.}, 65(1):150--169, 1990.

\bibitem[Cro91]{Croke-91}
Christopher~B. Croke.
\newblock Rigidity and the distance between boundary points.
\newblock {\em J. Differential Geom.}, 33(2):445--464, 1991.

\bibitem[Cro04]{Croke-04}
Christopher~B. Croke.
\newblock Rigidity theorems in {R}iemannian geometry.
\newblock In {\em Geometric methods in inverse problems and {PDE} control},
  volume 137 of {\em IMA Vol. Math. Appl.}, pages 47--72. Springer, New York,
  2004.

\bibitem[DG16]{Dyatlov-Guillarmou-16}
Semyon Dyatlov and Colin Guillarmou.
\newblock Pollicott-{R}uelle resonances for open systems.
\newblock {\em Ann. Henri Poincar\'{e}}, 17(11):3089--3146, 2016.

\bibitem[DGRS20]{Dang-Guillarmou-Riviere-Shen-20}
Nguyen~Viet Dang, Colin Guillarmou, Gabriel Rivi\`ere, and Shu Shen.
\newblock The {F}ried conjecture in small dimensions.
\newblock {\em Invent. Math.}, 220(2):525--579, 2020.

\bibitem[DS10]{Dairbekov-Sharafutdinov-10}
Nurlan Dairbekov and Vladimir Sharafutdinov.
\newblock Conformal {K}illing symmetric tensor fields on {R}iemannian
  manifolds.
\newblock {\em Mat. Tr.}, 13(1):85--145, 2010.

\bibitem[EL23]{Erchenko-Lefeuvre-23}
Alena {Erchenko} and Thibault {Lefeuvre}.
\newblock {Marked boundary rigidity for surfaces of Anosov type}.
\newblock {\em arXiv e-prints}, page arXiv:2305.06893, May 2023.

\bibitem[FRS08]{Faure-Roy-Sjostrand-08}
Fr\'{e}d\'{e}ric Faure, Nicolas Roy, and Johannes Sj\"{o}strand.
\newblock Semi-classical approach for {A}nosov diffeomorphisms and {R}uelle
  resonances.
\newblock {\em Open Math. J.}, 1:35--81, 2008.

\bibitem[GGJ22]{Bonthonneau-Guillarmou-Jezequel-22}
Yannick {Guedes Bonthonneau}, Colin {Guillarmou}, and Malo {J{\'e}z{\'e}quel}.
\newblock {Scattering rigidity for analytic metrics}.
\newblock {\em arXiv e-prints}, page arXiv:2201.02100, January 2022.

\bibitem[GL19]{Guillarmou-Lefeuvre-18}
Colin Guillarmou and Thibault Lefeuvre.
\newblock The marked length spectrum of {A}nosov manifolds.
\newblock {\em Ann. of Math. (2)}, 190(1):321--344, 2019.

\bibitem[GL21]{Gouezel-Lefeuvre-21}
S\'{e}bastien Gou\"{e}zel and Thibault Lefeuvre.
\newblock Classical and microlocal analysis of the x-ray transform on {A}nosov
  manifolds.
\newblock {\em Anal. PDE}, 14(1):301--322, 2021.

\bibitem[GM18]{Guillarmou-Mazzucchelli-18}
Colin Guillarmou and Marco Mazzucchelli.
\newblock Marked boundary rigidity for surfaces.
\newblock {\em Ergodic Theory Dynam. Systems}, 38(4):1459--1478, 2018.

\bibitem[Gro83]{Gromov-83}
Mikhael Gromov.
\newblock Filling {R}iemannian manifolds.
\newblock {\em J. Differential Geom.}, 18(1):1--147, 1983.

\bibitem[GS94]{Grigis-Sjostrand-94}
Alain Grigis and Johannes Sj\"{o}strand.
\newblock {\em Microlocal analysis for differential operators}, volume 196 of
  {\em London Mathematical Society Lecture Note Series}.
\newblock Cambridge University Press, Cambridge, 1994.
\newblock An introduction.

\bibitem[Gui17a]{Guillarmou-17-1}
Colin Guillarmou.
\newblock Invariant distributions and {X}-ray transform for {A}nosov flows.
\newblock {\em J. Differential Geom.}, 105(2):177--208, 2017.

\bibitem[Gui17b]{Guillarmou-17-2}
Colin Guillarmou.
\newblock Lens rigidity for manifolds with hyperbolic trapped sets.
\newblock {\em J. Amer. Math. Soc.}, 30(2):561--599, 2017.

\bibitem[HMS16]{Heil-Moroianu-Semmelmann-16}
Konstantin Heil, Andrei Moroianu, and Uwe Semmelmann.
\newblock Killing and conformal {K}illing tensors.
\newblock {\em J. Geom. Phys.}, 106:383--400, 2016.

\bibitem[Kli95]{Klingenberg-95}
Wilhelm P.~A. Klingenberg.
\newblock {\em Riemannian geometry}, volume~1 of {\em De Gruyter Studies in
  Mathematics}.
\newblock Walter de Gruyter \& Co., Berlin, second edition, 1995.

\bibitem[{Lef}19a]{Lefeuvre-19-2}
Thibault {Lefeuvre}.
\newblock {Local marked boundary rigidity under hyperbolic trapping
  assumptions}.
\newblock {\em Journal of Geometric Analysis}, 30:448--465, 2019.

\bibitem[Lef19b]{Lefeuvre-thesis}
Thibault Lefeuvre.
\newblock {\em On the rigidity of Riemannian manifolds}.
\newblock 2019.
\newblock PhD thesis, Available online:
  \url{https://thibaultlefeuvre.files.wordpress.com/2019/12/main.pdf}.

\bibitem[Lef19c]{Lefeuvre-19-1}
Thibault Lefeuvre.
\newblock On the s-injectivity of the x-ray transform on manifolds with
  hyperbolic trapped set.
\newblock {\em Nonlinearity}, 32(4):1275--1295, 2019.

\bibitem[Mic82]{Michel-81}
Ren\'{e} Michel.
\newblock Sur la rigidit\'{e} impos\'{e}e par la longueur des
  g\'{e}od\'{e}siques.
\newblock {\em Invent. Math.}, 65(1):71--83, 1981/82.

\bibitem[NS15]{Noakes-Stoyanov-15}
Lyle Noakes and Luchezar Stoyanov.
\newblock Rigidity of scattering lengths and travelling times for disjoint
  unions of strictly convex bodies.
\newblock {\em Proc. Amer. Math. Soc.}, 143(9):3879--3893, 2015.

\bibitem[Ota90a]{Otal-90}
Jean-Pierre Otal.
\newblock Le spectre marqu\'{e} des longueurs des surfaces \`a courbure
  n\'{e}gative.
\newblock {\em Ann. of Math. (2)}, 131(1):151--162, 1990.

\bibitem[Ota90b]{Otal-90-2}
Jean-Pierre Otal.
\newblock Sur les longueurs des g\'{e}od\'{e}siques d'une m\'{e}trique \`a
  courbure n\'{e}gative dans le disque.
\newblock {\em Comment. Math. Helv.}, 65(2):334--347, 1990.

\bibitem[PSU]{Paternain-Salo-Uhlmann-book}
Gabriel~P. Paternain, Mikko Salo, and Gunther Uhlmann.
\newblock {\em Geometric inverse problems, with emphasis on two dimensions}.
\newblock Cambridge University Press.

\bibitem[PSU14]{Paternain-Salo-Uhlmann-14-1}
Gabriel~P. Paternain, Mikko Salo, and Gunther Uhlmann.
\newblock Tensor tomography: progress and challenges.
\newblock {\em Chin. Ann. Math. Ser. B}, 35(3):399--428, 2014.

\bibitem[PU05]{Pestov-Uhlmann-05}
Leonid Pestov and Gunther Uhlmann.
\newblock Two dimensional compact simple {R}iemannian manifolds are boundary
  distance rigid.
\newblock {\em Ann. of Math. (2)}, 161(2):1093--1110, 2005.

\bibitem[Rob80]{Robinson-80}
Clark Robinson.
\newblock Structural stability on manifolds with boundary.
\newblock {\em J. Diff. Eq.}, 37:1--11, 1980.

\bibitem[Rud87]{Rudin-87}
Walter Rudin.
\newblock {\em Real and complex analysis}.
\newblock McGraw-Hill Book Co., New York, third edition, 1987.

\bibitem[Sha94]{Sharafutdinov-94}
Vladimir Sharafutdinov.
\newblock {\em Integral geometry of tensor fields}.
\newblock Inverse and Ill-posed Problems Series. VSP, Utrecht, 1994.

\bibitem[SKL23]{Desimoi-Kaloshin-Leguil}
Jacopo~De Simoi, Vadim Kaloshin, and Martin Leguil.
\newblock Marked length spectral determination of analytic chaotic billiards
  with axial symmetry.
\newblock {\em Invent. math.}, 2023.

\bibitem[SU04]{Stefanov-Uhlmann-04}
Plamen Stefanov and Gunther Uhlmann.
\newblock Stability estimates for the {X}-ray transform of tensor fields and
  boundary rigidity.
\newblock {\em Duke Math. J.}, 123(3):445--467, 2004.

\bibitem[SU09]{Stefanov-Uhlmann-09}
Plamen Stefanov and Gunther Uhlmann.
\newblock Local lens rigidity with incomplete data for a class of non-simple
  {R}iemannian manifolds.
\newblock {\em J. Differential Geom.}, 82(2):383--409, 2009.

\bibitem[SUV21]{Stefanov-Uhlmann-Vasy-21}
Plamen Stefanov, Gunther Uhlmann, and Andr\'{a}s Vasy.
\newblock Local and global boundary rigidity and the geodesic {X}-ray transform
  in the normal gauge.
\newblock {\em Ann. of Math. (2)}, 194(1):1--95, 2021.

\bibitem[Tay11]{Taylor-11}
Michael~E. Taylor.
\newblock {\em Partial differential equations {I}. {B}asic theory}, volume 115
  of {\em Applied Mathematical Sciences}.
\newblock Springer, New York, second edition, 2011.

\bibitem[Var09]{Vargo-09}
James Vargo.
\newblock A proof of lens rigidity in the category of analytic metrics.
\newblock {\em Math. Res. Lett.}, 16(6):1057--1069, 2009.

\bibitem[Vig80]{Vigneras-80}
Marie-France Vign\'{e}ras.
\newblock Vari\'{e}t\'{e}s riemanniennes isospectrales et non isom\'{e}triques.
\newblock {\em Ann. of Math. (2)}, 112(1):21--32, 1980.

\bibitem[Zwo12]{Zworski-12}
Maciej Zworski.
\newblock {\em Semiclassical analysis}, volume 138 of {\em Graduate Studies in
  Mathematics}.
\newblock American Mathematical Society, Providence, RI, 2012.

\end{thebibliography}

\end{document}